\newtheorem{theorem}{Theorem} 
\newtheorem{definition}{Definition}
\newtheorem{lemma}{Lemma}
\newtheorem{proposition}{Proposition}
\renewcommand*\env@matrix[1][*\c@MaxMatrixCols c]{%
  \hskip -\arraycolsep
  \let\@ifnextchar\new@ifnextchar
  \array{#1}}
\let\e=\varepsilon
\let\p=\partial
\let\O=\Omega
\numberwithin{equation}{section}
\let\hide\iffalse
\newcommand{\R}{\mathbb{R}}
\renewcommand{\S}{\mathbb{S}}
\newcommand{\be}{\begin{equation}}
\newcommand{\bm}{\begin{multline}}
\newcommand{\ee}{\end{equation}}
\newcommand{\dd}{\mathrm{d}}
\newcommand{\xb}{x_{\mathbf{b}}}
\newcommand{\tb}{t_{\mathbf{b}}}
\newcommand{\vb}{v_{\mathbf{b}}}
\newcommand{\Bes}{\begin{eqnarray*}}
\newcommand{\Ees}{\end{eqnarray*}}
\newcommand{\Be}{\begin{equation} }
\newcommand{\Ee}{\end{equation}}
\newcommand{\Bs}{\begin{split}}
\newcommand{\Es}{\end{split}}
\def\p{\partial}
\def\O{\Omega}
\def\R{\mathbb{R}}
\def\B{\begin{equation}}
\def\E{\end{equation}}
\def\BN{\begin{eqnarray*}}
\def\EN{\end{eqnarray*}}
\begin{document}

\title{Rarefied gas dynamics with external fields under specular reflection boundary condition}
\author{Yunbai Cao}

 \address{Department of Mathematics, University of Wisconsin, Madison, WI 53706 USA}
\email{ycao35@wisc.edu}
\begin{abstract}
We consider the Boltzmann equation with external fields in strictly convex domains with the specular reflection boundary condition. We construct classical $C^1$ solutions away from the grazing set under the assumption that the external field is $C^2$ and the normal derivative of the field is positive and bounded away from $0$. 
\end{abstract}
\maketitle

\tableofcontents

\section{Introduction}
Kinetic theory studys the time evolution of a large number of particles modeled by a distribution function in the phase space: $F(t,x,v)$ for $(t,x,v) \in [0, \infty) \times  {\O} \times \R^{3}$, where $\O$ is an open bounded subset of $\R^{3}$. Dynamics and collision processes of dilute charged particles with a field $E$ can be modeled by the Vlasov-Boltzmann equation
%
\Be \label{Boltzmann_E}
\p_t F + v\cdot \nabla_x F + E \cdot \nabla_v F = Q(F,F).
\Ee
The collision operator measures ``the change rate'' in binary collisions and takes the form of
\Be\begin{split}\label{Q}
Q(F_{1},F_{2}) (v)&: = Q_\mathrm{gain}(F_1,F_2)-Q_\mathrm{loss}(F_1,F_2)\\
&: =\int_{\R^3} \int_{\S^2} 
B(v-u) \cdot \omega) [F_1 (u^\prime) F_2 (v^\prime) - F_1 (u) F_2 (v)]
 \dd \omega \dd u,
\end{split}\Ee   
where $u^\prime = u - [(u-v) \cdot \omega] \omega$ and $v^\prime = v + [(u-v) \cdot \omega] \omega$.

Here, $B(v-u,\omega) = |v - u|^\kappa q_0( \frac{v-u}{|v -u |} \cdot \omega ) $, $0 \le \kappa \le 1 $ (hard potential), and $0 \le q_0( \frac{v-u}{|v -u |} \cdot \omega ) \le C |\frac{v-u}{ |v -u | } \cdot \omega | $ (angular cutoff).

The collision operator enjoys collision invariance: for any measurable function $G$,  
\Be\label{collison_invariance}
\int_{\R^{3}} \begin{bmatrix}1 & v & \frac{|v|^{2}-3}{2}\end{bmatrix} Q(G,G) \dd v = \begin{bmatrix}0 & 0 & 0 \end{bmatrix} .
\Ee

It is well-known that a global Maxwellian $\mu$ 
satisfies $Q(\mu,\mu)=0$ where
\Be\label{Maxwellian}
\mu(v):= \frac{1}{(2\pi)^{3/2}} \exp\bigg(
 - \frac{|v |^{2}}{2 }
 \bigg).
\Ee

Throughout this paper we assume that $\Omega$ is a bounded open subset of $\mathbb R^3$ and there exists a $C^3$ function $\xi: \mathbb R^3 \to \mathbb R$ such that $\Omega = \{ x \in \mathbb R^3: \xi(x) < 0 \}$, and $\partial \Omega = \{ x\in \mathbb R^3 : \xi(x) = 0 \}$. Moreover we assume the domain is \textit{strictly convex}:
\Be \label{convex}
\sum_{i,j} \partial_{ij} \xi(x) \zeta_i \zeta_j \ge C_\xi |\zeta|^2 \, \text{ for all } \, \zeta \in \mathbb R^3 \text{ and for all } x\in \bar \Omega = \Omega \cup \partial \Omega.
\Ee
We assume that 
\Be \label{gradientxinot0}
\nabla \xi(x) \neq 0  \text{ when } |\xi(x) | \ll 1,
\Ee
and we define the outward normal as $n(x) = \frac{ \nabla \xi(x) }{ | \nabla \xi (x) |}$ at the boundary.
%
The boundary of the phase space $
\gamma := \{ (x,v) \in \partial \Omega \times \mathbb R^3 \}$ can be decomposed as 
\begin{equation} \begin{split}
\gamma_- = \{ (x,v) \in \partial \Omega \times \mathbb R^3 : n(x) \cdot v < 0 \}, &\quad (\text{the incoming set}),
\\ \gamma_+ = \{ (x,v) \in \partial \Omega \times \mathbb R^3 : n(x) \cdot v > 0 \}, &\quad (\text{the outcoming set}),
\\ \gamma_0 = \{ (x,v) \in \partial \Omega \times \mathbb R^3 : n(x) \cdot v = 0 \}, &\quad (\text{the grazing set}).
\end{split} \end{equation}
In general the boundary condition is imposed only for the incoming set $\gamma_-$ for general kinetic PDEs. In this paper we consider a so-called specular reflection boundary condition
 \Be \label{SpecR}
 F(t,x,v) = F(t,x, R_x v ) \text{ on } (x,v) \in \gamma_-, \text{ where } R_x v := v -2 n(x) (n(x) \cdot v ) .
 \Ee
Physically this represents when a gas particle hits the boundary, it bounces back with the opposite normal velocity and the same tangential velocity, just like a billiard. Previous studies on the Boltzmann equation with specular reflection boundary conditions can be found in \cite{GKTT1, Guo10, KBR, KL1, KL2}. For other important physical boundary conditions, such as the diffuse boundary condition, we refer \cite{VPBKim, CaoSIAM, CKQ, GKTT1, Guo10} and the references therein. 

Due to the importance of the Boltzmann equation in the mathematical theory and application,
there have been explosive research activities in analytic study of the equation. Notably the nonlinear energy method has led to solutions of many open problems including global strong solution of Boltzmann equation coupled with either the Poisson equation or the Maxwell system for electromagnetism when the initial data are close to the Maxwellian $\mu$ in periodic box (no boundary). See \cite{Guo_M} and the references therein. 
%
%
%
%
In many important physical applications, e.g. semiconductor and tokamak, the charged dilute
gas is confined within a container, and its interaction with the boundary plays a crucial role both in physics and mathematics. 

However, in general, higher regularity may not be expected for solutions of the Boltzmann equation in physical bounded domains. Such a drastic difference of solutions with boundaries had been
demonstrated as the formation and propagation of discontinuity in non-convex domains \cite{Kim11, EGKM}, and a non-existence of some second order derivatives at the boundary in convex domains \cite{GKTT1}. Evidently the nonlinear energy method is not generally available to the boundary problems.
In order to overcome such critical difficulty, Guo developed a $L^2$-$L^\infty$ framework in \cite{Guo10} to study global solutions of the Boltzmann equation with various boundary conditions. The core of the method lays in a direct approach (without taking derivatives) to achieve a pointwise bound using trajectory of the transport operator, which leads substantial development in various directions including \cite{EGKM2, EGKM, GKTT1, GKTT2, KBOX}.
In \cite{GKTT1}, with the acid of some distance function towards the grazing set, the weighted classical $C^1$ solutions of Boltzmann equation ($E\equiv0$ in (\ref{Boltzmann_E})) was constructed under various boundary conditions.


%
%

In this paper, we extend a result of \cite{GKTT1} to the Boltzmann equation (\ref{Boltzmann_E}) with a given external field $(E\neq 0 )$ satisfying a crucial sign condition on the boundary:
\Be \label{signEonbdry}
E(t,x) \cdot n(x) > C_E > 0 \quad   \text{ for all } t \text{ and all } x \in \partial \Omega.
\Ee
One of the major difficulties when dealing with a field $E \neq 0$ is that trajectories are curved and behave in a very complicated way when they hit the boundary.

Let's clarify some notations. For any function $z(x,v) :  \bar \Omega \times \mathbb R^3 \to \mathbb R$, denote
\Be \notag
\| z \|_\infty = \sup_{(x,v) \in \Omega \times \mathbb R^3} |z(x,v) |.
\Ee
And for any function $g(t,x) : [0,T] \times \bar \Omega  \to \mathbb R$, denote
\[
\| g \|_{L^\infty_{t,x} }  = \sup_{(t,x) \in  [0,T] \times \bar \Omega} |g(t,x) |, \text{ and } \| g \|_{C^n_{t,x} } = \sum_{ 0 \le \alpha + \beta \le n }  \sup_{(t,x) \in  [0,T] \times \bar \Omega}  | \p_t^\alpha \p_x^\beta  g (t,x ) |.
\]

Our main result is a weighted $C^1$ estimate for the solution of (\ref{Boltzmann_E}) with specular boundary condition (\ref{SpecR}) in a short time. To state the main result, we introduce a distance function $\alpha(t,x,v)$ towards the grazing set $\gamma_0$:
\Be \label{alphatilde}
 \alpha(t,x,v) \sim \bigg[ |v \cdot \nabla \xi (x)| ^2 + \xi (x)^2 - 2 (v \cdot \nabla^2 \xi(x) \cdot v ) \xi(x) - 2(E(t,\overline x ) \cdot \nabla \xi (\overline x ) )\xi(x) \bigg]^{1/2}
\Ee
for $x \in \Omega$ close to boundary, where $\overline x := \{ \bar x \in \p \Omega :  d(x,\bar x ) = d(x, \partial \Omega) \}$ is uniquely defined. The precise definition of $\alpha$ can be found in (\ref{alphadef}). Note that $\alpha \vert_{\gamma_-} \sim | n(x) \cdot v |$. Similar distance functions towards $\gamma_0$ were used in \cite{GKTT1,Guo_V,Hwang}. With the weight $\alpha$, we establish the main theorem:

\begin{theorem}[Weighted $C^1$ Estimate]  \label{C1Main} 
Suppose $E$ satisfies the sign condition (\ref{signEonbdry}), and
\Be \label{c1bddforthepotentail}
 \| E \|_{C^2_{t,x}} < \infty.
\Ee
Assume $F_0 = \sqrt \mu f_0 \ge 0 $, $f_0 \in W^{1,\infty}(\Omega \times \mathbb R^3 ) $, and for $ 2 < \beta < 3 $, $0< \theta < \frac{1}{4}$, and $b> 1$,
\[
 \left \|  \frac{ {\alpha}^{\beta-1}   }{\langle v\rangle^{b }} \partial_{x} f_{0} \right\|_{\infty} + \left \|   \frac{     \alpha^{\beta-2} }{\langle v\rangle^{b-2 }} \partial_{v} f_{0}  \right\|_{\infty} + \left\| e^{\theta |v|^2 } f_0 \right \|_\infty < \infty,
 \]
and the compatibility condition 
\begin{equation} \label{compatibility}
f_0(x,v)= f_0(x,R_{x}v) \ \ \ \text{on} \ (x,v) \in \gamma_{-}. 
\end{equation}
Then there exists a unique solution $F(t) = \sqrt \mu f(t)$ for $0 \le t \le T$ with $T  \ll 1 $ to the system \eqref{Boltzmann_E}, \eqref{SpecR} that satisfies, for all $0 \le t \le T$,
\[
\left \| e^{-\varpi \langle v \rangle t } \frac{\alpha^\beta}{\langle v \rangle^{b+1} } \nabla_x f (t) \right \|_\infty + \left \| e^{-\varpi \langle v \rangle t } \frac{\alpha^{\beta-1}}{\langle v \rangle^{b-1} } \nabla_v f (t) \right \|_\infty \lesssim_{}  \left \|  \frac{ {\alpha}^{\beta-1}   }{\langle v\rangle^{b }} \partial_{x} f_{0} \right\|_{\infty} + \left \|   \frac{     \alpha^{\beta-2} }{\langle v\rangle^{b-2 }} \partial_{v} f_{0}  \right\|_{\infty} + P \left( \left\| e^{\theta |v|^2 } f_0 \right \|_\infty \right)
\]
for some polynomial $P$. Furthermore, if $f_0 \in C^1$, then $f \in C^1$ away from the grazing set $\gamma_0$.
\end{theorem}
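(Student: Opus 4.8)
The plan is to construct the solution by a time-weighted $L^\infty$ iteration along the curved characteristics of the Vlasov--Boltzmann operator $\partial_t + v\cdot\nabla_x + E\cdot\nabla_v$, then to differentiate the Duhamel representation and close a Gronwall-type estimate for the weighted derivatives $\frac{\alpha^\beta}{\langle v\rangle^{b+1}}\nabla_x f$ and $\frac{\alpha^{\beta-1}}{\langle v\rangle^{b-1}}\nabla_v f$. First I would set up the characteristics $(X(s;t,x,v),V(s;t,x,v))$ solving $\dot X = V$, $\dot V = E(s,X)$, together with the specular reflection bookkeeping: each time the trajectory reaches $\partial\Omega$ in $\gamma_+$, the velocity is reflected by $R_x$, producing a sequence of bounce times $t_1 > t_2 > \cdots$. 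The sign condition \eqref{signEonbdry} is what forces the trajectory to genuinely leave the boundary after a bounce (the normal acceleration $E\cdot n > C_E > 0$ pushes particles inward), which gives a uniform lower bound on the time between consecutive bounces away from the grazing set and makes the backward-in-time trajectory well-defined; this is the structural replacement for the straight-line billiard estimates of \cite{GKTT1}. I would record the invariance (or near-invariance, up to controllable error from $E$) of the weight $\alpha$ along characteristics — this is the analogue of the $C_\xi$-convexity velocity lemma and is the reason $\alpha$ was defined with the extra $-2(E\cdot\nabla\xi)\xi$ term in \eqref{alphatilde}: it is precisely the combination that behaves like a conserved quantity for the curved flow near the boundary.

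The $L^\infty$ bound for $f$ itself (the $\|e^{\theta|v|^2}f_0\|_\infty$ part, giving the polynomial $P$) is the $L^2$--$L^\infty$ argument of \cite{Guo10} adapted to the field: one splits $Q = Q_{\mathrm{gain}} - \nu f$, absorbs the loss term into the exponential $e^{-\int\nu}$ along the trajectory, and bounds the gain term by iterating the Duhamel formula twice, using the smallness of $T$ to close. For the derivative estimate, I would differentiate the characteristic ODEs to get the sensitivity matrices $\nabla_{x,v}(X,V)$, which grow like $e^{C\|E\|_{C^1}t}$ between bounces but pick up a factor that degenerates like $\alpha^{-1}$ (through $\partial v_{\mathbf b}/\partial x$ etc.) at each reflection — this is exactly why the weight carries the power $\beta$ on $\nabla_x f$ and $\beta-1$ on $\nabla_v f$, with $2<\beta<3$ chosen so that the total loss over all bounces in $[0,T]$ is summable. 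Differentiating Duhamel, one gets $\nabla f(t)$ expressed through (i) $\nabla f_0$ transported along characteristics, weighted by the sensitivity matrices; (ii) a contribution from differentiating the specular reflection operator $R_x$ at each bounce (the worst term, carrying the $\alpha$-degeneracy); (iii) the commutator $[\nabla, v\cdot\nabla_x + E\cdot\nabla_v]$ producing lower-order terms $\nabla_x E \cdot \nabla_v f$ which are handled because $\|E\|_{C^2}<\infty$; and (iv) $\nabla Q(f,f)$, which by the standard hard-potential estimates is bounded by $\langle v\rangle$ times $\|f\|_\infty$ times the weighted derivative norm, so it is absorbed by Gronwall in $e^{-\varpi\langle v\rangle t}$ for $\varpi$ large. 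Choosing $T\ll 1$ and $\varpi$ appropriately closes the a priori estimate; uniqueness follows from the same estimate applied to the difference of two solutions, and existence from a standard approximation/iteration scheme (e.g. penalized domain or mild-solution Picard iteration) together with this uniform bound.

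The propagation of $C^1$ regularity away from $\gamma_0$ is then a localization argument: on any compact subset of $(0,T]\times(\bar\Omega\times\mathbb R^3)\setminus\gamma_0$, the number of bounces of the backward trajectory is finite and the bounce points stay in $\gamma_+$ (uniformly transversal, again by \eqref{signEonbdry} together with $\alpha$ being bounded below), so the characteristic flow and all the sensitivity matrices depend continuously — indeed $C^1$ — on $(t,x,v)$ there; combined with the now-established finiteness of the weighted derivative norms, this upgrades the mild solution to a classical $C^1$ solution on that region, and since the region is arbitrary one gets $C^1$ on all of $\{\alpha > 0\}$.

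The main obstacle I anticipate is controlling the derivative of the bounce map under the curved flow: unlike the straight-line case, the bounce time $t_{\mathbf b}(t,x,v)$ and bounce point $x_{\mathbf b}$ are defined implicitly through $\xi(X(t_{\mathbf b};t,x,v)) = 0$ with $X$ nonlinear in its arguments, so differentiating requires the implicit function theorem with a denominator $v_{\mathbf b}\cdot\nabla\xi(x_{\mathbf b})$ that vanishes precisely on the grazing set — quantifying this degeneration by exactly $\alpha$ (and no worse), uniformly over the possibly many bounces in $[0,T]$, and showing the resulting geometric series in the $\alpha$-weights converges for $2<\beta<3$, is the technical heart of the argument and the place where the strict convexity \eqref{convex} and the sign condition \eqref{signEonbdry} must be used in tandem.
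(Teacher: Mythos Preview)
Your overall architecture is right: Duhamel along the specular characteristics, differentiate, use the velocity lemma to propagate $\alpha$, and close with a time-weight $e^{-\varpi\langle v\rangle t}$. But two of the steps you describe would not go through as stated.

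First, the bounce-derivative estimate is not a ``geometric series in the $\alpha$-weights.'' The number of bounces on $[s,t]$ is $\ell_*\sim |t-s||v|/\alpha$, so if each reflection genuinely contributed a factor degenerating like $\alpha^{-1}$, the product would blow up catastrophically. What the paper does (Theorem~\ref{theorem_Dxv}) is write $\partial(X_{\mathbf{cl}},V_{\mathbf{cl}})/\partial(x,v)$ as a product of $\ell_*$ many $6\times 6$ Jacobian matrices $\partial(t^{\ell+1},x^{\ell+1},v^{\ell+1})/\partial(t^\ell,x^\ell,v^\ell)$, diagonalize each one explicitly, and show that the nontrivial eigenvalue is $1+O(\mathbf r^\ell)$ with $\mathbf r^\ell = |\mathbf v_\perp^\ell|/|v|$, so that $\prod_\ell (1+O(\mathbf r^\ell))\lesssim (1+\alpha)^{1/\alpha}$ stays bounded. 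Achieving eigenvalue $1+O(\mathbf r)$ rather than $1+O(\sqrt{\mathbf r})$ is exactly where $\|E\|_{C^2}<\infty$ enters: one needs the cancellation identity \eqref{Fperpcancel} to show $|\partial(\mathbf v_\perp^{\ell+1})/\partial(t^\ell,x^\ell)|\lesssim |t^\ell-t^{\ell+1}|^2$ (one power better than the na\"ive $|t^\ell-t^{\ell+1}|$), and this requires the second derivatives of $E$. Your remark that $C^2$ of $E$ only handles commutator terms $\nabla_x E\cdot\nabla_v f$ misses the main use of this hypothesis. The output is the specific powers $|\nabla_x X_{\mathbf{cl}}|\lesssim |v|/\alpha$, $|\nabla_v X_{\mathbf{cl}}|\lesssim 1/\langle v\rangle$, $|\nabla_x V_{\mathbf{cl}}|\lesssim |v|^3/\alpha^2$, $|\nabla_v V_{\mathbf{cl}}|\lesssim |v|/\alpha$, and these mismatched exponents are what dictate the offset weights $\alpha^\beta/\langle v\rangle^{b+1}$ on $\nabla_x f$ versus $\alpha^{\beta-1}/\langle v\rangle^{b-1}$ on $\nabla_v f$.

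Second, your treatment of $\nabla Q(f,f)$ is too optimistic. After differentiating $\Gamma_{\mathrm{gain}}$ along the trajectory you face
\[
\int_0^t\int_{\mathbb R^3}\frac{e^{-C|V_{\mathbf{cl}}(s)-u|^2}}{|V_{\mathbf{cl}}(s)-u|^{2-\kappa}}\,|\nabla_x f(s,X_{\mathbf{cl}}(s),u)|\,\dd u\,\dd s,
\]
and the singularity in the weighted norm sits at $\alpha(s,X_{\mathbf{cl}}(s),u)^{-\beta}$, a different velocity argument than $v$. Closing the estimate therefore requires the nonlocal-to-local bound \eqref{specular_nonlocal} of Lemma~\ref{keylemma}: one has to show the $u$-integral of $\alpha(s,X_{\mathbf{cl}}(s),u)^{-\beta}$ returns $O(\varepsilon)\,\alpha(t,x,v)^{-(\beta-1)}/\langle v\rangle$ after the $s$-integration with the $e^{-l\langle v\rangle(t-s)}$ weight, and this is where the restriction $\beta<3$ is actually used. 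A plain Gronwall in $t$ does not see this gain of one power of $\alpha$.

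Finally, for the boundary-jump term you call (ii), the paper exploits the invariance $\Gamma(f,f)(R_x v)=\Gamma(f,f)(v)$ so that the only surviving contribution at each bounce is $(R_{x^\ell}v^\ell-v^\ell)\cdot E\,\partial_{\mathbf e}t^\ell$; since $|R_{x^\ell}v^\ell-v^\ell|\sim\alpha$ while $|\partial_x t^\ell|\lesssim 1/\alpha^2$ and $\ell_*\lesssim 1/\alpha$, the sum is $\lesssim \alpha^{\beta-2}$, which is the reason $\beta>2$ is required. You should make this cancellation explicit rather than folding it into ``differentiating the specular reflection operator.''
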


The proof of Theorem \ref{C1Main} devotes a nontrivial extension of the result in \cite{GKTT1}. The idea is to use Duhamel's formula to expand $f$ along the characteristics to the initial data and then take derivatives. To do this, we need to define the generalized characteristics as follows: 
\begin{definition}
For any $(t,x,v)\in [0,T] \times \Omega \times \mathbb R^3$, let $(X(s;t,x,v),V(s;t,x,v))$ denotes the characteristics
\Be\label{hamilton_ODE}
\frac{d}{ds} \left[ \begin{matrix}X(s;t,x,v)\\ V(s;t,x,v)\end{matrix} \right] = \left[ \begin{matrix}V(s;t,x,v)\\ 
E (s, X(s;t,x,v))\end{matrix} \right]  \ \ \text{ for }   0 \le  s ,  t \le T  ,
\Ee
with $(X(t;t,x,v), V(t;t,x,v)) =  (x,v)$.

We define \textit{the backward exit time} $\tb(t,x,v)$ as   
\Be\label{tb}
\tb (t,x,v) := \sup \{s \geq 0 : X(\tau;t,x,v) \in \O \ \ \text{for all } \tau \in (t-s,t) \}.
\Ee
Furthermore, we define $\xb (t,x,v) := X(t-\tb(t,x,v);t,x,v)$, and $\vb(t,x,v) := V(t-\tb(t,x,v);t,x,v)$.

Now let $(t^{0}, x^{0}, v^{0}) = (t,x,v).$ We define the specular cycles, for $\ell\geq 0,$
\[
(t^{\ell+1}, x^{\ell+1},v^{\ell+1}) = (t^{\ell}-t_{\mathbf{b}}(t^\ell, x^{\ell}, v^{\ell}), x_{\mathbf{b}}(t^\ell,x^{\ell},v^{\ell}),   v_{\mathbf b}(t^\ell, x^{\ell}, v^{\ell}) - 2n(x^{\ell+1} )(v_{\mathbf b}(t^\ell, x^{\ell}, v^{\ell}) \cdot n(x^{\ell+1}))).
\]
And we define the generalized characteristics as
\begin{equation} \label{cycles} 
\begin{split}
X_{\mathbf{cl}}(s;t,x,v)   \ = \ \sum_{\ell} \mathbf{1}_{[t^{\ell+1},t^{%
\ell})}(s) X(s;t^\ell, x^\ell , v^\ell ),  \ \ 
V_{\mathbf{cl}}(s;t,x,v)   \ = \ \sum_{\ell} \mathbf{1}_{[t^{\ell+1},t^{%
\ell})}(s)  V(s;t^\ell, x^\ell , v^\ell ).
\end{split}%
\end{equation}
\end{definition}

The key component of the proof is to estimate the derivatives of the backward trajectory
\[
 \frac{ \p \left( X_{\mathbf{cl}}(s;t,x,v), V_{\mathbf{cl}}(s;t,x,v) \right)}{\p \left( x , v  \right)}.
 \]
 This is done through the matrix method where we estimate the multiplication of $\ell^*(s;t,x,v)$ many Jacobian matrices 
\Be \label{lmatrices}
\prod_{\ell = 0}^{\ell^*(s;t,x,v) }\frac{\partial ( t^{\ell+1}, x^{\ell+1}, v^{\ell+1})}{\partial (t^\ell,x^\ell,v^\ell)}.
\Ee
Here $\ell^*(s;t,x,v)$ is the number of bounces it takes for the backward trajectory to reach time $s$ from time $t$, which can be shown to have order $\ell^*(s;t,x,v) \sim \frac{ |t-s||v|}{\alpha(t,x,v)}$. And for each bounce, we can calculate the Jacobian matrix $\frac{\partial ( t^{\ell+1}, x^{\ell+1}, v^{\ell+1})}{\partial (t^\ell,x^\ell,v^\ell)}$ explicitly.

One major difficulty here, comparing to the Boltzmann equation ($E= 0$ in \eqref{Boltzmann_E}),  is the field $E$ is time dependent, thus the characteristics ODE \eqref{hamilton_ODE} is not autonomous. This results the $\frac{\p (x^{\ell+1},v^{\ell+1})}{\p t^\ell} $ derivatives in the first column of the matrix $\frac{\partial ( t^{\ell+1}, x^{\ell+1}, v^{\ell+1})}{\partial (t^\ell,x^\ell,v^\ell)}$  is not trivally equal to 0, and need careful analysis. 

We estimate \eqref{lmatrices} by diagonalizing each matrix and multiplying them together. Here another difficulty arises as the derivatives $\frac{\p (n(x^{\ell+1}) \cdot v^{\ell+1} )}{\p( t^{\ell}, x^\ell )}$ can only be bounded as $| \frac{\p (n(x^{\ell+1}) \cdot v^{\ell+1} )}{\p( t^{\ell}, x^\ell )}| \lesssim | t^{\ell} - t^{\ell+1} |$. And this bound will result the multiplication of the $\ell^*$ many eigenvalues of the matrices to behave as
\[
 \prod_{\ell = 0}^{\ell^*(s;t,x,v) } \text{ eig } \left| \frac{\partial ( t^{\ell+1}, x^{\ell+1}, v^{\ell+1})}{\partial (t^\ell,x^\ell,v^\ell)}  \right |  \sim  (1+\sqrt{\alpha})^{\ell^*} \sim  (1+\sqrt{\alpha})^{\frac{1}{\alpha}} \to \infty
\]
as $\alpha \to 0$. Where $\alpha = \alpha(t,x,v) $ in \eqref{alphatilde}. This blow up will result the bound on \eqref{lmatrices} becomes too singular and makes it impossible for us to close the estimate. In order to overcome such a difficulty we utilize a crucial cancellation property \eqref{Fperpcancel}, and find that as long as the external field $E$ satisfies the regularity assumption
\Be \label{Ereg}
\| E(t,x) \|_{C^2_{t,x} } < \infty,
\Ee
we can improve the estimate and achieve the bound $| \frac{\p (n(x^{\ell+1}) \cdot v^{\ell+1} )}{\p( t^{\ell}, x^\ell )} | \lesssim | t^{\ell} - t^{\ell+1} |^2$. This extra smallness turns out to be just enough to control the accumulation in the many multiplications of eigenvalues:
\Be
 \prod_{\ell = 0}^{\ell^*(s;t,x,v) } \text{ eig } \left| \frac{\partial ( t^{\ell+1}, x^{\ell+1}, v^{\ell+1})}{\partial (t^\ell,x^\ell,v^\ell)}  \right |  \sim  { (1 + \alpha)}^{\frac{1}{\alpha}} < C.
\Ee
With this bound and additional cancellations between two adjacent matrices \eqref{vlplus1vl}, we carefully analyze the multiplications of the matrices and eventually achieve the key estimate in Theoreom \ref{theorem_Dxv}. 

Let's also address some other important differences when comparing the equation \eqref{Boltzmann_E} with the Boltzmann equation ($E = 0 $). Because of the presence of the field $E$ and its sign condition \eqref{signEonbdry}, we can achieve a better bound on the time gap
\[
|t^{\ell} -t^{\ell+1} | \lesssim |n(x^\ell) \cdot v^{\ell+1} |
\]
when $v$ is small \eqref{tbest}. This is because when the velocity is small, the field would always ``push" the trajectory back to the boundary in a short time. This fact would eventually allow us to get the bound
\[
|\nabla _{v}X_{\mathbf{cl}}(s;t,x,v)|   \lesssim \frac{1}{\langle v \rangle}
\]
in Theorem \ref{theorem_Dxv}, which does not blow up when $|v| \to 0$, and this turns out to be necessary for us to close the estimate.

When taking derivatives to the Duhamel's formula of $f(t,x,v)$ in \eqref{deriv_spec}, if $E \neq 0$, an extra term would come up as \eqref{IIIe}. In order to bound this term we have to additionally estimate the derivatives $\p_x t^\ell$ and $\p_v t^\ell$, for any $1 \le \ell \le \ell^*$. Those estimates are consequences of the matrix method and are obtained in \eqref{ptell} and \eqref{ptell2}:
\[
| \p_x t^\ell | \lesssim \frac{1}{\alpha^2} , \quad  | \p_v t^\ell | \lesssim \frac{1}{\alpha}.
\]
It's also important to note that in \eqref{IIIe}, we have $ | R_{x^{\ell}}v^{\ell} - v^\ell  |= 2 | (n(x^\ell ) \cdot v^\ell ) | \sim \alpha$. Thus the extra regularity we get by multiplying $\alpha^\beta$ to  $\p_x f$ and $\alpha^{\beta -1} $ to $\p_v f$ will bound the term as
\[
 \sum_{1 \le \ell \le \ell^*} \left( \alpha^\beta | \p_x t^\ell |  +  \alpha^{\beta -1} |\p_v t^\ell | \right) \max_{\ell} | R_{x^{\ell}}v^{\ell} - v^\ell  |\lesssim \frac{1}{\alpha} \left( \alpha^\beta \frac{1}{\alpha^2} + \alpha^{\beta -1} \frac{1}{\alpha}  \right) \alpha \lesssim \alpha^{\beta -2 } < C,
\]
as long as $\beta > 2 $. 

\section{Local existence and in-flow problems with external fields}
In this section we state some standard results which we will need to prove Theorem \ref{C1Main}. Let $F(t,x,v) = \sqrt \mu f(t,x,v) $. Then the corresponding problem to \eqref{Boltzmann_E}, \eqref{SpecR} becomes
\begin{equation}\label{boltzamnn_f}
 \partial_{t}f +v\cdot \nabla_{x} f + E \cdot \nabla_v f - \frac{v}{2} \cdot E f   = \Gamma_{\text{gain } }(f,f) - \nu(\sqrt{\mu} f)f.
\end{equation}
Here
\begin{equation} \label{nu}
\begin{split}
\nu( \sqrt{\mu}f)(v)
:= \frac{1}{\sqrt{\mu(v)}}Q_{\text{loss}} (\sqrt{\mu}f,\sqrt{\mu}f)(v) =  \int_{\mathbb{R}^{3}} \int_{\mathbb{S}^{2}} |v-u|^{\kappa
}q_{0}\big( \frac{v-u}{|v-u|} \cdot \omega\big)\sqrt{\mu(u)}
f (u) \mathrm{d}\omega \mathrm{d}u,
\end{split}
\end{equation}
and the gain term of the nonlinear Boltzmann operator is given by
\begin{equation}\label{gain_Gamma}
\begin{split}
\Gamma_{\text{gain}}(f_{1},f_{2})(v)   &:= \frac{1}{\sqrt{\mu }} Q_{\text{gain}} (\sqrt{\mu}f_{1}, \sqrt{\mu}f_{2})(v)\\
& =
\int_{\mathbb{R}^{3}} \int_{\mathbb{S}^{2}} |v-u|^{\kappa
}q_{0}\big( \frac{v-u}{|v-u|} \cdot \omega\big)\sqrt{\mu(u)} f_{1}(u^{\prime}) f_{2}(v^{\prime}) \mathrm{d}\omega\mathrm{d}u.
\end{split}
\end{equation}
And the specular reflection boundary condition in terms of $f$ is
\begin{equation} \label{specularBC}
f(t,x,v)= f(t,x,R_{x}v), \ \ \ \text{on} \ (x,v) \in \gamma_{-}. 
\end{equation}

We first state a local existence result which is standard:

\begin{lemma} \label{localexslemma}[Local Existence]
Suppose $\| E \|_{L^\infty_{t,x}} < \infty$, and $\| e^{\theta |v|^2} f_0 \|_\infty < \infty$, $0< \theta < \frac{1}{4}$. And $f_0$ satisfy the compatibility condition \eqref{compatibility}. Then there exists $0 < T \ll 1$ small enough such that $f \in L^\infty ([0,T) \times \Omega \times \mathbb R^3)$ solves the equation (\ref{boltzamnn_f}) with specular boundary condition (\ref{specularBC}).
%
\end{lemma}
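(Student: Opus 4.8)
\textbf{Proof plan for Lemma \ref{localexslemma}.}

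The plan is to set up a standard iteration scheme adapted to the specular reflection boundary condition and the external field $E$, working in the weighted space $L^\infty$ with weight $w(v) := e^{\theta |v|^2}$, $0 < \theta < 1/4$. First I would linearize: given the $n$-th iterate $f^n$, define $f^{n+1}$ as the solution of the linear transport problem
\[
\partial_t f^{n+1} + v\cdot\nabla_x f^{n+1} + E\cdot \nabla_v f^{n+1} - \tfrac{v}{2}\cdot E\, f^{n+1} + \nu(\sqrt\mu f^n) f^{n+1} = \Gamma_{\text{gain}}(f^n, f^n),
\]
with initial data $f_0$ and the boundary condition $f^{n+1}(t,x,v) = f^{n+1}(t,x,R_x v)$ on $\gamma_-$. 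The linear problem is solved by integrating along the generalized (specular) characteristics of Definition \ref{hamilton_ODE}: since the transport operator $\partial_t + v\cdot\nabla_x + E\cdot\nabla_v$ is exactly the generator of $(X_{\mathbf{cl}}, V_{\mathbf{cl}})$, Duhamel's formula along these trajectories expresses $f^{n+1}(t,x,v)$ as an integral of the source $\Gamma_{\text{gain}}(f^n,f^n)$ and the zeroth-order terms, terminating either at $s=0$ (hitting the initial data) or continuing indefinitely — but for $T \ll 1$ and $|v|$ in any bounded range, only finitely many bounces occur, and the specular reflection preserves $|v|$, which is what makes the Gaussian weight propagate cleanly. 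The compatibility condition \eqref{compatibility} ensures $f^{n+1}$ is consistent at $t=0$ on $\gamma_-$.

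Next I would establish a uniform-in-$n$ a priori bound. Multiplying the equation by $w(v)$ and using that $w(v)\nu(\sqrt\mu f^n) w(v)^{-1} = \nu(\sqrt\mu f^n) \gtrsim \nu_0 \langle v\rangle^\kappa$ has a good sign (it damps), while $w(v)\Gamma_{\text{gain}}(f^n,f^n) \lesssim \nu_0\langle v\rangle^\kappa \|w f^n\|_\infty^2$ by the standard gain-term estimate (the $\sqrt{\mu(u)}$ factor absorbs the weights via $\theta < 1/4$), and the zeroth-order term $\tfrac{v}{2}\cdot E$ contributes at most $\|E\|_{L^\infty_{t,x}}\langle v\rangle \|w f^{n+1}\|_\infty$, Grönwall along characteristics gives
\[
\|w f^{n+1}(t)\|_\infty \le e^{Ct}\|w f_0\|_\infty + C t\, e^{Ct} \|w f^n\|_{L^\infty_t L^\infty}^2 .
\]
Choosing $T$ small depending only on $\|w f_0\|_\infty$ and $\|E\|_{L^\infty_{t,x}}$, a standard fixed-point-ball argument shows $\|w f^n\|_{L^\infty([0,T]\times\Omega\times\mathbb R^3)} \le 2\|w f_0\|_\infty$ for all $n$. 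Then I would show the scheme is Cauchy: the difference $f^{n+1}-f^n$ satisfies a similar linear equation with source controlled by $\|w(f^n - f^{n-1})\|_\infty (\|w f^n\|_\infty + \|w f^{n-1}\|_\infty)$, so for $T$ small the map is a contraction in $L^\infty_t L^\infty$, yielding a limit $f \in L^\infty([0,T)\times\Omega\times\mathbb R^3)$ solving \eqref{boltzamnn_f}. Nonnegativity $F = \sqrt\mu f \ge 0$ follows from the usual observation that $\Gamma_{\text{gain}} \ge 0$ and the damping structure preserves positivity through the iteration (starting from $f^0 = f_0 \ge 0$).

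The main obstacle — and the reason this lemma, though ``standard,'' is not entirely trivial with $E \neq 0$ — is controlling the characteristics near the boundary: with a curved field, $t_{\mathbf b}$ and the bounce structure are more delicate than in the $E=0$ billiard case, and one must make sure that for $T \ll 1$ the number of bounces before time $0$ is finite on the relevant velocity range so the Duhamel expansion \eqref{cycles} is well-defined and the characteristic integral is legitimate. The sign condition \eqref{signEonbdry} is what rescues this: it prevents the trajectory from lingering tangentially near $\partial\Omega$ and guarantees a definite push back into $\Omega$, so bounces are separated in time and the expansion closes. I would invoke the basic properties of $\tb$ and $X_{\mathbf{cl}}$ established for this setting (as in the analysis surrounding \eqref{tbest}) to handle this point, and otherwise the argument is the textbook $L^\infty$ contraction.
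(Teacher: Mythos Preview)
Your iteration differs from the paper's in a way that matters for this lemma. You impose the \emph{true} specular condition $f^{n+1}(t,x,v)=f^{n+1}(t,x,R_xv)$ at each step, which forces you to integrate along the full bouncing trajectory $(X_{\mathbf{cl}},V_{\mathbf{cl}})$ and hence to control the number of bounces in $[0,T]$. You correctly identify that the sign condition \eqref{signEonbdry} (through estimates like \eqref{tbest} and the velocity lemma) is what provides this control --- but Lemma~\ref{localexslemma} assumes only $\|E\|_{L^\infty_{t,x}}<\infty$, not \eqref{signEonbdry}. So as written your argument invokes a hypothesis the lemma does not grant, and strictly speaking proves a weaker statement.

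The paper sidesteps this by using an \emph{inflow} iteration: it sets $f^{m+1}|_{\gamma_-}=f^m(t,x,R_xv)$ (the previous iterate, not $f^{m+1}$ itself), so each linear step is an inflow problem solved along a single characteristic segment terminating at the first boundary hit. No multi-bounce control is needed, and the uniform bound $\sup_m\|e^{\theta'|v|^2}f^m\|_\infty\lesssim\|e^{\theta|v|^2}f_0\|_\infty$ with $\theta'=\theta-T$ follows from the standard argument for inflow data. The paper then passes to the limit by weak-$*$ compactness rather than by a contraction argument. Your contraction route is fine in principle, but pairing it with the inflow boundary condition (rather than the true specular one) is what would make it work under only $\|E\|_{L^\infty_{t,x}}<\infty$.

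A smaller point: your Gr\"onwall conclusion $e^{Ct}$ is too optimistic. The zeroth-order term you flag is of size $C\langle v\rangle$, and in addition $E\cdot\nabla_v$ acting on the weight $e^{\theta|v|^2}$ produces a further $2\theta(v\cdot E)$ contribution, so along characteristics the growth factor is $e^{C\langle v\rangle t}$, not $e^{Ct}$. This is exactly why the paper degrades the Gaussian exponent to $\theta'=\theta-T$; you would need to do the same.
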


\begin{proof}
Let $f^0 = \sqrt \mu$. We start with the sequence for $m \ge 0$
\begin{equation} \label{VPBsq1}
(\partial_t + v \cdot \nabla_x +E \cdot \nabla_v  - \frac{v}{2} \cdot  E + \nu( \sqrt \mu f^m) ) f^{m+1} = \Gamma_{\text{gain}} (f^m,f^m),
\end{equation}
with the initial data $f^m(0,x,v) = f_0(x,v)$, and boundary condition for all $(x,v) \in \gamma_-$ be
\Be \label{seqbd}
\begin{split}
f^1(t,x,v) & =  f_0(x, R_x v ),
\\ f^{m+1}(t,x,v) & =f^m(t,x,R_x v ) , \, m \ge 1.
\end{split}
\Ee
Then (see Lemma 7 in \cite{GKTT1} for example)
\Be \label{uniformlinfinitybddnoselfgeneratedpotential}
\sup_m \sup_{0 \le t \le T} \| e^{\theta' |v|^2 } f^m(t) \|_\infty \lesssim \| e^{\theta |v|^2} f_0 \|_\infty < \infty,
\Ee
where $\theta' = \theta - T$.
From (\ref{uniformlinfinitybddnoselfgeneratedpotential}) we have up to a subsequence we have the weak-$\ast$ convergence:
\Be \label{weakstarcovlinear}
 e^{\theta' |v|^2} f^m(t,x,v) \overset{\ast}{\rightharpoonup} e^{\theta' |v|^2} f(t,x,v)
 \Ee
 in $L^\infty ([0,T) \times \Omega \times \mathbb R^3) \cap L^\infty ([0,T) \times \gamma) $ for some $f$. And it's easy to show $f$ is the solution of (\ref{boltzamnn_f}) with specular boundary condition (\ref{specularBC}).
\end{proof}

We need some bound on the derivatives of the nonlocal term:
\begin{lemma} \label{lemma_operator}
Let $[Y,W]=[Y(x,v), W(x,v)]\in \Omega\times \mathbb{R}^{3}$. For $0<\theta < \frac{1}{4}$ and $\partial_{\mathbf{e}}
\in \{\p_{t}, \nabla_{x}, \nabla_{v}\},$
%
\begin{equation}\notag
\begin{split} 
    &  |\partial_{\mathbf{e}} \Gamma_{\mathrm{gain}} (g,g)(Y , W ) | \\
\lesssim & \  |\partial_{\mathbf{e}} Y|||  e^{\theta|v|^{2}} g||_{\infty} \int_{\mathbb{R}^{3}} \frac{e^{- {C_{\theta}|u-W|^{2}} }}{|u-W|^{2-\kappa}} |\nabla_{x}g(Y,u)| \mathrm{d}u  \\
&  + \ 
 |\partial_{\mathbf{e}} W|||  e^{\theta|v|^{2}} g||_{\infty} \int_{\mathbb{R}^{3}}  \frac{e^{- {C_{\theta}|u-W|^{2}} }}{|u-W|^{2-\kappa}} |\nabla_{v}g(Y,u)| \mathrm{d}u + \langle v\rangle^{\kappa} e^{-  {\theta} |v|^{2}} |\partial_{\mathbf{e}} W|  ||  e^{\theta|v|^{2}}  g||_{\infty}^{2}. 
 \end{split}
\end{equation}
\end{lemma}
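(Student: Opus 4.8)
The plan is to differentiate the explicit integral formula \eqref{gain_Gamma} for $\Gamma_{\mathrm{gain}}(g,g)$, then use the standard change of variables that turns the $\omega$-integral over $\mathbb{S}^2$ into a regular integral over $\mathbb{R}^3$ so that the derivatives fall on $g$ in a controllable way. First I would write
\[
\Gamma_{\mathrm{gain}}(g,g)(Y,W) = \int_{\mathbb{R}^3}\int_{\mathbb{S}^2} |W-u|^\kappa q_0\!\left(\tfrac{W-u}{|W-u|}\cdot\omega\right)\sqrt{\mu(u)}\, g(Y,u')\, g(Y,v')\,\dd\omega\,\dd u,
\]
where $u' = u - [(u-W)\cdot\omega]\omega$ and $v' = W + [(u-W)\cdot\omega]\omega$. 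Applying $\partial_{\mathbf{e}} \in \{\p_t,\nabla_x,\nabla_v\}$ and the chain rule, the derivative hits $g$ in three ways: through the explicit $Y$-slot (producing $\partial_{\mathbf e} Y \cdot \nabla_x g$), through the $W$-dependence of the kernel $|W-u|^\kappa q_0(\cdots)$, and through the $W$-dependence of $u'$ and $v'$ inside the arguments of $g$ (producing $\partial_{\mathbf e} W$ times $\nabla_v g$ evaluated at $u'$ or $v'$). For the $\p_t$ and $\nabla_x$ cases only the first mechanism is active since the kernel and the collision map $u,W,\omega \mapsto (u',v')$ do not depend on $t$ or $x$; for $\nabla_v$ all three appear, but the kernel term is the one that, after the change of variables, yields the last summand with the $\langle v\rangle^\kappa e^{-\theta|v|^2}\|e^{\theta|v|^2}g\|_\infty^2$ structure (here one extracts an $e^{\theta|v|^2}$ from each copy of $g$ and is left with $\sqrt{\mu(u)} e^{-\theta|u'|^2} e^{-\theta|v'|^2}$, which by the energy identity $|u'|^2+|v'|^2 = |u|^2+|W|^2$ and $0<\theta<\tfrac14$ gives an integrable Gaussian with the stated decay in $v$).

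Next I would perform the Carleman-type change of variables. The key point is that in the terms where the derivative lands on $\nabla_x g(Y,u')$ or $\nabla_x g(Y,v')$ (i.e. the $\partial_{\mathbf e}Y$ contribution), one changes from $(u,\omega)\in\mathbb{R}^3\times\mathbb{S}^2$ to $(u',v')$ so that, say, $v'$ becomes a free integration variable over a plane and $u'$ is pinned; the Jacobian together with $|W-u|^\kappa$ and the cutoff $q_0 \le C|\tfrac{W-u}{|W-u|}\cdot\omega|$ produces the kernel $\frac{e^{-C_\theta|u-W|^2}}{|u-W|^{2-\kappa}}$ after relabeling $u' \to u$ (this is exactly the standard estimate for $\Gamma_{\mathrm{gain}}$ used throughout the $L^2$–$L^\infty$ literature, e.g. as in \cite{Guo10,GKTT1}). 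The Gaussian $\sqrt{\mu}$ of the pinned variable, combined with one $e^{\theta|v|^2}$-weight pulled out of that copy of $g$, gives the $e^{-C_\theta|u-W|^2}$ factor; the remaining copy of $g$ contributes $\|e^{\theta|v|^2}g\|_\infty$. The same reduction handles the $\nabla_v g$ integral in the second line. Summing the $u'$- and $v'$-contributions (there are finitely many, each of the same form) and absorbing constants gives the three-term bound as stated.

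The main obstacle I expect is the bookkeeping in the $\nabla_v$ case: the velocity derivative acts simultaneously on the kernel $|W-u|^\kappa q_0(\tfrac{W-u}{|W-u|}\cdot\omega)$ and on the arguments $u',v'$ of $g$, and one must verify that after the change of variables every resulting piece is dominated by one of the three advertised terms — in particular that the kernel-derivative piece, which has no $\partial_{\mathbf e}$ hitting $g$ and hence must be controlled purely by the $\|e^{\theta|v|^2}g\|_\infty^2$ norm, genuinely decays like $\langle v\rangle^\kappa e^{-\theta|v|^2}$. This requires care with the singularity of $\nabla_\omega$ (or $\nabla_W$) of $q_0(\tfrac{W-u}{|W-u|}\cdot\omega)$ near $u=W$; the angular cutoff $q_0(s)\le C|s|$ is exactly what keeps this integrable after the Carleman change of variables, as in the classical arguments. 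Once that piece is pinned down, the rest is routine Gaussian bookkeeping using $|u'|^2+|v'|^2=|u|^2+|W|^2$ and $0<\theta<\tfrac14$.
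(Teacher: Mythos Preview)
Your proposal is correct and follows the standard route; the paper itself offers no proof but simply refers to \cite{GKTT1}, where precisely this Carleman-type argument is carried out. One minor simplification that sidesteps the obstacle you flag: rather than differentiating the collision kernel $|W-u|^\kappa q_0\big(\tfrac{W-u}{|W-u|}\cdot\omega\big)$ directly, first shift $u\mapsto u+W$ so the kernel becomes $|u|^\kappa q_0\big(\tfrac{-u}{|u|}\cdot\omega\big)$ and carries no $W$-dependence; then $\partial_{\mathbf e}W$ falls only on $\sqrt{\mu(u+W)}$ and on the $g$'s through $u',v'$, and the third term of the estimate emerges cleanly from the $\nabla_W\sqrt{\mu(u+W)}$ contribution without any kernel-singularity bookkeeping. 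Either route reaches the stated bound.
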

\begin{proof}
See \cite{GKTT1}.
\end{proof}

We need a result for the corresponding inflow problem to \eqref{boltzamnn_f}. Consider
\begin{equation} \label{traninflowfixE}
\partial_t f + v \cdot \nabla_x f+ E \cdot \nabla_v f + \nu f = H,
\end{equation}
where $H=H(t,x,v)$ and $\nu = \nu (t,x,v)$ are given. Let $\tau_1(x)$ and $\tau_2(x)$ bet unit tangential vector to $\partial \Omega$ satisfying 
\begin{equation} \label{tangentialderivativedef}
\tau_1(x) \cdot n(x) = 0 = \tau_2 (x) \cdot n(x) \text{ and } \tau_1(x) \times \tau_2(x) = n(x).
\end{equation}
And let $\partial_{\tau_i} g$ be the tangential derivative at direction $\tau_i$ for $g$ defined on $\partial \Omega$. Define 
\Be \label{defofgradientxg}
 \begin{split}
\nabla_x g =  \sum_{i =1 }^2  \tau_i \partial_{\tau_i } g  - \frac{ n }{ n \cdot \vb } \Big \{ \partial_t g + \sum_{i =1}^2 ( \vb \cdot \tau_i ) \partial_{\tau_i } g  + \nu g - H +  E \cdot \nabla_v g  \Big \}.
 \end{split} 
 \Ee

\begin{proposition} \label{inflowexistence1}
Assume the compatibility condition
\[ f_0(x,v) = g(0,x,v)\quad  \text{for} \quad (x,v) \in \gamma_- .\]
Let $p \in [1, \infty )$ and $0 < \theta < 1/4$. $|\nu(t,x,v) | \lesssim \langle v \rangle $. $\| E \|_{L^\infty_{t,x}} + \| \nabla_x E \|_{L^\infty_{t,x}} < \infty$.

Assume
\[ \begin{split}
  \nabla_x f_0 , \nabla_v f_0 , 
  \in L^p (\Omega \times \mathbb R^3 ),
\\ \nabla_v g, \partial_{\tau_i } g \in L^p ( [0, T] \times \gamma_- ) ,
\\ \frac{ n(x) }{ n(x) \cdot v } \Big \{ \partial_t g + \sum_{i =1}^2 ( v \cdot \tau_i ) \partial_{\tau_i } g  + \nu g - H +  E \cdot \nabla_v g \Big \} \in L^p ([0, T ] \times \gamma_- ),
\\ 
 \frac{ n(x) \cdot \iint \partial_x E }{ n(x) \cdot v } \Big \{ \partial_t g + \sum_{i =1}^2 ( v \cdot \tau_i) \partial_{ \tau_i } g   - \nu g + H \Big \} \in L^p ([0, T ] \times \gamma_- ),
\\ \nabla_x H ,\nabla_v H \in L^p  ([0, T ] \times \Omega \times \mathbb R^3 ),
\\ e^{- \theta |v|^2 } \nabla_x \nu, e^{-\theta |v|^2 } \nabla_v \nu \in L^p ([ 0, T ] \times \Omega \times \mathbb R^3 ),
\\ e^{\theta |v|^2 } f_0 \in L^\infty ( \Omega \times \mathbb R^3 ) , e^{\theta |v|^2 } g \in L^\infty ( [0, T] \times \gamma_- ),
\\ e^{\theta |v|^2 } H \in L^\infty ([0, T] \times \Omega \times \mathbb R^3 ).
\end{split} \]
Then for any $T > 0$, there exists a unique solution $f$ to (\ref{traninflowfixE}), such that $f, \partial_t, \nabla_x f ,\nabla_v f \in C^0( [ 0, T ] ; L^p (\Omega \times \mathbb R^3) ) $ and their traces satisfiey
\Be \label{traceofinflow} \begin{split}
\nabla_v f |_{\gamma_-} = \nabla_v g, \nabla_x f|_{\gamma_-} = \nabla_x g, \quad \text{on} \quad \gamma_- ,
\\ \nabla_x f(0,x,v) = \nabla_x f_0, \nabla_v f(0,x,v) = \nabla_v f_0, \quad \text{in} \quad \Omega \times \mathbb R^3,  
\\ \partial_t f(0,x,v) = \partial_t f_0, \quad \text{in} \quad \Omega \times \mathbb R^3. 
\end{split} \Ee
where $\nabla_x g $ is given by (\ref{defofgradientxg}).
\begin{proof}
See \cite{CaoSIAM}.
\end{proof}

\end{proposition}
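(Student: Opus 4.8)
The plan is to solve \eqref{traninflowfixE} together with its differentiated system by the method of characteristics combined with $L^p$ energy estimates, using crucially that the phase-space vector field $(v, E(t,x))$ of the transport operator is divergence-free in $(x,v)$, so that the flow generated by \eqref{hamilton_ODE} preserves $\d x\,\d v$ and the transport term integrates to a pure boundary contribution on $\gamma_\pm$. To construct $f$ itself: since $E,\nabla_x E\in L^\infty_{t,x}$, the characteristics $(X,V)(s;t,x,v)$ of \eqref{hamilton_ODE} and the backward exit time $\tb$ are well-defined and, away from $\gamma_0$, locally Lipschitz in $(t,x,v)$, and Duhamel's formula along characteristics,
\[
f(t,x,v) = \mathbf{1}_{\{\tb\ge t\}}\, e^{-\int_0^t \nu}\, f_0(X(0),V(0)) + \mathbf{1}_{\{\tb < t\}}\, e^{-\int_{t-\tb}^t \nu}\, g(t-\tb,\xb,\vb) + \int_{(t-\tb)_+}^t e^{-\int_s^t \nu}\, H(s,X(s),V(s))\,\d s,
\]
defines a function which, thanks to $e^{\theta|v|^2}(f_0,g,H)\in L^\infty$ and $|\nu|\lesssim\langle v\rangle$, satisfies a weighted bound $\|e^{\theta'|v|^2}f\|_\infty<\infty$ for $\theta'<\theta$ and solves \eqref{traninflowfixE} with $f|_{\gamma_-}=g$, $f(0)=f_0$.

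Next come the derivative estimates. Formally differentiating \eqref{traninflowfixE} produces the weakly coupled system
\[
\begin{cases} (\partial_t + v\cdot\nabla_x + E\cdot\nabla_v + \nu)\,\nabla_x f = \nabla_x H - (\nabla_x E)^{\top}\nabla_v f - (\nabla_x\nu)\,f,\\[2pt] (\partial_t + v\cdot\nabla_x + E\cdot\nabla_v + \nu)\,\nabla_v f = \nabla_v H - \nabla_x f - (\nabla_v\nu)\,f, \end{cases}
\]
with $\nabla_v f|_{\gamma_-}=\nabla_v g$, and — since the normal component of $\nabla_x f$ on $\partial\Omega$ is not free but is determined by the equation restricted to the boundary — with $\nabla_x f|_{\gamma_-}=\nabla_x g$, where $\nabla_x g$ is given by \eqref{defofgradientxg}. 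I would solve this system by Picard iteration: given $(\nabla_x f^m,\nabla_v f^m)$ and the already-constructed $f$, solve two scalar inflow transport equations for $(\nabla_x f^{m+1},\nabla_v f^{m+1})$ along the characteristics as above. The quantitative engine is the $L^p$ identity: multiplying a scalar equation $(\partial_t + v\cdot\nabla_x + E\cdot\nabla_v + \nu)w = \tilde H$ with $w|_{\gamma_-}=\tilde g$ by $p|w|^{p-2}w$ and integrating in $(x,v)$ (using that $(v,E)$ is divergence-free),
\[
\frac{\d}{\d t}\|w(t)\|_{L^p}^p + \int_{\gamma_+}|w|^p\,\d\gamma + p\int \nu\,|w|^p = \int_{\gamma_-}|\tilde g|^p\,\d\gamma + p\int \tilde H\,|w|^{p-2}w ,
\]
where $\d\gamma=|v\cdot n(x)|\,\d S(x)\,\d v$: the outgoing term has a good sign, the incoming term is pure data, the $\nu$-term is absorbed by Gronwall (using $|\nu|\lesssim\langle v\rangle$ together with the weighted $L^\infty$ control on the lower-order factors), and the source is handled by H\"older. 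Integrating in time yields uniform $C^0([0,T];L^p)$ bounds on the iterates provided $\nabla_x H,\nabla_v H, e^{-\theta|v|^2}\nabla_x\nu, e^{-\theta|v|^2}\nabla_v\nu\in L^p$, $\|\nabla_x E\|_\infty<\infty$, and $\nabla_x g\in L^p(\gamma_-)$; this last requirement is precisely the content of the two weighted $\gamma_-$-hypotheses in the statement, the one involving $n\cdot\iint\partial_x E/(n\cdot v)$ accounting for the extra term produced when the derivatives of $\tb,\xb,\vb$ are expanded along the curved characteristics (iterated time-integrals of $\partial_x E$ appear). Passing to a weak-$\ast$ limit (or estimating the difference of successive iterates in $L^p$) yields a solution, and applying the same identity to the difference of two solutions with vanishing data gives uniqueness.

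Finally I would promote the $L^p$ objects to genuine weak derivatives and verify the trace relations \eqref{traceofinflow}. Approximate $(f_0,g,H,\nu)$ by smooth data for which $f$ is classically $C^1$ and \eqref{traceofinflow} holds by inspection; the $L^p$ estimates above are stable under this approximation, and the kinetic Green's identity (Ukai's trace theorem) for the free transport operator lets one pass to the limit in the traces, identifying the constructed functions with $\nabla_x f,\nabla_v f$ and their prescribed traces on $\gamma_-$ and at $t=0$. Continuity in time, $f,\nabla_x f,\nabla_v f\in C^0([0,T];L^p)$, follows from the renormalization property of transport equations with Lipschitz, divergence-free drift, and then $\partial_t f = H - v\cdot\nabla_x f - E\cdot\nabla_v f - \nu f$ is recovered from the equation with $\partial_t f(0)=\partial_t f_0$. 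I expect the main obstacle to be the boundary condition for the normal component of $\nabla_x f$: unlike the tangential and velocity derivatives it is not prescribed but reconstructed from the equation as in \eqref{defofgradientxg}, which introduces the singular factor $1/(n\cdot\vb)$ near the grazing set and an extra term governed by the curvature of the characteristics, so that controlling the incoming boundary term in the $L^p$ identity is exactly as strong as the two weighted $\gamma_-$ integrability hypotheses, and showing that the limiting $L^p$ solution genuinely attains this trace — rather than merely solving the interior equation — is the technical heart of the argument.
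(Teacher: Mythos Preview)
The paper does not give a self-contained proof of this proposition; it simply cites \cite{CaoSIAM}. Your outline is the standard route for such inflow transport problems --- Duhamel along characteristics for $f$, the coupled differentiated system for $(\nabla_x f,\nabla_v f)$, $L^p$ energy identities exploiting that $(v,E)$ is divergence-free in $(x,v)$, reconstruction of the normal component of $\nabla_x f$ on $\gamma_-$ from the equation itself as in \eqref{defofgradientxg}, and Ukai-type trace theorems to identify the limits --- and is essentially what one expects to find in the cited reference, so there is no meaningful discrepancy to report.
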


\section{Velocity lemma and the nonlocal to local estimate}
Recall the definition of specular trajectories in \eqref{cycles}. In this section we prove some properties of the specular trajectories which are crucial in order to establish the main result.

Let's give the precise definition for the weight function $\alpha$. We first need a cutoff function: for any $\epsilon >0$, let $\chi_\epsilon: [0,\infty) \to [0,\infty) $ be a smooth function satisfying:
%

\begin{equation} \label{chicondition} \begin{split}
& \chi_\epsilon(x) =x \,\, \text{for} \,\, 0 \le x \le \frac{\epsilon}{4}, 
\\ &\chi_\epsilon(x) = C_\epsilon  \,\, \text{for}\,\, x \ge \frac{\epsilon}{2},
\\& \chi_\epsilon(x) \text{ is increasing}  \,\, \text{for} \,\, \frac{\epsilon}{4} < x < \frac{\epsilon}{2},
\\ & \chi_\epsilon'(x) \le 1.
\end{split} \end{equation}

Let $d(x,\partial \Omega) := \inf_{y \in \partial \Omega} \| x - y \| $. And for any $\delta > 0$, let 
\[
\Omega ^\delta : = \{ x \in \Omega : d(x, \partial \Omega ) < \delta \}.
\]
Since $\partial \Omega$ is $C^2$, we claim that if $\delta \ll 1$ is small enough we have:
\begin{equation} \label{distfunctionunique}
\text{for any} \, x \in \Omega ^\delta \, \text{ there exists a unique} \, \bar x \in \partial \Omega \, \text{such that} \, d(x,\bar x ) = d(x, \partial \Omega), \, \text{ moreover } \sup_{x\in \Omega^\delta } | \nabla_x \bar x | < \infty.
\end{equation}
To prove the claim, we have by (\ref{gradientxinot0}) WLOG locally we can assume $\eta$ takes the form $\eta ( x_\parallel)  = (x_{\parallel,1} , x_{\parallel,2} , \bar \eta(x_{\parallel,1}, x_{\parallel,2}))$, and $\bar x = \eta(\bar x_\parallel) =(\bar x_{\parallel,1} , \bar x_{\parallel,2} , \bar \eta(\bar x_{\parallel,1}, \bar x_{\parallel,2}))$. Denote $\partial_i \bar \eta = \frac {\p }{ \partial x_{\parallel, i } } \bar \eta (x_{\parallel,1} , x_{\parallel,2} ) $, and $\partial_{i,j} \bar \eta = \frac {\p^2 }{ \partial x_{\parallel, i } \partial x_{\parallel, j } } \bar \eta (x_{\parallel,1} , x_{\parallel,2} )$.
Now since $|\eta(\bar x_\parallel) - x|^2 = \inf_{y \in \partial \Omega } | y -x |^2$, $\bar x_\parallel$ satisfies
\[ 
\omega (x_1,x_2,x_3, \bar x_{\parallel,1},\bar x_{\parallel,2} )= 
\begin{bmatrix}
(\bar x_{\parallel,1} - x_1) + (\bar \eta(\bar x_{\parallel,1}, \bar x_{\parallel,2}) - x_3) \p_1 \bar \eta ( \bar x_{\parallel,1}, \bar x_{\parallel,2})  \\
(\bar x_{\parallel,2} - x_2) + (\bar \eta(\bar x_{\parallel,1}, \bar x_{\parallel,2}) - x_3) \p_2 \bar \eta ( \bar x_{\parallel,1}, \bar x_{\parallel,2})
\end{bmatrix} 
=0.
\]
Since
\[ \begin{split}
\det (\frac{\partial \omega}{ \partial x_\parallel} )= & \det
\begin{bmatrix}
1 + (\p_1 \bar \eta)^2 + (\bar \eta - x_3 ) \p_{1,1} \bar \eta_{} & \p_2 \bar \eta_{}\p_1\bar \eta_{} + (\bar \eta - x_3 ) \p_{1,2} \bar \eta \\
\p_1 \bar \eta_{}\p_2 \bar \eta_{} +(\bar \eta -x_3) \p_{1,2} \bar \eta_{} & 1 + (\p_2 \bar \eta_{}) ^2 + (\bar \eta -x_3 ) \p_{1,2} \bar \eta_{}
\end{bmatrix}
\\ & = ( 1 + (\p_1 \bar \eta_{})^2 ) ( 1 + (\p_2 \bar \eta_{}) ^2 ) - (\p_1 \bar \eta_{} \p_2 \bar \eta_{ } ) ^2 +O(|\bar \eta - x_3 |)
\\ & = 1 + (\p_1 \bar \eta_{})^2 + (\p_2 \bar \eta_{}) ^2+O(|\bar \eta - x_3 |) > 0,
\end{split} \]
if $|\bar \eta (x_\parallel) - x_3 | $ is small enough. By the implicit function theorem $(\bar x_{\parallel,1}, \bar x_{\parallel,2 } )$ are functions of $x_1, x_2, x_3$ if $x$ is close enough to $\partial \Omega$.

Moreover,
\[ \begin{split}
\frac{\partial \bar x_{\parallel} }{ \partial x_j }&  = - ( \frac{\partial \omega}{ \partial \bar x_{\parallel} } ) ^{-1}  \cdot \frac{\partial \omega}{ \partial x_j }
\\ & = \frac{1}{ \det ( \frac{\partial \omega}{ \partial \bar x_{\parallel} }) }
\begin{bmatrix}
1 +  (\p_2 \bar \eta) ^2 + (\bar \eta -x_3 )  \partial_{1,2} \bar \eta & - \p_2  \bar \eta \p_1 \bar \eta - (\bar \eta - x_3 ) \p_{1,2} \bar \eta \\
- \p_1  \bar \eta \p_2 \bar \eta -(\bar \eta -x_3) \p_{1,2} \bar \eta & 1 + (\p_1 \bar \eta)^2 + (\bar \eta - x_3 ) \p_{1,1} \bar \eta 
\end{bmatrix}
\cdot \frac{\partial \omega }{ \partial x_j }
\end{split}\]
is bounded as $\frac{\partial \omega }{\partial x_j } $ is bounded and $\det ( \frac{\partial \omega}{ \partial \overline x })$ is bounded from below if $x$ is close enough to the boundary. Therefore $| \nabla_x \bar x |$ is bounded. This proves (\ref{distfunctionunique}).

Now define
\[
\beta(t,x,v) = \bigg[ |v \cdot \nabla \xi (x)| ^2 + \xi (x)^2 - 2 (v \cdot \nabla^2 \xi(x) \cdot v ) \xi(x) - 2(E(t,\overline x ) \cdot \nabla \xi (\overline x ) )\xi(x) \bigg]^{1/2},
\]
for all $(x, v ) \in  \Omega ^{\delta} \times \mathbb R^3 $. Let $\delta': = \min \{| \xi (x)| : x\in \Omega, d(x, \partial \Omega) = \delta \} $, and let $\chi_{\delta'}$ be a smooth cutoff function satisfies (\ref{chicondition}), then define
\begin{equation} \label{alphadef}
\alpha(t,x,v) : = 
\begin{cases}
 (\chi_{\delta'}  ( \beta(t,x,v) ) )^{} & x \in  \Omega^\delta, \\
 C_{\delta'} ^{} & x \in \Omega \setminus  \Omega^\delta.
\end{cases} \end{equation}
The following lemmas about $\alpha$ is important for our estimate:

\begin{lemma}[Velocity lemma near boundary] \label{velocitylemma} 
Suppose $E(t,x)$ satisfies $\| E \|_{C^1} < \infty$ and the sign condition (\ref{signEonbdry}). Then $\alpha$ is continuous, and for $\delta \ll 1 $ small enough, we have for any $(t,x,v) \in [0,T ] \times \O \times \mathbb R^3$, and $0 \le s<t $, $\alpha$ satisfies
\begin{equation}  \label{velocitylemmaintform}
e^{ - C \int_s ^ t ( |V_{\mathbf{cl}}(\tau')| + 1 ) d \tau'} \alpha ( s,X_{\mathbf{cl}}(s),V_{\mathbf{cl}}(s) ) \le \alpha (t,x,v) \le  e^{C \int_s ^ t ( |V_{\mathbf{cl}}(\tau')| + 1 ) d \tau'} \alpha (s,X_{\mathbf{cl}}(s),V_{\mathbf{cl}}(s)),
\end{equation}
for any $C \ge \frac{C_{\xi}(\| E \|_{L^\infty_{t,x}} + \| \nabla E \|_{L^\infty_{t,x}} +\| \p_t E \|_{L^\infty_{t,x}} + 1 )}{C_E} $, where $C_\xi > 0$ is a large constant depending only on $\xi$. Here $( X_{\mathbf{cl}}(s),V_{\mathbf{cl}}(s)) = ( X_{\mathbf{cl}}(s;t,x,v), V_{\mathbf{cl}}(s;t,x,v)) $ as defined in \eqref{cycles}.
\end{lemma}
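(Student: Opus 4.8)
The plan is to show that the quantity $\beta(t,x,v)^2$ (equivalently $\alpha^2$ away from the cutoff region) is a near-invariant along the generalized characteristics by differentiating it in time and bounding the result by a multiple of $(|V_{\mathbf{cl}}|+1)\,\beta^2$; the exponential bounds in \eqref{velocitylemmaintform} then follow by Gr\"onwall's inequality, and the behavior across specular bounces is handled by the reflection symmetry $R_x$. First I would work on the open set $\Omega^\delta$ where $\alpha = \chi_{\delta'}(\beta)$, and reduce to estimating $\frac{d}{ds}\beta(s,X_{\mathbf{cl}}(s),V_{\mathbf{cl}}(s))^2$ along a smooth piece of the trajectory (i.e. between consecutive bounces, where $(X,V)$ solves \eqref{hamilton_ODE}). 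Using $\dot X = V$, $\dot V = E(s,X)$, I would compute
\[
\frac{d}{ds}\beta^2 = 2(v\cdot\nabla\xi)\big( E\cdot\nabla\xi + v\cdot\nabla^2\xi\cdot v\big) + 2\xi (v\cdot\nabla\xi) - 2 \frac{d}{ds}\Big[(v\cdot\nabla^2\xi\cdot v)\xi\Big] - 2\frac{d}{ds}\Big[(E(s,\overline X)\cdot\nabla\xi(\overline X))\xi\Big],
\]
evaluated along the trajectory. The point is that the ``dangerous'' terms — those proportional to $v\cdot\nabla\xi$ with no compensating factor of $\xi$ — must cancel. The term $2(v\cdot\nabla\xi)(v\cdot\nabla^2\xi\cdot v)$ coming from the first bracket is cancelled against the $\xi$-free part of $-2\frac{d}{ds}[(v\cdot\nabla^2\xi\cdot v)\xi]$, which produces $-2(v\cdot\nabla^2\xi\cdot v)(v\cdot\nabla\xi)$ from the $\dot\xi = v\cdot\nabla\xi$ factor. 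This is exactly the classical velocity-lemma cancellation, now carried out with the extra field terms present.

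Next I would collect the remaining terms, each of which carries at least one factor of $\xi(X)$ or of $v\cdot\nabla\xi(X)$ (and the latter is $\le \beta$), and bound them. The terms with a factor $\xi$ are controlled using $|\xi(X)|\lesssim$ (distance to boundary)$\,\lesssim \beta/C_E$ — here is where the sign condition \eqref{signEonbdry} enters: near the boundary $\beta^2 \gtrsim -2(E(t,\overline x)\cdot\nabla\xi(\overline x))\xi(x) \gtrsim C_E\,|\xi(x)|\,|\nabla\xi|$, so $|\xi|$ is dominated by $\beta^2/(C_E|\nabla\xi|)$, which lets us absorb $\xi$-linear error terms into $C\,\beta^2$ at the cost of a constant proportional to $1/C_E$. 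The derivatives $\frac{d}{ds}[E(s,\overline X)\cdot\nabla\xi(\overline X)]$ and $\frac{d}{ds}[v\cdot\nabla^2\xi\cdot v]$ bring in $\partial_t E$, $\nabla E$, $\nabla^3\xi$ and $\nabla_x\overline X$; the bound $\sup_{\Omega^\delta}|\nabla_x\overline x|<\infty$ from \eqref{distfunctionunique} together with $\|E\|_{C^1}<\infty$ and $\xi\in C^3$ makes all of these at most $O((|V|+1)^2)$, and since each such term still carries a factor $\xi$ or $v\cdot\nabla\xi$, it is $\lesssim (|V|+1)\beta^2$. Assembling, $\big|\frac{d}{ds}\beta^2\big| \le 2C(|V_{\mathbf{cl}}(s)|+1)\,\beta^2$ with $C$ as in the statement, and Gr\"onwall on each smooth piece gives \eqref{velocitylemmaintform} for $\beta^2$, hence for $\beta$, hence for $\alpha=\chi_{\delta'}(\beta)$ since $\chi_{\delta'}$ is increasing with $\chi_{\delta'}'\le 1$ (so it preserves the two-sided multiplicative bound, after shrinking $\delta$ so that trajectories starting in the relevant region stay where the formula is valid).

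To finish I must handle the two remaining points. First, continuity of $\alpha$: $\beta$ is continuous on $\Omega^\delta\times\mathbb R^3$ (all ingredients are, including $\overline x$ by \eqref{distfunctionunique}), and at the interface $d(x,\partial\Omega)=\delta$ one checks $\beta$ matches the constant $C_{\delta'}$ region through the plateau of $\chi_{\delta'}$; on $\gamma_-$ one verifies $\alpha\sim|n(x)\cdot v|$ directly from the formula since $\xi(x)=0$ there. Second, the passage through a specular bounce: at a bounce point $x^{\ell+1}\in\partial\Omega$ we have $\xi(x^{\ell+1})=0$, so $\beta(t^{\ell+1},x^{\ell+1},v)^2 = |v\cdot\nabla\xi(x^{\ell+1})|^2$, which is invariant under $v\mapsto R_{x^{\ell+1}}v$ because $R_{x^{\ell+1}}v\cdot\nabla\xi = -v\cdot\nabla\xi$; thus $\alpha$ is continuous across bounces and the Gr\"onwall estimates on adjacent smooth pieces chain together with no loss. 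I expect the main obstacle to be the bookkeeping in the $\frac{d}{ds}\beta^2$ computation — specifically, making the cancellation of the $\xi$-free $v\cdot\nabla\xi$ term transparent while simultaneously showing that the new field-dependent term $-2(E(s,\overline X)\cdot\nabla\xi(\overline X))\xi$ contributes nothing worse than $O((|V|+1)\beta^2)$, which is precisely where both the sign condition \eqref{signEonbdry} and the regularity $\|E\|_{C^1}<\infty$ together with \eqref{distfunctionunique} are used in an essential way.
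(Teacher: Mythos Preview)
Your proposal is correct and follows the standard velocity-lemma argument: differentiate $\beta^2$ along the flow, exhibit the cancellation of the $\xi$-free $(v\cdot\nabla\xi)(v\cdot\nabla^2\xi\cdot v)$ terms, use the sign condition \eqref{signEonbdry} to bound $|\xi|\lesssim \beta^2/C_E$, apply Gr\"onwall on each smooth piece, and patch across bounces via the invariance of $|v\cdot\nabla\xi|$ under $R_x$. The paper itself does not give a proof here---it simply cites \cite{CaoSIAM}---so there is no in-paper argument to compare against; your outline is precisely the computation carried out in that reference.
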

Similar estimates have been used in \cite{Guo_V} and then in \cite{Hwang,GKTT1}.
\begin{proof}
See \cite{CaoSIAM}.
\end{proof}

\begin{lemma} \label{int1overalphabetainu}
Suppose $E$ satisfies \eqref{signEonbdry}, then
for any y $\in \bar \Omega$, $1 < \beta < 3$, $0 < \kappa \le 1$, and $\theta > 0$ we have
\Be \label{kernelbddfor1overalpha}
\int_{\mathbb R^3} \frac{ e^{-\theta |v- u|^2}}{ |v - u | ^{2 - \kappa} [\alpha(s, y, u ) ]^\beta } du \le C \left( \frac{ 1}{( |v|^2 \xi(y) + c (y) )^{\frac{\beta - 1}{2} } } +1 \right) , 
\Ee
where $c(y) =  \xi(y) ^2 - C_E  \xi (y)$.
\end{lemma}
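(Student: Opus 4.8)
The plan is to reduce the integral over $u \in \mathbb R^3$ near $y \in \partial\Omega$ (the interesting case; when $y$ is away from the boundary $\alpha \sim C_{\delta'}$ and the bound is trivial from the Gaussian) to a one-dimensional integral in the ``normal'' velocity variable. First I would choose coordinates at $\bar y \in \partial\Omega$ so that $n(\bar y)$ is the third axis and write $u = (u_\parallel, u_n)$; correspondingly $v\cdot\nabla\xi(y) \sim |\nabla\xi| u_n + O(\xi(y))$-type lower-order terms. The key structural fact is that, by the very definition \eqref{alphadef} and \eqref{alphatilde}, for $y$ close to $\partial\Omega$,
\[
\alpha(s,y,u)^2 \sim |u\cdot\nabla\xi(y)|^2 + \xi(y)^2 - 2(u\cdot\nabla^2\xi(y)\cdot u)\xi(y) - 2(E(s,\bar y)\cdot\nabla\xi(\bar y))\xi(y).
\]
Using $\xi(y) < 0$ for $y\in\Omega$, strict convexity \eqref{convex} (which makes $-2(u\cdot\nabla^2\xi(y)\cdot u)\xi(y) \gtrsim |\xi(y)||u|^2 \ge 0$), and the sign condition \eqref{signEonbdry} (which makes $-2(E\cdot\nabla\xi)\xi(y) \gtrsim C_E|\xi(y)|$), I would establish the lower bound
\[
\alpha(s,y,u)^2 \gtrsim u_n^2 + |\xi(y)|\,|u|^2 + \xi(y)^2 + C_E|\xi(y)| \gtrsim u_n^2 + \big(|u|^2\,|\xi(y)| + c(y)\big),
\]
where $c(y) = \xi(y)^2 - C_E\xi(y) = \xi(y)^2 + C_E|\xi(y)| > 0$. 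This is the one place the hypotheses on $E$ and on the domain are all genuinely used, and it is the heart of the argument. (The relation $|\xi(y)|\sim d(y,\partial\Omega)$ from \eqref{gradientxinot0} lets me pass freely between $\xi$ and the geometric distance.)

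Next I would carry out the integration. Write $A := |u|^2|\xi(y)| + c(y)$; note $A$ is essentially constant in $u_n$ over the relevant range (or one can simply bound $|u|^2 \le 2u_n^2 + 2|u_\parallel|^2$ and absorb the $u_n^2|\xi(y)|$ piece into the $u_n^2$ term). Then
\[
\int_{\mathbb R^3} \frac{e^{-\theta|v-u|^2}}{|v-u|^{2-\kappa}\,\alpha(s,y,u)^\beta}\,\d u
\lesssim \int_{\mathbb R^2} e^{-\theta'|v_\parallel - u_\parallel|^2}\left(\int_{\mathbb R} \frac{\d u_n}{|v_n-u_n|^{2-\kappa}\,(u_n^2 + A)^{\beta/2}}\right)\d u_\parallel,
\]
after splitting the Gaussian $e^{-\theta|v-u|^2} = e^{-\theta(v_n-u_n)^2}e^{-\theta|v_\parallel-u_\parallel|^2}$ and bounding $e^{-\theta(v_n-u_n)^2}\le 1$ where convenient. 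For the inner integral I would split $|u_n| \le \sqrt A$ versus $|u_n| > \sqrt A$: on the first region $(u_n^2+A)^{\beta/2}\sim A^{\beta/2}$ and the singularity $|v_n-u_n|^{-(2-\kappa)}$ is integrable since $2-\kappa < 2$, contributing $\lesssim A^{-\beta/2}\cdot\sqrt A = A^{(1-\beta)/2}$; on the second region one either uses $(u_n^2+A)^{-\beta/2}\le |u_n|^{-\beta}$ and integrates against the bounded-away singularity, or uses the Gaussian decay to control large $u_n$ — in all sub-cases the bound $\lesssim A^{(1-\beta)/2} + 1$ comes out, using $1 < \beta < 3$ so that the relevant power integrals converge. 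The tangential integral of the Gaussian is then just a constant, and $A^{(1-\beta)/2} = (|v|^2|\xi(y)| + c(y))^{-(\beta-1)/2}$ up to harmless adjustments (replacing $|u|$ by $|v|$ in the final statement is legitimate since the Gaussian forces $u\approx v$, or one notes $|u|^2|\xi(y)|+c(y)\gtrsim$ the same with $|v|$ on the support modulo the already-accounted additive constant).

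The main obstacle I anticipate is not the integration itself but making the reduction to the normal variable fully rigorous: one must verify that in a genuine neighborhood of $\partial\Omega$ the coordinate change $u \mapsto (u_\parallel, u\cdot n(\bar y))$ is uniformly non-degenerate, that the cross terms and the $O(|\xi(y)|)$ error in $u\cdot\nabla\xi(y)$ versus $|\nabla\xi(\bar y)|\,u_n$ do not destroy the lower bound $\alpha^2 \gtrsim u_n^2 + A$, and that the cutoff $\chi_{\delta'}$ in \eqref{alphadef} only helps (it makes $\alpha$ larger, hence $1/\alpha^\beta$ smaller, so the bound proved for $\beta(s,y,u)$ transfers). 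I would also need to check the borderline behaviour as $\beta \to 3^-$ in the $|u_n|>\sqrt A$ region, where the $\d u_n$ integral of $|u_n|^{-\beta}$ near infinity is fine but one must keep the Gaussian to control it cleanly; this is exactly why the hypothesis $\beta < 3$ appears. Once the pointwise lower bound on $\alpha$ is secured, the rest is a routine one-dimensional estimate.
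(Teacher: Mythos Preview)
The paper does not prove this lemma in-text; it cites \cite{CaoSIAM}. Your overall strategy---lower-bound $\alpha^2$ by $u_n^2 + |u|^2|\xi(y)| + c(y)$ using convexity \eqref{convex} and the sign condition \eqref{signEonbdry}, then reduce to a one-dimensional integral in the normal variable---is correct and is indeed the argument in that reference. However, your execution of the reduction contains a genuine error.

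You write the inner integral as
\[
\int_{\mathbb R}\frac{\dd u_n}{|v_n-u_n|^{2-\kappa}\,(u_n^2+A)^{\beta/2}}
\]
and claim ``the singularity $|v_n-u_n|^{-(2-\kappa)}$ is integrable since $2-\kappa<2$''. That is a three-dimensional criterion; in one dimension $|t|^{-p}$ is locally integrable only for $p<1$, and here $2-\kappa\ge 1$ for all $0<\kappa\le 1$, so your inner integral diverges at $u_n=v_n$. The kernel $|v-u|^{-(2-\kappa)}$ does not factor across the splitting $u=(u_\parallel,u_n)$, and bounding $|v-u|^{-(2-\kappa)}\le |v_n-u_n|^{-(2-\kappa)}$ gives something non-integrable.

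The fix is to reverse the order: first use half of the Gaussian to localize $|u-v|\le |v|/2$ (so that $|u|^2|\xi(y)|\gtrsim |v|^2|\xi(y)|$ and $A$ may be replaced by the constant $\tilde A:=|v|^2|\xi(y)|+c(y)$; the complementary region contributes $O(e^{-c\theta|v|^2})\lesssim 1$), and then for fixed $u_n$ integrate out the tangential variables. In polar coordinates $w=u_\parallel-v_\parallel$, $r=|w|$, with $a=v_n-u_n$,
\[
\int_{\mathbb R^2}\frac{e^{-\frac{\theta}{2}|v-u|^2}}{|v-u|^{2-\kappa}}\,\dd u_\parallel
=2\pi e^{-\frac{\theta}{2}a^2}\int_0^\infty\frac{e^{-\frac{\theta}{2}r^2}}{(a^2+r^2)^{(2-\kappa)/2}}\,r\,\dd r
=\pi\int_{a^2}^\infty s^{-(2-\kappa)/2}e^{-\frac{\theta}{2}s}\,\dd s,
\]
which is uniformly bounded in $a$ precisely because $(2-\kappa)/2<1$ (i.e.\ $\kappa>0$), and is $\lesssim e^{-\frac{\theta}{4}a^2}$ for $|a|\ge 1$. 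The remaining integral
\[
\int_{\mathbb R}\frac{\min(1,e^{-\frac{\theta}{4}(v_n-u_n)^2})}{(u_n^2+\tilde A)^{\beta/2}}\,\dd u_n
\]
is now handled exactly by your split $|u_n|\le\sqrt{\tilde A}$ versus $|u_n|>\sqrt{\tilde A}$, yielding $\lesssim \tilde A^{(1-\beta)/2}+1$ for $1<\beta<3$. Your remarks about the cutoff $\chi_{\delta'}$ only helping, and about the case $y$ away from $\partial\Omega$ being trivial, are correct.
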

\begin{proof}
See \cite{CaoSIAM}.
\end{proof}

\begin{lemma}\label{keylemma}
(1)
Let $(t,x,v) \in [0,T] \times  \Omega \times \mathbb R^3$, $ 1 < \beta < 3$, $0 < \kappa \le 1 $. 
Suppose $E$ satisfies (\ref{signEonbdry}) and (\ref{c1bddforthepotentail}), 
then for $\varpi \gg 1$ large enough, we have for any $0 < \delta \ll 1$,
\begin{equation} \label{genlemma1}
 \begin{split}
 & \int_{\max\{ 0, t - \tb \}} ^t  \int_{\mathbb R^3} e^{ -\int_s^t\frac{\varpi}{2} \langle V(\tau;t,x,v) \rangle d\tau }  \frac{e^{-\frac{C_\theta}{2} |V(s)-u|^2 }}{|V(s) -u |^{2 - \kappa}  }  \frac{1}{(\alpha(s,X(s) ,u))^\beta} du  ds
\\  \lesssim &   e^{ 2C_\xi \frac{\| \nabla E\|_\infty  + \| E \|_{L^\infty_{t,x}}^2 + \| E \|_{L^\infty_{t,x}}}{C_E}}  \frac{  \delta ^{\frac{3 -\beta}{2} }}{\langle v\rangle^2(C_E+1)^ {\frac{\beta-1}{2}} (\alpha(t,x,v))^{\beta -2 }  (   \| E \|_{L^\infty_{t,x}}^2 + 1 )^{\frac{3 -\beta}{2}}}  
\\ & +  \frac{ ( \| E \|_{L^\infty_{t,x}}^2+1)^{\beta -1} } { C_E^{\beta-1} \delta ^{\beta -1 } ( \alpha ( t,x,v) )^{\beta - 1 } } \frac{2}{ \varpi },
\end{split} 
\end{equation}
where $(X(s), V(s)) = (X(s;t,x,v), V(s;t,x,v)) $ as in \eqref{hamilton_ODE}.

   \vspace{4pt}
   
(2)    Let $[X_{\mathbf{cl}}(s;t,x,v), V_{\mathbf{cl%
}}(s;t,x,v)]$ be the specular backward trajectory as in \eqref{cycles}. Let $Z(s,x,v) \geq 0$ be any bounded non-negative function in the phase space.

For any $\varepsilon>0$, there exists $l \gg_{}1$ such that for any $r >0$,
\begin{equation}  \label{specular_nonlocal}
\begin{split}
& \int_{0}^{t} \int_{\mathbb{R}^{3}} e^{- l\langle v\rangle (t-s)} \frac{%
e^{-\theta|V_{\mathbf{cl}}(s;t,x,v)-u|^{2}}}{|V_{\mathbf{cl}}(s;t,x,v)-u|^{2-\kappa}}
\frac{\langle u\rangle^{r}}{\langle v\rangle^{r}} \frac{Z(s,x,v)}{ \big[ %
\alpha(s,X_{\mathbf{cl}}(s;t,x,v),u) \big]^{\beta}} \mathrm{d}u\mathrm{d}s \\
& \lesssim \ \frac{ O(\varepsilon)}{\langle v\rangle \big[\alpha(t,x,v)\big]^{\beta-1} }
\sup_{0 \leq s\leq t} \big\{ e^{-  \frac{l}{2}  \langle v\rangle
(t-s)} Z(s,x,v) \big\}.
\end{split}%
\end{equation}
\end{lemma}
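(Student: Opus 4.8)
# Proof Proposal for Lemma \ref{keylemma}

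\textbf{Overall strategy.} Both parts are integral estimates for the nonlocal collision kernel weighted by $\alpha^{-\beta}$ along backward characteristics, and the natural approach is to reduce them to Lemma \ref{int1overalphabetainu} via the Velocity Lemma \ref{velocitylemma}. The scheme is: (i) freeze the spatial variable along the trajectory using the velocity lemma to transfer $\alpha(s,X(s),u)$ to $\alpha(t,x,v)$-controlled quantities; (ii) perform the $u$-integral using Lemma \ref{int1overalphabetainu}, which produces a factor like $(|v|^2\xi(y)+c(y))^{-(\beta-1)/2}$; (iii) carry out the remaining $s$-integral, splitting according to whether the trajectory is deep inside $\Omega$ or in the boundary layer $\Omega^\delta$, and using the exponential damping $e^{-\varpi\langle V\rangle(t-s)/2}$ (resp. $e^{-l\langle v\rangle(t-s)}$) to absorb singular factors.

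\textbf{Part (1).} First I would use Lemma \ref{int1overalphabetainu} with $y = X(s;t,x,v)$ to bound the inner $u$-integral by $C\big((|V(s)|^2\xi(X(s)) + c(X(s)))^{-(\beta-1)/2} + 1\big)$, where $c(y)=\xi(y)^2 - C_E\xi(y)$. The term ``$+1$'' contributes the harmless piece handled by the crude $e^{-\varpi\langle V\rangle(t-s)/2}$ damping, giving a $2/\varpi$-type bound (this is where the last line of \eqref{genlemma1} comes from, after also relating $\alpha(t,x,v)$ to the relevant quantities through the velocity lemma and the lower bound $\xi^2 - C_E\xi \gtrsim C_E\delta/(\|E\|^2+1)$ in the layer). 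For the singular term, I would invoke Lemma \ref{velocitylemma}: along the characteristic, $\alpha(s,X(s),V(s))$ is comparable to $\alpha(t,x,v)$ up to the exponential $e^{\pm C\int_s^t(|V|+1)}$, and since $\alpha^2 \sim |v\cdot\nabla\xi|^2 + \xi^2 - \dots \gtrsim |v|^2\xi + c$ near the boundary (this is essentially the definition \eqref{alphatilde}--\eqref{alphadef}), the quantity $(|V(s)|^2\xi(X(s))+c(X(s)))^{-(\beta-1)/2}$ is controlled by $\alpha(s,X(s),V(s))^{-(\beta-1)}$, hence by $\alpha(t,x,v)^{-(\beta-1)}$ times the exponential factor. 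Then the $s$-integral of $\alpha(s,X(s),V(s))^{-(\beta-1)}$ over a short trajectory segment near the boundary converts one power of $\alpha$ into a $\delta^{(3-\beta)/2}$ factor by the change of variables $s \mapsto \xi(X(s))$ (noting $\frac{d}{ds}\xi(X(s)) = V(s)\cdot\nabla\xi$ and the sign condition makes this transversal near the wall), producing the $\alpha(t,x,v)^{\beta-2}$ in the denominator and the $\langle v\rangle^{-2}$ from the Jacobian. Collecting the exponential prefactors gives the stated constant $e^{2C_\xi(\|\nabla E\|_\infty + \|E\|^2 + \|E\|)/C_E}$.

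\textbf{Part (2).} This is the ``smallness'' version: I would split $[0,t]$ into pieces between consecutive bounce times and on each piece apply the single-trajectory estimate. The key point is that the $u$-integral via Lemma \ref{int1overalphabetainu} and the velocity lemma give, after pulling $Z$ out via its supremum, a bound of the form $\frac{1}{\langle v\rangle[\alpha(t,x,v)]^{\beta-1}}$ times an $s$-integral of a decaying exponential $e^{-l\langle v\rangle(t-s)/2}$ against a bounded-but-possibly-singular-near-wall factor. Choosing $l \gg 1$ makes $\int_0^t e^{-l\langle v\rangle(t-s)/2}\langle v\rangle\,ds \le 2/l = O(\varepsilon)$; the $\langle u\rangle^r/\langle v\rangle^r$ factor is absorbed into the Gaussian $e^{-\theta|V_{\mathbf{cl}}(s)-u|^2}$ at the cost of enlarging $\theta$ slightly (standard), and the dependence on $r$ only affects the implied constant in how large $l$ must be. Summing the geometric-in-$\ell^*$ contributions (using $\ell^*\sim |t-s||v|/\alpha$ and the exponential damping to beat this) yields the claimed $O(\varepsilon)$ bound uniformly in $r>0$.

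\textbf{Main obstacle.} The delicate step is Part (1): correctly tracking how the $s$-integration near the boundary converts the kernel singularity $\alpha^{-(\beta-1)}$ into $\delta^{(3-\beta)/2}\alpha(t,x,v)^{-(\beta-2)}\langle v\rangle^{-2}$, since this requires the change of variables along the curved characteristic to be non-degenerate — precisely where the sign condition \eqref{signEonbdry} enters, guaranteeing $\frac{d}{ds}\xi(X(s))$ does not vanish when the trajectory grazes, so that the layer $\{X(s)\in\Omega^\delta\}$ has $s$-measure $\lesssim \delta/\langle v\rangle$ in the right way. Managing the interplay between this transversality, the velocity-lemma exponential (which must be controlled on the short time interval $T\ll 1$), and the precise powers of $\langle v\rangle$ and $C_E$ is the technical heart of the argument.
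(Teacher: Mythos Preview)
Your overall strategy for Part (1) begins correctly (apply Lemma~\ref{int1overalphabetainu} to the $u$-integral), but the subsequent step contains a genuine error that breaks the argument. You write that since $\alpha^2 \gtrsim |v|^2\xi + c$, the quantity $(|V(s)|^2\xi(X(s))+c(X(s)))^{-(\beta-1)/2}$ is ``controlled by'' $\alpha(s,X(s),V(s))^{-(\beta-1)}$. This inequality is backwards: from $\alpha^2 \ge |V|^2|\xi|+c$ one gets $(|V|^2|\xi|+c)^{-(\beta-1)/2} \ge \alpha^{-(\beta-1)}$, a \emph{lower} bound on the integrand, which is useless here. Even ignoring the direction, once you pass to $\alpha(s,X(s),V(s))^{-(\beta-1)}$ the Velocity Lemma makes this essentially constant $\sim\alpha(t,x,v)^{-(\beta-1)}$ along the trajectory, so your claimed ``conversion of one power of $\alpha$ into $\delta^{(3-\beta)/2}$'' via $s\mapsto\xi$ cannot happen---you would just get $\alpha(t,x,v)^{-(\beta-1)}\cdot(\text{length of layer})$.

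The paper never makes this replacement. It keeps the explicit $|\xi|$-dependence in $(|V|^2|\xi|+c)^{-(\beta-1)/2}$ and splits the $s$-integral into three pieces. Near each bounce point (regions $\mathbf{(I)}$, $\mathbf{(III)}$, of width $\sigma_i=\delta|\mathbf v_\perp|/(|v|^2+\|E\|^2+1)$) the function $|\xi(X(s))|$ is shown to be monotone in $s$ with $|V\cdot\nabla\xi|\gtrsim\alpha$; the change of variables $s\mapsto|\xi|$ then gives the explicit integral $\int_0^{O(\delta\alpha^2/|v|^2)} \frac{d|\xi|}{\alpha\,(C_E|\xi|)^{(\beta-1)/2}}$, which produces the $\delta^{(3-\beta)/2}\alpha^{-(\beta-2)}\langle v\rangle^{-2}$ term. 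In the middle region $\mathbf{(II)}$ the paper proves (this is the content of Step~2, and where the sign condition \eqref{signEonbdry} is genuinely used) a uniform lower bound $|\xi(X(s))|\gtrsim C_E\sigma_2^2$, making the integrand bounded by a constant multiple of $\alpha^{-(\beta-1)}$; the exponential damping then contributes the $2/\varpi$ factor. So the $2/\varpi$ term comes from $\mathbf{(II)}$, not from the ``$+1$'' in Lemma~\ref{int1overalphabetainu} as you suggest.

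For Part (2) your outline is essentially the paper's: reduce to $r=0$ by absorbing $\langle u\rangle^r/\langle v\rangle^r$ into the Gaussian, apply the Part~(1) bounds on each bounce interval $[t^{\ell+1},t^\ell]$, and sum. The only point you underestimate is that summing the $\delta^{(3-\beta)/2}\alpha^{-(\beta-2)}$ contributions over $\ell_*\sim|t-s||v|^2/\alpha$ bounces against $e^{-l\langle v\rangle(t-t^\ell)/4}$ is not a simple geometric sum; the paper splits at the index $\tilde\ell$ where $t^{\tilde\ell}\approx t-1/\langle v\rangle$ and handles the near-time and far-time pieces separately to obtain the extra factor $1/\alpha$ and close the estimate.
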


\begin{proof} [\textbf{Proof of (1) Lemma \protect\ref{keylemma}}] The proof is similar to the proof of Lemma 11 in \cite{CaoSIAM}, but with some modifications been made in order to achieve \eqref{specular_nonlocal} later. We separate the proof into several cases. 

In \textit{Step 1, Step 2, Step 3} we prove (\ref{genlemma1}) for the case when $x \in \partial \Omega$ and $t \le \tb $.

In \textit{Step 4} we prove (\ref{genlemma1}) for the case when $x \in \partial \Omega$ and $t > \tb $.

In \textit{Step 5} we prove (\ref{genlemma1}) for the case when $x \in  \Omega$ and $t \le \tb $.

In \textit{Step 6} we prove (\ref{genlemma1}) for the case when $x \in  \Omega$ and $t > \tb $.


\bigskip

\textit{Step 1} \qquad
Let's first start with the case $t \ge \tb$ and prove (\ref{genlemma1}), 
%
Let's shift the time variable: $s \mapsto t - \tb + s$, and let $\tilde X(s) = X(t -\tb +s )$, $\tilde V(s) = V(t -\tb +s ) $. Then $s \in [0, \tb] $ and from (\ref{kernelbddfor1overalpha}) we only need to bound the integral
\begin{equation} \label{trajint}
\int_0^{\tb} e^{ -\int_{t -\tb +s}^t\frac{\varpi}{2} \langle V(\tau;t,x,v) \rangle d\tau }  \frac{ 1}{\left[ |\tilde V(s)|^2 \xi(\tilde X( s) ) + \xi^2(\tilde X( s)) - C_E \xi (  \tilde X(s) ) \right]^{\frac{\beta - 1}{2} } }  ds.
\end{equation}

Let's assume $x \in \partial \Omega$ and $ v \cdot \nabla \xi (x) > 0$. Then by the velocity lemma (Lemma \ref{velocitylemma}) we have $\vb \cdot \nabla \xi(\xb) < 0$.

Claim: for any $0< \delta \ll 1$ small enough, if we let 
\begin{equation} \label{sigma12}
\sigma_1 = \delta \frac{ \vb \cdot \nabla \xi (\xb) } { |v | ^2 + \| E \|_{L^\infty_{t,x}}^2 +  \| E \|_{L^\infty_{t,x}}+1 },  \text{ and } \sigma_2 = \delta \frac{ v \cdot \nabla \xi (x) } { |v|^2 + \| E \|_{L^\infty_{t,x}}^2 +  \| E \|_{L^\infty_{t,x}}+1 },
\end{equation}
then $ |\xi(\tilde X(s) |$ is monotonically increasing on $[0, \sigma_1]$, and monotonically decreasing on $[\tb - \sigma_2 , \tb]$. Moreover, we have the following bounds:
\begin{equation} \label{sigma12lowerbd}
|\xi ( \tilde X(\sigma_1) ) | \ge  \frac{ \delta ( \vb \cdot \nabla \xi (\xb) ) ^2 }{ 2 ( |v| ^2 +  \| E \|_{L^\infty_{t,x}}^2 +  \| E \|_{L^\infty_{t,x}} +1) }, \, |\xi ( \tilde X(\sigma_2 ) ) | \ge \frac{\delta ( v \cdot \nabla \xi (x) ) ^2 }{ 2 ( |v| ^2 +  \| E \|_{L^\infty_{t,x}}^2 +  \| E \|_{L^\infty_{t,x}}+1 ) },
\end{equation}
\begin{equation}
\begin{split} \label{xiupperbd}
& |\xi (\tilde X(s) |  \le  \frac{ 3 \delta ( \vb \cdot \nabla \xi (\xb) ) ^2 }{ 2 ( |v| ^2 +  \| E \|_{L^\infty_{t,x}}^2 +  \| E \|_{L^\infty_{t,x}}+1 ) }, \, s \in [ 0, \sigma_1],
\\ &  |\xi (\tilde X(s) | \le \frac{ 3 \delta ( v \cdot \nabla \xi (x) ) ^2 }{ 2 ( |v| ^2 +  \| E \|_{L^\infty_{t,x}}^2 +  \| E \|_{L^\infty_{t,x}}+1 ) }, \, s \in [ \tb - \sigma_2, \tb],
\end{split}
\end{equation}
and
\begin{equation} \label{vdotnlowerbd}
\begin{split}
& | \tilde V(s) \cdot \nabla \xi (\tilde X(s))|  \ge \frac{ |\vb \cdot \nabla \xi (\xb ) | }{2}, \, s \in [0, \sigma_1],
\\ & | \tilde V(s) \cdot \nabla \xi (\tilde X(s))|  \ge \frac{ |v \cdot \nabla \xi (x ) | }{2}, \, s \in [\tb - \sigma_2, \tb].
\end{split}
\end{equation}

To prove the claim we first note that $\frac{d}{ds } \xi ( \tilde X(s) ) |_{s= 0 } = \vb \cdot \nabla \xi (\xb) < 0$, and
\begin{equation} \label{accelerationbd}
 \begin{split}
\frac{ d^2}{d^2 s } \xi (\tilde X(s) ) ) = & \frac{d}{ds} (\tilde V(s) \cdot \nabla \xi (\tilde X(s))) = \tilde V(s) \cdot \nabla^2 \xi (\tilde X(s)) \cdot \tilde V(s) + E(s ,\tilde X(s)) \cdot \nabla \xi (\tilde X(s))
\\  \le & C(|\tilde V(s) | ^2 + \| E \|_{L^\infty_{t,x}} ) \le C(2 |v|^2 + 2 (\tb \| E \|_{L^\infty_{t,x}})^2 +  \| E \|_{L^\infty_{t,x}} ) \le C_1 ( |v|^2 +  \| E \|_{L^\infty_{t,x}}^2 +  \| E \|_{L^\infty_{t,x}}+1),
\end{split} 
\end{equation}
for some $C_1 >0$. Thus if $\delta $ small enough, we have $\frac{d}{ds } \xi ( \tilde X(s) ) < 0 $ for all $s \in [0, \delta \frac{| \vb \cdot \nabla \xi (\xb) |}{ |v| ^2 + \| E \|_{L^\infty_{t,x}}^2 +  \| E \|_{L^\infty_{t,x}}+1  }]$. Therefore $\xi ( \tilde X(s) ) $ is decreasing on $[0, \sigma_1]$.

Similarly $ \frac{d}{ds } \xi ( \tilde X(s) ) |_{s= \tb } = v \cdot \nabla \xi(x) > 0$, and since $ | \frac{ d^2}{d^2 s } \xi (\tilde X(s) ) ) | \lesssim (|v|^2 +\| E \|_{L^\infty_{t,x}}^2 +  \| E \|_{L^\infty_{t,x}} +1) $ we have that $ \frac{d}{ds } \xi ( \tilde X(s) )  > 0 $ for all $s \in [ \tb - \delta \frac{ | v \cdot \nabla \xi (v) | }{ |v| ^2 +\| E \|_{L^\infty_{t,x}}^2 +  \| E \|_{L^\infty_{t,x}}  +1}, \tb ] $ if $\delta $ small enough. Therefore $\xi ( \tilde X(s) ) $ is increasing on $[ \tb - \sigma_2, \tb]$.

Next we establish the bounds (\ref{sigma12lowerbd}), (\ref{xiupperbd}), and (\ref{vdotnlowerbd}). By (\ref{accelerationbd}), we have
\[ \begin{split}
|\xi ( \tilde X(\sigma_1 ) ) | = & \int_0 ^{\sigma_1} -  \tilde V(s) \cdot \nabla \xi ( \tilde X(s) ) ds  
\\ = & \int_0^{\sigma_1}  \left( \int_0 ^s   - \frac{d}{d\tau} (\tilde V(\tau) \cdot \nabla \xi (\tilde X(\tau ) ) )  d\tau -  \vb \cdot \nabla \xi (\xb )  \right)   ds 
\\ \ge & \int_0^{\sigma_1}  \left(|\vb \cdot \nabla \xi (\xb) | - C_1(|v|^2 + \| E \|_{L^\infty_{t,x}}^2 +  \| E \|_{L^\infty_{t,x}} +1 ) s  \right) ds 
\\ = & \sigma_1 |\vb \cdot \nabla \xi (\xb) | - \frac{\sigma_1^2 }{2 } C_1 (|v|^2 + \| E \|_{L^\infty_{t,x}}^2 +  \| E \|_{L^\infty_{t,x}}  +1 )  
\\ = & \sigma_1 \left( |\vb \cdot \nabla \xi (\xb) |- \frac{ \delta C_1}{2} |\vb \cdot \nabla \xi (\xb ) | \right) 
\\ \ge & \frac{ \sigma_1}{2}  |\vb \cdot \nabla \xi (\xb) | = \frac{ \delta ( \vb \cdot \nabla \xi (\xb) ) ^2 }{ 2 ( |v| ^2 + \| E \|_{L^\infty_{t,x}}^2 +  \| E \|_{L^\infty_{t,x}} +1 ) }.
\end{split} \]

And by the same argument we have $|\xi ( \tilde X(\sigma_2 ) ) | \ge \frac{\delta ( v \cdot \nabla \xi (x) ) ^2 }{ 2 ( |v| ^2 + \| E \|_{L^\infty_{t,x}}^2 +  \| E \|_{L^\infty_{t,x}} +1  ) } $ for $\delta \ll 1$. This proves (\ref{sigma12lowerbd}).

To prove (\ref{xiupperbd}), we have from (\ref{accelerationbd}), for $s \in [0,\sigma_1]$,
\[ \begin{split}
|\xi (\tilde X(s) | \le & s \left( |\vb \cdot \nabla \xi (\xb) |+ \frac{ \delta C_1}{2} |\vb \cdot \nabla \xi (\xb ) | \right)  
\\ \le & \frac{3 s}{2} | \vb \cdot \nabla \xi(\xb) | \le  \frac{ 3 \delta ( \vb \cdot \nabla \xi (\xb) ) ^2 }{ 2 ( |v| ^2 +  \| E \|_{L^\infty_{t,x}}^2 +  \| E \|_{L^\infty_{t,x}} +1 ) },
\end{split} \]
and $ |\xi (\tilde X(s) | \le \frac{ 3 \delta ( v \cdot \nabla \xi (x) ) ^2 }{ 2 ( |v| ^2 +  \| E \|_{L^\infty_{t,x}}^2 +  \| E \|_{L^\infty_{t,x}}+1 ) }$ for $s \in [ \tb -\sigma_2, \tb] $. This proves (\ref{xiupperbd}).

Finally for (\ref{vdotnlowerbd}), again from (\ref{accelerationbd}),
\[ 
\begin{split}
| \tilde V(s) \cdot \nabla \xi (\tilde X(s))| \ge & |\vb \cdot \nabla \xi (\xb)  | - \int_0 ^{\sigma_1} C_1 (|v|^2 +  \| E \|_{L^\infty_{t,x}}^2 +  \| E \|_{L^\infty_{t,x}} +1 ) ds  
\\ \ge & |\vb \cdot \nabla \xi (\xb)  |  - C_1 \delta |\vb \cdot \nabla \xi (\xb ) |  \ge \frac{ |\vb \cdot \nabla \xi (\xb ) | }{2}.
\end{split}
\]
And similarly $| \tilde V(s) \cdot \nabla \xi (\tilde X(s))| \ge \frac{| v \cdot \nabla \xi ( x ) |}{2} $ for $s \in [ \tb - \delta_2, \tb] $. This proves the claim.

\bigskip

\textit{Step 2} \qquad
Recall the definition of $\sigma_1, \sigma_2$ in (\ref{sigma12}), and $C_E$ in (\ref{signEonbdry}).
In this step we establish the lower bound:
\begin{equation} \label{xilowerbd}
 | \xi( \tilde X(s)  ) | > \frac{C_E}{10} (\sigma_2)^2, \, \text{for all } s \in [\sigma_1, \tb - \sigma_2 ].
\end{equation}

Suppose towards contradiction that $I := \{ s \in [ \sigma_1, \tb - \sigma_2]:  | \xi( \tilde X(s)  ) | \le \frac{C_E}{10}  (\sigma_2)^2 \} \neq \emptyset$.


Then from (\ref{velocitylemmaintform}) and (\ref{sigma12lowerbd}) we have
\[ \begin{split}
 \frac{C_E}{10} (\sigma_2)^2 \le &  \delta^2  \frac{C_E}{10} \frac{ (v \cdot \nabla \xi (x))^2 } { |v|^2 +   \| E \|_{L^\infty_{t,x}}^2 +  \| E \|_{L^\infty_{t,x}} +1 } 
 \\  \le & \delta^2 \frac{C_E}{10} e^{ C_\xi \frac{\|  \nabla E \|_{L^\infty_{t,x}}  + \| E \|_{L^\infty_{t,x}}^2 + \| E \|_{L^\infty_{t,x}}}{C_E}}  \frac{  (\vb \cdot \nabla \xi (\xb))^2 } { |v|^2 +  \| E \|_{L^\infty_{t,x}}^2 +  \| E \|_{L^\infty_{t,x}}  +1 } 
 \\ \le & 2 \delta \frac{C_E}{10} e^{ C_\xi \frac{\|  \nabla E \|_{L^\infty_{t,x}}  + \| E \|_{L^\infty_{t,x}}^2 + \| E \|_{L^\infty_{t,x}}}{C_E}}  |\xi (\tilde X(\sigma_1 ) ) |
 \\  < &  |\xi (\tilde X(\sigma_1 ) ) |,
 \end{split} \]
  if $\delta \ll 1$. So $\sigma_1 \notin I$. Let $s^*:= \min\{s \in  I \}$ be the minimum of such $s$. Then clearly 
  \[
  \frac{d}{ds} \xi (\tilde X(s) ) |_{s = s^*} = \tilde V(s^* ) \cdot \nabla \xi ( \tilde X(s^*) )  \ge 0.
  \]
Now expanding around $\tilde X(s)$, we have
\begin{equation} 
\begin{split}
E(s ,\tilde X(s)) \cdot \nabla \xi (\tilde X(s)) = E(s , \overline {\tilde X(s)} ) \cdot \nabla \xi ( \overline {\tilde X(s) }) + c (\tilde X(s) ) \cdot \xi (\tilde X(s) ),
\end{split} 
\end{equation}
with $|c(\tilde X(s) ) | < \frac{C_{\xi}(\| E \|_{L^\infty_{t,x}} + \| \nabla E \|_{L^\infty_{t,x}} )}{C_E}$.
Thus
\Be \label{accnearbdy}
\begin{split}
   \frac{d}{ds} (\tilde V(s) \cdot \nabla \xi (\tilde X(s))) = & \tilde V(s) \cdot \nabla^2 \xi (\tilde X(s)) \cdot \tilde V(s) + E(s ,\tilde X(s)) \cdot \nabla \xi (\tilde X(s))
   \\ = & \tilde V(s) \cdot \nabla^2 \xi (\tilde X(s)) \cdot \tilde V(s) + E(s , \overline {\tilde X(s)} ) \cdot \nabla \xi ( \overline {\tilde X(s) }) + c (\tilde X(s) ) \cdot \xi (\tilde X(s) ) 
     \\ \ge & C_E - \frac{C_{\xi}(\| E \|_{L^\infty_{t,x}} + \| \nabla E \|_{L^\infty_{t,x}} )}{C_E} | \xi (\tilde X(s) ) |,
   \end{split}
\Ee
so 
\[
\frac{d}{ds} (\tilde V(s) \cdot \nabla \xi (\tilde X(s))) |_{s = s^*}   \ge  C_E - \delta^2  \frac{C_{\xi}(\| E \|_{L^\infty_{t,x}} + \|  \nabla E \|_{L^\infty_{t,x}} )}{C_E}  \frac{C_E}{10} \frac{ (v \cdot \nabla \xi (x))^2 } { |v|^2 +   \| E \|_{L^\infty_{t,x}}^2 +  \| E \|_{L^\infty_{t,x}}  + 1}    \ge \frac{C_E}{2},
\]
for $\delta \ll 1 $ small enough. Then we have $ \frac{d}{ds} (\tilde V(s) \cdot \nabla \xi (\tilde X(s)))$ is increasing on the interval $[s^*,\tb]$ as $|\xi (\tilde X(s)) |$ is decreasing. So
\[
\frac{d}{ds} (\tilde V(s) \cdot \nabla \xi (\tilde X(s))) \ge \frac{C_E}{2}, \quad s \in [s^* , \tb].
\]
%
%
 And therefore
 \[ \begin{split}
 |\xi(\tilde X(s^* ) ) | = & \int_{s^*}^ {\tb } \tilde V(s) \cdot \nabla \xi ( \tilde X(s) ) ds  
 \\ = & \int_{s^*}^ {\tb } \left( \int_{s^*}^{s} \frac{d}{d\tau }  ( \tilde V(\tau) \cdot \nabla \xi ( \tilde X(\tau) ) ) d\tau + \tilde V(s^* ) \cdot \nabla \xi( \tilde X(s^*) ) \right) ds
 \\ \ge & \int_{s^*}^ {\tb } (s - s^* ) \frac{C_E}{2} ds  = \frac{C_E }{4} ( \tb - s^* ) ^2 \ge \frac{C_E}{4} (\sigma_2)^2,
\end{split}  \]
which is a contradiction. Therefore we conclude \eqref{xilowerbd}.

\bigskip

\textit{Step 3} \qquad
Let's split the time integration (\ref{trajint}) as
\begin{equation} 
\begin{split}
\int_0^{\tb}  & e^{ -\int_{t -\tb +s}^t\frac{\varpi}{2} \langle V(\tau;t,x,v) \rangle d\tau }  \frac{ 1}{\left[ |\tilde V(s)|^2 \xi(\tilde X( s) ) + \xi^2(\tilde X( s)  - C_E \xi ( \tilde X(s) ) \right]^{\frac{\beta - 1}{2} } }  ds
\\ = & \int_0^{\sigma_1} + \int_{\sigma_1}^{\tb - \sigma_2} + \int_{\tb -\sigma_2}^{\tb }  = (\textbf{I})+ (\textbf{II})+ (\textbf{III}).
\end{split} 
\end{equation}

Let's first estimate $ (\textbf{I}),  (\textbf{III})$:

From \textit{Step 2} we have that $| \xi (\tilde X(s) | $ is monotonically increasing on $[0, \sigma_1] $ and $[\tb - \sigma_2, \tb] $, so we have the change of variables:
\[
ds = \frac{d |\xi | }{ | \tilde V(s) \cdot \nabla \xi (\tilde X(s))|  }.  
\]
Using this change of variable and the bounds (\ref{xiupperbd}), (\ref{vdotnlowerbd}), and the $| \tilde V(s) |^2+1 \gtrsim | v|^2 +1$, $ (\textbf{I})$ is bounded by 
\begin{equation} \label{Ibd}
\begin{split}
 (\textbf{I}) \le & \int_0^{\sigma_1}   \frac{ 1 }{\left[ |\tilde V(s)|^2 \xi(\tilde X( s) ) + \xi^2(\tilde X( s)- C_E \xi (\tilde X(s) ) \right]^{\frac{\beta - 1}{2} } }  ds
\\ \le & \int_0^{\frac{ 3 \delta ( \vb \cdot \nabla \xi (\xb) ) ^2 }{ 2 ( |v| ^2 +   \| E \|_{L^\infty_{t,x}}^2 +  \| E \|_{L^\infty_{t,x}} +1 ) }} \frac{ 1 } {  | \tilde V(s) \cdot \nabla \xi (\tilde X(s))|  ( (C_E+ |\tilde V(s) | )  |\xi | ) ^{\frac{\beta-1}{2}}} d |\xi |
\\ \lesssim &   \int_0^{\frac{ 3 \delta ( \vb \cdot \nabla \xi (\xb) ) ^2 }{ 2 ( |v| ^2 +   \| E \|_{L^\infty_{t,x}}^2 +  \| E \|_{L^\infty_{t,x}} +1 ) }} \frac{ 2  } {  | \vb \cdot \nabla \xi (\xb)|  ( (C_E +|v|^2)   |\xi | )^ {\frac{\beta-1}{2}}}d |\xi | 
\\ = &  \frac{2}{| \vb \cdot \nabla \xi (\xb)| (C_E+|v|^2)^ {\frac{\beta-1}{2}}} \left[ |\xi |^{\frac{3 - \beta} {2}} \right] _0^{\frac{ 3 \delta ( \vb \cdot \nabla \xi (\xb) ) ^2}{ 2 ( |v| ^2 +  \| E \|_{L^\infty_{t,x}}^2 +  \| E \|_{L^\infty_{t,x}} +1 ) }} 
\\ = &  \frac{2^{\frac{\beta-1}{2}} \delta ^{\frac{3 -\beta}{2} }} { (C_E+|v|^2)^ {\frac{\beta-1}{2}} | \vb \cdot \nabla \xi (\xb) | ^{\beta -2 } (|v|^2 +  \| E \|_{L^\infty_{t,x}}^2 +  \| E \|_{L^\infty_{t,x}} +1)^{\frac{3 -\beta}{2}}}
\\ \lesssim & e^{ 2C_\xi \frac{\|  \nabla E \|_{L^\infty_{t,x}}  + \| E \|_{L^\infty_{t,x}}^2 + \| E \|_{L^\infty_{t,x}}}{C_E}}  \frac{  \delta ^{\frac{3 -\beta}{2} }}{(C_E+|v|^2)^ {\frac{\beta-1}{2}} (\alpha(t,x,v))^{\beta -2 }  (|v|^2 +  \| E \|_{L^\infty_{t,x}}^2 +  \| E \|_{L^\infty_{t,x}} +1 )^{\frac{3 -\beta}{2}}}
\\ \lesssim  & e^{ 2C_\xi \frac{\|  \nabla E \|_{L^\infty_{t,x}}  + \| E \|_{L^\infty_{t,x}}^2 + \| E \|_{L^\infty_{t,x}}}{C_E}}  \frac{  \delta ^{\frac{3 -\beta}{2} }}{\langle v\rangle^2(C_E+1)^ {\frac{\beta-1}{2}} (\alpha(t,x,v))^{\beta -2 }  (   \| E \|_{L^\infty_{t,x}}^2 + 1 )^{\frac{3 -\beta}{2}}}
.
\end{split} 
\end{equation}
And by the same computation we get 
\begin{equation} \label{IIIbd}
 (\textbf{III}) \lesssim  e^{ 2C_\xi \frac{\|  \nabla E \|_{L^\infty_{t,x}}  + \| E \|_{L^\infty_{t,x}}^2 + \| E \|_{L^\infty_{t,x}}}{C_E}}  \frac{  \delta ^{\frac{3 -\beta}{2} }}{\langle v\rangle^2(C_E+1)^ {\frac{\beta-1}{2}} (\alpha(t,x,v))^{\beta -2 }  (   \| E \|_{L^\infty_{t,x}}^2 + 1 )^{\frac{3 -\beta}{2}}}  .
 \end{equation}


Finally for (\textbf{II}), using the lower bound for $|\xi (\tilde X(s )) |$ in (\ref{xilowerbd}), we have
\begin{equation} \label{IIbd}
\begin{split}
(\textbf{II}) = & \int_{\sigma_1}^{\sigma_2}  e^{ -\int_{t -\tb +s}^t\frac{\varpi}{2} \langle V(\tau;t,x,v) \rangle d\tau }   \frac{ 1 }{\left[ |\tilde V(s)|^2 \xi(\tilde X( s) ) + \xi^2(\tilde X( s) -C_E \xi (\tilde X(s) ) \right]^{\frac{\beta - 1}{2} } }  ds
\\ \le & \int_0^{\tb} e^{ -\int_{t -\tb +s}^t\frac{\varpi}{2} \langle V(\tau;t,x,v) \rangle d\tau } \frac{1}{ | ( |\tilde V(s)|^2 +C_E) \xi (\tilde X(s)) |^{\frac{\beta -1}{2}} } ds
\\ \lesssim &  \frac{1}{C_E^{\beta-1}( \langle v \rangle \sigma_2)^{\beta - 1 } }\int_0^{\tb} e^{ \int_{t -\tb +s}^t \frac{\varpi}{2}    d\tau } ds 
\\ \lesssim & \frac{ (|v| +\| E \|_{L^\infty_{t,x}} + \| E \|_{L^\infty_{t,x}}^2 +1)^{\beta -1} } { C_E^{\beta-1}  \langle v \rangle^{\beta -1 } \delta ^{\beta -1 } ( \alpha ( t,x,v) )^{\beta - 1 } }\int_0^{\tb} e^{ ( s - \tb )  \frac{\varpi}{2}   } ds 
  \lesssim   \frac{ (|v| +\| E \|_{L^\infty_{t,x}} + \| E \|_{L^\infty_{t,x}}^2 +1)^{\beta -1} } { C_E^{\beta-1}  \langle v \rangle^{\beta -1 } \delta ^{\beta -1 } ( \alpha ( t,x,v) )^{\beta - 1 } } \frac{2}{ \varpi }.
\end{split} 
\end{equation}

This proves (\ref{genlemma1}) for the case $ x \in \partial \Omega$ and $t \le \tb$. 

%
%

\textit{Step 4} \qquad
Now suppose $x \in \partial \Omega$ and  $ \tb >  t$. It suffices to bound the integral:
\begin{equation} \label{trajintt}
\int_0^{t} e^{ -\int_{s}^t\frac{\varpi}{2} \langle V(\tau;t,x,v) \rangle d\tau }  \frac{ 1}{\left[ | V(s)|^2 \xi( X( s) ) + \xi^2( X( s) - C_E \xi (  X(s) ) \right]^{\frac{\beta - 1}{2} } }  ds.
\end{equation}
Denote
\[
X(0;t,x,v ) = x_0, V(0;t,x,v) = v_0.
\]
Let  
\[
\sigma_2 = \delta \frac{ v \cdot \nabla \xi (x) } { |v|^2 + \| E \|_{L^\infty_{t,x}} + \| E \|_{L^\infty_{t,x}}^2 +1 }
\]
as defined in (\ref{sigma12}). If 
\[
\sigma_2 \ge t,
\]
then from \textit{Step 2} $|\xi (X(s) ) | $ is decreasing on $[0,t]$, and by
(\ref{xiupperbd}), (\ref{vdotnlowerbd}), and the bound for (\textbf{III}) (\ref{IIIbd}), we get the desired estimate.
Now we assume 
\[
\sigma_2 < t.
\]
So from (\ref{sigma12lowerbd}) we have
\begin{equation} \label{sigma2lowerbd}
|\xi (  X(\sigma_2 ) ) | \ge \frac{\delta ( v \cdot \nabla \xi (x) ) ^2 }{ 2 ( |v| ^2 +  \| E \|_{L^\infty_{t,x}} + \| E \|_{L^\infty_{t,x}}^2+1 ) }.
\end{equation}

Now if $|\xi ( x_0 ) | \le \delta \frac{ \alpha^2 ( t,x,v) }{10(|v|^2 + \| E \|_{L^\infty_{t,x}} + \| E \|_{L^\infty_{t,x}}^2+1)} $, 
\begin{equation} 
\begin{split}
\alpha^2(t,x,v) \lesssim & e^{ C_\xi \frac{\|  \nabla E \|_{L^\infty_{t,x}}  + \| E \|_{L^\infty_{t,x}}^2 + \| E \|_{L^\infty_{t,x}}}{C_E}}   \alpha^2 (0,x_0, v_0 )    \\ \lesssim & e^{ C_\xi \frac{\|  \nabla E \|_{L^\infty_{t,x}}  + \| E \|_{L^\infty_{t,x}}^2 + \| E \|_{L^\infty_{t,x}}}{C_E}}  ( (\nabla \xi (x_0 ) \cdot v_0 ) ^2 + (|v_0|^2 + |\xi(x_0)| + \| E \|_{L^\infty_{t,x}} ) |\xi (x_0 ) |)
\\ \lesssim & e^{ C_\xi \frac{\|  \nabla E \|_{L^\infty_{t,x}}  + \| E \|_{L^\infty_{t,x}}^2 + \| E \|_{L^\infty_{t,x}}}{C_E}} (\nabla \xi (x_0 ) \cdot v_0 ) ^2 + \delta \alpha^2 (t,x,v),
\end{split}
\end{equation}
 So 
\begin{equation} \label{alphaxi0v0}
\frac{1}{2} \alpha(t,x,v) \lesssim  e^{ C_\xi \frac{\|  \nabla E \|_{L^\infty_{t,x}}  + \| E \|_{L^\infty_{t,x}}^2 + \| E \|_{L^\infty_{t,x}}}{C_E}} |\nabla \xi(x_0) \cdot v_0 | ,
\end{equation}
if $ \delta \ll 1 $ is small enough.

Claim: 
\[
 \nabla \xi (x_0 ) \cdot v_0 <0 .
 \]
Since otherwise by (\ref{accnearbdy}) we have
\[
\frac{d}{ds} | \xi (X(s) ) | < 0,
\]
for all $s \in [0,t]$, so $|\xi(X(s))| $ is always decreasing, which contradicts (\ref{sigma2lowerbd}).

Therefore $\nabla \xi (x_0 ) \cdot v_0 <0$, and we can run the same argument from \textit{Step 1}, \textit{Step 2}, \textit{Step 3} with $\nabla \xi (\xb) \cdot \vb$ replaced by $\nabla \xi (x_0) \cdot v_0$, and by (\ref{alphaxi0v0}) we get the same estimate.
 
 If $|\xi ( x_0 ) | > \delta \frac{  \alpha^2 ( t,x,v) }{10(|v|^2 +\| E \|_{L^\infty_{t,x}}^2 + \| E \|_{L^\infty_{t,x}} +1)} $, then  we have
 \begin{equation}
  \frac{C_E \sigma_2 ^2}{10} =   \delta^2  \frac{C_E}{10} \frac{ (v \cdot \nabla \xi (x))^2 } { |v|^2 +\| E \|_{L^\infty_{t,x}}^2 + \| E \|_{L^\infty_{t,x}} + 1 } < C_E \delta | \xi (x_0)|< |\xi (x_0 )|,
   \end{equation}
for $\delta \ll 1 $ small enough. Therefore by (\ref{sigma2lowerbd}) and the same argument in \textit{Step 3} we get the same lower bound
\begin{equation} \label{xilowerbdx0}
 | \xi( s  ) | > \frac{C_E}{10} (\sigma_2)^2, \, \text{for all } s \in [0, t - \sigma_2 ].
\end{equation}
And therefore we get the desired estimate.

\bigskip

\textit{Step 5} \qquad
We now consider the case when $x \in \Omega$ and $ t \ge \tb$. We need to bound the integral (\ref{trajint}).
Let
\[
\sigma_1 = \delta \frac{ \vb \cdot \nabla (\xb) }{ |v|^2 + \| E \|_{L^\infty_{t,x}}^2 + \| E \|_{L^\infty_{t,x}} +1 },
\]
as defined in (\ref{sigma12lowerbd}). If
\[
\sigma_1 \ge t,
\]
then from \textit{Step 2} $|\xi (\tilde X(s) ) | $ is increasing on $[0, \tb]$,and by
(\ref{xiupperbd}), (\ref{vdotnlowerbd}), and the bound for (\textbf{I}) in (\ref{Ibd}), we get the desired estimate.

Now we assume 
\[
\sigma_1 < t.
\]
So from (\ref{sigma12lowerbd}) we have
\begin{equation} \label{sigma1lowerbd}
|\xi (  \tilde X(\sigma_1 ) ) | \ge \frac{\delta ( \vb \cdot \nabla \xi (\xb) ) ^2 }{ 2 ( |v| ^2 +\| E \|_{L^\infty_{t,x}}^2 + \| E \|_{L^\infty_{t,x}}+ 1 ) }.
\end{equation}

Now if 
\begin{equation} \label{xixlowerbd1}
|\xi ( x ) | \le \delta \frac{ \alpha^2 ( t,x,v) }{10(|v|^2 +\| E \|_{L^\infty_{t,x}}^2 + \| E \|_{L^\infty_{t,x}} +1)} ,
\end{equation}
we have
\begin{equation} 
\begin{split}
\alpha^2(t,x,v)    \le &  (\nabla \xi (x ) \cdot v ) ^2 + C(|v|^2  + \| E \|_{L^\infty_{t,x}}+1 ) |\xi (x )| 
\\ \le & (\nabla \xi (x ) \cdot v ) ^2 + \delta \alpha^2 (t,x,v) \le (\nabla \xi (x ) \cdot v ) ^2 + \frac{1}{10} \alpha^2(t,x,v),
\end{split}
\end{equation}
if $ \delta \ll 1 $ is small enough. So 
\begin{equation} \label{alphaxiv}
\frac{1}{2} \alpha(t,x,v) \le |\nabla \xi(x) \cdot v | .
\end{equation}

Claim: 
\[
 \nabla \xi (x ) \cdot v >0 .
 \]
Since otherwise by (\ref{accnearbdy}) we have
\[
\frac{d}{ds} | \xi (\tilde X(s) ) | > 0,
\]
for all $s \in [0,\tb]$, so $|\xi(\tilde X(s))| $ is always increasing, thus 
\[
|\xi(\tilde X(s))| \le \delta \frac{ \alpha^2 ( t,x,v) }{10(|v|^2 +\| E \|_{L^\infty_{t,x}}^2 + \| E \|_{L^\infty_{t,x}} +1)},
\]
for all $s \in [0,\tb]$, which contradicts (\ref{sigma1lowerbd}).

Therefore $\nabla \xi (x ) \cdot v > 0$, and we can run the same argument from \textit{Step 2}, \textit{Step 3}, \textit{Step 4} , and by (\ref{alphaxiv}) we get the same estimate.
 
 If 
 \Be \label{xixupperbd1}
 |\xi ( x ) | > \delta \frac{  \alpha^2 ( t,x,v) }{10(|v|^2 +\| E \|_{L^\infty_{t,x}}^2 + \| E \|_{L^\infty_{t,x}} +1)},
 \Ee
 we claim:
\begin{equation} \label{xilowerbd2}
|\xi ( \tilde X(s)  ) | \ge \delta^2 \frac{ \alpha^2(t,x,v) }{ |v|^2 +\| E \|_{L^\infty_{t,x}}^2 + \| E \|_{L^\infty_{t,x}} + 1 },
\end{equation}
for all $s \in [ \sigma_1, \tb ] $. Since otherwise let 
\[
s^* : = \min \{ s  \in [ \sigma_1, t ] :  |\xi ( \tilde X(s)  ) | < \delta^2 \frac{ \alpha^2(t,x,v) }{ |v|^2 + \| E \|_{L^\infty_{t,x}}^2 + \| E \|_{L^\infty_{t,x}} +1 } \}.
\]
From (\ref{sigma1lowerbd}) we have $s^* > \sigma_1$, and 
\[
\frac{d}{ds} | \xi (\tilde X(s^* ) ) | < 0.
\]
And from (\ref{accnearbdy}) we have
\[
\frac{d^2}{ds^2} | \xi (\tilde X(s ) ) | < 0,
\]
for all $s \in [s^*,t] $. So $| \xi (\tilde X(s ) ) | $ is always decreasing on $[s^*,\tb]$. Therefore
\[
|\xi(x) | = | \xi (\tilde X(\tb ) ) | <| \xi (\tilde X(s^* ) ) |< \delta^2 \frac{ \alpha^2(t,x,v) }{ |v|^2 + \| E \|_{L^\infty_{t,x}}^2 + \| E \|_{L^\infty_{t,x}} +1 },
\]
which contradicts (\ref{xixupperbd1}). Therefore the lower bound (\ref{xilowerbd2}) and the estimates (\ref{IIbd}), (\ref{Ibd}) gives the desired bound.

\bigskip

\textit{Step 6} \qquad
Finally we consider the case $x \in \Omega$ and $t < \tb$.
First suppose 
\[
| \xi(x) | \le \delta \frac{ \alpha^2(t,x,v) }{ 10 ( |v|^2 + \| E \|_{L^\infty_{t,x}}^2 + \| E \|_{L^\infty_{t,x}} +1 ) }.
\]
From (\ref{alphaxiv}) we have
\[
\frac{\alpha(t,x,v) }{2} \le | v \cdot \nabla \xi (x) |.
\]

If $v \cdot \nabla \xi (x) > 0 $, then by (\ref{accnearbdy}) we have $\xi ( X(t+ t')  ) = 0 $ for some $t' \lesssim \frac{\delta}{C_E ^2 } < 1$. Therefore we can extend the trajectory until it hits the boundary and conclude the desired bound from \textit{Step 3}.

If $v \cdot \nabla \xi (x) < 0 $, again by (\ref{accnearbdy}) we have $ | \xi (X(s) ) | $ is increasing on $[0,t ] $ and $|V(s) \cdot \nabla \xi (X(s) ) | $ is decreasing on $[0,t]$. Therefore using the change of variable $s \mapsto |\xi |$:
\begin{equation} \begin{split}
\int_0^{t} & e^{ -\int_{s}^t\frac{\varpi}{2} \langle V(\tau;t,x,v) \rangle d\tau }  \frac{ 1}{\left[ | V(s)|^2 \xi( X( s) ) + \xi^2( X( s) -C_E \xi (  X(s) ) \right]^{\frac{\beta - 1}{2} } }  ds
\\ \lesssim & \int_0^{\delta \frac{ \alpha^2(t,x,v) }{ 10 ( |v|^2 +\| E \|_{L^\infty_{t,x}}^2 + \| E \|_{L^\infty_{t,x}} + 1 ) }} \frac{1}{ | V(s) \cdot \nabla \xi (X(s)) |  (C_E |\xi |)^{\frac{\beta -1}{2}} }  d|\xi | 
\lesssim  \int_0^{\delta \frac{ \alpha^2(t,x,v) }{ 10 ( |v|^2 + \| E \|_{L^\infty_{t,x}}^2 + \| E \|_{L^\infty_{t,x}} +1 ) }} \frac{1}{ | v \cdot \nabla \xi(x) | (C_E |\xi |)^{\frac{\beta -1}{2}} }  d|\xi | 
\\ \lesssim  &\int_0^{\delta \frac{ \alpha^2(t,x,v) }{ 10 ( |v|^2 + \| E \|_{L^\infty_{t,x}}^2 + \| E \|_{L^\infty_{t,x}} +1 ) }} \frac{1}{ | \alpha(t,x,v)  (C_E|\xi |)^{\frac{\beta -1}{2}} }  d|\xi | 
\lesssim \frac{  \delta ^{\frac{3 -\beta}{2} }}{ C_E^{\frac{\beta -1}{2} } (\alpha(t,x,v))^{\beta -2 }  (|v|^2 + \| E \|_{L^\infty_{t,x}}^2 + \| E \|_{L^\infty_{t,x}} +1 )^{\frac{3 -\beta}{2}}},
\end{split}
\end{equation}
which is the desired estimate.

Now suppose 
\begin{equation} \label{xilowerbd3}
| \xi(x) | > \delta \frac{ \alpha^2(t,x,v) }{ 10 ( |v|^2 + \| E \|_{L^\infty_{t,x}}^2 + \| E \|_{L^\infty_{t,x}} +1 ) },
\end{equation}
and
\[
| \xi(x_0) | \le \delta \frac{ \alpha^2(t,x,v) }{ 10 ( |v|^2 + \| E \|_{L^\infty_{t,x}}^2 + \| E \|_{L^\infty_{t,x}} +1 ) }.
\]
Then by (\ref{alphaxi0v0}) we have
\begin{equation} \label{alphaxi0v02}
\frac{\alpha(t,x,v) }{2} \lesssim e^{ C_\xi \frac{\|  \nabla E \|_{L^\infty_{t,x}}  + \| E \|_{L^\infty_{t,x}}^2 + \| E \|_{L^\infty_{t,x}}}{C_E}} |\nabla \xi(x_0) \cdot v_0 |.
\end{equation}
Now if $v_0 \cdot \nabla \xi (x_0) > 0 $, then from (\ref{accnearbdy}) we have $|\xi (X(s) ) |$ is decreasing for all $s \in [0, t] $. And this contradicts with (\ref{xilowerbd3}).
So we must have
\[
v_0 \cdot \nabla \xi (x_0) < 0.
\]
Then we can define $\sigma_1 = \delta \frac{ |v_0 \cdot \nabla \xi (x_0 ) | }{ |v|^2 + \| E \|_{L^\infty_{t,x}}^2 + \| E \|_{L^\infty_{t,x}} + 1 }$ as before. 
Now if $\sigma_1 \ge t$ then $|\xi (X(s) ) | $ is increasing on $[0,t]$, using the change of variable $x \mapsto |\xi | $ and the estimate (\ref{Ibd}) and (\ref{alphaxi0v02}) we get the desired bound.

If $\sigma_1 < t$, then from (\ref{sigma12lowerbd}) we have
\[
|\xi (X(\sigma_1) ) | \ge \delta \frac{ (v_0 \cdot \nabla \xi (x_0 ))^2 }{ 2 (|v|^2 + \| E \|_{L^\infty_{t,x}}^2 + \| E \|_{L^\infty_{t,x}} + 1 )}.
\]
And then from the argument for (\ref{xilowerbd2}) we get
\[
|\xi (  X(s)  ) | \ge \delta^2 \frac{ \alpha^2(t,x,v) }{ |v|^2 +  \| E \|_{L^\infty_{t,x}}^2 + \| E \|_{L^\infty_{t,x}} +1 },
\]
for all $s \in [\sigma_1, t]$. This lower bound combined with the estimate (\ref{IIbd}), (\ref{Ibd}) gives the desired bound.

Finally we left with the case
\[
| \xi(x_0) | > \delta \frac{ \alpha^2(t,x,v) }{ 10 ( |v|^2 + \| E \|_{L^\infty_{t,x}}^2 + \| E \|_{L^\infty_{t,x}} + 1 ) }.
\]
Then again, from the argument for (\ref{xilowerbd2}) we get
\[
|\xi (  X(s)  ) | \ge \delta^2 \frac{ \alpha^2(t,x,v) }{ |v|^2 +  \| E \|_{L^\infty_{t,x}}^2 + \| E \|_{L^\infty_{t,x}}+ 1 },
\]
for all $s \in [0, t ] $. This lower bound combined with the estimate (\ref{IIbd}) gives the desired bound.
\end{proof}

\begin{proof}[\textbf{Proof of (2) Lemma \protect\ref{keylemma}}]
Since $\frac{\langle u\rangle^{r}}{\langle v\rangle^{r}} \lesssim \frac{ \langle u \rangle ^r}{\langle V_{\mathbf{cl}}(s) \rangle ^r }  \lesssim \{ 1+
|V_{\mathbf{cl}}(s) -u|^{2}\}^{\frac{r}{2}}$ and $\langle V_{\mathbf{cl}}(s)-u\rangle^{r}
e^{-\theta |V_{\mathbf{cl}}(s)-u|^{2}}\lesssim e^{-C_{\theta,r} |V_{\mathbf{%
cl}}(s)-u|^{2}}$, it suffices to consider the case $r=0$. It is important to control the \textit{number of bounces}, 
\[
\ell_{*}(s)= \ell_{*}(s;t,x,v)\in\mathbb{N} \ \ \ \text{such that} \ \ \   t^{\ell_{*}+1} \leq s< t^{\ell_{*}}.
\]
An important consequence of Velocity lemma is that for the specular cycles
$$\alpha(s,X_{\mathbf{cl}}(s;t,x,v), V_{\mathbf{cl}}(s;t,x,v)  ) \gtrsim e^{-\mathcal{C}\langle v \rangle ||t-s|} \alpha(t,x,v),$$
and therefore for the specular cycles
\begin{equation}\label{number_control}
\begin{split}
\ell_{*}(s;t,x,v) &\leq \frac{|t-s|}{ \min_{0 \leq \ell \leq \ell_{*}(s;t,x,v)}  |t^{\ell}-t^{\ell+1}| } \lesssim
 \frac{|t-s|}{ \min_{0 \leq \ell \leq \ell_{*}(s;t,x,v)}  \frac{ {\alpha(t^\ell, x^{\ell}, v^{\ell})}}{|v^{\ell}|^{2}} } \\
 &\lesssim \frac{|t-s|(|v|^{2}+1)}{{\alpha(t,x,v)}}e^{\mathcal{C}\langle v \rangle (t-s)}.
\end{split}
\end{equation}
For fixed $(t,x,v)$ we use the following notation $\alpha(s):= \alpha(s;t,x,v) := \alpha(X_{\mathbf{cl}}(s;t,x,v), V_{\mathbf{cl%
}}(s;t,x,v)).$

Now we consider the estimate (\ref{specular_nonlocal}). From \eqref{Ibd}, \eqref{IIIbd}, and \eqref{IIbd} we have
\begin{equation} \label{specntl0}
\begin{split}
&\int_{0}^{t} \int_{\mathbb{R}%
^{3}} e^{-l \langle v\rangle (t-s)} \frac{e^{-\theta |V_{\mathbf{cl}} (s)
-u|^{2}}}{|V_{\mathbf{cl}} (s) -u|^{2-\kappa}} \frac{Z(s,x,v)}{\big[ %
\alpha(s,X_{\mathbf{cl}}(s;t,x,v),u) \big]^{\beta}} \mathrm{d}u \mathrm{d}s \\
&\lesssim \sum_{\ell=0}^{\ell_{*}(0;t,x,v)}   \int^{t^{\ell}}_{t^{\ell+1}}
\int_{\mathbb{R}^{3}} e^{- l \langle v\rangle (t-s)} \frac{e^{-\theta |
v^{\ell}-u |^{2}}}{|v^{\ell} -u|^{2-\kappa}} \frac{Z(s,x,v)}{\big[ %
\alpha(s,X_{\mathbf{cl}}(s;t,x,v),u)  \big]^{\beta}} \mathrm{d}u
\mathrm{d}s \\
&\lesssim \sup_{0 \leq s\leq t} \big\{ e^{-  \frac{l}{2}  \langle v\rangle
(t-s)} Z(s,x,v) \big\}  
\\ & \ \ \ \ \ \times \sum_{\ell=0}^{\ell_{*}(0;t,x,v)} \left( e^{- \frac{l}{2} \langle v\rangle (t-t^\ell)} \frac{  \delta ^{\frac{3 -\beta}{2} }}{\langle v\rangle^2 (\alpha(t^\ell,x^\ell,v^\ell))^{\beta -2 }  } +   \frac{1 } {  \delta ^{\beta -1 } ( \alpha ( t^\ell,x^\ell,v^\ell) )^{\beta - 1 } }\int_{t^{\ell+1}}^{t^\ell}  e^{- \frac{l}{2} \langle v\rangle (t-s)} ds  \right)
\\ &\lesssim \sup_{0 \leq s\leq t} \big\{ e^{-  \frac{l}{2}  \langle v\rangle
(t-s)} Z(s,x,v) \big\}  
\\ & \ \ \ \ \ \times \sum_{\ell=0}^{\ell_{*}(0;t,x,v)} \left( e^{- \frac{l}{4} \langle v\rangle (t-t^\ell)} \frac{  \delta ^{\frac{3 -\beta}{2} }}{\langle v\rangle^2 (\alpha(t,x,v))^{\beta -2 }  } +   \frac{e^{C\langle v \rangle |t-t^\ell|} } {  \delta ^{\beta -1 } ( \alpha ( t,x,v) )^{\beta - 1 } }\int_{t^{\ell+1}}^{t^\ell}  e^{- \frac{l}{2} \langle v\rangle (t-s)} ds  \right).
\end{split}
\end{equation}
Clearly
\Be \label{specntl1} \begin{split}
& \sum_{\ell=0}^{\ell_{*}(0;t,x,v)}  \frac{e^{C\langle v \rangle |t-t^\ell|} } {  \delta ^{\beta -1 } ( \alpha ( t,x,v) )^{\beta - 1 } }\int_{t^{\ell+1}}^{t^\ell}  e^{- \frac{l}{2} \langle v\rangle (t-s)} ds 
\\ & \lesssim  \frac{1 } {  \delta ^{\beta -1 } ( \alpha ( t,x,v) )^{\beta - 1 } }\int_{0}^{t}  e^{- \frac{l}{4} \langle v\rangle (t-s)} ds
\\ & \lesssim  \frac{1 } { l \langle v \rangle \delta ^{\beta -1 } ( \alpha ( t,x,v) )^{\beta - 1 } }.
\end{split} \Ee
And for $ \sum_{\ell=0}^{\ell_{*}(0;t,x,v)} e^{- \frac{l}{4} \langle v\rangle (t-t^\ell)} \frac{  \delta ^{\frac{3 -\beta}{2} }}{\langle v\rangle^2 (\alpha(t,x,v))^{\beta -2 }  } $, we let $\tilde \ell $ be the bounce that $t^{\tilde \ell } \ge t-\frac{1}{\langle v \rangle}$ and $t^{\tilde \ell +1 } < t-\frac{1}{\langle v \rangle}$, and decompose $ \sum_{\ell=0}^{\ell_{*}(0;t,x,v)}  =  \sum_{\ell=0}^{\tilde \ell} +  \sum_{\ell = \tilde \ell +1}^{\ell_{*}(0;t,x,v)} $. Then from \eqref{number_control}
\[
 \sum_{\ell=0}^{\tilde \ell} e^{- \frac{l}{4} \langle v\rangle (t-t^\ell)} \le |  \tilde \ell | \lesssim \frac{1/\langle v \rangle }{\alpha(t,x,v) / |v|^2} \lesssim \frac{|v|}{\alpha(t,x,v) }.
\]
For $\ell \ge \tilde \ell +1 $, we have 
\[
| t -t^{\ell+1} | \le |t -t^\ell | + |t^\ell -t^{\ell +1} | \le  |t -t^\ell | + C\frac{1}{\langle v \rangle }  \le  |t -t^\ell | + C|t-t^\ell | = (1+C)|t-t^\ell |.
\]
Thus
\[ \begin{split}
 \sum_{\ell = \tilde \ell +1}^{\ell_{*}(0;t,x,v)} e^{- \frac{l}{4} \langle v\rangle (t-t^\ell)} & \le  \sum_{\ell = \tilde \ell +1}^{\ell_{*}(0;t,x,v)} e^{-\frac{l}{8} \langle v \rangle (t-t^\ell ) } e^{- \frac{l}{8(1+C)} \langle v\rangle (t-t^{\ell+1})}
 \\ & \le \max_{\ell} \left\{ \frac{e^{-\frac{l}{8} \langle v \rangle (t-t^\ell ) }} {  |t^\ell -t^{\ell+1} |}  \right\}\sum_{\ell = 0}^{\ell_*} |t^\ell -t^{\ell+1} |e^{- \frac{l}{8(1+C)} \langle v\rangle (t-t^{\ell+1})} 
 \\ & \lesssim \frac{\langle v \rangle^2e^{-\frac{l}{8} \langle v \rangle (t-t^\ell ) } e^{C \langle v \rangle (t -t^\ell ) }}{\alpha(t,x,v) } \int_0^t e^{- \frac{l}{8(1+C)} \langle v\rangle (t-s)}  ds
 \\ & \lesssim  \frac{\langle v \rangle (1+C)}{l \alpha(t,x,v) }.
\end{split} \]
Therefore
\Be \label{specntl2}
 \sum_{\ell=0}^{\ell_{*}(0;t,x,v)} e^{- \frac{l}{4} \langle v\rangle (t-t^\ell)} \frac{  \delta ^{\frac{3 -\beta}{2} }}{\langle v\rangle^2 (\alpha(t,x,v))^{\beta -2 }  } \lesssim  \frac{  \delta ^{\frac{3 -\beta}{2} }}{\langle v\rangle (\alpha(t,x,v))^{\beta -1 }  }.
\Ee
Combing \eqref{specntl0}, \eqref{specntl1} and \eqref{specntl2} we prove \eqref{specular_nonlocal}.
\end{proof}

\section{Moving frame for specular cycles}

We use the moving frame for the specular cycles introduced in \cite{GKTT1}. We denote the standard spherical coordinate $\mathbf{x}_{\parallel} = \mathbf{x}_{\parallel} (\omega) =(\mathbf{x}_{\parallel,1}, \mathbf{x}_{\parallel,2})$ for $\omega \in\mathbb{S}^{2}$
\[
\omega = (\cos \mathbf{x}_{\parallel,1}(\omega) \sin \mathbf{x}_{\parallel,2}(\omega), \sin  \mathbf{x}_{\parallel,1}(\omega) \sin \mathbf{x}_{\parallel,2}(\omega), \cos \mathbf{x}_{\parallel,2}(\omega)),
\]
where $\mathbf{x}_{\parallel,1}(\omega) \in [0,2\pi)$ is the azimuth and $ \mathbf{x}_{\parallel,2} (\omega)\in [0,\pi)$ is the inclination. 

%

We define an orthonormal basis of $\mathbb{R}^{3}$, $\{\hat{r}(\omega), \hat{\phi} (\omega), \hat{\theta } (\omega)  \}$, with $\hat{r}(\omega):= \omega$ and 
\begin{equation}\notag
\begin{split}
 {\hat{{\phi}}}(\omega) & \ : = \ (\cos  \mathbf{x}_{\parallel,1}(\omega) \cos \mathbf{x}_{\parallel,2}(\omega), \sin \mathbf{x}_{\parallel,1}(\omega) \cos \mathbf{x}_{\parallel,2}(\omega), -\sin \mathbf{x}_{\parallel,2}(\omega)),\\
 {\hat{{\theta}}} (\omega) & \ := \ (-\sin  \mathbf{x}_{\parallel,1}(\omega), \cos  \mathbf{x}_{\parallel,1}(\omega),0).
\end{split}
\end{equation}
Moreover, ${\hat{ {r}}}\times  {\hat{\phi}} = {\hat{\theta}}, \     {\hat{\phi}}\times  {\hat{\theta}} =  {\hat{r}}, \  {\hat{\theta}}\times  {\hat{r}} = {\hat{\phi}},$ and
\begin{equation}\label{Dr}
 \partial_{ \mathbf{x}_{\parallel,1}}  { \hat{r}} = \sin \mathbf{x}_{\parallel,2} \  {\hat{\theta}},  \ \ \ \partial_{\mathbf{x}_{\parallel,2}}  {\hat{r}} =  {\hat{\phi}},
\end{equation}
where $ \partial_{ \mathbf{x}_{\parallel,1}}  { \hat{r}}$ does not vanish (non-degenerate) away from $\mathbf{x}_{\parallel,2} =0$ or $\pi.$

Without loss of generality we assume $\mathbf{0}=(0,0,0)\in\Omega$. For
\[
\mathbf{p}=(z, w)\in \partial\Omega\times \mathbb{S}^{2} \ \text{ with } \ n(z)\cdot w=0,
\]
we define the north pole $\mathcal{N}_{\mathbf{p}} \in\partial\Omega$ and the south pole $\mathcal{S}_{\mathbf{p}} \in\partial\Omega$ as
\[
\mathcal{N}_{\mathbf{p}} := |\mathcal{N}_{\mathbf{p}}| ( n(z)\times w) \in\partial\Omega, \ \ \ \mathcal{S}_{\mathbf{p}} := - |\mathcal{S}_{\mathbf{p}}| (  n(z)\times w) \in\partial\Omega,
\]
where $\partial_{\mathbf{x}_{\parallel,1}} \hat{r}$ is degenerate. We define the straight-line $\mathcal{L}_{\mathbf{p}}$ passing both poles
\[
\mathcal{L}_{\mathbf{p}} := \{ \tau \mathcal{N}_{\mathbf{p}} + (1-\tau) \mathcal{S}_{\mathbf{p}} : \tau\in \mathbb{R}\}.
\]

\begin{lemma}\label{chart_lemma} Assume $\Omega$ is convex  (\ref{convex}). Fix $\mathbf{p}=(z, w)\in \partial\Omega\times \mathbb{S}^{2}$ with $n(z)\cdot w=0$.

(i) There exists a smooth map (spherical-type coordinate)
\begin{equation}\label{x_parallel}
\begin{split}
 \mathbf{\eta}_{\mathbf{p}} \ : \ \ &  \ \ \ [0,2\pi)\times (0,\pi) \ \ \ \ \rightarrow \ \ \partial\Omega \backslash\{\mathcal{N}_{\mathbf{p}} , \mathcal{S}_{\mathbf{p}}  \}, \\
  &\mathbf{x}_{\parallel_{\mathbf{p}}}:= (\mathbf{x}_{\parallel_{\mathbf{p}},1  }, \mathbf{x}_{\parallel_{\mathbf{p}},2  })     \mapsto \ \ \ \  \mathbf{\eta}_{\mathbf{p}}( \mathbf{x}_{\parallel_{\mathbf{p}}}) ,
\end{split}
\end{equation}
which is one-to-one and onto. Here on $[0,2\pi)\times (0,\pi)$ we have $\partial_{i} \mathbf{\eta}_{\mathbf{p}}:= \frac{\partial \mathbf{\eta}_{\mathbf{p}}}{\partial \mathbf{x}_{\parallel_{\mathbf{p}} ,i  }}\neq 0$ and
 \begin{equation}\label{nondegenerate_eta}
 \frac{\partial   \mathbf{\eta}_{ \mathbf{p}} }{\partial\mathbf{x}_{\parallel_{ \mathbf{p}},1}}  ( \mathbf{x}_{\parallel_{\mathbf{p}} }  ) \times \frac{\partial   \mathbf{\eta}_{ \mathbf{p}} }{\partial\mathbf{x}_{\parallel_{\mathbf{p}},2}}  ( \mathbf{x}_{\parallel_{\mathbf{p}} }  )\neq 0.
 \end{equation}
We define
$$\mathbf{n}_{ \mathbf{p}}:= n\circ\mathbf{\eta}_{ \mathbf{p}} : [ 0 ,2\pi) \times (0,\pi) \rightarrow  \mathbb{S}^{2}.$$

\vspace{8pt}

(ii) We define the $\mathbf{p}-$spherical coordinate in the tubular neighborhood of the boundary:

\vspace{4pt}

For $\delta>0, \ \delta_{1}>0 , \ C>0,$ we have a smooth one-to-one and onto map

\vspace{4pt}

\begin{equation}\notag
\begin{split} 
\Phi_{\mathbf{p}} :  [0,C\delta) \times [0, 2\pi) \times (\delta_{1},\pi-\delta_{1}) \times \mathbb{R}\times \mathbb{R}^{2} & \   \rightarrow \  \{x\in \bar{\Omega} : |\xi(x)|< \delta\} \backslash B_{C\delta_{1}}(\mathcal{L}_{\mathbf{p}})\times \mathbb{R}^{3},\\
(\mathbf{x}_{\perp_{\mathbf{p}}}, \mathbf{x}_{\parallel_{\mathbf{p}},1 }, \mathbf{x}_{\parallel_{\mathbf{p}},2 }, \mathbf{v}_{\perp_{\mathbf{p}}}, \mathbf{v}_{\parallel_{\mathbf{p}},1},  \mathbf{v}_{\parallel_{\mathbf{p}},2}) & \ \mapsto \ 
\Phi_{\mathbf{p}} (\mathbf{x}_{\perp_{\mathbf{p}}}, \mathbf{x}_{\parallel_{\mathbf{p}},1 }, \mathbf{x}_{\parallel_{\mathbf{p}},2 }, \mathbf{v}_{\perp_{\mathbf{p}}}, \mathbf{v}_{\parallel_{\mathbf{p}},1},  \mathbf{v}_{\parallel_{\mathbf{p}},2}),
\\
\end{split}
\end{equation}
\vspace{4pt}

where $B_{C\delta_{1}}(\mathcal{L}_{\mathbf{p}}): =\{ x\in \mathbb{R}^{3} : |x - y |< C\delta_{1} \text{ for some } y \in \mathcal{L}_{\mathbf{p}}  \}.$

Explicitly,
\begin{equation}\label{polar}
\Phi_{\mathbf{p}} ( \mathbf{x}_{\perp_{\mathbf{p}}},\mathbf{x}_{\parallel_{\mathbf{p}}} ,  \mathbf{v}_{\perp_{\mathbf{p}}},\mathbf{v}_{\parallel_{\mathbf{p}}}):=
\left[\begin{array}{c}
\mathbf{x}_{\perp_{\mathbf{p}}}[- \mathbf{n}_{\mathbf{p}}   (\mathbf{x}_{\parallel_{\mathbf{p}}})     ]  + \mathbf{\eta}_{ \mathbf{p}}(\mathbf{x}_{\parallel_{ \mathbf{p}}}) \\
  \mathbf{v}_{\perp_{ \mathbf{p}}} [-    \mathbf{n}_{\mathbf{p}}   (\mathbf{x}_{\parallel_{ \mathbf{p}}}) ]  + \mathbf{v}_{\parallel_{ \mathbf{p}}} \cdot \nabla \mathbf{\eta}_{ \mathbf{p}} (\mathbf{x}_{\parallel_{ \mathbf{p}}}) + \mathbf{x}_{\perp_{ \mathbf{p}}} \mathbf{v}_{\parallel_{ \mathbf{p}}} \cdot \nabla [-\mathbf{n}_{ \mathbf{p}} (\mathbf{x}_{\parallel_{ \mathbf{p}}})]
  \end{array} \right],
\end{equation}
where $\nabla \mathbf{\eta}_{\mathbf{p}}= (\partial_{1} \mathbf{\eta}_{\mathbf{p}}, \partial_{2} \mathbf{\eta}_{\mathbf{p}})= ( \frac{\partial \mathbf{\eta}_{\mathbf{p}}   }{\partial \mathbf{x}_{\parallel_{   \mathbf{p } ,1  }   }} ,\frac{\partial \mathbf{\eta}_{\mathbf{p}}   }{\partial \mathbf{x}_{\parallel_{   \mathbf{p } ,2  }   }}  )$ and $\nabla \mathbf{n}_{\mathbf{p}}= (\partial_{1} \mathbf{n}_{\mathbf{p}}, \partial_{2} \mathbf{n}_{\mathbf{p}}  )= (\frac{\partial \mathbf{ n}_{\mathbf{p}}   }{\partial \mathbf{x}_{\parallel_{   \mathbf{p } ,1  }   }},\frac{\partial \mathbf{n}_{\mathbf{p}}   }{\partial \mathbf{x}_{\parallel_{   \mathbf{p } ,2  }   }}  ).$

The Jacobian matrix is
\begin{equation}\begin{split}\label{jac_Phi}
&\frac{\partial \Phi( \mathbf{x}_{\perp}, \mathbf{x}_{\parallel}, \mathbf{v}_{\perp}, \mathbf{v}_{\parallel})}{\partial ( \mathbf{x}_{\perp}, \mathbf{x}_{\parallel}, \mathbf{v}_{\perp}, \mathbf{v}_{\parallel})}\\
&{\tiny = \left[\begin{array}{ccc|ccc}
n(\mathbf{x}_{\parallel})
 &
 \substack{ \frac{\partial \eta}{\partial \mathbf{x}_{\parallel,1}} (\mathbf{x}_{\parallel}) \\ + \mathbf{x}_{\perp} \frac{\partial n}{\partial \mathbf{x}_{\parallel,1}} (\mathbf{x}_{\parallel})}
  \frac{\partial \Phi_{1}(\mathbf{x}_{\perp}, \mathbf{x}_{\parallel})}{\partial (\mathbf{x}_{\perp}, \mathbf{x}_{\parallel})}
  &
  \substack{ \frac{\partial \eta}{\partial \mathbf{x}_{\parallel,2}} (\mathbf{x}_{\parallel})  \\  + \mathbf{x}_{\perp} \frac{\partial n}{\partial \mathbf{x}_{\parallel,2}} (\mathbf{x}_{\parallel}) }
  & &\mathbf{0}_{3,3} & \\ \hline
 -\mathbf{v}_{\parallel}  \cdot \nabla_{\mathbf{x}_{\parallel}} \mathbf{n}(\mathbf{x}_{\parallel})&
  \substack{ -\mathbf{v}_{\perp} \frac{ \partial \mathbf{n}}{\partial \mathbf{x}_{\parallel,1}}(\mathbf{x}_{\parallel}) \\
  + \mathbf{v}_{\parallel} \cdot \nabla_{\mathbf{x}_{\parallel}} \frac{\partial \mathbf{\eta}}{\partial \mathbf{x}_{\parallel,1}} (\mathbf{x}_{\parallel})  \\
   - \mathbf{x}_{\perp} \mathbf{v}_{\parallel} \cdot \nabla_{\mathbf{x}_{\parallel}} \frac{\partial \mathbf{n}}{\partial \mathbf{x}_{\parallel,1}}(\mathbf{x}_{\parallel})  }&
 \substack{ -\mathbf{v}_{\perp} \frac{\partial \mathbf{n}}{\partial \mathbf{x}_{\parallel,2}}(\mathbf{x}_{\parallel}) \\ + \mathbf{v}_{\parallel} \cdot \nabla_{\mathbf{x}_{\parallel}} \frac{\partial \mathbf{\eta}}{\partial \mathbf{x}_{\parallel,2}} (\mathbf{x}_{\parallel})  \\ - \mathbf{x}_{\perp} \mathbf{v}_{\parallel} \cdot \nabla_{\mathbf{x}_{\parallel}} \frac{\partial \mathbf{n}}{\partial \mathbf{x}_{\parallel,2}}(\mathbf{x}_{\parallel})  }
 &
  n(\mathbf{x}_{\parallel})
 &
 \substack{ \frac{\partial \eta}{\partial \mathbf{x}_{\parallel,1}} (\mathbf{x}_{\parallel}) \\ + \mathbf{x}_{\perp} \frac{\partial n}{\partial \mathbf{x}_{\parallel,1}} (\mathbf{x}_{\parallel})}
  \frac{\partial \Phi_{1}(\mathbf{x}_{\perp}, \mathbf{x}_{\parallel})}{\partial (\mathbf{x}_{\perp}, \mathbf{x}_{\parallel})}
 &
  \substack{ \frac{\partial \eta}{\partial \mathbf{x}_{\parallel,2}} (\mathbf{x}_{\parallel})  \\  + \mathbf{x}_{\perp} \frac{\partial n}{\partial \mathbf{x}_{\parallel,2}} (\mathbf{x}_{\parallel}) }
 \end{array}\right].}\end{split}
 \end{equation}

We fix an inverse map
\begin{equation}\notag
\begin{split}
\Phi_{\mathbf{p}}^{-1 }   :    \{x\in \bar{\Omega} : |\xi(x)|< \delta\}\backslash  B_{C\delta^{\prime}}(\mathcal{L}_{\mathbf{p}})    \times \mathbb{R}^{3}    & \ \rightarrow \  [0,C\delta) \times  [0, 2\pi) \times (\delta_{1},\pi-\delta_{1}) \times \mathbb{R}\times \mathbb{R}^{2}.
\end{split}
\end{equation}
In general this choice is not unique but once we fix the range as above then an inverse map is uniquely determined.

We denote, for $(x,v) \in   \{x\in \bar{\Omega} : |\xi(x)|< \delta\}\backslash  B_{C\delta^{\prime}}(\mathcal{L}_{\mathbf{p}})    \times \mathbb{R}^{3}$
\[
(\mathbf{x}_{\perp_{\mathbf{p}} }, \mathbf{x}_{\parallel_{\mathbf{p}},1}, \mathbf{x}_{\parallel_{\mathbf{p}},2}, \mathbf{v}_{\perp_{\mathbf{p}}}, \mathbf{v}_{\parallel_{\mathbf{p}},1}, \mathbf{v}_{\parallel_{\mathbf{p}},2})=  \Phi_{\mathbf{p}}^{-1} (x,v).
\]


\vspace{8pt}

(iii) Let $\mathbf{q} = (y,u) \in \partial\Omega \times \mathbb{S}^{2}$ with $n(y)\cdot u=0$ and $|\mathbf{p}- \mathbf{q}| \ll 1$ and
\[
  {\Phi}_{\mathbf{p}} ( \mathbf{x}_{\perp_{  \mathbf{p}   }}, \mathbf{x}_{\parallel_{ \mathbf{p}}}, \mathbf{v}_{\perp_{\mathbf{p}}}, \mathbf{v}_{\parallel_{ \mathbf{p}}}) =(x,v)
= {\Phi}_{\mathbf{q}} ( \mathbf{x}_{\perp_{ \mathbf{q}   }}, \mathbf{x}_{\parallel_{ \mathbf{q}}}, \mathbf{v}_{\perp_{ \mathbf{q}}}, \mathbf{v}_{\parallel_{ \mathbf{q}}}) .
\]
Then
\begin{equation}\label{chart_changing}
\begin{split}
  \frac{\partial (\mathbf{x}_{\perp_{   \mathbf{p} }}, \mathbf{x}_{\parallel_{   \mathbf{p} }}, \mathbf{v}_{\perp_{   \mathbf{p} }}, \mathbf{v}_{\parallel_{   \mathbf{p} }})}{\partial ( \mathbf{x}_{\perp_{ \mathbf{q} }}, \mathbf{x}_{\parallel_{ \mathbf{q} }}, \mathbf{v}_{\perp_{ \mathbf{q} }}, \mathbf{v}_{\parallel_{ \mathbf{q} }})} = \nabla \Phi_{ \mathbf{q} }^{-1} \nabla \Phi_{  \mathbf{p} } 
=   
\mathbf{Id}_{6,6} + O_{\xi}(|\mathbf{p}-\mathbf{q}|)
\left[
\begin{array}{ccc|ccc}
0 & 0 & 0 &  &  &  \\
0 & 1 & 1 &  & \mathbf{0}_{3,3} & \\
0 & 1 & 1 &  & &   \\ \hline
0 & 0 & 0 & 0 & 0 & 0 \\
0 & |v| &  |v|  & 0 & 1 & 1 \\
0 &  |v|  &  |v|  & 0 & 1 & 1
\end{array}
\right].
\end{split}
\end{equation}\end{lemma}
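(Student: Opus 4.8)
\textit{Strategy.} The plan is to reduce (i) and (ii) to two classical facts — strict convexity together with $\mathbf 0\in\Omega$ makes $\partial\Omega$ radially diffeomorphic to $\mathbb S^2$, and a $C^2$ hypersurface carries a genuine tubular neighborhood — while the substance of (iii) will be to read the error matrix off a few chart-independent quantities attached to $(x,v)$ rather than to differentiate $\nabla\Phi_{\mathbf q}^{-1}\nabla\Phi_{\mathbf p}$ by brute force. For (i): for $\omega\in\mathbb S^2$ the map $t\mapsto\xi(t\omega)$ is strictly convex (its second derivative is $\omega\cdot\nabla^2\xi(t\omega)\cdot\omega\ge C_\xi$ by (\ref{convex})) and negative at $t=0$, so it has a unique positive zero $r(\omega)$ at which $\nabla\xi(r(\omega)\omega)\cdot\omega>0$; by the implicit function theorem $\rho(\omega):=r(\omega)\omega$ is a smooth diffeomorphism $\mathbb S^2\to\partial\Omega$ with inverse $x\mapsto x/|x|$. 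Let $S:[0,2\pi)\times(0,\pi)\to\mathbb S^2$ be the standard spherical parametrization from the start of this section, and choose a rotation $R_{\mathbf p}\in SO(3)$ depending smoothly on $\mathbf p=(z,w)$ near a fixed $\mathbf p_0$ with $R_{\mathbf p}e_3=n(z)\times w$. Since $\mathcal N_{\mathbf p}=\rho(n(z)\times w)$ and $\mathcal S_{\mathbf p}=\rho(-(n(z)\times w))$, the map $\eta_{\mathbf p}:=\rho\circ R_{\mathbf p}\circ S$ is smooth, one-to-one and onto $\partial\Omega\setminus\{\mathcal N_{\mathbf p},\mathcal S_{\mathbf p}\}$; by (\ref{Dr}) the vectors $\partial_1 S=\sin\mathbf x_{\parallel,2}\,\hat\theta$ and $\partial_2 S=\hat\phi$ are orthogonal and nonvanishing on $(0,\pi)$, and $\partial_i\eta_{\mathbf p}$ is their image under the linear isomorphisms $d\rho$ and $R_{\mathbf p}$, so $\partial_i\eta_{\mathbf p}\neq0$ and (\ref{nondegenerate_eta}) holds. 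Put $\mathbf n_{\mathbf p}:=n\circ\eta_{\mathbf p}$.

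For (ii), let $\Psi_{\mathbf p}(\mathbf x_{\perp},\mathbf x_{\parallel}):=\mathbf x_{\perp}(-\mathbf n_{\mathbf p}(\mathbf x_{\parallel}))+\eta_{\mathbf p}(\mathbf x_{\parallel})$ be the position part of $\Phi_{\mathbf p}$. Since $\partial_{\mathbf x_{\perp}}\Psi_{\mathbf p}=-\mathbf n_{\mathbf p}$ and $\partial_{\mathbf x_{\parallel,i}}\Psi_{\mathbf p}=\partial_i\eta_{\mathbf p}-\mathbf x_{\perp}\partial_i\mathbf n_{\mathbf p}$, comparison with (\ref{polar}) shows the velocity part of $\Phi_{\mathbf p}$ is exactly the tangent lift $D\Psi_{\mathbf p}(\mathbf x_{\perp},\mathbf x_{\parallel})(\mathbf v_{\perp},\mathbf v_{\parallel})$. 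At $\mathbf x_{\perp}=0$ the matrix $D\Psi_{\mathbf p}=(-\mathbf n_{\mathbf p},\,\partial_1\eta_{\mathbf p},\,\partial_2\eta_{\mathbf p})$ is a unit normal together with two linearly independent tangents, depending continuously on the compact range $\mathbf x_{\parallel,2}\in[\delta_1,\pi-\delta_1]$, so its determinant is bounded away from $0$ there; by the inverse function theorem and compactness $\Psi_{\mathbf p}$ is a diffeomorphism for $\mathbf x_{\perp}\in[0,C\delta)$ once $\delta\ll1$, and (\ref{distfunctionunique}) identifies its range with $\{x\in\bar\Omega:|\xi(x)|<\delta\}\setminus B_{C\delta_1}(\mathcal L_{\mathbf p})$ because each such point has a unique nearest boundary point. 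The tangent lift of a diffeomorphism is a diffeomorphism, which is the asserted $\Phi_{\mathbf p}$; and (\ref{jac_Phi}) follows by differentiating (\ref{polar}) directly — the upper-right $3\times3$ block is zero since the position part is $\mathbf v$-free, the lower-right block equals $D\Psi_{\mathbf p}$ since the velocity part is $\mathbf v$-linear with that coefficient matrix, and the lower-left block gathers the $\mathbf x$-derivatives of $D\Psi_{\mathbf p}(\mathbf v_{\perp},\mathbf v_{\parallel})$, producing the listed terms in $\nabla^2\eta_{\mathbf p},\nabla^2\mathbf n_{\mathbf p}$.

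For (iii), fix $(x,v)$ in the overlap and let $\bar x\in\partial\Omega$ be its (chart-independent) nearest point. Three quantities are chart-independent: $\mathbf x_{\perp_{\mathbf p}}=d(x,\partial\Omega)$ and $\mathbf v_{\perp_{\mathbf p}}=-v\cdot n(\bar x)$ (project (\ref{polar}) onto $n(\bar x)$ using $\partial_i\eta_{\mathbf p}\cdot n=\partial_i\mathbf n_{\mathbf p}\cdot n=0$), and the tangent vector $t^\top:=\sum_i\mathbf v_{\parallel_{\mathbf p},i}\,\partial_i\eta_{\mathbf p}(\bar x)$, which by substitution into (\ref{polar}) satisfies the chart-free relation $(\mathbf{Id}-\mathbf x_{\perp}Dn(\bar x))\,t^\top=v-(v\cdot n(\bar x))n(\bar x)$; the foot point $\bar x$ and the basis-image $\sum_i\mathbf v_{\parallel_{\mathbf p},i}\partial_i\mathbf n_{\mathbf p}(\bar x)=Dn(\bar x)[t^\top]$ are chart-independent too. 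These facts pin down the $\mathbf{Id}$ part and every exact zero of the transition Jacobian: rows $1$ and $4$ equal those of $\mathbf{Id}_{6,6}$ because $\mathbf x_{\perp_{\mathbf p}}\equiv\mathbf x_{\perp_{\mathbf q}}$ and $\mathbf v_{\perp_{\mathbf p}}\equiv\mathbf v_{\perp_{\mathbf q}}$; the upper-right $3\times3$ block vanishes because position coordinates are $\mathbf v$-free in every chart; column $1$ equals that of $\mathbf{Id}$ because moving $\mathbf x_{\perp_{\mathbf q}}$ alone (other $\mathbf q$-coordinates fixed) slides $x$ along the inward normal and forces $v$ to vary so that $\bar x$ and $t^\top$ stay fixed, hence $\mathbf x_{\parallel_{\mathbf p}},\mathbf v_{\perp_{\mathbf p}},\mathbf v_{\parallel_{\mathbf p}}$ are unchanged; and column $4$ equals that of $\mathbf{Id}$ because moving $\mathbf v_{\perp_{\mathbf q}}$ changes only the normal component of $v$. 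What remains are two $2\times2$ blocks: $\partial\mathbf x_{\parallel_{\mathbf p}}/\partial\mathbf x_{\parallel_{\mathbf q}}$ is the Jacobian of the boundary reparametrization $(\rho R_{\mathbf p}S)^{-1}\circ(\rho R_{\mathbf q}S)$, and $\partial\mathbf v_{\parallel_{\mathbf p}}/\partial\mathbf v_{\parallel_{\mathbf q}}$ is the change of basis between $\{\partial_i\eta_{\mathbf q}(\bar x)\}$ and $\{\partial_i\eta_{\mathbf p}(\bar x)\}$ expressing the fixed vector $t^\top$; each is $\mathbf{Id}_2$ at $\mathbf p=\mathbf q$ and depends on $\mathbf p$ only through the Lipschitz family $R_{\mathbf p}$, hence each is $\mathbf{Id}_2+O_\xi(|\mathbf p-\mathbf q|)$. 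Finally $\partial\mathbf v_{\parallel_{\mathbf p}}/\partial\mathbf x_{\parallel_{\mathbf q}}$ vanishes at $\mathbf p=\mathbf q$ and picks up an extra factor $|v|$ because moving $\mathbf x_{\parallel_{\mathbf q}}$ moves $\bar x$, perturbing that change-of-basis matrix by $O(|\mathbf p-\mathbf q|)$ acting on $\mathbf v_{\parallel_{\mathbf q}}=O(|v|)$, the $|v|$ being traceable to the second-order terms in (\ref{polar}). Assembling the blocks gives exactly $\mathbf{Id}_{6,6}+O_\xi(|\mathbf p-\mathbf q|)M$.

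I expect (iii) to be the only genuine difficulty: one must check that the factors of $|v|$ sit precisely where $M$ has them and nowhere else, that the exact zeros really are exact — which is where chart-independence of $d(x,\partial\Omega)$, of $v\cdot n(\bar x)$, and of $t^\top$ (equivalently the identity $(\mathbf{Id}-\mathbf x_\perp Dn)t^\top=v^\top$) is used — and that all the $O_\xi(|\mathbf p-\mathbf q|)$ bounds are uniform over $\mathbf p,\mathbf q$ in a fixed small ball around $\mathbf p_0$ and over the whole tubular region away from $B_{C\delta_1}(\mathcal L_{\mathbf p})\cup B_{C\delta_1}(\mathcal L_{\mathbf q})$, which is exactly why the poles and the axes $\mathcal L_{\mathbf p}$ were excised in (ii). Parts (i) and (ii) are standard differential geometry — radial projection, tubular neighborhood, tangent lift — and hold no surprises.
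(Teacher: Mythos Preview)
The paper does not prove this lemma at all: it cites \cite{GKTT1} and moves on. Your proposal therefore cannot be compared approach-for-approach with anything in the paper itself, but it can be evaluated on its own terms, and it is essentially correct and well organized.

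Your key device in (iii) --- identifying the chart-independent scalars $\mathbf x_{\perp}=d(x,\partial\Omega)$, $\mathbf v_{\perp}=-v\cdot n(\bar x)$ and the chart-independent tangent vector $t^{\top}$ via $(\mathbf{Id}-\mathbf x_{\perp}Dn(\bar x))\,t^{\top}=v^{\top}$ --- is exactly the right way to explain why rows and columns $1$ and $4$ of the transition Jacobian are \emph{exactly} those of $\mathbf{Id}_{6,6}$, not merely to leading order. That is the content of the lemma that matters later (the exact zeros in the first row and first column of $J_{\ell}^{\ell+1}$ after chart changes), and a brute-force differentiation of $\nabla\Phi_{\mathbf q}^{-1}\nabla\Phi_{\mathbf p}$ would obscure it. Your treatment of the remaining $2\times2$ blocks as a boundary reparametrization $(\rho R_{\mathbf p}S)^{-1}\circ(\rho R_{\mathbf q}S)$ and a change of tangent basis acting on $t^{\top}$ is also the right picture; the $|v|$ factor in $\partial\mathbf v_{\parallel_{\mathbf p}}/\partial\mathbf x_{\parallel_{\mathbf q}}$ indeed comes from differentiating the basis $\{\partial_i\eta_{\mathbf p}(\bar x)\}$ in $\bar x$ while holding $t^{\top}=O(|v|)$ fixed.

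One point to tighten: uniformity of the $O_{\xi}(|\mathbf p-\mathbf q|)$ constants over the whole excised tubular region requires that the family $\mathbf p\mapsto R_{\mathbf p}$ be chosen globally Lipschitz on a neighborhood of the relevant $\mathbf p$'s, which you note but should state explicitly can be done since $(z,w)\mapsto n(z)\times w$ is smooth away from a measure-zero set and the argument only ever compares charts with $|\mathbf p-\mathbf q|\ll1$. Parts (i) and (ii) are, as you say, standard: radial graph over $\mathbb S^{2}$ from strict convexity and $\mathbf 0\in\Omega$, then the tubular neighborhood and its tangent lift.
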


\begin{proof}
See \cite{GKTT1}.
\end{proof}

\begin{lemma}\label{lemma_flow} 
(i) For $|\xi(X_{\mathbf{cl}}(s;t,x,v)) |< \delta$ and $|X_{\mathbf{cl}}(s;t,x,v)-\mathcal{L}_{\mathbf{p}}|> C\delta_{1}$ we define
\begin{equation}\notag
\begin{split}
 (\mathbf{X}_{\mathbf{p}}(s;t,x,v), \mathbf{V}_{\mathbf{p}}(s;t,x,v))& := \Phi^{-1}_{\mathbf{p}} ( {X}_{\mathbf{cl}}(s;t,x,v), {V}_{\mathbf{cl}}(s;t,x,v))\\
& := (\mathbf{x}_{\perp_{\mathbf{p}}}(s;t,x,v), \mathbf{x}_{\parallel_{\mathbf{p}}}(s;t,x,v),\mathbf{v}_{\perp_{\mathbf{p}}}(s;t,x,v), \mathbf{v}_{\parallel_{\mathbf{p}}}(s;t,x,v) )
.
\end{split}
\end{equation}
Then $|v|\simeq |\mathbf{V}_{\mathbf{p}}| $ and
\begin{equation}\label{ODE_ell}
\left[\begin{array}{c}  
\dot{\mathbf{x}} _{\perp_{\mathbf{p}}}\\
\dot{\mathbf{x}} _{\parallel_{ \mathbf{p}}} \\
\dot{\mathbf{v}} _{\perp_{ \mathbf{p}}} \\
\dot{\mathbf{v}} _{\parallel_{ \mathbf{p}}} 
\end{array} \right](s;t,x,v)
= \left[\begin{array}{c}  
\mathbf{v}_{\perp_{ \mathbf{p}}}  \\
\mathbf{v}_{\parallel_{ \mathbf{p}}}  \\
F_{\perp_{ \mathbf{p}}}(\mathbf{x}_{ \mathbf{p}} , \mathbf{v}_{ \mathbf{p}} )\\
F_{\parallel_{ \mathbf{p}}}(\mathbf{x}_{ \mathbf{p}} , \mathbf{v}_{ \mathbf{p}} )
\end{array} \right](s;t,x,v). 
\end{equation}
Here
\begin{equation}\label{F_perp}
\begin{split}
F_{\perp_{ \mathbf{p}}} = & F_{\perp_{\mathbf{p}}}(\mathbf{x}_{\perp_{ \mathbf{p}}}, \mathbf{x}_{\parallel_{ \mathbf{p}}},   \mathbf{v}_{\parallel_{ \mathbf{p}}})\\
= & \sum_{j,k=1}^{2} \mathbf{v}_{\parallel_{ \mathbf{p}},k} \mathbf{v}_{\parallel_{ \mathbf{p}},j} \ \partial_{j} \partial_{k} \mathbf{\eta}_{ \mathbf{p}}(\mathbf{x}_{\parallel_{ \mathbf{p}}}) \cdot \mathbf{n}_{ \mathbf{p}}(\mathbf{x}_{\parallel_{ \mathbf{p}}})  
  - \mathbf{x}_{\perp_{ \mathbf{p}}} \sum_{k=1}^{2} \mathbf{v}_{\parallel_{ \mathbf{p}},k} (\mathbf{v}_{\parallel_{ \mathbf{p}}}\cdot \nabla) \partial_{k} \mathbf{n}_{ \mathbf{p}}(\mathbf{x}_{\parallel_{ \mathbf{p}}}) \cdot \mathbf{n}_{ \mathbf{p}}(\mathbf{x}_{\parallel_{ \mathbf{p}}})
  \\ &  -E(s, - \mathbf{x}_\perp \mathbf n (\mathbf x_\parallel ) + \mathbf \eta (\mathbf x_\parallel ) ) \cdot \mathbf n (\mathbf x_\parallel ),
\end{split}
\end{equation}
where
$$\sum_{j,k=1}^{2} \mathbf{v}_{\parallel_{ \mathbf{p}},k} \mathbf{v}_{\parallel_{ \mathbf{p}},j} \ \partial_{j} \partial_{k} \mathbf{\eta}_{ \mathbf{p}}(\mathbf{x}_{\parallel_{ \mathbf{p}}}) \cdot \mathbf{n}_{ \mathbf{p}}(\mathbf{x}_{\parallel_{ \mathbf{p}}})\lesssim_{\xi}- |\mathbf{v}_{\parallel}|^{2},$$
and
\begin{equation}\label{F||}
\begin{split}
F_{\parallel_{ \mathbf{p}}}&=F_{\parallel_{ \mathbf{p}}}(\mathbf{x}_{\perp_{ \mathbf{p}}}, \mathbf{x}_{\parallel_{ \mathbf{p}}}, \mathbf{v}_{\perp_{ \mathbf{p}}}, \mathbf{v}_{\parallel_{ \mathbf{p}}})\\
& =  \sum_{i=1,2} G_{ \mathbf{p},ij} (\mathbf{x}_{\perp_{ \mathbf{p}}}, \mathbf{x}_{\parallel_{ \mathbf{p}}})\frac{(-1)^{i }}{\mathbf{n}_{ \mathbf{p}}(\mathbf{x}_{\parallel_{ \mathbf{p}}}) \cdot (\partial_{1} \mathbf{\eta}_{ \mathbf{p}}(\mathbf{x}_{\parallel_{ \mathbf{p}}}) \times \partial_{2} \mathbf{\eta}_{ \mathbf{p}}(\mathbf{x}_{\parallel_{ \mathbf{p}}}))} \\
& \ \ \  \times \big\{
2 \mathbf{v}_{\perp_{ \mathbf{p}}} \mathbf{v}_{\parallel_{ \mathbf{p}}} \cdot \nabla \mathbf{n}_{ \mathbf{p}} (\mathbf{x}_{\parallel_{ \mathbf{p}}}) - \mathbf{v}_{\parallel_{ \mathbf{p}}} \cdot  \nabla^{2} \mathbf{\eta}_{ \mathbf{p}}(\mathbf{x}_{\parallel_{ \mathbf{p}}}) \cdot \mathbf{v}_{\parallel_{ \mathbf{p}}} + \mathbf{x}_{\perp_{ \mathbf{p}}} \mathbf{v}_{\parallel_{ \mathbf{p}}} \cdot \nabla^{2} \mathbf{n}_{ \mathbf{p}}(\mathbf{x}_{\parallel_{ \mathbf{p}}}) \cdot \mathbf{v}_{\parallel_{ \mathbf{p}}} 
\\ & \quad \quad \quad  - E(s, - \mathbf{x}_\perp \mathbf n (\mathbf x_\parallel ) + \mathbf \eta (\mathbf x_\parallel )) 
\big\} \cdot \big\{ \mathbf{n}_{ \mathbf{p}} (\mathbf{x}_{\parallel_{ \mathbf{p}}}) \times \partial_{i+1} \mathbf{\eta}_{ \mathbf{p}}(\mathbf{x}_{\parallel_{ \mathbf{p}}})  \big\},
\end{split}
\end{equation}
where a smooth bounded function $G_{ \mathbf{p},ij} (\mathbf{x}_{\perp_{ \mathbf{p}}}, \mathbf{x}_{\parallel_{\mathbf{p}}})$ is specified in (\ref{G}).

\vspace{4pt}

(ii) For $\tau \in ( t^{\ell+1}, t^{\ell})$, if the $\mathbf{p}^{ \ell}-$spherical coordinate is well-defined in $[\tau , t^{\ell})$ then 
\[
[\mathbf{X}_{ \ell}(\tau;t,x,v), \mathbf{V}_{ \ell}(\tau;t,x,v)] \equiv [\mathbf{X}_{ \ell}(\tau;t^{\ell} ,0, \mathbf{x}_{\parallel_{\ell}}^{\ell} , \mathbf{v}_{\perp_{\ell}}^{\ell}, \mathbf{v}_{\parallel_{\ell}}^{\ell} ), \mathbf{V}_{ \ell} (\tau;t^{\ell} ,0, \mathbf{x}_{\parallel_{\ell}}^{\ell} , \mathbf{v}_{\perp_{\ell}}^{\ell}, \mathbf{v}_{\parallel_{\ell}}^{\ell} )]
\]
and, for $\partial_{\mathbf{v}_{ {\ell}}^{\ell}  } = [\partial_{\mathbf{v}_{\perp_{\ell}}^{\ell}  }, \partial_{\mathbf{v}_{\parallel_{\ell}}^{\ell}  }]$,
\begin{equation}\label{Dxv_free}
\left[\begin{array}{cc}
|\partial_{\mathbf{x}^{\ell}_{\parallel_{\ell}}   } \mathbf{X}_{ \ell   }(\tau)| & 
|\partial_{\mathbf{v}_{ {\ell}}^{\ell}  } \mathbf{X}_{ \ell}(\tau)| \\
|\partial_{\mathbf{x}^{\ell}_{\parallel_{\ell}}  } \mathbf{V}_{ \ell}(\tau)| & 
|\partial_{\mathbf{v}^{\ell}_{ {\ell}}  } \mathbf{V}_{ \ell}(\tau)| 
\end{array}\right]
\lesssim 
\left[\begin{array}{cc}
1 & |\tau- t^{\ell}|\\
(O_{\xi, \|  \nabla E \|_{L^\infty_{t,x}}}(1) + | {v} |^{2})|\tau- t^{\ell}| & 
1
\end{array}
\right].
\end{equation}
For $t^{\ell+1} < \tau < s < t^{\ell}$ then
\begin{equation}\notag
\begin{split}
&[\mathbf{X}_{\ell}(\tau;t,x,v), \mathbf{V}_{\ell}(\tau; t,x,v)  ] \\
\equiv & \ [ \mathbf{X}_{\ell}(\tau; s, \mathbf{X}_{\ell} (s;t,x,v),\mathbf{V}_{\ell} (s;t,x,v) ),  
\mathbf{V}_{\ell}(\tau; s, \mathbf{X}_{\ell} (s;t,x,v),\mathbf{V}_{\ell} (s;t,x,v) )  ]
,
\end{split}
\end{equation}
and 
\begin{equation}\label{Dxv_free_s}
\left[\begin{array}{cc}
|\partial_{ \mathbf{X}_{\ell}(s)  } \mathbf{X}_{ \ell   }(\tau)|  & |\partial_{ \mathbf{V}_{\ell}(s) } \mathbf{X}_{ \ell}(\tau)|\\
|\partial_{ \mathbf{X}_{\ell}(s)  }  \mathbf{V}_{ \ell}(\tau)|  & 
|\partial_ { \mathbf{V}_{\ell}(s) }  \mathbf{V}_{ \ell}(\tau)| 
\end{array} \right]\lesssim 
\left[\begin{array}{cc} 
1 & |\tau -s|\\
(O_{\xi, \|  \nabla E \|_{L^\infty_{t,x}}}(1) + | {v} |^{2}) |\tau-s| & 1
\end{array} \right].
\end{equation}
Moreover, for either $[\p_{\mathbf{X}}, \p_{\mathbf{V}}]
= [ \p_{\mathbf{x}^{\ell}_{\parallel^{\ell}}}, \p_{\mathbf{v}^{\ell}_{\perp^{\ell}}},\p_{\mathbf{v}^{\ell}_{\parallel^{\ell}}}]$ or $[\p_{\mathbf{X}}, \p_{\mathbf{V}}]
= [ \p_{\mathbf{X}_{\ell} (s)  }, \p_{\mathbf{V}_{\ell} (s)  } ]$
\begin{equation}\label{Dxv_F}
\left[\begin{array}{cc}
|\p_{\mathbf{X}} F(\tau)| & |\p_{\mathbf{V}} F(\tau)|  \\
| \frac{d}{d\tau }\p_{\mathbf{X}} F(\tau)| & | \frac{d}{d\tau }\p_{\mathbf{V}} F(\tau)|  
\end{array}\right]
\lesssim \left[\begin{array}{cc}
O_{\xi, \|  \nabla E \|_{L^\infty_{t,x}}}(1) + |v|^{2} &O_{\xi, \|  \nabla E \|_{L^\infty_{t,x}}}(1) + |v| \\
O_{\xi, \|  \nabla E \|_{L^\infty_{t,x}}}(1) +|v|^{3} &O_{\xi, \|  \nabla E \|_{L^\infty_{t,x}}}(1) + |v|^{2}
\end{array} \right].
\end{equation}
%
\end{lemma}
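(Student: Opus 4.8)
The plan is to obtain the equations of motion \eqref{ODE_ell}--\eqref{F||} by changing variables in the Cartesian characteristic system via the chart $\Phi_{\mathbf{p}}$ of Lemma~\ref{chart_lemma}, and then to read off the Jacobian bounds \eqref{Dxv_free}--\eqref{Dxv_F} from the explicit form of the forcing together with a Gr\"onwall argument for the variational equation; throughout this parallels the field-free computation of \cite{GKTT1}, the only new ingredient being the terms generated by $E$. For \emph{Part (i)}: between two consecutive specular bounces the generalized characteristics solve $\dot X_{\mathbf{cl}}=V_{\mathbf{cl}}$, $\dot V_{\mathbf{cl}}=E(s,X_{\mathbf{cl}})$, and I would write $(X_{\mathbf{cl}},V_{\mathbf{cl}})=\Phi_{\mathbf{p}}(\mathbf{x}_\perp,\mathbf{x}_\parallel,\mathbf{v}_\perp,\mathbf{v}_\parallel)$ as in \eqref{polar} and differentiate in $s$. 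Differentiating $X_{\mathbf{cl}}=-\mathbf{x}_\perp \mathbf{n}(\mathbf{x}_\parallel)+\eta(\mathbf{x}_\parallel)$ and using $\dot X_{\mathbf{cl}}=V_{\mathbf{cl}}$: since $|\mathbf{n}|\equiv 1$ forces $\partial_i\mathbf{n}\perp\mathbf{n}$, every term carrying $\nabla\mathbf{n}$ is tangential, so projecting onto $\mathbf{n}$ gives $\dot{\mathbf{x}}_\perp=\mathbf{v}_\perp$, and projecting onto the tangent plane and using that the tangential part of the Jacobian \eqref{jac_Phi} is non-degenerate for $|\mathbf{x}_\perp|$ small (by \eqref{nondegenerate_eta}) gives $\dot{\mathbf{x}}_\parallel=\mathbf{v}_\parallel$. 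Substituting this into the $s$-derivative of the velocity component of \eqref{polar} and using $\dot V_{\mathbf{cl}}=E(s,-\mathbf{x}_\perp\mathbf{n}+\eta)$, the $\mathbf{n}$-component yields $\dot{\mathbf{v}}_\perp=F_{\perp_{\mathbf{p}}}$ with $F_{\perp_{\mathbf{p}}}$ exactly \eqref{F_perp} (its $E$-part being $-E(s,-\mathbf{x}_\perp\mathbf{n}+\eta)\cdot\mathbf{n}$), while the tangential component, after inverting the moving frame written through the cross products $\mathbf{n}\times\partial_{i+1}\eta$ — which is precisely what produces the cofactor coefficients $G_{\mathbf{p},ij}$ in \eqref{F||} — yields $\dot{\mathbf{v}}_\parallel=F_{\parallel_{\mathbf{p}}}$. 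Finally, differentiating $\xi(\eta(\mathbf{x}_\parallel))\equiv 0$ twice gives $\mathbf{n}\cdot\partial_j\partial_k\eta=-|\nabla\xi|^{-1}\,\partial_j\eta\cdot\nabla^2\xi\cdot\partial_k\eta$, so the strict convexity \eqref{convex} together with the uniform lower bound on the Gram determinant of $\{\partial_1\eta,\partial_2\eta\}$ gives $\sum_{j,k}\mathbf{v}_{\parallel,k}\mathbf{v}_{\parallel,j}\,\partial_j\partial_k\eta\cdot\mathbf{n}\lesssim_\xi-|\mathbf{v}_\parallel|^2$; and $|v|\simeq|\mathbf{V}_{\mathbf{p}}|$ follows since the velocity block of \eqref{jac_Phi} is uniformly invertible for $|\mathbf{x}_\perp|\le C\delta$ while $|V_{\mathbf{cl}}(s)|$ stays comparable to $|v|$ for $0\le s\le t\le T\ll1$.

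For \emph{Part (ii)}, I would read the bound \eqref{Dxv_F} directly off \eqref{F_perp}--\eqref{F||}: $\eta,\mathbf{n}$ and finitely many of their derivatives are bounded on $\partial\Omega$, $|\mathbf{v}_\perp|+|\mathbf{v}_\parallel|\lesssim|v|$, and $E\in C^2_{t,x}$, so $F=(F_{\perp_{\mathbf{p}}},F_{\parallel_{\mathbf{p}}})$ is a quadratic polynomial in $(\mathbf{v}_\perp,\mathbf{v}_\parallel)$ with bounded coefficients plus a term controlled in $C^1_x$ by $\|E\|_{C^1}$; hence $|\partial_{\mathbf{X}}F|\lesssim 1+|v|^2$ and $|\partial_{\mathbf{V}}F|\lesssim 1+|v|$, and since $\tfrac{d}{d\tau}=\dot{\mathbf{x}}\cdot\partial_{\mathbf{x}}+\dot{\mathbf{v}}\cdot\partial_{\mathbf{v}}$ with $|\dot{\mathbf{x}}|\lesssim|v|$ and $|\dot{\mathbf{v}}|=|F|\lesssim 1+|v|^2$ (and $\|E\|_{C^2_{t,x}}<\infty$ handling $\partial_s\nabla_xE$ and $\nabla_x^2E$), each application of $\tfrac{d}{d\tau}$ costs one power of $|v|$, giving the second row of \eqref{Dxv_F}. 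For the flow-map bounds I would let $M(\tau)$ be the relevant block of $\partial(\mathbf{X}_\ell(\tau),\mathbf{V}_\ell(\tau))/\partial(\text{data})$; by \eqref{ODE_ell} it solves the linear variational system $\dot M=\mathcal{A}(\tau)M$, where $\mathcal{A}$ is the block matrix with blocks $\mathbf{0}$, $\mathbf{Id}$, $\partial_{\mathbf{X}}F$, $\partial_{\mathbf{V}}F$, and $M$ equals the identity on the data coordinates and $\mathbf{0}$ on the others at $\tau=t^\ell$ (respectively at $\tau=s$, for the two-time statement \eqref{Dxv_free_s}).

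Rather than a crude matrix-exponential estimate I would integrate the two blocks separately. Writing $\partial$ for differentiation in the data, $\tfrac{d}{d\tau}\partial\mathbf{X}_\ell=\partial\mathbf{V}_\ell$ and $\tfrac{d}{d\tau}\partial\mathbf{V}_\ell=(\partial_{\mathbf{X}}F)\,\partial\mathbf{X}_\ell+(\partial_{\mathbf{V}}F)\,\partial\mathbf{V}_\ell$, so a Gr\"onwall bootstrap on $(\sup|\partial\mathbf{X}_\ell|,\sup|\partial\mathbf{V}_\ell|)$ produces the gain of a factor $|\tau-t^\ell|$ on $\partial_{\mathbf{v}}\mathbf{X}_\ell$ and of $(1+|v|^2)|\tau-t^\ell|$ on $\partial_{\mathbf{x}}\mathbf{V}_\ell$ (and the analogues with $t^\ell$ replaced by $s$), provided $(1+|v|^2)|\tau-t^\ell|$ stays bounded on the part of the bounce where $|\xi(X_{\mathbf{cl}})|<\delta$, i.e. where the chart $\Phi_{\mathbf{p}}$ is defined. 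That uniformity is exactly where \eqref{signEonbdry} and \eqref{convex} enter: on that portion $\tfrac{d^2}{ds^2}\xi(X_{\mathbf{cl}}(s))=V_{\mathbf{cl}}\cdot\nabla^2\xi\cdot V_{\mathbf{cl}}+E\cdot\nabla\xi\gtrsim C_\xi|V_{\mathbf{cl}}|^2+C_E\gtrsim\langle v\rangle^2$, so $\xi\circ X_{\mathbf{cl}}$ is uniformly convex there and the time spent within distance $\delta$ of $\partial\Omega$ is $\lesssim\langle v\rangle^{-2}$ times the normal speed at $t^\ell$, keeping $(1+|v|^2)|\tau-t^\ell|\lesssim 1$; this closes the bootstrap and yields \eqref{Dxv_free}--\eqref{Dxv_free_s}.

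The step I expect to be the main obstacle is \emph{Part (i)}: correctly extracting $F_{\perp_{\mathbf{p}}}$ and $F_{\parallel_{\mathbf{p}}}$, which requires carrying every $\mathbf{x}_\perp$-correction term through the differentiations and handling the non-orthonormality of $\{\mathbf{n},\partial_1\eta,\partial_2\eta\}$ when inverting to isolate the normal and tangential components, as well as verifying the convexity sign. In Part (ii) the only non-routine point is the uniformity of the Gr\"onwall constants as $|v|\to\infty$ or near the grazing set, which is taken care of by the lower bound $\tfrac{d^2}{ds^2}\xi(X_{\mathbf{cl}})\gtrsim\langle v\rangle^2$ coming from \eqref{convex} and \eqref{signEonbdry}.
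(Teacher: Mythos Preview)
Your treatment of Part~(i) is essentially the same as the paper's: differentiate the chart $\Phi_{\mathbf p}$, project $\dot V_{\mathbf{cl}}=E$ onto $\mathbf n$ using $\nabla\mathbf n\perp\mathbf n$ and $\nabla\eta\perp\mathbf n$ to get $F_{\perp}$, project onto the cross products $\mathbf n\times\partial_{i+1}\eta$ and invert the near-identity matrix $\delta_{ki}+\mathbf x_\perp a_{ki}$ (this is where $G_{ij}$ comes from) to get $F_{\parallel}$, and obtain the convexity sign by differentiating $\xi(\eta(\mathbf x_\parallel))\equiv 0$ twice. That part is fine.

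In Part~(ii) there is a genuine gap in the uniformity step. Your own computation gives $|\tau-t^\ell|\lesssim |\mathbf v_\perp^\ell|/\langle v\rangle^2$, and hence $(1+|v|^2)|\tau-t^\ell|\lesssim |\mathbf v_\perp^\ell|$, which is \emph{not} bounded uniformly in $v$ (it can be of order $|v|$); so the hypothesis ``$(1+|v|^2)|\tau-t^\ell|$ stays bounded'' that you invoke to close the Gr\"onwall bootstrap simply fails on a full bounce interval when $|v|$ is large. What \emph{is} true is the weaker bound $(1+|v|)|\tau-t^\ell|\lesssim 1$, and if you run the bootstrap more carefully---first solving $\bar B\le 2h(1+|v|^2)\bar A$ from the $\partial\mathbf V$ inequality under $h(1+|v|)\le\tfrac12$, then substituting into the $\partial\mathbf X$ inequality to get $\bar A\le 1+2h^2(1+|v|^2)\bar A$---you see that only $h^2(1+|v|^2)\lesssim 1$, i.e.\ $(1+|v|)h\lesssim 1$, is needed. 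So your argument can be repaired, but not as written.

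For comparison, the paper does not argue this way. It splits into $|v|<M$ (where a crude Gr\"onwall on $|\partial\mathbf X|+|\partial\mathbf V|$ gives $O_M(1)$ bounds, after which the $|\tau-t^\ell|$ gains follow by integrating once more) and $|v|\ge M$, where it invokes a matrix Gr\"onwall lemma (Lemma~\ref{mtx_Grwn}) that diagonalizes the coefficient matrix $\left[\begin{smallmatrix}0&1\\m+|v|^2&|v|\end{smallmatrix}\right]$; its eigenvalues are $O(|v|)$ rather than $O(|v|^2)$, so the exponential factor is $e^{C|v||\tau-t^\ell|}$, which is bounded precisely because $|v||\tau-t^\ell|\lesssim 1$. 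Either the corrected bootstrap above or the paper's matrix-Gr\"onwall route works; the point is that the quantity you must control is $(1+|v|)|\tau-t^\ell|$, not $(1+|v|^2)|\tau-t^\ell|$.
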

\begin{proof} From $\dot{v}=0$ and the second equation of (\ref{polar}) equals
\begin{equation}\label{dotv0}
\begin{split}
E(s, - \mathbf{x}_\perp \mathbf n (\mathbf x_\parallel ) + \mathbf \eta (\mathbf x_\parallel ) )  = & \dot{\mathbf{v}}_{\perp} (s) [-\mathbf{n}(\mathbf{x}_{\parallel}(s))] -2 \mathbf{v}_{\perp} (s)\mathbf{v}_{\parallel} \cdot \nabla \mathbf{n}(\mathbf{x}_{\parallel}(s))+ \dot{\mathbf{v}}_{\parallel} (s)\cdot \nabla \mathbf{\eta}(\mathbf{x}_{\parallel}(s))\\
& + \mathbf{v}_{\parallel} \cdot \nabla^{2} \mathbf{\eta}(\mathbf{x}_{\parallel} ) \cdot \mathbf{v}_{\parallel}  -\mathbf{x}_{\perp} \dot{\mathbf{v}}_{\parallel} \cdot \nabla \mathbf{n}(\mathbf{x}_{\parallel}) - \mathbf{x}_{\perp} \mathbf{v}_{\parallel} \cdot \nabla^{2} \mathbf{n}(\mathbf{x}_{\parallel}) \cdot \mathbf{v}_{\parallel}.
\end{split}
\end{equation}
We take the inner product with $\mathbf{n}(\mathbf{x}_{\parallel}(s))$ to the above equation to have
\begin{equation}\label{dot_v_perp}
\begin{split}
\dot{\mathbf{v}}_{\perp}(s) =& -E(s, - \mathbf{x}_\perp \mathbf n (\mathbf x_\parallel ) + \mathbf \eta (\mathbf x_\parallel ) ) \cdot \mathbf n (\mathbf x_\parallel ) + [ \mathbf{v}_{\parallel} \cdot \nabla^{2} \mathbf{ \eta}(\mathbf{x}_{\parallel}) \cdot \mathbf{v}_{\parallel}] \cdot \mathbf{n}(\mathbf{x}_{\parallel})
\\ &-\mathbf{x}_{\perp} [ \mathbf{v}_{\parallel} \cdot \nabla^{2} \mathbf{n}(\mathbf{x}_{\parallel})\cdot \mathbf{v}_{\parallel}] \cdot \mathbf{n}( \mathbf{x}_{\parallel})
\\  :=& F_{\perp} ( \mathbf{v}_{\perp}, \mathbf{v}_{\parallel}, \mathbf{x}_{\parallel} ),
\end{split}
 \end{equation}
where we have used the fact $\nabla \mathbf{n} \perp \mathbf{n}$ and $\nabla \mathbf{\eta} \perp \mathbf{n}$.

Since $0=\xi(\mathbf{\eta}(\mathbf{x}_{\parallel}))$ we take $\mathbf{x}_{\parallel,i}$ and $\mathbf{x}_{\parallel,j}$ derivatives to have
\[
0=\partial_{\mathbf{x}_{\parallel,j}} \big[  \sum_{k} \partial_{k}\xi \partial_{\mathbf{x}_{\parallel},i} \mathbf{\eta}_{k} \big]
 = \sum_{k,m} \partial_{k}\partial_{m} \xi \partial_{\mathbf{x}_{\parallel, j}} \mathbf{\eta}_{m} \partial_{\mathbf{x}_{\parallel,i}} \mathbf{\eta}_{k} + \sum_{k}
 \partial_{k} \xi \partial_{\mathbf{x}_{\parallel,i}}\partial_{\mathbf{x}_{\parallel,j}} \mathbf{\eta}_{k}   ,
\]
and from the convexity (\ref{convex}) and $\mathbf{n}= \nabla \xi/ |\nabla \xi|$,
\[
\big[\mathbf{v}_{\parallel} \cdot \nabla^{2} \mathbf{\eta} \cdot \mathbf{v}_{\parallel}\big] \cdot \mathbf{n} =
\sum_{i,j,k}\frac{ \mathbf{v}_{\parallel,{i}} \partial_{k}\xi \partial_{i  } \partial_{j} \mathbf{\eta}_{k} \mathbf{v}_{\parallel,j}  }{| \nabla \xi|} = - \sum_{i,j,k,m} \frac{  \{ \mathbf{v}_{\parallel,i} \partial_{i} \mathbf{\eta}_{m} \} \partial_{k} \partial_{m} \xi \{  \partial_{j} \mathbf{\eta}_{m} \mathbf{v}_{\parallel,j} \} }{|\nabla \xi|}\lesssim_{\xi} -|\mathbf{v}_{\parallel}|^{2}.
\]

Define $a_{ij}(\mathbf{x}_{\parallel})$ via
\begin{equation}\notag
\left[\begin{array}{cc} a_{11} & a_{12} \\ a_{21} & a_{22} \end{array} \right] = \left[\begin{array}{cc}  \partial_{1} \mathbf{n} \cdot \partial_{1} \mathbf{n} & \partial_{1 } \mathbf{n} \cdot \partial _{2} \mathbf{n} \\ \partial_{2}   \mathbf{n}  \cdot \partial_{1}   \mathbf{n}  & \partial_{2 }   \mathbf{n}  \cdot \partial_{2}   \mathbf{n}  \end{array}\right] \left[\begin{array}{cc}  \partial_{1} \mathbf{\eta} \cdot \partial_{1}   \mathbf{\eta}  & \partial_{1 }  \mathbf{\eta}  \cdot \partial _{2}   \mathbf{\eta}  \\ \partial_{2}   \mathbf{\eta}  \cdot \partial_{1}   \mathbf{\eta} & \partial_{2 }   \mathbf{\eta} \cdot \partial_{2}  \mathbf{\eta}  \end{array}\right]^{-1},
\end{equation}
where $\text{det} (\partial_{i} \mathbf{\eta} \cdot \partial_{j} \mathbf{\eta}) = |\partial_{1} \mathbf{\eta}\times \partial_{2} \mathbf{\eta}|^{2} \neq 0$ due to (\ref{nondegenerate_eta}). Then $\nabla \mathbf{n}$ is generated by $\nabla \mathbf{\eta}$ :
\[
-\partial_{i} \mathbf{n} (\mathbf{x}_{\parallel}) = \sum_{k} a_{ik}(\mathbf{x}_{\parallel}) \partial_{k} \mathbf{\eta}(\mathbf{x}_{\parallel}).
\]
We take the inner product (\ref{dotv0}) with $(-1)^{i+1} ( \mathbf{n}(\mathbf{x}_{\parallel}) \times \partial_{i} \mathbf{n}(\mathbf{x}_{\parallel}))$ to have
\begin{equation}\notag
\begin{split}
&\sum_{k} ( \delta_{ki} + \mathbf{x}_{\perp} a_{ki}) \dot{\mathbf{v}}_{\parallel,k} \\
&=\frac{(-1)^{i+1}}{-\mathbf{n}(\mathbf{x}_{\parallel}) \cdot (\partial_{1} \mathbf{\eta}(\mathbf{x}_{\parallel}) \times \partial_{2} \mathbf{\eta}(\mathbf{x}_{\parallel}))} \\
& \    \times \Big\{-
2 \mathbf{v}_{\perp} \mathbf{v}_{\parallel} \cdot \nabla \mathbf{n} (\mathbf{x}_{\parallel}) + \mathbf{v}_{\parallel} \cdot  \nabla^{2} \mathbf{\eta}(\mathbf{x}_{\parallel}) \cdot \mathbf{v}_{\parallel} - \mathbf{x}_{\perp} \mathbf{v}_{\parallel} \cdot \nabla^{2} \mathbf{n}(\mathbf{x}_{\parallel}) \cdot \mathbf{v}_{\parallel} 
\\ & \quad \quad - E(s, - \mathbf{x}_\perp \mathbf n (\mathbf x_\parallel ) + \mathbf \eta (\mathbf x_\parallel ) ) 
\Big\} \cdot ( -\mathbf{n} (\mathbf{x}_{\parallel}) \times \partial_{i+1} \mathbf{\eta}(\mathbf{x}_{\parallel})),
\end{split}
\end{equation}
where we used the notational convention for $\partial_{i+1} \mathbf{\eta}$, the index $i+1 \ \text{mod } 2$ . For $|\xi(x)| \ll 1$(and therefore $|\mathbf{x}_{\perp}|\ll 1$) the matrix $\delta_{ki} + \mathbf{x}_{\perp} a_{ki}$ is invertible: there exists the inverse matrix $G_{ij}$ such that $\sum_{i} (\delta_{ki} + \mathbf{x}_{\perp} a_{ki}(\mathbf{x}_{\parallel})) G_{ij} (\mathbf{x}_{\perp}, \mathbf{x}_{\parallel}) = \delta_{kj}.$ Therefore we have
\begin{equation}\label{dot_v_||}
\begin{split}
\dot{\mathbf{v}}_{\parallel,j} &= \sum_{i} G_{ij} (\mathbf{x}_{\perp}, \mathbf{x}_{\parallel})\frac{(-1)^{i+1}}{-\mathbf{n}(\mathbf{x}_{\parallel}) \cdot (\partial_{1} \mathbf{\eta}(\mathbf{x}_{\parallel}) \times \partial_{2} \mathbf{\eta}(\mathbf{x}_{\parallel}))}\\
& \ \ \ \times  \Big\{-
2 \mathbf{v}_{\perp} \mathbf{v}_{\parallel} \cdot \nabla \mathbf{n} (\mathbf{x}_{\parallel}) + \mathbf{v}_{\parallel} \cdot  \nabla^{2}\mathbf{ \eta}(\mathbf{x}_{\parallel}) \cdot \mathbf{v}_{\parallel} - \mathbf{x}_{\perp} \mathbf{v}_{\parallel} \cdot \nabla^{2} \mathbf{n}(\mathbf{x}_{\parallel}) \cdot \mathbf{v}_{\parallel} 
\\ & \quad \quad- E(s, - \mathbf{x}_\perp \mathbf n (\mathbf x_\parallel ) + \mathbf \eta (\mathbf x_\parallel ) ) 
\Big\}\\
& \ \ \ \ \  \cdot (-\mathbf{n} (\mathbf{x}_{\parallel}) \times \partial_{i+1} \mathbf{\eta}(\mathbf{x}_{\parallel}))\\
&:= F_{\parallel,j}(\mathbf{x}_{\perp}, \mathbf{x}_{\parallel}, \mathbf{v}_{\perp}, \mathbf{v}_{\parallel}).
\end{split}
\end{equation}
Here
\begin{equation}\label{G}
\begin{split}
&\left[\begin{array}{cc}G_{11} & G_{12} \\ G_{21} & G_{22} \end{array} \right] \\
& = \frac{1}{1 + \mathbf{x}_{\perp} (a_{11} + a_{22}) + (\mathbf{x}_{\perp})^{2} (a_{11} a_{22} - a_{12} a_{21})} \left[\begin{array}{cc}  1+ \mathbf{x}_{\perp} a_{22} & -\mathbf{x}_{\perp} a_{12} \\ - \mathbf{x}_{\perp} a_{21} & 1+ \mathbf{x}_{\perp} a_{11}\end{array} \right],
\\
& \left[\begin{array}{cc} a_{11} & a_{12} \\ a_{21} & a_{22} \end{array} \right] \\ &= \frac{1}{|\partial_{1} \mathbf{\eta}|^{2} |\partial_{2} \mathbf{\eta}|^{2} - (\partial_{1}\mathbf{ \eta} \cdot \partial_{2} \mathbf{\eta})^{2}}
 \\
& \ \ \times
\left[\begin{array}{cc}
|\partial_{1} \mathbf{n}|^{2} |\partial_{2} \mathbf{\eta}|^{2} -(\partial_{1}\mathbf{n} \cdot \partial_{2} \mathbf{n}) (\partial_{1} \mathbf{\eta} \cdot \partial_{2} \mathbf{\eta})  &- |\partial_{1}\mathbf{n}|^{2} (\partial_{1} \mathbf{\eta} \cdot \partial_{2} \mathbf{\eta}) + (\partial_{1} \mathbf{n} \cdot \partial_{2} \mathbf{n}) |\partial_{1}\mathbf{ \eta}|^{2} \\
(\partial_{1} \mathbf{n} \cdot \partial_{2} \mathbf{n}) |\partial_{2} \mathbf{\eta}|^{2} - |\partial_{2} \mathbf{n}|^{2} (\partial_{1} \mathbf{\eta} \cdot \partial_{2}\mathbf{ \eta}) & - (\partial_{1} \mathbf{n} \cdot \partial_{2} \mathbf{n}) (\partial_{1}\mathbf{ \eta}\cdot \partial_{2} \mathbf{\eta}) + |\partial_{2} \mathbf{n}|^{2}| \partial_{1} \mathbf{\eta}|^{2}
 \end{array} \right].
\end{split}
\end{equation}  
To complete the proof of (\ref{ODE_ell}), from $\dot{x}=v$ and $%
\dot{v}=E,$ we have 
\begin{eqnarray*}
v &=&-\mathbf{v}_{\perp } \mathbf{n} +\mathbf{v}_{||}\cdot \nabla \eta +\mathbf{x}_{\perp }[-\nabla n(\mathbf{x}_{|| })]%
\dot{\mathbf{x}}_{||} \\
&=&\dot{\mathbf{x}}_{\perp }(-\mathbf{n}(\mathbf{x}_{||}))+\mathbf{x}_{\perp }[-\nabla \mathbf{n}(\mathbf{x}_{||})]\dot{\mathbf{x}}%
_{||}+\nabla \eta \dot{\mathbf{x}}_{||} \\
E(s, - \mathbf{x}_\perp \mathbf n (\mathbf x_\parallel ) + \mathbf \eta (\mathbf x_\parallel ) )  &=&\dot{\mathbf{v}}_{\perp }(-\mathbf{n}(\mathbf{x}_{||}))-\mathbf{v}_{\perp }\nabla \mathbf{n}\dot{\mathbf{x}}_{||}+\dot{\mathbf{v}}%
_{||}\nabla \eta +\mathbf{v}_{||}\nabla ^{2}\eta \dot{\mathbf{x}}_{||} \\
&&+\dot{\mathbf{x}}_{\perp }\mathbf{v}_{||}[-\nabla \mathbf{n}(\mathbf{x}_{||})]+\mathbf{x}_{\perp }\dot{\mathbf{v}}_{||}[-\nabla
\mathbf{n}(\mathbf{x}_{||})]+\mathbf{x}_{\perp }\mathbf{v}_{||}[-\nabla ^{2}n]\dot{\mathbf{x}}_{||}.
\end{eqnarray*}%
We therefore conclude that $\dot{\mathbf{x}}_{\perp }=\mathbf{v}_{\perp },$ and $\dot{\mathbf{x}}%
_{||}=\mathbf{v}_{||}$ from $\Phi _{\mathbf{p}}^{-1}.$ We then solve $\dot{\mathbf{v}}_{\perp }$ and $%
\dot{\mathbf{v}}_{||}$ to obtain (\ref{ODE_ell}).

Now we prove (\ref{Dxv_free}) and (\ref{Dxv_free_s}). From (\ref{F_perp}) and (\ref{F||}), $\dot{\mathbf{x}}_{\parallel_{ \ell}} = \mathbf{v}_{\parallel_{ \ell}}, \ \dot{\mathbf{x}}_{\perp_{ \ell}} = \mathbf{v}_{\perp_{ \ell}}$ and $\dot{\mathbf{v}}_{\perp_{ \ell}} =  {F}_{\perp_{ \ell}}$ and $\dot{\mathbf{v}}_{\parallel_{ \ell}} =  {F}_{\parallel_{ \ell}}$. Denote $\partial = [  \frac{\partial }{ \partial {\mathbf{x}_{\parallel_{ \ell}}^{\ell} } },  \frac{\partial }{\partial { \mathbf{v}_{\perp_{ \ell}}^{\ell} } },  \frac{\partial }{ \partial { \mathbf{v}_{\parallel_{ \ell}}^{\ell} }} ].$ From (\ref{F_perp}) and (\ref{F||}),
\begin{equation}\label{D_F}
\left[\begin{array}{c}
|\partial F_{\perp}| \\
|\partial F_{\parallel}| 
\end{array}\right]
\lesssim 
\left[\begin{array}{c}
(O_{\xi, \|  \nabla E \|_{L^\infty_{t,x}}}(1) + | V(\tau) |^{2}) \{ |\partial \mathbf{x}_{\perp}|  +  | \partial\mathbf{x}_{\parallel}| \}  +  | V(\tau)| |\partial \mathbf{v}_{\parallel}|\\
  (O_{\xi, \|  \nabla E \|_{L^\infty_{t,x}}}(1) + | { V(\tau)} |^{2}) \{  |\partial \mathbf{x}_{\perp}| + |\partial \mathbf{x}_{\parallel}| \}   
+ | V(\tau)| \{ |\partial \mathbf{v}_{\perp}| + |\partial \mathbf{v}_{\parallel}|\}
\end{array}\right].
\end{equation}
%
Now we use a single (rough) bound of $|\partial F_{\perp}|  + |\partial F_{\parallel}| \lesssim  (O_{\xi, \|  \nabla E \|_{L^\infty_{t,x}}}(1) + | { V(\tau)} |^{2}) \{  |\partial \mathbf{x}_{\perp}| + |\partial \mathbf{x}_{\parallel}| \}   
+ | V(\tau)| \{ |\partial \mathbf{v}_{\perp}| + |\partial \mathbf{v}_{\parallel}|\}$ to have
\begin{equation}\notag
\begin{split}
& \frac{d}{d\tau} \{  |\partial  \mathbf{v}_{\perp_{ \ell   } }(\tau)|  +   |\partial  \mathbf{v}_{\parallel_{ \ell   } }(\tau)|   \} 
\\ &\lesssim \  |\partial F_{\perp_{\ell}}(\tau)|   +  |\partial F_{\parallel_{\ell}}(\tau)|\\
&\lesssim \ (O_{\xi, \|  \nabla E \|_{L^\infty_{t,x}}}(1) + | { V(\tau)} |^{2}) \big\{     |\partial \mathbf{x}_{\perp_{\ell   }}(\tau)|   + |\partial \mathbf{x}_{\parallel_{\ell   }}(\tau)| \big\}+ | V(\tau)|\big\{    |\partial \mathbf{v}_{\perp_{\ell   }}(\tau)| + |\partial \mathbf{v}_{\parallel _{\ell   }}(\tau)|  \big\}.
\end{split}
\end{equation}
Combining with $ \frac{d}{d\tau}[\mathbf{x}_{\perp_{\ell}}(\tau), \mathbf{x}_{\parallel_{\ell}}(\tau) ]= [\mathbf{v}_{\perp_{\ell}}(\tau), \mathbf{v}_{\parallel_{\ell}}(\tau) ]$ yields  
\begin{equation} \label{Dxvtdbd}
\begin{split}
&\frac{d}{d\tau}\left[\begin{array}{ccccc}|\partial \mathbf{x}_{\perp_{\ell   }}(\tau)| + |\partial \mathbf{x}_{\parallel_{\ell   }}(\tau)| \\ |\partial \mathbf{v}_{\perp_{\ell   }}(\tau)| + |\partial \mathbf{v}_{\parallel_{\ell   }}(\tau)|   \end{array}\right]  
 \\ & \lesssim_{\xi}  \ \left[\begin{array}{cc} 0 & 1 \\ (O_{\xi, \|  \nabla E \|_{L^\infty_{t,x}}}(1) + | { V(\tau)} |^{2}) & | V(\tau)|\end{array}\right]  \left[\begin{array}{ccccc}|\partial \mathbf{x}_{\perp_{\ell   }}(\tau)| + |\partial \mathbf{x}_{\parallel_{\ell   }}(\tau)| \\ |\partial \mathbf{v}_{\perp_{\ell   }}(\tau)| + |\partial \mathbf{v}_{\parallel_{\ell   }}(\tau)|   \end{array}\right].
 \end{split}
\end{equation}

Now for $M \gg 1 $, lets first prove \eqref{Dxv_free} for $|v| < M$. From \eqref{Dxvtdbd} we have
\[ \begin{split}
|\partial \mathbf{X}_{ \ell   }(\tau)| + |\partial \mathbf{V}_{ \ell   }(\tau)| \lesssim & 1 + \int_\tau^{t^\ell} \left(1 + O_{\xi, \|  \nabla E \|_{L^\infty_{t,x}}}(1) + |V(\tau' ) |  + |V(\tau' ) |^2 \right) |\partial \mathbf{X}_{ \ell   }(\tau')| + |\partial \mathbf{V}_{ \ell   }(\tau')| d\tau'
\\ \lesssim & 1 + \int_\tau^{t^\ell} \left( 1 + O_{\xi, \|  \nabla E \|_{L^\infty_{t,x}}}(1)  + M^2 \right) |\partial \mathbf{X}_{ \ell   }(\tau')| + |\partial \mathbf{V}_{ \ell   }(\tau')| d\tau'.
\end{split} \]
From Gronwall we have
\Be \label{pxvrbd}
|\partial \mathbf{X}_{ \ell   }(\tau)| + |\partial \mathbf{V}_{ \ell   }(\tau)| \lesssim_{\xi, \|  \nabla E \|_{L^\infty_{t,x}}, M } 1.
\Ee
For $ \partial_{\mathbf{v}} = [ \frac{\partial }{\partial { \mathbf{v}_{\perp_{ \ell}}^{\ell} } },  \frac{\partial }{ \partial { \mathbf{v}_{\parallel_{ \ell}}^{\ell} }} ]$, from \eqref{pxvrbd} we have 
\Be \label{pvxrbd}
|\partial_{\mathbf{v}} \mathbf{X}_{ \ell   }(\tau)| \le \int_{\tau}^{t^{\ell }} |\partial_{\mathbf{v}}   \mathbf{V}_{ \ell   }(\tau') |  d\tau' \lesssim_{\xi, \|  \nabla E \|_{L^\infty_{t,x}}, M} | \tau - t^{\ell} |.
\Ee
And for $\partial_{\mathbf{x}} =  \frac{\partial }{ \partial {\mathbf{x}_{\parallel_{ \ell}}^{\ell} } }$, from \eqref{Dxvtdbd}, \eqref{pxvrbd} we have
\[
\begin{split}
|\partial_{\mathbf{x}}  \mathbf{V}_{ \ell   }(\tau)| \le & \int_\tau^{t^{\ell } } \left(O_{\xi, \|  \nabla E \|_{L^\infty_{t,x}}}(1)  + |V(\tau' ) |^2 \right) |\partial_{\mathbf{x}}  \mathbf{X}_{ \ell   }(\tau')| + |V(\tau' ) | |\partial_{\mathbf{x}}  \mathbf{V}_{ \ell   }(\tau')|  \, d\tau'
\\ \lesssim & \left( O_{\xi, \|  \nabla E \|_{L^\infty_{t,x}}}(1)  + |v |^2 \right) |\tau - t^{\ell } | + M \int_\tau^{t^\ell}   |\partial_{\mathbf{x}}  \mathbf{V}_{ \ell   }(\tau')| \, d \tau'.
\end{split}
\]
From Gronwall we have
\Be \label{pxofvbd}
|\partial_{\mathbf{x}}  \mathbf{V}_{ \ell   }(\tau)| \lesssim_{\xi, \|  \nabla E \|_{L^\infty_{t,x}}, M } \left( O_{\xi, \|  \nabla E \|_{L^\infty_{t,x}}}(1)  + |v |^2 \right) | \tau - t^\ell |.
\Ee
Combining \eqref{pxvrbd}, \eqref{pvxrbd}, and \eqref{pxofvbd} we prove \eqref{Dxv_free} for $|v| <  M$.

%
For the case $|v| \ge M \gg 1$, we have $|V(\tau) | < 2 |v|$, so
\[
\begin{split}
&\frac{d}{d\tau}\left[\begin{array}{ccccc}|\partial \mathbf{x}_{\perp_{\ell   }}(\tau)| + |\partial \mathbf{x}_{\parallel_{\ell   }}(\tau)| \\ |\partial \mathbf{v}_{\perp_{\ell   }}(\tau)| + |\partial \mathbf{v}_{\parallel_{\ell   }}(\tau)|   \end{array}\right]  \ \lesssim_{\xi}  \ \left[\begin{array}{cc} 0 & 1 \\ (O_{\xi, \|  \nabla E \|_{L^\infty_{t,x}}}(1) + | v |^{2}) & | v|\end{array}\right]  \left[\begin{array}{ccccc}|\partial \mathbf{x}_{\perp_{\ell   }}(\tau)| + |\partial \mathbf{x}_{\parallel_{\ell   }}(\tau)| \\ |\partial \mathbf{v}_{\perp_{\ell   }}(\tau)| + |\partial \mathbf{v}_{\parallel_{\ell   }}(\tau)|   \end{array}\right].
 \end{split}
\]
By Lemma \ref{mtx_Grwn} we prove our claim (\ref{Dxv_free}) for the case $|v| \ge M$. The proof of (\ref{Dxv_free_s}) is exactly same but we use $\partial = [ \partial_{\mathbf{X}_{\ell}}(s), \partial_{\mathbf{V}_{\ell}}(s) ]$ to conclude the proof.

We prove the first row of (\ref{Dxv_F}) by (\ref{D_F}). By taking the time derivative to (\ref{F_perp}), (\ref{F||}) and applying (\ref{ODE_ell}) we prove the second row of row of (\ref{Dxv_F}).

\end{proof}

\section{Derivative estimate for the generalized characteristics}
The main goal of this section is to prove the following key estimate for the derivatives of the generalized characteristics $(X_{\mathbf{cl}}(s;t,x,v), V_{\mathbf{cl}}(s;t,x,v))$ defined in \eqref{cycles}.
 \begin{theorem}\label{theorem_Dxv}
There exists $C=C(\Omega, E)>0$ such that for all $(t,x,v)\in [0,T] \times \bar{\Omega}\times \mathbb{R}^{3}$, $0 \le s \le t$, with $s\neq t^{\ell}$ for $\ell = 1,2,\cdots, \ell_{*}$ 
\begin{equation}\label{lemma_Dxv}
\begin{split}
|\nabla _{x}X_{\mathbf{cl}}(s;t,x,v)| & \ \lesssim \ e^{C|v|(t-s)}\frac{|v|+1}{%
\alpha (t,x,v)} , \\
|\nabla _{v}X_{\mathbf{cl}}(s;t,x,v)| &  \ \lesssim  \ e^{C|v|(t-s)}\frac{1}{|v|+1}%
, \\
|\nabla_{x}V_{\mathbf{cl}}(s;t,x,v)| & \ \lesssim \ e^{C|v|(t-s)}\frac{|v|^{3} +1%
}{\alpha^2 (t,x,v)} , \\
|\nabla_{v}V_{\mathbf{cl}}(s;t,x,v)| &  \ \lesssim \ e^{C|v|(t-s)}\frac{|v|+1}{%
\alpha (t,x,v)} .
\end{split}
\end{equation}
\end{theorem}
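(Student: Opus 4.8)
The plan is to differentiate the representation of $(X_{\mathbf{cl}},V_{\mathbf{cl}})$ through the specular cycles \eqref{cycles} by the chain rule. For fixed $(t,x,v)$ and $s$ with $t^{\ell_*+1}\le s<t^{\ell_*}$ one has
\[
\frac{\partial(X_{\mathbf{cl}}(s;t,x,v),V_{\mathbf{cl}}(s;t,x,v))}{\partial(x,v)}
=\frac{\partial(X(s;t^{\ell_*},x^{\ell_*},v^{\ell_*}),V(s;t^{\ell_*},x^{\ell_*},v^{\ell_*}))}{\partial(t^{\ell_*},x^{\ell_*},v^{\ell_*})}\,
\prod_{\ell=0}^{\ell_*-1}\frac{\partial(t^{\ell+1},x^{\ell+1},v^{\ell+1})}{\partial(t^\ell,x^\ell,v^\ell)},
\]
the leftmost factor being a single free flight controlled by Lemma \ref{lemma_flow} and the product being essentially \eqref{lmatrices}. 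First I would pass, at each bounce point $x^\ell$, to the moving-frame coordinates $(\mathbf{x}_{\perp},\mathbf{x}_{\parallel},\mathbf{v}_{\perp},\mathbf{v}_{\parallel})$ of Lemma \ref{chart_lemma}, in which $\mathbf{x}_{\perp}^\ell=0$ on $\partial\Omega$, so the $\ell$-th bounce matrix becomes a $6\times6$ matrix in $(t^\ell,\mathbf{x}_{\parallel}^\ell,\mathbf{v}_{\perp}^\ell,\mathbf{v}_{\parallel}^\ell)$; the change of chart between consecutive poles $\mathbf{p}^\ell$ and $\mathbf{p}^{\ell+1}$ is the harmless perturbation $\mathbf{Id}+O(|\mathbf{p}^\ell-\mathbf{p}^{\ell+1}|)$ of \eqref{chart_changing}, carried along throughout.

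Second, I would compute each bounce matrix explicitly. The flight time $t_{\mathbf b}^\ell:=t^\ell-t^{\ell+1}$ solves $\xi(X(t^{\ell+1};t^\ell,x^\ell,v^\ell))=0$, so implicit differentiation gives $\partial_{(t^\ell,x^\ell,v^\ell)}t^{\ell+1}=-\nabla\xi(x^{\ell+1})\cdot\partial_{(t^\ell,x^\ell,v^\ell)}X\big/\big(\nabla\xi(x^{\ell+1})\cdot v_{\mathbf b}^\ell\big)$; since $\alpha|_{\gamma_-}\sim|n\cdot v|$ and the velocity lemma \eqref{velocitylemmaintform} propagates $\alpha$, these carry a factor $1/\alpha$. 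Differentiating $x^{\ell+1}=X(t^{\ell+1};t^\ell,x^\ell,v^\ell)$ and the reflection $v^{\ell+1}=v_{\mathbf b}^\ell-2n(x^{\ell+1})(n(x^{\ell+1})\cdot v_{\mathbf b}^\ell)$, and inserting the free-flight bounds \eqref{Dxv_free}, \eqref{Dxv_free_s}, \eqref{Dxv_F}, determines every entry. The dangerous one is the normal-velocity entry $\partial_{(t^\ell,x^\ell)}(n(x^{\ell+1})\cdot v^{\ell+1})$, for which the naive bound is only $O(|t_{\mathbf b}^\ell|)$; this is the origin of the $(1+\sqrt{\alpha})^{1/\alpha}$ blow-up noted in the introduction. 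Here I would exploit the cancellation built into $F_{\perp_{\mathbf p}}$ in \eqref{F_perp} (the coercive sign $\mathbf{v}_{\parallel}\cdot\nabla^2\eta\cdot\mathbf{n}\lesssim_\xi-|\mathbf{v}_{\parallel}|^2$) together with the hypothesis $\|E\|_{C^2_{t,x}}<\infty$ from \eqref{c1bddforthepotentail} to sharpen this entry to $O(|t_{\mathbf b}^\ell|^2)$. Separately, the sign condition \eqref{signEonbdry} forces the field to return a slow trajectory to $\partial\Omega$ within time $O(|n(x^\ell)\cdot v^{\ell+1}|)$, giving the improved gap bound $|t_{\mathbf b}^\ell|\lesssim|n(x^\ell)\cdot v^{\ell+1}|$ for small $|v|$, which is what produces the $1/(|v|+1)$ (rather than singular) estimate on $\nabla_v X_{\mathbf{cl}}$.

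Third, I would conjugate each bounce matrix by a diagonal matrix whose entries are suitable powers of $\alpha(t^\ell,x^\ell,v^\ell)$, absorbing all the $1/\alpha$ singularities into the two endpoints. After this conjugation, using the sharpened normal-velocity entry and a further cancellation between the reflected velocity $v^{\ell+1}$ and the incoming velocity $v^\ell$ of adjacent matrices, each conjugated matrix takes the form $\mathbf{Id}+O(\alpha(t^\ell,x^\ell,v^\ell))+O(|t_{\mathbf b}^\ell|\langle v\rangle)$. A matrix Gronwall argument (Lemma \ref{mtx_Grwn}) then bounds the product: since $\sum_\ell|t_{\mathbf b}^\ell|=|t-s|$ and the bounce count satisfies $\ell_*(s;t,x,v)\lesssim\frac{|t-s|(|v|^2+1)}{\alpha(t,x,v)}e^{\mathcal{C}\langle v\rangle(t-s)}$ by \eqref{number_control}, while \eqref{velocitylemmaintform} gives $\alpha(t^\ell,x^\ell,v^\ell)\lesssim e^{C\langle v\rangle(t-s)}\alpha(t,x,v)$, the $\mathbf{Id}+O(\alpha)$ factors multiply to $\lesssim(1+C\alpha(t,x,v))^{\ell_*}\lesssim e^{C|v|(t-s)}$ for $T\ll1$ — precisely where the extra power of $|t_{\mathbf b}^\ell|$ won in Step 2 is indispensable. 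Restoring the endpoint conjugations produces the prefactors $1/\alpha$ for $\nabla_x X_{\mathbf{cl}}$ and $\nabla_v V_{\mathbf{cl}}$ and $1/\alpha^2$ for $\nabla_x V_{\mathbf{cl}}$; composing with the free-flight Jacobian of Lemma \ref{lemma_flow} and invoking the small-$|v|$ improvement for $\nabla_v X_{\mathbf{cl}}$ yields \eqref{lemma_Dxv}. The time-dependence of $E$ (the nonzero first column $\partial_{t^\ell}(x^{\ell+1},v^{\ell+1})$) forces one also to track $\partial_x t^\ell,\partial_v t^\ell$, but these come out of the same product estimate with $|\partial_x t^\ell|\lesssim\alpha^{-2}$ and $|\partial_v t^\ell|\lesssim\alpha^{-1}$.

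The main obstacle is Step 3: controlling the product of $\ell_*\sim\alpha^{-1}$ bounce matrices. Because these matrices are not uniformly diagonalizable, one cannot simply multiply eigenvalues; one must follow how the off-diagonal entries of the conjugated matrices propagate through many multiplications, and it is exactly there that both the $C^2$-regularity cancellation upgrading $O(|t_{\mathbf b}^\ell|)$ to $O(|t_{\mathbf b}^\ell|^2)$ and the cross-cancellation between adjacent matrices are needed to avert the $(1+\sqrt{\alpha})^{1/\alpha}\to\infty$ catastrophe and keep the accumulated constant of order $e^{C|v|(t-s)}$.
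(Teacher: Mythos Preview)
Your outline captures the overall architecture correctly---chain rule through the cycles, moving-frame coordinates, explicit computation of each bounce Jacobian, the $O(|t_{\mathbf b}^\ell|^2)$ upgrade of the normal-velocity entry via \eqref{Fperpcancel} under $\|E\|_{C^2_{t,x}}<\infty$, the adjacent-matrix cancellation \eqref{vlplus1vl}, and the short-time gap bound \eqref{tbest} from the sign condition. These are exactly the ingredients the paper uses, and your identification of the $(1+\sqrt\alpha)^{1/\alpha}$ danger and its resolution is accurate.

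There is, however, a genuine gap in your Step 3. Diagonal conjugation by powers of $\alpha$ is weaker than what is actually needed; the paper diagonalizes by the full eigenvector matrix $\mathcal P(\mathbf r)$ of \eqref{diagonal_matrix}, and even then, after carrying out the full product with the $\mathcal P^{-1}(\mathbf r^{\ell+1})\mathcal P(\mathbf r^{\ell})$ cancellations, the resulting bound \eqref{middle} is still one power of $\alpha$ too singular in the \emph{tangential} block: one gets $|\partial_{\mathbf x_{\perp}(s^1)}\mathbf x_{\parallel}(s^{\ell_*})|\lesssim|v|^2/|\mathbf v_\perp^1|^2$ rather than $|v|/|\mathbf v_\perp^1|$, which would yield $|\nabla_x X_{\mathbf{cl}}|\lesssim |v|^2/\alpha^2$, not the claimed $(|v|+1)/\alpha$. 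The paper fixes this with a separate ``ODE method'' (Step 9): one writes $(\mathbf x_\parallel,\mathbf v_\parallel)$ in integrated form \eqref{xv_mildform}, observes that in the $\dot{\mathbf v}_\parallel$ equation the discontinuous quantity $\mathbf v_\perp$ enters only through $H\,\mathbf v_\perp=H\,\dot{\mathbf x}_\perp$, integrates by parts in time, and uses $\mathbf x_\perp(t^\ell)=0$ to kill all the boundary terms at bounces. This converts the forcing from $\partial\mathbf v_\perp$ (singular) to $\partial\mathbf x_\perp$ (already well-controlled by the matrix step), and a Gronwall iteration over groups then gives the sharp tangential bound \eqref{xparallderfinal}. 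Without this integration-by-parts trick---which your proposal does not mention---the matrix product alone does not close. Also, Lemma \ref{mtx_Grwn} is a continuous $2\times2$ Gronwall used only for the within-one-flight bounds \eqref{Dxv_free}; it does not control the discrete product of $\ell_*$ bounce matrices, so invoking it for that purpose is misplaced.
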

In order to achieve this, we need a crucial bound on the backward exit time:
\begin{lemma}
Suppose $E(t,x) \cdot n(x) > c_E$ for all $x \in \p \Omega$, then there exists $ C = C(\Omega, E ) \gg 1 $ and $0 < T \ll 1 $ such that for any $(t,x,v) \in [0, T ] \times \overline \Omega \times \mathbb R^3$, $t^1(t,x,v)  > 0$,
\Be \label{tbest}
\frac{|t -t^1 | }{|\mathbf{v}_\perp^1 |} + \frac{|t -t^1 | |v|}{|\mathbf{v}_\perp^1 |} + \frac{|t -t^1 | |v|^2}{|\mathbf{v}_\perp^1 |} < C.
\Ee
And for $(t,x,v) \in [0, T ] \times \gamma_+ \times \mathbb R^3$, $t^1(t,x,v) < 0 $,
\Be \label{tbest2}
\frac{|t | }{|\mathbf{v}_\perp |} + \frac{|t | |v|}{|\mathbf{v}_\perp |} + \frac{|t  | |v|^2}{|\mathbf{v}_\perp |} < C.
\Ee
\end{lemma}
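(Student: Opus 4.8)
The plan is to analyze the one-bounce trajectory by projecting the Hamiltonian ODE \eqref{hamilton_ODE} onto the normal direction $\nabla\xi$ and using the sign condition \eqref{signEonbdry} together with the convexity \eqref{convex} to show that $\xi(X_{\mathbf{cl}}(s))$ behaves, on the relevant time interval, like a concave function of $s$ with a definite negative second derivative near the boundary. Concretely, set $g(s) := \xi(X(s;t,x,v))$ for the exit segment, so that $\dot g(s) = V(s)\cdot\nabla\xi(X(s))$ and $\ddot g(s) = V(s)\cdot\nabla^2\xi(X(s))\cdot V(s) + E(s,X(s))\cdot\nabla\xi(X(s))$. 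When $|\xi(X(s))|\ll 1$, expanding $E\cdot\nabla\xi$ around the boundary point $\overline{X(s)}$ as in \eqref{accnearbdy} gives $\ddot g(s) \ge C_E - c|\xi(X(s))|$ with $c = c(\xi,\|E\|_{C^1})$, hence $\ddot g(s)\ge C_E/2$ once $\delta\ll1$ and $T\ll1$; simultaneously $\ddot g(s) \le C_1(|v|^2+\|E\|_{L^\infty}^2+1)$ by \eqref{accelerationbd}. This is exactly the two-sided control on the ``acceleration in the normal direction'' that was already established inside the proof of Lemma \ref{keylemma}, so I would invoke those estimates rather than redo them.

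Next I would translate this into the bound \eqref{tbest}. Work in the shifted variable so that $s=t$ corresponds to the point $(x,v)$ and $s = t^1$ is the exit time; denote by $\mathbf v_\perp^1$ the (signed) normal component of the velocity at the bounce, so $|\mathbf v_\perp^1| = |\vb\cdot\nabla\xi(\xb)|$ up to the bounded factor $|\nabla\xi|$. By the velocity lemma (Lemma \ref{velocitylemma}) and since $T\ll1$, $|\mathbf v_\perp^1|$ is comparable to $\alpha(t,x,v)$ up to $e^{C\langle v\rangle T}\approx 1$. I split into two regimes. If $|v|$ is bounded below (say $|v|\ge M$), the particle travels essentially ballistically: $g(s)$ stays negative with $|\dot g|\gtrsim |\mathbf v_\perp^1|$ except in a short window near the endpoints, and integrating $\ddot g\ge C_E/2$ backward from $t^1$ yields $|v_\perp^1|=|\dot g(t^1)|\le \ddot g_{\max}\cdot|t-t^1| \lesssim (|v|^2+1)|t-t^1|$, giving $|t-t^1|\gtrsim |\mathbf v_\perp^1|/(|v|^2+1)$ — but for the \emph{upper} bound on $|t-t^1|$ I instead use that $g$ must return from $\xi(x)\le 0$ to $0$ and that along the way $|\dot g|$ cannot be too small for too long because $\ddot g$ is bounded; the standard comparison (as in the proof that $\tb$ is finite, e.g.\ \cite{GKTT1,CaoSIAM}) gives $|t-t^1|\lesssim |\mathbf v_\perp^1|/C_E + |v_\perp^1|/|v|^2$-type bounds, and multiplying through by $1,|v|,|v|^2$ and using $|v_\perp^1|\le|v|$ closes \eqref{tbest}. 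If $|v|\le M$ is small, then the dominant term in $\ddot g$ is $E\cdot\nabla\xi\ge C_E>0$: the field pushes the trajectory back to the boundary, so $g(s)$ is uniformly concave with $\ddot g\asymp C_E$, and an elementary estimate for a concave function vanishing at two endpoints gives $|t-t^1|\lesssim |\dot g(t^1)|/C_E = |\mathbf v_\perp^1|/C_E$; since $|v|,|v|^2 \le M^2$ are bounded, all three terms in \eqref{tbest} are then $\lesssim |\mathbf v_\perp^1|/(C_E|\mathbf v_\perp^1|) \cdot(1+M+M^2)\lesssim C$. Here it is essential that the smallness of $|v|$ is exactly compensated by the field, which is the mechanism announced after \eqref{tbest} in the introduction.

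For \eqref{tbest2} the setup is the same but the trajectory is followed \emph{forward} out of $\Omega$: for $(t,x,v)\in[0,T]\times\gamma_+\times\mathbb R^3$ with $t^1<0$, the exit ``time'' from the past is cut off at $s=0$, and $g(s)=\xi(X(s;t,x,v))$ on $[0,t]$ satisfies $g(t)=0$, $\dot g(t)=v\cdot n(x)|\nabla\xi|>0$ (the outgoing condition), and $\ddot g\ge C_E/2$; running the same concavity/comparison argument on $[0,t]$ with $|\mathbf v_\perp|$ in place of $|\mathbf v_\perp^1|$ gives $|t|\lesssim |\mathbf v_\perp|/C_E$ in the small-velocity regime and the ballistic bound otherwise, yielding \eqref{tbest2}.

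The main obstacle is the \emph{upper} bound on $|t-t^1|$ in the large-velocity regime, $|t-t^1|\lesssim|\mathbf v_\perp^1|/(|v|^2+1)\cdot(|v|^2+1) = |\mathbf v_\perp^1|\cdot$(something bounded): a priori the transit time across a near-boundary slab of thickness $\sim|\xi(x)|$ could be as large as $\sqrt{|\xi(x)|/\ddot g}\sim\sqrt{|\xi(x)|}/|v|$, which is not obviously $\lesssim|\mathbf v_\perp^1|/|v|^2$ unless one uses the relation $|\xi(x)|\lesssim |v_\perp^1|^2/\ddot g + |v_\perp^1|\cdot|t-t^1|$ that comes from integrating $g$ twice and the comparability $\alpha^2 \sim |v\cdot\nabla\xi|^2 + (\text{lower order})\cdot|\xi|$ from \eqref{alphatilde}; carefully bookkeeping which of $\xi(x)$ or $x_0$ is ``deep'' (the case split seen in Steps 4--6 of Lemma \ref{keylemma}) is where the real work lies, and I would organize \eqref{tbest} by exactly that trichotomy: $v$ small (field-dominated), $v$ large with $x$ near $\partial\Omega$, and $v$ large with $x$ deeper than the turning scale.
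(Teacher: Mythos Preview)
Your small-velocity case (the field-dominated regime $|v|\le M$) is correct and matches the paper's argument essentially verbatim: the paper works in the moving frame and shows $F_\perp(s)<-c_E$ for $s\in[t^1,t]$, then expands $\mathbf{x}_\perp(t)\ge 0$ to get $\tfrac{c_E}{2}(t-t^1)^2<(t-t^1)|\mathbf v_\perp^1|$, i.e.\ $|t-t^1|\lesssim|\mathbf v_\perp^1|/c_E$. Your use of $g(s)=\xi(X(s))$ in place of $\mathbf{x}_\perp$ is an equivalent bookkeeping choice.

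The large-velocity case, however, is not closed in your proposal. The ``standard comparison'' bound you write, $|t-t^1|\lesssim|\mathbf v_\perp^1|/C_E + |\mathbf v_\perp^1|/|v|^2$, does \emph{not} control $|t-t^1||v|^2/|\mathbf v_\perp^1|$: the first term gives $|v|^2/C_E$, which blows up. You recognize this in your ``main obstacle'' paragraph and defer to an unexecuted trichotomy on the depth of $\xi(x)$. That route may be viable via the velocity lemma and the structure of $\alpha$, but it is neither carried out nor necessary.

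The paper avoids the whole case analysis by a single reduction you are missing: for $|v|\ge N$ large, extend the trajectory \emph{forward} in time until it hits $\partial\Omega$, which happens within time $t'<2\,\mathrm{diam}(\Omega)/N\ll 1$ (extending $E$ past $T$ if needed). This lets you assume $x\in\partial\Omega$ without loss of generality, since the backward exit point $(x^1,v^1)$ and hence $|\mathbf v_\perp^1|$ are unchanged, while $|t-t^1|$ only increases. Now $\xi(x)=\xi(x^1)=0$, and Taylor-expanding
\[
0=\xi(x^1)=\xi(x)-(t-t^1)\,v\cdot\nabla\xi(x)+\int_{t^1}^{t}\!\!\int_{s}^{t}\ddot g(\tau)\,d\tau\,ds
\]
with the convexity bound $\ddot g(\tau)\ge C_\xi|V(\tau)|^2 - O(1)\gtrsim |v|^2$ (this is where \eqref{convex} enters) gives directly $(t-t^1)|v|^2\lesssim |v\cdot\nabla\xi(x)|$; the velocity lemma then yields $|v\cdot\nabla\xi(x)|\lesssim|\mathbf v_\perp^1|$. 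This is the missing idea: once $x\in\partial\Omega$, convexity supplies the $|v|^2$-weighted bound in one line, and no depth trichotomy is needed.
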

\begin{proof}
Let $ N > 10 ( \| E \|_{L^\infty_{t,x}}  +1 ) $ be fixed. Let's first consider the case  $(t,x,v) \in [0, T ] \times \overline \Omega \times \mathbb R^3$, $t^1(t,x,v)  > 0$, and prove
\Be \label{tbsimvperp} 
\frac{|t - t^1|}{ |\mathbf{v}_\perp^1|} \lesssim 1 \text{ for all } |v| < N.
\Ee
From \eqref{F_perp} we have
\[
F_{\perp} (s) < - c_\xi | \mathbf v_\parallel |^2 - c_E + C_\xi \mathbf{x}_{\perp } | \mathbf v_\parallel |^2.
\]
By choosing $T < \frac{c_\xi}{4 N C_\xi } $, we have $\mathbf x_\perp < 2N T < \frac{ c_\xi }{2 C_\xi} $, thus
\Be \label{Fperpneg}
F_{\perp} (s) < -c_E  - c_\xi   | \mathbf v_\parallel |^2 + \frac{ c_\xi}{2}  | \mathbf v_\parallel |^2 < - c_E  , \, \text{ for all } t^1 < s < t.
\Ee
Therefore
\Be \label{xperpexpan} \begin{split}
0 < \mathbf{x}_\perp (t )  &= \int_{t^1}^t \mathbf{v}_\perp(s) ds  
\\ &=  \int_{t^1}^t  \left( - \mathbf{v}_\perp^1 + \int_{t^1}^s F_\perp (\tau ) d\tau \right) ds 
\\ &= (t - t^1 )(- \mathbf{v}_\perp^1 ) + \int_{t^1}^t \int_{t^1}^t F_\perp(\tau) d \tau d s.
\end{split} \Ee

So from \eqref{Fperpneg} and \eqref{xperpexpan},
\Be \label{FperpEbdd}
\frac{c_E}{2}(t -t^1)^2 < -  \int_{t^1}^t \int_{t^1}^t F_\perp(\tau) d \tau d s < |t - t^1| |\mathbf{v}_\perp^1|.
\Ee
Therefore $\frac{c_E}{2}(t -t^1) < |\mathbf{v}_\perp^1|$, and this proves \eqref{tbsimvperp}.

Next, for $|v| \ge N$, let $d = \max_{ x,y \in \overline \Omega } |x- y|$, then $\xi (X(t + t' )) = 0 $ for some $t' < \frac{2d}{N}$ by extending the field as $E(s,x) = E(T,x) $ for $s>T$ if necessary. So we can without loss of generality assume $x \in \p \Omega$. We claim
\Be \label{tbv2est}
\frac{|t -t^1 | |v|^2}{|\mathbf{v}_\perp^1 |} \lesssim 1 \text{ for all } |v| \ge N.
\Ee
Since $(x,v) \in \gamma_+$ we have
\Be \label{xiexp1}
\begin{split}
 0 = \xi(x^1) =& \xi(x) - \int_{t^1}^t  \nabla \xi (X(s)) \cdot V(s) ds
 \\ = & - (t - t^1) v \cdot \nabla \xi (x) + \int_{t^1}^t \int_s^t \left( V(\tau) \cdot \nabla^2 \xi(X(\tau)) \cdot V(\tau) + E(\tau, X(\tau))\cdot \nabla \xi (X(\tau)) \right) d\tau ds.
\end{split}
\Ee
Note that for $T < \frac{N}{4\| E \|_{L^\infty_{t,x}}}$, $\frac{|v|}{2} < |V(\tau) | < 2 |v| $ for all $\tau \in [t^1,t]$. Thus from \eqref{xiexp1}
\Be \label{xiexp2} \begin{split}
 &  |t - t^1| ( v \cdot \nabla \xi (x) )
\\ \ge &  \frac{C}{8}|t -t^1|^2 |v|^2 + \int_{t^1}^t \int_s^t E(\tau, X(\tau))\cdot \nabla \xi (X(\tau))  d\tau ds
\\ \ge & \frac{C}{8}|t -t^1|^2 |v|^2 + \frac{|t-t^1|^2}{2} E(t,x) \cdot \nabla \xi (x)  - \int_{t^1}^t \int_s^t \int_\tau^t \frac{d}{d\tau' }\left( E(\tau', X(\tau')) \cdot \nabla \xi (X(\tau')) \right) d\tau' d\tau ds
\\ \ge & \frac{C}{8}|t -t^1|^2 |v|^2 - |t - t^1|^3 C_{E,\xi}(1 + |v| )
\\ \ge & |t -t^1|^2 \left(  \frac{C}{8}|v|^2 - |t-t^1| C_{E,\xi}(1 + |v| ) \right).
\end{split} \Ee
Since $|v| \ge N$, we have $ \frac{C}{8}|v|^2 - |t-t^1| C_{E,\xi}(1 + |v| ) > \frac{C}{20} |v|^2 $. Therefore \eqref{xiexp2} gives
\Be \label{vdotnxiest}
| v \cdot \nabla \xi (x) | > \frac{C}{20} |t-t^1 | |v|^2.
\Ee
Then using the velocity lemma we have $ |t -t^1||v|^2 \lesssim |v \cdot \nabla \xi (x) | \lesssim |\mathbf{v}_\perp^1 | $, and we conclude \eqref{tbv2est}.

Now combining \eqref{tbsimvperp} and \eqref{tbv2est} we actually have for all $(x,v) \in \gamma_+$, 
\[
\frac{|t - t^1 |}{|\mathbf{v}_\perp^1 |}+\frac{|t -t^1 | |v|^2}{|\mathbf{v}_\perp^1 |} \lesssim 1.
\]
Therefore
\[
\frac{|t -t^1 | |v|}{|\mathbf{v}_\perp^1 |} \le  \max \{ \frac{|t - t^1 |}{|\mathbf{v}_\perp^1 |} , \frac{|t -t^1 | |v|^2}{|\mathbf{v}_\perp^1 |} \} \lesssim 1,
\]
and we conclude \eqref{tbest}.

The proof of \eqref{tbest2} is similar. If $|v| <N$, we have
\Be
0 < \mathbf x_\perp(0) = - \int_{0}^t \mathbf v_\perp(s) ds = -   \int_0^t \left( \mathbf v_\perp - \int_s^t F_\perp(\tau) d\tau \right) ds   = - t \mathbf v_\perp + \int_0^t \int_s^t F_\perp(\tau) d\tau ds,
\Ee
So same as \eqref{FperpEbdd} we have
\[
\frac{c_E}{2}t ^2 < -  \int_{0}^t \int_{s}^t F_\perp(\tau) d \tau d s < t |\mathbf{v}_\perp|.
\]
Therefore $\frac{c_E}{2} t < |\mathbf{v}_\perp|$. And if $|v| >N$, similarly we get
\Be \label{xiexp12}
\begin{split}
 0 > & \xi(X(0)) 
 \\ =& \xi(x) - \int_{0}^t  \nabla \xi (X(s)) \cdot V(s) ds
 \\ = & -| t| ( v \cdot \nabla \xi (x)) + \int_{0}^t \int_s^t \left( V(\tau) \cdot \nabla^2 \xi(X(\tau)) \cdot V(\tau) + E(\tau, X(\tau))\cdot \nabla \xi (X(\tau)) \right) d\tau ds.
\end{split}
\Ee
Then by the same argument as lines between \eqref{xiexp1} and \eqref{vdotnxiest} we get $| v \cdot \nabla \xi (x) | > \frac{C}{20} t |v|^2$, and this proves \eqref{tbest2}.
\end{proof}

We need a version of Gronwall's inequality for matrices:
\begin{lemma}\label{mtx_Grwn}
Let $m > 0$, $a(\tau),b(\tau),f(\tau),g(\tau) \geq 0$ for all $0 \leq \tau \leq t$, and satisfy $|v| > M \gg 1 $, and 
\begin{equation}\notag
\begin{split}
\left[\begin{array}{c} a( {\tau})  \\ b( {\tau})   \end{array} \right] \lesssim 
\left[\begin{array}{cc} 0 & 1 \\ m + |v|^{2} & |v| \end{array}\right]
 \left[\begin{array}{c} \int^{t}_{\tau } a(\tau^{\prime}) \mathrm{d}\tau^{\prime} \\ \int^{t}_{\tau} b(\tau^{\prime}) \mathrm{d}\tau^{\prime} \end{array}\right] + 
 \left[\begin{array}{c} g(t-\tau) \\  h(t-\tau) \end{array} \right]
\end{split}
\end{equation}
then
\begin{equation}\label{matrix_gronwall}
\begin{split}
\left[\begin{array}{c} a(\tau) \\ b(\tau) \end{array}\right]  \lesssim  & \ 
 e^{C  (\tau-t)} 
 \left[\begin{array}{cc}
 1 & |\tau-t|\\
 |v|^{2}|\tau-t|& 1
 \end{array} \right]
 \left[\begin{array}{c}
 g(0)   \\
h(0) 
  \end{array} \right]\\
  &
  + \int^{\tau}_{t}  e^{C  (\tau-\tau^{\prime})}
   \left[\begin{array}{cc}
 1 &|\tau-\tau^{\prime}|\\
 |v|^{2}|\tau-\tau^{\prime}|& 1
 \end{array} \right]
   \left[\begin{array}{c}
 |g'(t-\tau^{\prime}) |  \\
|h'(t-\tau^{\prime})  |
  \end{array} \right]
  \dd \tau^{\prime}.
\end{split}
\end{equation}
\end{lemma}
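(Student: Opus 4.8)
The plan is to reduce the integral inequality to a standard \emph{forward} Gronwall inequality for the antiderivative of $\binom{a}{b}$, solve it with the matrix exponential of the constant matrix $\mathcal{A}:=\begin{pmatrix}0&1\\ m+|v|^{2}&|v|\end{pmatrix}$, and then extract the weight matrix from an explicit diagonalization of $\mathcal{A}$. First I would reverse time: put $s=t-\tau$, set $\mathbf{W}(s):=\int_{t-s}^{t}\binom{a(\tau')}{b(\tau')}\,\mathrm{d}\tau'$ and $\mathbf{r}(s):=\binom{g(s)}{h(s)}$, so that $\mathbf{W}\ge\mathbf{0}$ componentwise, $\mathbf{W}(0)=\mathbf{0}$, $\mathbf{W}'(s)=\binom{a(t-s)}{b(t-s)}$, and the hypothesis becomes the forward differential inequality $\mathbf{W}'(s)\lesssim\mathcal{A}\,\mathbf{W}(s)+\mathbf{r}(s)$ in which $\mathcal{A}$ and $\mathbf{r}$ have nonnegative entries. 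Since $\mathcal{A}\ge\mathbf{0}$ entrywise the system is quasimonotone, so a Picard iteration (equivalently, componentwise comparison with the linear ODE $\mathbf{Y}'=\mathcal{A}\,\mathbf{Y}+\mathbf{r}$, $\mathbf{Y}(0)=\mathbf{0}$) yields $\mathbf{W}(s)\lesssim\int_{0}^{s}e^{\mathcal{A}(s-s')}\mathbf{r}(s')\,\mathrm{d}s'$ entrywise.

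Next I would feed this back into the hypothesis, getting $\binom{a(t-s)}{b(t-s)}\lesssim\mathcal{A}\int_{0}^{s}e^{\mathcal{A}(s-s')}\mathbf{r}(s')\,\mathrm{d}s'+\mathbf{r}(s)$. Since $\det\mathcal{A}=-(m+|v|^{2})\neq0$, $\mathcal{A}$ is invertible, and using $\frac{\mathrm{d}}{\mathrm{d}s'}\big(-\mathcal{A}^{-1}e^{\mathcal{A}(s-s')}\big)=e^{\mathcal{A}(s-s')}$ an integration by parts in $s'$ rewrites $\mathcal{A}\int_{0}^{s}e^{\mathcal{A}(s-s')}\mathbf{r}(s')\,\mathrm{d}s'$ as $e^{\mathcal{A}s}\mathbf{r}(0)-\mathbf{r}(s)+\int_{0}^{s}e^{\mathcal{A}(s-s')}\mathbf{r}'(s')\,\mathrm{d}s'$, so the $\mathbf{r}(s)$ terms cancel, leaving
\[
\binom{a(t-s)}{b(t-s)}\ \lesssim\ e^{\mathcal{A}s}\,\mathbf{r}(0)+\int_{0}^{s}e^{\mathcal{A}(s-s')}\mathbf{r}'(s')\,\mathrm{d}s'.
\]
Undoing $s=t-\tau$ and $s'=t-\tau'$ (so that $s-s'=\tau'-\tau$) turns this into \eqref{matrix_gronwall}, once $e^{\mathcal{A}s}$ is controlled; here I would assume, as the statement implicitly requires, that $g,h\in C^{1}$ so that the integration by parts is valid.

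The core of the argument is the entrywise estimate of $e^{\mathcal{A}s}$ via diagonalization. Write $\mathcal{A}=PDP^{-1}$ with $D=\mathrm{diag}(\lambda_{+},\lambda_{-})$, $\lambda_{\pm}=\tfrac12\big(|v|\pm\sqrt{5|v|^{2}+4m}\big)$, eigenvectors $\binom{1}{\lambda_{\pm}}$, $P=\begin{pmatrix}1&1\\ \lambda_{+}&\lambda_{-}\end{pmatrix}$, $\det P=\lambda_{-}-\lambda_{+}=-\sqrt{5|v|^{2}+4m}$ and $\lambda_{+}\lambda_{-}=-(m+|v|^{2})$. For $|v|>M\gg1$ (large enough that $m\lesssim|v|^{2}$) one has $|\lambda_{\pm}|\le C|v|$ and $|\lambda_{+}-\lambda_{-}|\simeq|v|$. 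Expanding $e^{\mathcal{A}s}=P\,\mathrm{diag}(e^{\lambda_{+}s},e^{\lambda_{-}s})\,P^{-1}$ entry by entry: the two diagonal entries are convex combinations of $e^{\lambda_{\pm}s}$, hence $\lesssim e^{C|v|s}$; the off-diagonal entries contain the divided difference $\frac{e^{\lambda_{+}s}-e^{\lambda_{-}s}}{\lambda_{+}-\lambda_{-}}=s\,e^{\zeta s}$ for some $\zeta$ between $\lambda_{\pm}$ (mean value theorem), hence $\lesssim s\,e^{C|v|s}$, and the $(2,1)$ entry additionally carries the factor $|\lambda_{+}\lambda_{-}|\simeq|v|^{2}$, so it is $\lesssim|v|^{2}s\,e^{C|v|s}$. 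Thus $|e^{\mathcal{A}s}|\lesssim e^{C|v|s}\begin{pmatrix}1&s\\ |v|^{2}s&1\end{pmatrix}$ entrywise, and inserting this into the display above (with $s=t-\tau$, $s-s'=\tau'-\tau$, and $C\simeq|v|$) gives \eqref{matrix_gronwall}.

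The step I expect to be the main obstacle is this last one: one must keep the $(\lambda_{+}-\lambda_{-})^{-1}$ and $\lambda_{+}\lambda_{-}$ factors matched so that the off-diagonal entries of $e^{\mathcal{A}s}$ genuinely carry the small factor $|\tau-\tau'|$ (respectively the factor $|v|^{2}|\tau-\tau'|$) rather than the crude bound $e^{C|v||\tau-\tau'|}$ that a plain operator-norm estimate would produce — this refinement is exactly what makes the lemma usable in Lemma~\ref{lemma_flow}. A secondary, minor subtlety is justifying the comparison/Gronwall step of the first paragraph as a genuine vector inequality, which is legitimate because $\mathcal{A}$ and all the forcing terms are entrywise nonnegative (so the Picard iterates increase monotonically, converge to the ODE solution, and dominate $\mathbf{W}$ at every step).
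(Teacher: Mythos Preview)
Your proof is correct and follows essentially the same route as the paper: both arguments reduce the inequality to the linear ODE governed by $\mathcal{A}=\begin{pmatrix}0&1\\ m+|v|^{2}&|v|\end{pmatrix}$, diagonalize $\mathcal{A}$ explicitly with eigenvalues $\lambda_{\pm}=\tfrac{1}{2}\big(|v|\pm\sqrt{5|v|^{2}+4m}\big)$, and extract the crucial off-diagonal factors $|\tau-\tau'|$ and $|v|^{2}|\tau-\tau'|$ from the divided difference $\frac{e^{\lambda_{+}s}-e^{\lambda_{-}s}}{\lambda_{+}-\lambda_{-}}$ via the mean value theorem. The only cosmetic differences are that the paper justifies the comparison step by building an $\epsilon$-strict supersolution $(A^{\epsilon},B^{\epsilon})$ and arguing by contradiction before letting $\epsilon\to0$, whereas you pass to the antiderivative $\mathbf{W}$ and invoke quasimonotonicity directly; and the paper obtains the $g',h'$ forcing by differentiating the integral equation, whereas you recover it by an integration by parts after solving---both routes land on the same Duhamel formula.
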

\begin{proof}
 First we consider $A^{\e}, B^{\e}$ solving, for $\e>0$,
\begin{equation}\label{sup_eqnt}
\left[\begin{array}{c} A^{\e}( {\tau})  \\ B^{\e}( {\tau})   \end{array} \right] = 
C 
\left[\begin{array}{cc} 0 & 1 \\m+ |v|^{2} &  |v| \end{array}\right]
 \left[\begin{array}{c} \int^{t}_{\tau } A^{\e}(\tau^{\prime}) \mathrm{d}\tau^{\prime} \\ \int^{t}_{\tau}B^{\e} (\tau^{\prime}) \mathrm{d}\tau^{\prime} \end{array}\right] +  \left[\begin{array}{c} g(t-\tau) \\  h(t-\tau) \end{array} \right] +  \left[\begin{array}{c} \e \\ \e \end{array}\right]
 .
\end{equation}
We claim that 
\begin{equation}\label{super_bound}
\begin{split}
\left[\begin{array}{c}
A^{\e} (\tau) \\
B^{\e} (\tau)
\end{array} \right]\lesssim & \ 
 e^{C (\tau-t)} 
 \left[\begin{array}{cc}
 1 & |\tau-t|\\
 |v|^{2}|\tau-t| & 1
 \end{array} \right]
 \left[\begin{array}{c}
 g(0) + \e \\
h(0) + \e
  \end{array} \right]\\
  &
  + \int^{\tau}_{t}  e^{C (\tau-\tau^{\prime})}
   \left[\begin{array}{cc}
 1 & |\tau-\tau^{\prime}|\\
 |v|^{2}|\tau-\tau^{\prime}| & 1
 \end{array} \right]
   \left[\begin{array}{c}
 |g'(t-\tau^{\prime})| \\
|h'(t-\tau^{\prime}) |
  \end{array} \right]
  \dd \tau^{\prime}.\end{split}
\end{equation}

We consider the matrix
$\left[\begin{array}{cc} 1 & 0 \\ 0 & 1 \end{array} \right]\left[\begin{array}{cc} 0 & 1 \\ m+|v|^{2} & |v|\end{array} \right]  = \left[\begin{array}{cc} 0 & 1 \\ m+ |v|^{2} &   |v|  \end{array} \right]$. Denote 
\begin{eqnarray*}
r_{1} : = \frac{ 1  + \sqrt{5 + \frac{4m}{|v|^2}}}{2} , \ r_{2} : = \frac{1 - \sqrt{5 + \frac{4m}{|v|^2}}}{2}, \ r_{3} : = \frac{1}{\sqrt{5 + \frac{4m}{|v|^2}}  }.
\end{eqnarray*} Then we diagonalize this matrix as 
\begin{equation*} 
 \left[\begin{array}{cc} 0 & 1 \\  m + |v|^{2} &  |v|  \end{array} \right] 
 =  \left[\begin{array}{cc} 1 & 1 \\ r_{1}|v| &r_{2}|v| \end{array} \right]
 \left[\begin{array}{cc}
 r_{1}|v| & 0 \\ 0 &  r_{2}|v| \end{array}\right] 
 \left[\begin{array}{cc}
- r_{2} r_{3}  &r_{3} \frac{1}{|v|} \\
r_{1} r_{3}
&- r_{3} \frac{1}{|v|}
  \end{array} \right]. 
\end{equation*}
Denote $ \left[\begin{array}{c}  { {\mathcal{A}^{\e}  }}( {\tau}) \\  {\mathcal{B}^{\e}}( {\tau}) \end{array} \right]:= 
   \left[\begin{array}{cc}
- r_{2} r_{3}  &r_{3} \frac{1}{|v|} \\
r_{1} r_{3}
&- r_{3} \frac{1}{|v|}
  \end{array} \right]
  \left[\begin{array}{c} { {A}}^{\e}( {\tau}) \\  { {B}}^{\e}( {\tau})\end{array} \right] $ and rewrite the equations as 
\begin{equation*}
\begin{split}
\frac{d}{d {\tau}}
\left[\begin{array}{c}  { {\mathcal{A}^{\e}}}( {\tau}) \\  {\mathcal{B}^{\e}} ({\tau}) \end{array} \right]
  & =C
  \left[\begin{array}{cc}
  r_{1} |v| & 0 \\
  0 & r_{2} |v|
  \end{array}
  \right] 
 \left[\begin{array}{c}  { {\mathcal{A}^{\e}}}( {\tau}) \\  {\mathcal{B}^{\e}}( {\tau}) \end{array} \right]  
 +  \left[\begin{array}{cc}
- r_{2} r_{3}  &r_{3} \frac{1}{|v|} \\
r_{1} r_{3}
&- r_{3} \frac{1}{|v|}
  \end{array} \right]  \left[\begin{array}{c} {g'}(t-\tau)  \\     {h'}(t-\tau)   \end{array} \right].
\end{split}
\end{equation*}
Directly we compute
\begin{equation}\notag
\begin{split}
 \left[\begin{array}{c}  { {\mathcal{A}^{\e}}}( {\tau})
  \\  {\mathcal{B}^{\e}}( {\tau}) \end{array} \right] &
   =  \left[\begin{array}{c}
 e^{C   r_{1}|v| { (\tau-t)}}
  { {\mathcal{A}^{\e}}}(t) \\   
  e^{C_{\xi }  r_{2}|v|  {  (\tau-t)} }  {\mathcal{B}^{\e}}(t) \end{array} \right]\\
   & \   \ \  +  \int ^{\tau}_{t}   \left[\begin{array}{cc}
 e^{ C r_{2}|v| (\tau-\tau^{\prime})}& 0 \\ 0 &  e^{Cr_{2}|v| (\tau-\tau^{\prime})}\end{array}\right]    \left[\begin{array}{cc}
- r_{2} r_{3}  &r_{3} \frac{1}{|v|} \\
r_{1} r_{3}
&- r_{3} \frac{1}{|v|}
  \end{array} \right]
  \left[\begin{array}{c}  {g'}(t-\tau^{\prime})  \\     {h'}(t-\tau^{\prime})  \end{array} \right] \mathrm{d}\tau^{\prime}.
  \end{split}
  \end{equation}
  Then 
  \begin{eqnarray*}
  \left[\begin{array}{c}
  A^{\e} (\tau) \\
  B^{\e} (\tau)
  \end{array} \right] &=&  
  \left[\begin{array}{cc}
  1 & 1\\
r_{1} |v| & 
r_{2} |v|  
\end{array}  \right]
    \left[\begin{array}{c}
  \mathcal{A}^{\e} (\tau) \\
  \mathcal{B}^{\e} (\tau)
  \end{array} \right] \\
  &=&    \left[\begin{array}{cc}
  1 & 1\\
r_{1} |v| & 
r_{2} |v|  
\end{array}  \right] 
\left[\begin{array}{cc}
e^{C r_{1} |v|( \tau-t) } & 0 \\
0 & e^{C r_{2} |v| (\tau-t) } 
\end{array} \right]   \left[\begin{array}{cc}
- r_{2} r_{3}  &r_{3} \frac{1}{|v|} \\
r_{1} r_{3}
&- r_{3} \frac{1}{|v|}
  \end{array} \right]
  \left[\begin{array}{c} { {A}}^{\e}( {t}) \\  { {B}}^{\e}( {t})\end{array} \right] \\
  &&+ 
  \int^{\tau}_{t}  \left[\begin{array}{cc}
  1 & 1\\
r_{1} |v| & 
r_{2} |v|  
\end{array}  \right] 
\left[\begin{array}{cc}
e^{C r_{1} |v| (\tau - \tau^{\prime} )} & 0 \\
0 & e^{C r_{2} |v| (\tau - \tau^{\prime} )} 
\end{array} \right] \\
&& \ \ \ \ \ \ \ \ \ \ \ \ \ \  \ \ \  \ \ \  \ \ \ \ \ \  \times   \left[\begin{array}{cc}
- r_{2} r_{3}  &r_{3} \frac{1}{|v|} \\
r_{1} r_{3}
&- r_{3} \frac{1}{|v|}
  \end{array} \right]  \left[\begin{array}{c}  {g'}(t-\tau^{\prime})  \\     {h'}(t-\tau^{\prime})  \end{array} \right] \mathrm{d}\tau^{\prime} .
    \end{eqnarray*}
  Directly, the RHS equals
    \begin{eqnarray*}
  & & 
  \left[\begin{array}{cc}
  r_{3} \big( r_{1} e^{C r_{2} |v| (\tau-t)} 
  - r_{2} e^{C r_{1} |v| (\tau-t)} 
  \big)
  & \frac{r_{3}}{|v|} \big(  e^{C r_{1} |v|  (\tau-t)} -e^{C r_{2} |v|  (\tau-t)}   \big)\\
-  r_{1} r_{2} r_{3} |v| \big(  e^{C r_{1} |v| (\tau-t)} -e^{C r_{2} |v|  (\tau-t)}   \big) & r_{3 } \big( r_{1} e^{C r_{2} |v| (\tau-t)} 
  - r_{2} e^{C r_{1} |v| (\tau-t)} 
  \big)
\end{array}  \right] \left[\begin{array}{c}
  A^{\e} (t) \\
  B^{\e} (t)
  \end{array} \right]\\
  &&+  \int^{\tau}_{t} 
   \left[\begin{array}{cc}
  r_{3} \big( r_{1} e^{C r_{2} |v| (\tau-\tau^{\prime})} 
  - r_{2} e^{C r_{1} |v| (\tau- \tau^{\prime})} 
  \big)
  & \frac{r_{3}}{|v|} \big(  e^{C r_{1} |v|  (\tau- \tau^{\prime})} -e^{C r_{2} |v|  (\tau-\tau^{\prime})}   \big)\\
-  r_{1} r_{2} r_{3} |v| \big(  e^{C r_{1} |v| (\tau-\tau^{\prime})} -e^{C r_{2} |v|  (\tau-\tau^{\prime})}   \big) & r_{3 } \big( r_{1} e^{C r_{2} |v| (\tau-\tau^{\prime})} 
  - r_{2} e^{C r_{1} |v| (\tau-\tau^{\prime})} 
  \big)
\end{array}  \right] \\
&&   \ \ \ \ \ \ \ \ \times  \left[\begin{array}{c}  {g'}(t-\tau^{\prime})  \\     {h'}(t-\tau^{\prime})  \end{array} \right] \mathrm{d}\tau^{\prime}.
\end{eqnarray*}
Since $|v| > M$, we have $|r_1 - r_2 | \lesssim 1$, so by expansion we have $|e^{C r_{1} |v|  (\tau-t)} -e^{C r_{2} |v|  (\tau-t)}| \lesssim_{C^{\xi}, \delta } |v||\tau-t|  e^{C_{\xi, \delta}|v|(\tau-t)}$. Therefore we conclude (\ref{super_bound}).

  
  Now we claim 
  \begin{equation}\label{aAbB}
  a(\tau) \leq  A (\tau), \ \ \ b(\tau) \leq 
B  (\tau), \ \ \ \text{for all} \ \ \tau \leq t.
  \end{equation}
First we claim that $a(\tau) \leq A^{\e}(\tau)$ and $b(\tau) \leq B^{\e}(\tau)$ for all $\tau$. Otherwise, we should have at least for some time $\tau_{0}$ such that $a(\tau) \leq A^{\e}(\tau)$ and $b(\tau) \leq B^{\e}(\tau)$ for $ \tau_{0} \leq \tau \leq t$ but either $a(\tau)> A^{\e}(\tau)$ or $b(\tau)> B^{\e}(\tau)$ for a small neighborhood of $\tau> \tau_{0}$. Especially either $a(\tau_{0})= A^{\e} (\tau_{0})$ or $b(\tau_{0})= B^{\e} (\tau_{0})$. But this is impossible. Since
\begin{eqnarray*}
\left[\begin{array}{c}
A^{\e}(\tau) - a(\tau) \\
B^{\e}(\tau) - b(\tau)
\end{array} \right]
\geq C 
\left[\begin{array}{cc}
0 & 1 \\
m+ |v|^{2}  & |v|
\end{array} \right]
\left[\begin{array}{c}
\int^{t}_{\tau} (A^{\e} (\tau^{\prime}) - a(\tau^{\prime})) \dd \tau^{\prime}\\
\int^{t}_{\tau} (B^{\e} (\tau^{\prime}) - b(\tau^{\prime})) \dd \tau^{\prime}
\end{array} \right] +   \left[\begin{array}{c} \e \\ \e
\end{array} \right],
\end{eqnarray*}
we have $\left[\begin{array}{c}
A^{\e}(\tau) - a(\tau) \\
B^{\e}(\tau) - b(\tau)
\end{array} \right] \geq   \left[\begin{array}{c} \e \\ \e
\end{array} \right]>0$ as $\tau \rightarrow \tau_{0}^{+}$. Then we prove the inequalities (\ref{aAbB}) by letting $\e \rightarrow 0$.  
Finally we prove the claim (\ref{matrix_gronwall}) from (\ref{super_bound}) and (\ref{aAbB}) and letting $\e \rightarrow0 $.\end{proof}

\begin{proof}[\textbf{Proof of Theorem \ref{theorem_Dxv}}]

First we consider the case of $t<t_{\mathbf{b}}(t,x,v).$ Directly
\begin{equation}\notag\label{Dxv_interior}
\begin{split}
\left| \frac{\partial(  {X}_{\mathbf{cl}}(s;t,x,v), V_{\mathbf{cl}}(s,t,x,v))}{\partial (t,x,v)}  \right|
 \lesssim  
 \left[\begin{array}{ccc}
 |v| +(t-s) & 1 & (t-s)
 \\ \| E \|_{L^\infty_{t,x}} +(t-s) & |v|+(t-s) & 1 
 \end{array}\right].
\end{split}
\end{equation}
The computation will be the same as we will get for \eqref{ss1}.

 \vspace{8pt}

Now we consider the case of $t\geq t_{\mathbf{b}}(t,x,v)$. We split our proof into 10 steps.

 \vspace{8pt}

\noindent{\textit{Step 1. Moving frames and grouping with respect to the scaling $t|v|=L_{\xi}$, with fixed $0< L_{\xi}\ll 1.$   }  }

\vspace{4pt}

Fix $(t,x,v)\in [0,\infty)\times \bar{\Omega}\times \mathbb{R}^{3}.$ Also we fix small constant $\delta$ such that $\delta \ll \| E \|_{L^\infty_{t,x}}$. We define, at the boundary,
\begin{equation}
\mathbf{r}^{\ell} : = \frac{|\mathbf{v}_{\perp}^{\ell}|}{|v^{\ell}|}.\label{r}
\end{equation}

Bounces $\ell$ (and $(t^{\ell}, x^{\ell},v^{\ell})$) are categorized as \textit{Type I}, \textit{Type II}, or  \textit{Type III}:
\begin{equation}\label{type_r}
\begin{split}
 \text{all the bounces } \ell \text{ are \textit{Type I} if and only if }  & |v| \le \delta, \\
 \text{a bounce } \ell \text{ is \textit{Type II} if and only if } & |v| > \delta, \mathbf{r}^{\ell} \leq   \sqrt{\delta}, \\
 \text{a bounce } \ell \text{ is \textit{Type III} if and only if }  & |v| > \delta, \mathbf{r}^{\ell} >  \sqrt{\delta} .
\end{split}
\end{equation}
Now we choose $T < \frac{ \sqrt \delta}{\| E \|_{L^\infty_{t,x}}^2 +1}$. Then if $|v|\le \delta$, we have
\begin{equation}\notag
\begin{split}
\max_{t^{\ell+1} \le s \le t^\ell }|\xi(X_{\mathbf{cl}}(s ;t^{\ell}, x^{\ell},v^{\ell})  )| \le |v| T + \| E \|_{L^\infty_{t,x}} T^2 \le 2 \delta.
\end{split}
\end{equation}
And if $ |v| > \delta, \mathbf{r}^{\ell} \leq   \sqrt{\delta}$, we have from \eqref{tbest}
\[
\max_{t^{\ell+1} \le s \le t^\ell }|\xi(X_{\mathbf{cl}}(s ;t^{\ell}, x^{\ell},v^{\ell})  )| \lesssim |t^\ell - t^{\ell +1 } |^2 |v^\ell |^2 + (\| E \|_{L^\infty_{t,x}}^2 + 1 )T^2 \lesssim   \left( \frac{|\mathbf{v}_{\perp}^{\ell}|}{|v^{\ell}|} \right)^2 + \delta \lesssim \delta.
\]
Therefore if a bounce $\ell$ is \textit{Type I} or \textit{Type II} then $\max_{t^{\ell+1} \leq \tau \leq t^{\ell}} |\xi(X_{\mathbf{cl}}(\tau;t,x,v))| \leq C\delta$.

\vspace{4pt}

Now we assign a coordinate chart for each bounce $\ell$ (moving frames). For \textit{Type I} bounces $\ell$ in (\ref{type_r}) we let $\mathbf{p}^{\ell}= (z^{\ell}, w^{\ell})$ with $z^{\ell} =x^{\ell}$ and $w^{\ell} = \tau_1(x^\ell)$. We choose $\mathbf{p}^{\ell}-$spherical coordinate in Lemma \ref{chart_lemma} and (\ref{polar}) with this $\mathbf{p}^{\ell}$.

For \textit{Type II} bounce $\ell$, we choose $ {\mathbf{p}}^{\ell}:=(z^{\ell}, w^{\ell})$ on $\partial\Omega\times \mathbb{S}^{2}$ with $n(z^{\ell} ) \cdot w^{\ell}=0$
 \begin{equation}\label{pl}
 z^{\ell} = x^{\ell} , \ \ \  w^{\ell}= \frac{v^{\ell} - {(v^{\ell}\cdot n(z^{\ell}))}n(z^{\ell})}{|  v^{\ell} - {(v^{\ell}\cdot n(z^{\ell}))}n(z^{\ell})  |} .
\end{equation}
Note that, by the definition of \textit{Type I } bounce, $|v^{\ell} - (v^{\ell} \cdot n(z^{\ell}) n(z^{\ell}))|^{2}=|v|^{2} - |\mathbf{v}_{\perp}^{\ell}|^{2}\gtrsim |v|^{2}(1-\delta) \gtrsim_{\delta} |v|^{2}$ and hence $w^{\ell}$ is well-defined.

Moreover for \textit{Type I } and \textit{Type II} bounce
\begin{equation}\label{L}
|X_{\mathbf{cl}}(s;t,x,v)- \mathcal{L}_{\mathbf{p}^{\ell}}| \gtrsim C_\delta>0,
\end{equation}
for $|v||t^{\ell}-s| \leq  \frac{1}{100} \min_{x\in\partial\Omega} |x|.$ This is due to the fact that the projection of $V_{\mathbf{cl}}(s)$ on the plane passing $z^{\ell}$ and perpendicular to $n(z^{\ell}) \times w^{\ell}$ is at most $|v|$ magnitude but the distance from $z^{\ell}$ to the origin(the projection of poles $\mathcal{N}_{\mathbf{p}^{\ell}}$ and $\mathcal{S}_{\mathbf{p}^{\ell}}$) has lower bound $\frac{1}{10} \min_{x\in \partial\Omega}|x|,$ $|s -t^{\ell}|\ll1.$

For \textit{Type III} bounce $\ell$$(t^{\ell},x^{\ell},v^{\ell})$, we choose $\mathbf{p}^{\ell}= (z^{\ell}, w^{\ell})$ with $|z^{\ell}-x^{\ell}|\leq \sqrt{\delta}$ and we choose arbitrary $w^{\ell} \in\mathbb{S}^{2}$ satisfying $n(z^{\ell})\cdot w^{\ell}=0$. Note that unlike \textit{Type I}, this $\mathbf{p}^{\ell}-$spherical coordinate might not be defined for $s\in [t^{\ell+1}, t^{\ell}]$ but only defined near the boundary.

Whenever the moving frame is defined (for all $\tau \in (t^{\ell+1}, t^{\ell}]$ when $\ell$ is \textit{Type I} or \textit{Type II}, and $|\tau -t^{\ell}| \ll1$ when $\ell$ is \textit{Type III}) we denote, by (\ref{polar}),
\[
(\mathbf{X}_{\mathbf{\ell}}(\tau), \mathbf{V}_{\mathbf{\ell}}(\tau))=
( \mathbf{x}_{\perp_\mathbf{\ell}}(\tau), \mathbf{x}_{\parallel_\mathbf{\ell}}(\tau), \mathbf{v}_{\perp_\mathbf{\ell}}(\tau), \mathbf{v}_{\parallel_{\mathbf{\ell}}}(\tau)) 
: = \Phi^{-1}_{\mathbf{p}^{\ell}}(X_{\mathbf{cl}}(\tau), V_{\mathbf{cl}}(\tau)).
\]
Especially at the boundary we denote
\[
(\mathbf{x}_{\perp_{\ell}}^{\ell},\mathbf{x}_{\parallel_{\ell}}^{\ell} , \mathbf{v}_{\perp_{\ell}}^{\ell},\mathbf{v}_{\parallel_{\ell}}^{\ell} )
:= \lim_{\tau \uparrow t^{\ell}} (\mathbf{X}_{\mathbf{\ell}}(\tau), \mathbf{V}_{\mathbf{\ell}}(\tau))
, \ \ \ \text{with  } \mathbf{x}_{\perp_{\ell}}^{\ell}=0, \ \mathbf{v}_{\perp_{\ell}}^{\ell}\geq 0.
\]
Then we define
\[
( \mathbf{x}_{\perp_{\ell}}^{\ell+1}, \mathbf{x}_{\parallel_{\ell}}^{\ell+1},  \mathbf{v}_{\parallel_{\ell}}^{\ell+1} ) = 
\lim_{\tau \downarrow t^{\ell+1}} (   \mathbf{x}_{\perp_{\ell}}(\tau), \mathbf{x}_{\parallel_{\ell}}(\tau),  \mathbf{v}_{\parallel_{\ell}}(\tau) ),
\]
and
\begin{equation}\label{v_perp}
  \mathbf{v}_{\perp_{\ell}}^{\ell+1}:= - \lim_{\tau \downarrow t^{\ell+1}} \mathbf{v}_{\perp_{\ell}}(\tau).
\end{equation}


Now we regroup the indices of the specular cycles, without order changing, as
\begin{equation}\notag
\begin{split}
\{0,1,2,\cdots,   \ell_{*}-1, \ell_{*}\}  =\{0\} \cup  \mathcal{G}_{1} \cup \mathcal{G}_{2} \cup \cdots \cup\mathcal{G}_{[\frac{|t-s||v|}{L_{\xi}}]} \cup \mathcal{G}_{[\frac{|t-s||v|}{L_{\xi}}]+1} 
,
\end{split}
\end{equation}
where $\big[ a \big]\in\mathbb{N}$ is the greatest integer less than or equal to $a$. Each group is
\begin{equation}\label{group}
\begin{split}
\mathcal{G}_{1} &= \{ 1, \cdots, \ell_{1}-1, \ell_{1}\},\\
 \mathcal{G}_{2} &= \{ \ell_{1}, \ell_{1}+1,\cdots , \ell_{2}-1,\ell_{2}\},    \\
 & \ \   \vdots\\
\mathcal{G}_{[\frac{|t-s||v|}{L_{\xi}}]} &= \{\ell_{[\frac{|t-s||v|}{L_{\xi}}]-1}  ,\ell_{[ \frac{|t-s||v|}{{L_{\xi}}}]-1 }+1,\cdots,\ell_{[ \frac{|t-s||v|}{L_{\xi}}]}-1,\ell_{[ \frac{|t-s||v|}{L_{\xi}}]}  \},\\
 \mathcal{G}_{[ \frac{|t-s||v|}{L_{\xi}}]+1}  &= \{\ell_{[ \frac{|t-s||v|}{L_{\xi}}] }  ,\ell_{[ \frac{|t-s||v|}{L_{\xi}}] } +1,\cdots,\ell_{*}
  \},
\end{split}
\end{equation}
where $\ell_{1} = \inf\{ \ell \in\mathbb{N} : |v|\times |t^{0} - t^{\ell_{1}}| \geq L_{\xi} \}$ and inductively
\begin{equation}\label{L_xi}
\ell_{i} = \inf\{ \ell \in\mathbb{N} :    |v|\times |t^{\ell_{i}} - t^{\ell_{i+1}}| \geq L_{\xi}\},
\end{equation}
and we have denoted $\ell_{*} = \ell_{[ \frac{|t-s||v|}{L_{\xi}}]+1}$.

Our analysis is carried out in each group $G_{i}$. We note
that within each $G_{i},$ $|t^{\ell_{i}}-t^{\ell_{i+1}}||v|<L_{\xi }$ by our
design, so from the velocity lemma, $r_{\ell_{i}}$ is comparable to each other,
so is $|v^{\ell}|.$ We can also cover the entire $G_{i}$ via a single chart in
Section 8. By the chain rule, with the assigned $\mathbf{p}^{\ell}-$spherical coordinate (moving frame), we have for fixed $0 \leq s \leq t$ and $s \in (t^{\ell_{*}+1}, t^{\ell_{*}})$
\begin{equation}\label{chain}
\begin{split}
&\frac{\partial ( X_{\mathbf{cl}}(s;t,x,v),V_{\mathbf{cl}}(s;t,x,v))}{\partial (t,x,v)}\\
 = &\underbrace{\frac{\partial ( X_{\mathbf{cl}}(s), V_{\mathbf{cl}}(s))}{\partial (t^{\ell_{*}},  \mathbf{x}_{\parallel_{\ell_{*}}}^{\ell_{*}}, \mathbf{v}_{\perp_{\ell_{*}}}^{\ell_{*}}, \mathbf{v}_{\parallel_{\ell_{*}}}^{\ell_{*}})}  }_{\text{from the last bounce to the }s-\text{plane}} \\
\times& \underbrace{\prod_{i=1}^{ [\frac{|t-s||v|}{L_{*}}]} \underbrace{\frac{\partial (t^{\ell_{i+1}},  \mathbf{x}_{\parallel_{\ell_{i+1}}}^{\ell_{i+1}}, \mathbf{v}_{\perp_{\ell_{i+1}}}^{\ell_{i+1}}, \mathbf{v}_{\parallel_{\ell_{i+1}}}^{\ell_{i+1}})}{\partial (t^{\ell_{i+1}-1},  \mathbf{x}_{\parallel_{{\ell_{i+1}-1}} }^{\ell_{i+1}-1}, \mathbf{v}_{\perp_{{\ell_{i+1}-1}}}^{\ell_{i+1}-1}, \mathbf{v}_{\parallel_{{\ell_{i+1}-1}}}^{\ell_{i+1}-1})}
 \times \cdots \times
 \frac{\partial (t^{\ell_{i}+1},  \mathbf{x}_{\parallel_{{\ell_{i} +1}}}^{\ell_{i} +1}, \mathbf{v}_{\perp_{{\ell_{i} +1}}}^{\ell_{i}+1}, \mathbf{v}_{\parallel_{{\ell_{i} +1}}}^{\ell_{i}+1})}{\partial (t^{\ell_{i} },  \mathbf{x}_{\parallel_{ {\ell_{i} }}}^{\ell_{i}  }, \mathbf{v}_{\perp_{ {\ell_{i} }}}^{\ell_{i} }, \mathbf{v}_{\parallel_{ {\ell_{i} }}}^{\ell_{i} })}  }_{i-\text{th intermediate group}} }_{\text{whole intermediate groups}}\\
 \times & \underbrace{ \frac{\partial (t^{1},  \mathbf{x}_{\parallel_{1}}^{1}, \mathbf{v}_{\perp_{1}}^{1}, \mathbf{v}_{\parallel_{1}}^{1})}{\partial (t,x,v)}}_{\text{from the } t-\text{plane to the first bounce} }.
\end{split}
\end{equation}

Before we start to calculate the matrix for any bounces, we first prove an claim that will be used later: there exists a constant $C = C(\xi)$ such that for any bounce $\ell$ and any $t^{\ell+1} < s < t^\ell$ we have
\Be \label{Ftimedest}
 \left| \frac{ \p F_\perp(s) }{\p \tau}  + \frac{ \p F_\perp(s) }{\p t^\ell}  \right| +  \left| \frac{ \p F_\parallel(s) }{\p \tau}  + \frac{ \p F_\parallel(s) }{\p t^\ell}  \right|
< C \| \p_t E \|_{L^{\infty}_{t,x}}
\Ee
By direct computation we have
\Be \label{dtellall} \begin{split}
& \frac{ \p \mathbf x_\perp(s) } {\p t^\ell} = - \mathbf v_\perp(s) + \int_s^{t^\ell} \int_\tau^{t^\ell}   \left( \p_{\tau'} F_\perp (\tau') + \p_{t^\ell} F_\perp(\tau') \right)  d\tau' d\tau, 
\\ & \frac{ \p \mathbf x_\parallel(s) } {\p t^\ell} = - \mathbf v_\parallel(s) + \int_s^{t^\ell} \int_\tau^{t^\ell}  \left( \p_{\tau'} F_\parallel (\tau') + \p_{t^\ell} F_\parallel(\tau') \right)d\tau' d\tau, 
\\ & \frac{ \p \mathbf v_\parallel(s) } {\p t^\ell} = - F_\parallel(s) - \int_s^{t^\ell} \left( \p_\tau F_\parallel (\tau) + \p_{t^\ell} F_\parallel(\tau) \right) d\tau,
 \\ & \frac{ \p \mathbf v_\parallel(s) } {\p t^\ell} = - F_\perp(s) - \int_s^{t^\ell} \left( \p_\tau F_\perp (\tau) + \p_{t^\ell} F_\perp(\tau) \right) d\tau,
\end{split} \Ee
and
\Be \label{diftderv1} \begin{split}
& \left| \frac{ \p F_\perp(s) }{\p \tau}  + \frac{ \p F_\perp(s) }{\p t^\ell}  \right| +  \left| \frac{ \p F_\parallel(s) }{\p \tau}  + \frac{ \p F_\parallel(s) }{\p t^\ell}  \right|
 \\  =&  \left| \nabla_{\mathbf{x}_\perp} F_\perp \cdot \left( \mathbf{v}_\perp(s) + \frac{ \p \mathbf x_\perp(s) } {\p t^\ell} \right) + \nabla_{\mathbf{x}_\parallel} F_\perp \cdot  \left( \mathbf{v}_\parallel(s) +  \frac{ \p \mathbf x_\parallel(s) } {\p t^\ell}  \right)+ \nabla_{\mathbf{v}_\parallel} F_\perp \cdot  \left( F_\parallel(s) + \frac{ \p \mathbf v_\parallel(s) } {\p t^\ell} \right) - \p_s E \cdot \mathbf n(\mathbf{x}_\parallel) \right|
 \\ & +  \left| \nabla_{\mathbf{x}_\perp} F_\parallel \cdot \left( \mathbf{v}_\perp(s) + \frac{ \p \mathbf x_\perp(s) } {\p t^\ell} \right) + \nabla_{\mathbf{x}_\parallel} F_\parallel \cdot  \left( \mathbf{v}_\parallel(s) +  \frac{ \p \mathbf x_\parallel(s) } {\p t^\ell}  \right)+ \nabla_{\mathbf{v}_\perp} F_\parallel \cdot  \left( F_\perp(s) + \frac{ \p \mathbf v_\perp(s) } {\p t^\ell} \right)  \right.
 \\ &  \quad \left. + \nabla_{\mathbf{v}_\parallel} F_\parallel \cdot  \left( F_\parallel(s) + \frac{ \p \mathbf v_\parallel(s) } {\p t^\ell} \right) - \sum_{i=1,2} G_{ \mathbf{p},ij} (\mathbf{x}_{\perp_{ \mathbf{p}}}, \mathbf{x}_{\parallel_{ \mathbf{p}}})\frac{(-1)^{i } \p_s E(s, - \mathbf{x}_\perp \mathbf n (\mathbf x_\parallel ) + \mathbf \eta (\mathbf x_\parallel ) )  \cdot \big\{ \mathbf{n}_{ \mathbf{p}} (\mathbf{x}_{\parallel_{ \mathbf{p}}}) \times \partial_{i+1} \mathbf{\eta}_{ \mathbf{p}}(\mathbf{x}_{\parallel_{ \mathbf{p}}})  \big\}
}{\mathbf{n}_{ \mathbf{p}}(\mathbf{x}_{\parallel_{ \mathbf{p}}}) \cdot (\partial_{1} \mathbf{\eta}_{ \mathbf{p}}(\mathbf{x}_{\parallel_{ \mathbf{p}}}) \times \partial_{2} \mathbf{\eta}_{ \mathbf{p}}(\mathbf{x}_{\parallel_{ \mathbf{p}}}))}  \right|.
 \end{split} \Ee
Then from \eqref{dtellall}, \eqref{diftderv1}, and using the fact that $\| \nabla_{\mathbf x_\parallel, \mathbf x_\perp} F_{\parallel} \|_\infty + \| \nabla_{\mathbf x_\parallel, \mathbf x_\perp} F_{\perp} \|_\infty \lesssim |v|^2 +1$, $\| \nabla_{\mathbf v_\parallel} F_{\perp} \|_\infty + \| \nabla_{\mathbf v_\perp, \mathbf v_\parallel} F_{\parallel} \|_\infty \lesssim |v| + 1$, we have
\Be \label{diftderv1} \begin{split}
& \left| \frac{ \p F_\perp(s) }{\p \tau}  + \frac{ \p F_\perp(s) }{\p t^\ell}  \right| +  \left| \frac{ \p F_\parallel(s) }{\p \tau}  + \frac{ \p F_\parallel(s) }{\p t^\ell}  \right|
 \\  \lesssim & (|v|^2 +1 )  \int_s^{t^\ell} \int_\tau^{t^\ell} \left(  \left|  \p_{\tau'} F_\perp (\tau') + \p_{t^\ell} F_\perp(\tau') \right| + \left| \p_{\tau'} F_\parallel (\tau') + \p_{t^\ell} F_\parallel(\tau') \right| \right)  d\tau' d\tau
 \\ & + (|v|+1) \int_s^{t^\ell}   \left(  \left|  \p_{\tau'} F_\perp (\tau') + \p_{t^\ell} F_\perp(\tau') \right| + \left| \p_{\tau'} F_\parallel (\tau') + \p_{t^\ell} F_\parallel(\tau') \right| \right)  d\tau' + \|\p_s E \|_\infty
  \\  \lesssim &  (|v|^2 +1 )   \int_s^{t^\ell}  (\tau' - s ) \left(  \left|  \p_{\tau'} F_\perp (\tau') + \p_{t^\ell} F_\perp(\tau') \right| + \left| \p_{\tau'} F_\parallel (\tau') + \p_{t^\ell} F_\parallel(\tau') \right| \right)  d\tau'
 \\ & + (|v|+1) \int_s^{t^\ell}   \left(  \left|  \p_{\tau'} F_\perp (\tau') + \p_{t^\ell} F_\perp(\tau') \right| + \left| \p_{\tau'} F_\parallel (\tau') + \p_{t^\ell} F_\parallel(\tau') \right| \right)  d\tau' + \|\p_s E \|_\infty
 \\ \lesssim & (|v|+1) \int_s^{t^\ell}   \left(  \left|  \p_{\tau'} F_\perp (\tau') + \p_{t^\ell} F_\perp(\tau') \right| + \left| \p_{\tau'} F_\parallel (\tau') + \p_{t^\ell} F_\parallel(\tau') \right| \right)  d\tau' + \|\p_s E \|_\infty,
\end{split} \Ee
where for the second inequality we switch the order of integration $\int_s^{\ell} \int_\tau^{t^\ell } 1  d\tau' d\tau =\int_s^{\ell} \int_s^{\tau' } 1  d\tau d\tau' =  \int_s^{\ell } (\tau' -s ) d\tau'$, and for the third inequality we use $|v|(t^\ell -s ) \lesssim 1 $.

Therefore from \eqref{diftderv1} and Gronwall's inequality we get
\[
\left| \frac{ \p F_\perp(s) }{\p \tau}  + \frac{ \p F_\perp(s) }{\p t^\ell}  \right| +  \left| \frac{ \p F_\parallel(s) }{\p \tau}  + \frac{ \p F_\parallel(s) }{\p t^\ell}  \right| \lesssim   \|\p_s E \|_{L^{\infty}_{t,x}} e^{ \int_s^{t^\ell } (|v| + 1 ) d\tau' }  \lesssim   \|\p_s E \|_{L^{\infty}_{t,x}},
\]
and this proves \eqref{Ftimedest}.

\vspace{4pt}
\noindent\textit{Step 2. From the last bounce $\ell_{*}$ to the $s-$plane}

We choose $s^{ \ell_{*}}\in (\frac{t^{\ell_{*}}+s}{2},t^{\ell_{*}})\subset(s,t^{\ell_{*}})$ such that $|v||t^{\ell_{*}} - s^{\ell_{*}}|\ll 1$ and the $\ell_{*}-$spherical coordinate $(\mathbf{X}_{\ell_{*}}(s^{\ell_{*}}), \mathbf{V}_{\ell_{*}}(s^{\ell_{*}}))$ is well-defined regardless of {type}s of $\ell_{*}$ in (\ref{type_r}). Notice that $s^{\ell_{*}}$ is independent of $t^{\ell_{*}}$ and $s$ so that $\frac{\partial s^{\ell_{*}}}{\partial t^{\ell_{*}}}=0= \frac{\partial s^{\ell_{*}} }{\partial s  }.$

We first follow the flow in $(x,v)$
co-ordinate to near the boundary at $t=s^{\ell_{* }}$, change to the chart
to $(X,V),$ then follow the flow in $(X,V).$ Regarding $s^{\ell_{*}}$ as a free variable, by the chain rule,
 \begin{equation}\notag
 \begin{split}
&\frac{\partial (
X_{\mathbf{cl}}(s), V_{\mathbf{cl}}(s))}
{\partial (t^{\ell_{*}},
\mathbf{x}_{\parallel_{\ell_{*}}}^{\ell_{*}}, \mathbf{v}_{\perp_{\ell_{*}}}^{\ell_{*}}, \mathbf{v}_{\parallel_{\ell_{*}}}^{\ell_{*}})}\\
&= \frac{\partial ( X_{\mathbf{cl}}(s), V_{\mathbf{cl}}(s))}
{\partial (s^{\ell_{*}}, \mathbf{X}_{\ell_{*}}(s^{\ell_{*}}),   \mathbf{V}_{\ell_{*}}(s^{\ell_{*}})  )}
   \frac{\partial (s^{\ell_{*}}, \mathbf{x}_{\perp_{\ell_{*}}}(s^{\ell_{*}}),  \mathbf{x}_{\parallel_{\ell_{*}}}(s^{\ell_{*}}),   \mathbf{v}_{\perp_{\ell_{*}}}(s^{\ell_{*}}), \mathbf{v}_{\parallel_{\ell_{*}}}(s^{\ell_{*}})
    )}{\partial (t^{\ell_{*}},
     \mathbf{x}_{\parallel_{\ell_{*}}}^{\ell_{*}}, \mathbf{v}_{\perp_{\ell_{*}}}^{\ell_{*}}, \mathbf{v}_{\parallel_{\ell_{*}}}^{\ell_{*}})}
\\
& = \frac{\partial (
X_{\mathbf{cl}}(s), V_{\mathbf{cl}}(s))}{\partial (s^{\ell_{*}},  {X}_{ \mathbf{cl} }(s^{\ell_{*}}),   {V}_{ \mathbf{cl}   }(s^{\ell_{*}})  )}
\frac{\partial (s^{\ell_{*}},  {X}_{  \mathbf{cl}   } (s^{\ell_{*}}),   {V}_{ \mathbf{cl}   }(s^{\ell_{*}})  )}{\partial (s^{\ell_{*}}, \mathbf{X}_{ {\ell_{*}}}(s^{\ell_{*}}),   \mathbf{V}_{ {\ell_{*}}}(s^{\ell_{*}})  )}\\
& \ \ \ \ \ \ \  \times 
   \frac{\partial (s^{\ell_{*}}, \mathbf{x}_{\perp_{\ell_{*}}}(s^{\ell_{*}}),  \mathbf{x}_{\parallel_{\ell_{*}}}(s^{\ell_{*}}),   \mathbf{v}_{\perp_{\ell_{*}}}(s^{\ell_{*}}), \mathbf{v}_{\parallel}(s^{\ell_{*}})
    )}{\partial (t^{\ell_{*}},
     \mathbf{x}_{\parallel_{\ell_{*}}}^{\ell_{*}}, \mathbf{v}_{\perp_{\ell_{*}}}^{\ell_{*}}, \mathbf{v}_{\parallel_{\ell_{*}}}^{\ell_{*}})}.
 \end{split}
 \end{equation}
 Firstly, we claim
 \begin{equation}\label{ss1}
 \frac{\partial ( X_{   \mathbf{cl}}(s), V_{ \mathbf{cl}  }(s))}{\partial (s^{\ell_{*}}, \mathbf{X}_{ {\ell_{*}}}(s^{\ell_{*}}),   \mathbf{V}_{ {\ell_{*}}}(s^{\ell_{*}})  )}
 =
 \left[\begin{array}{ccc}
 -V_{\mathbf{cl}}(s^{\ell_{*} }) +O(1)|s^{\ell_*} - s |& O_{\xi}(1) (1+|v||s^{\ell_{*}}-s|) & O_{\xi}(1)  |s^{\ell_{*}}-s|\\
-E -O(1)(s^{\ell_*} - s)|v| & O_{\xi}(1)  (|v| + |s^{\ell_*} - s | ) & O_{\xi}(1)(1 + |s^{\ell_*} -s | )
 \end{array} \right].
 \end{equation}


Since
\Be \label{slstarderivatives}
\begin{split}
X_{\mathbf{cl}}(s) = & X_{\mathbf{cl}}(s^{\ell_{*} }) -\int_s^{s^{\ell_*}}  V_{\mathbf{cl}}(\tau) \, d \tau = X_{\mathbf{cl}}(s^{\ell_{*} }) - (s^{\ell_*} - s ) V_{\mathbf{cl}}(s^{\ell_{*}}) +\int_s^{s^{\ell_*}} \int_\tau^{s^{\ell_* } } E(\tau', X_{\mathbf{cl} }(\tau' ) ) \, d\tau'd\tau , 
\\ V_{\mathbf{cl}}(s) = & V_{\mathbf{cl}}(s^{\ell_{*}}) - \int_s^{s^{\ell_* } } E(\tau, X_{\mathbf{cl} }(\tau ) ) \, d\tau,
\end{split}
\Ee
we have
\Be \label{pXclpsellstar} \begin{split}
\frac{ \p X_{\mathbf{cl}}(s) }{\p s^{\ell_*}} = & - V_{\mathbf{cl}} (s^{\ell_*}) + \int_s^{s^{\ell_*}} \left[ E(s^{\ell_*}, X_{\mathbf{cl}}(s^{\ell_*})) + \int_\tau^{s^{\ell_* }} \nabla_x E(\tau', X_{\mathbf{cl}}(\tau' ) )\frac{ \p X_{\mathbf{cl}}( \tau') }{\p s^{\ell_*}} \, d\tau' \right] d\tau
\\ = &  - V_{\mathbf{cl}} (s^{\ell_*}) + ( s^{\ell_*} -s ) E(s^{\ell_*}, X_{\mathbf{cl}}(s^{\ell_*})) + \int_s^{ s^{\ell_*}} \int_s^{\tau' } \nabla_x E(\tau', X_{\mathbf{cl}}(\tau' ) )\frac{ \p X_{\mathbf{cl}}( \tau') }{\p s^{\ell_*}} \, d\tau d\tau'
\\ = &  - V_{\mathbf{cl}} (s^{\ell_*}) + ( s^{\ell_*} -s ) E(s^{\ell_*}, X_{\mathbf{cl}}(s^{\ell_*})) + \int_s^{ s^{\ell_*}}  (\tau' -s )  \nabla_x E(\tau', X_{\mathbf{cl}}(\tau' ) )\frac{ \p X_{\mathbf{cl}}( \tau') }{\p s^{\ell_*}} \, d\tau'.
\end{split} \Ee
By Gronwall we have
\Be \label{pXclpsellstarbdd}
|\frac{ \p X_{\mathbf{cl}}(s) }{\p s^{\ell_*}} | \lesssim ( |V_{\mathbf{cl}} (s^{\ell_*}) | + (s^{\ell_*} - s ) |E|) e^{\int_s^{s^{\ell_*}}   (s^{\ell_*} - s ) | \nabla_x E | \, d\tau'}  \lesssim  |V_{\mathbf{cl}} (s^{\ell_*}) | + |s^{\ell_*} - s|.
\Ee
Plug \eqref{pXclpsellstarbdd} into \eqref{pXclpsellstar} we get
\Be \label{Xclsellstardif}
\frac{ \p X_{\mathbf{cl}}(s) }{\p s^{\ell_*}} = - V_{\mathbf{cl}} (s^{\ell_*}) + O_{\xi, \|  \nabla E \|_{L^\infty_{t,x}}}(1)|s^{\ell_*} - s|.
\Ee
Similarly we have
\Be \label{pVClpsellstarbd}
 \begin{split}
\frac{ \p V_{\mathbf{cl}}(s) }{\p s^{\ell_*}} & = - E(s^{\ell_*}, X_{\mathbf{cl}}(s^{\ell_*})) - \int_s^{s^{\ell_*}} \nabla_x E(\tau, X_{\mathbf{cl}} (\tau ) ) \frac{ \p X_{\mathbf{cl}}(\tau) }{\p s^{\ell_*}} \, d \tau 
\\ &=  - E(s^{\ell_*}, X_{\mathbf{cl}}(s^{\ell_*})) - O_{\xi, \|  \nabla E \|_{L^\infty_{t,x}}}|(1) (s^{\ell_*} - s) | v|.
\end{split} \Ee

Also, using the fact that for $\p = [ \frac{\p}{X_{\mathbf{cl}}(s^{\ell_*})}, \frac{\p}{V_{\mathbf{cl}}(s^{\ell_*})}]$, $|\p X_{\mathbf{cl}}(s) | + |\p V_{\mathbf{cl}}(s) | \lesssim 1 $, we can combine \eqref{slstarderivatives}, \eqref{pXclpsellstarbdd}, and \eqref{pVClpsellstarbd} to get

\begin{equation}\notag
\begin{split}
\frac{\partial ( X_{\mathbf{cl}}(s), V_{\mathbf{cl}}(s))}{\partial (s^{\ell_{*}} ,  {X}_{\mathbf{cl}}(s^{\ell_{*}}),   {V}_{\mathbf{cl}}(s^{\ell_{*}})  )}&= \left[\begin{array}{ccc}
 -V_{\mathbf{cl}}(s^{\ell_{*}}) +O(1)|s^{\ell_*} - s |
 & \mathbf{Id}_{3,3} +O(1)|s^{\ell_*} - s |& -(s^{\ell_{*}}-s) \mathbf{Id}_{3,3}+O(1)|s^{\ell_*} - s |\\
-E -O(1)(s^{\ell_*} - s)|v| & \mathbf{0}_{3,3}+O(1)|s^{\ell_*} - s | & \mathbf{Id}_{3,3}+O(1)|s^{\ell_*} - s |
\end{array} \right].
\end{split}
\end{equation}

 Furthermore due to Lemma \ref{chart_lemma}, we conclude
\begin{equation}\notag
\begin{split}
&\frac{\partial (s^{\ell_{*}},  {X}_{\mathbf{cl}}(s^{\ell_{*}}),   {V}_{\mathbf{cl}}(s^{\ell_{*}})  )}{\partial (s^{\ell_{*}}, \mathbf{X}_{ {\ell_{*}}}(s^{\ell_{*}}),   \mathbf{V}_{ {\ell_{*}}}(s^{\ell_{*}})  )}=\\
&  {\tiny\left[ \begin{array}{c|ccc|ccc}
1 &   & \mathbf{0}_{1,3} &  &  & \mathbf{0}_{1,3} &   \\ \hline
 \mathbf{0}_{3,1}
  & -\mathbf{n}_{{\ell_{*}}}  &
   \substack{  \partial_{ 1} \mathbf{\eta}_{{\ell_{*}}}  \\
- \mathbf{x}_{\perp_{{\ell_{*}}}}  \partial_{ 1} \mathbf{n}_{{\ell_{*}}}  }&
 \substack{   \partial_{2} \mathbf{\eta}_{{\ell_{*}}}\\
-\mathbf{x}_{\perp_{{\ell_{*}}}}  \partial_{2} \mathbf{n}_{{\ell_{*}}}   }
 & & \mathbf{0}_{3,3} & \\ \hline
\mathbf{0}_{3,1}&
-\mathbf{v}_{\parallel_{{\ell_{*}}}} \cdot \nabla_{ \mathbf{x}_{\parallel_{{\ell_{*}}}} } \mathbf{n}_{{\ell_{*}}}
&  \substack{ \mathbf{v}_{\parallel_{{\ell_{*}}}} \cdot \nabla  \partial_{  1  } \mathbf{\eta}_{{\ell_{*}}} \\
-  \mathbf{v}_{\perp_{{\ell_{*}}}}\partial_{1}  \mathbf{n}_{{\ell_{*}}}   \\
-  \mathbf{x}_{\perp_{{\ell_{*}}}}  \mathbf{v}_{\parallel_{{\ell_{*}}}}\cdot \nabla \partial_{ 1} \mathbf{n}_{{\ell_{*}}}   }
 & \substack{ \mathbf{v}_{\parallel_{{\ell_{*}}}} \cdot \nabla \partial_{2} \mathbf{\eta}_{{\ell_{*}}} \\
- \mathbf{v}_{\perp_{{\ell_{*}}}} \partial_{ 2 } \mathbf{n}{\ell_{*}} \\
-  \mathbf{x}_{\perp_{{\ell_{*}}}}  \mathbf{v}_{\parallel_{{\ell_{*}}}}\cdot \nabla  \partial_{2} \mathbf{n}_{{\ell_{*}}}  }
    & -\mathbf{n}_{{\ell_{*}}}  
    & \substack{\partial_{1} \mathbf{\eta}_{{\ell_{*}}} \\
   - \mathbf{x}_{\perp_{{\ell_{*}}}}\partial_{1} \mathbf{n}_{{\ell_{*}}} }
     & \substack{\partial_{2} \mathbf{\eta}_{{\ell_{*}}}  \\ - \mathbf{x}_{\perp_{{\ell_{*}}}} \partial_{2} \mathbf{n}}_{{\ell_{*}}}
  \end{array}\right],}
\end{split}
\end{equation}
 where all entries are evaluated at $(\mathbf{X}_{ {\ell_{*}}}(s^{\ell_{*}}), \mathbf{V}_{ {\ell_{*}}}(s^{\ell_{*}})).$ The multiplication of above two matrices gives (\ref{ss1}).

Secondly, we claim that whenever $\mathbf{p}^{\ell}-$spherical coordinate is defined for all $\tau \in [s^{\ell}, t^{\ell}]$, we have following $7 \times 6$ matrix
\begin{equation}\label{s1tstar}
\begin{split}
&\frac{\partial (s^{\ell }, \mathbf{x}_{\perp_{{\ell }}}(s^{\ell }),  \mathbf{x}_{\parallel_{{\ell }}}(s^{\ell }),   \mathbf{v}_{\perp_{{\ell }}}(s^{\ell }), \mathbf{v}_{\parallel_{{\ell }}}(s^{\ell })
    )}{\partial (t^{\ell },  \mathbf{x}_{\parallel_{{\ell }}}^{\ell }, \mathbf{v}_{\perp_{{\ell }}}^{\ell }, \mathbf{v}_{\parallel_{{\ell }}}^{\ell })}\\
=& \tiny{ \left[\begin{array}{c|c|cc}
0 &  \mathbf{0}_{1,2} & 0 &   \mathbf{0}_{1,2}  \\ \hline
- \mathbf v_\perp (s^\ell) +O(1) |t^\ell - s^\ell |^2   & O_{\xi}(1) |v|^{2} |t^{\ell}-s^{\ell}|^{2} & O_{\xi}(1)   |t^{\ell}-s^{\ell}|  &  O_{\xi}(1)     |v| |t^{\ell}-s^{\ell}|^{2 }   \\
- \mathbf v_\parallel (s^\ell) +O(1) |t^\ell - s^\ell |^2   &\mathbf{Id}_{2,2}+ O_{\xi}(1) |v|^{2}|t^{\ell}-s^{\ell}|^{2} &  O_{\xi}(1)   |v| |t^{\ell}-s^{\ell}|^{2} &
O_{\xi}(1)|t^{\ell}-s^{\ell}| ( \mathbf{Id}_{2,2}+  |v||t^{\ell}-s^{\ell}|)   \\ \hline
O_{}(1) (|v|^{2} + 1 )    &(O_{}(1) + |v|^{2})|t^{\ell}-s^{\ell}|  &1+O_{\xi}(1)|v||t^{\ell}-s^{\ell}| &    O_{\xi}(1)|v||t^{\ell}-s^{\ell}| \\
O_{}(1) (|v|^{2} + 1 )  &(O_{}(1) + |v|^{2})|t^{\ell}-s^{\ell}| & O_{\xi}(1)|v||t^{\ell}-s^{\ell}| & \mathbf{Id}_{2,2}+  O_{\xi}(1)|v||t^{\ell}-s^{\ell}|
\end{array}\right] .}
\end{split}
\end{equation}
In this step we just need (\ref{s1tstar}) for $\ell=\ell_{*}$ but we need (\ref{s1tstar}) for general $\ell$ in Step 8.

Clearly the first raw is identically zero since $s^{\ell}$ is chosen to be independent of $(t^{\ell}, \mathbf{x}_{\parallel_{\ell}}^{\ell}, \mathbf{v}_{\perp_{\ell}}^{\ell}, \mathbf{v}_{\parallel_{\ell}}^{\ell})$. By directly taking $\frac{\p}{\p t^\ell }$ derivative to $\mathbf v_{\perp,\parallel}(s^\ell ) = \mathbf v_{\perp,\parallel}^\ell - \int_{s^\ell }^{t^\ell } F_{\perp,\parallel} (\tau) d\tau $ and $\mathbf x_{\perp,\parallel}(s^\ell ) =\mathbf  x_{\perp,\parallel}^\ell - \int_{s^\ell}^{t^\ell } \mathbf v_{\perp,\parallel} (\tau) d\tau$ we have
\Be
\frac{ \p \mathbf v_{\perp,\parallel}(s) }{\p t^\ell } = - F_{\perp,\parallel}(t^\ell ) - \int_{s^\ell}^{t^\ell} \p_{t^\ell } F_{\perp,\parallel} (\tau) d\tau = - F_{\perp,\parallel}(s) - \int_{s^\ell}^{t^\ell}  \left( \p_{t^\ell } F_{\perp,\parallel}(\tau) + \p_\tau F_{\perp,\parallel}(\tau ) \right)  d\tau,
\Ee
and
\Be \begin{split}
\frac{ \p \mathbf x_{\perp,\parallel}(s^\ell ) }{\p t^\ell }  &= - \mathbf v_{\perp,\parallel}(t^\ell ) - \int_{s^\ell }^{t^\ell} \p_{t^\ell } \mathbf v_{\perp,\parallel} (\tau) d\tau 
\\ &=  - \mathbf v_{\perp,\parallel}(s^\ell )  - \int_{s^\ell }^{t^\ell } F_{\perp,\parallel}(s) ds - \int_{s^\ell }^{t^\ell} \p_{t^\ell } \mathbf v_{\perp,\parallel} (\tau)
\\ & =  - \mathbf v_{\perp,\parallel}(s^\ell ) + \int_{s^\ell}^{t^\ell } \int_\tau^{t^\ell } \left( \p_{t^\ell } F_{\perp,\parallel}(\tau') + \p_\tau F_{\perp,\parallel}(\tau' ) \right) d\tau' d\tau.
\end{split} \Ee
Then from \eqref{Ftimedest} we get the desired estimate for the first column of \eqref{s1tstar}.

Now we turn to other entries in (\ref{s1tstar}). From the characteristics ODE, (\ref{ODE_ell}) in the $\mathbf{p}^{\ell}-$spherical coordinate, (\ref{Dxv_free}), (\ref{Dxv_free_s}), and (\ref{Dxv_F}),
we deduce (\ref{s1tstar}) for $|v||s^{\ell}-t^{\ell}|\lesssim 1$.

\vspace{8pt}

\noindent\textit{Step 3. From $t-$plane to the first bounce}

\vspace{4pt}

We choose $s^{1} \in (t^{1}, \frac{t^{1}+t}{2})\subset (t^{1},t)$ such that $|v||t^{1}-s^{1}|\ll 1$ and the polar coordinate $(\mathbf{X}_{1}(s^{1}), \mathbf{V}_{1}(s^{1}))$ is well-defined. More precisely we choose $0 < \Delta  $ such that $|v||t-\Delta -t^{1}| \ll 1$ and define
\begin{equation}
s^{1}:= t - \Delta.\label{Delta_step3}
\end{equation}
We first follow the flow in the cartesian coordinate to near the boundary at $s^{1}$, change to the chart to $\mathbf{p}^{\ell}-$spherical coordinate, then follow the flow in that coordinate. 

Then, by the chain rule,
 \begin{equation}\notag
 \begin{split}
& \frac{\partial (t^{1}  , \mathbf{x}_{\parallel_{1}}^{1} , \mathbf{v}_{\perp_{1}}^{1} , \mathbf{v}_{\parallel_{1}}^{1}  )}{\partial (t,x,v)}\\
 & = \frac{\partial (t^{1}  , \mathbf{x}_{\parallel_{1}}^{1}, \mathbf{v}_{\perp_{1}}^{1}, \mathbf{v}_{\parallel_{1}}^{1})}{\partial (s^{1}, X_{\mathbf{cl}}(s^{1} ), V_{\mathbf{cl}}(s^{1} ))}
\frac{\partial (s^{1}, X_{\mathbf{cl}}(s^{1}), V_{\mathbf{cl}}(s^{1}))}{\partial (t,x,v)}
\\
&= \frac{\partial (t^{1}  , \mathbf{x}_{\parallel_{1}}^{1}, \mathbf{v}_{\perp_{1}}^{1}, \mathbf{v}_{\parallel_{1}}^{1})}{ \partial (s^{1}, \mathbf{x}_{\perp_{1}}(s^{1}), \mathbf{x}_{\parallel_{1}}(s^{1}), \mathbf{v}_{\perp_{1}}(s^{1}), \mathbf{v}_{\parallel_{1}}(s^{1}) )}
\frac{\partial (s^{1}, \mathbf{X}_{1}(s^{1}),  \mathbf{V}_{ 1}(s^{1})    )}{\partial (s^{1}, X_{ \mathbf{cl}}(s^{1} ), V_{ \mathbf{cl}}(s^{1} ))}
\frac{\partial (s^{1}, X_{\mathbf{cl}}(s^{1}), V_{ \mathbf{cl}}(s^{1}))}{\partial (t,x,v)}.
 \end{split}
 \end{equation}
We fix $\mathbf{p}^{1}-$spherical coordinate and drop the index of the chart.

 Firstly, we claim 
%
  {\tiny	 
  \Be \label{t1s1}
  \begin{split}
 &  \frac{\partial (t^{1} , \mathbf{x}_{\parallel}^{1}, \mathbf{v}_{\perp}^{1}, \mathbf{v}_{\parallel}^{1})}{ \partial (s^{1}, \mathbf{x}_{\perp}(s^{1}), \mathbf{x}_{\parallel}(s^{1}), \mathbf{v}_{\perp}(s^{1}), \mathbf{v}_{\parallel}(s^{1}) )}
   \\ \lesssim_{\Omega} &
   {\left[\begin{array}{c|cc|cc}
\frac{|v| + O(1) |s^1 -t^1|}{|v_\perp^1|} &  \frac{1}{|\mathbf{v}_{\perp}^{1}|}  &   \frac{(|v|^{2}+O(1))|s^{1}-t^{1}|^{2}}{|\mathbf{v}_{\perp}^{1}|} & \frac{|s^{1}-t^{1}|}{|\mathbf{v}_{\perp}^{1}|} & \frac{(|v|^{}+O(1))|s^{1}-t^{1}|^{2}}{|\mathbf{v}_{\perp}^{1}|}  \\ \hline
|v| + \frac{|v|^2 + O(1)}{|v_\perp^1|} &     \frac{|v|}{|\mathbf{v}_{\perp}^{1}|}  +(|v|^{2}+O(1))|s^{1}-t^{1}|^{2} & \mathbf{Id}_{2,2} + (|v|^{}+O(1))|s^{1}-t^{1}| &   \frac{|s^{1}-t^{1}||v|}{|\mathbf{v}_{\perp}^{1}|} + |s^{1}-t^{1}|^{2}|v| & |s^{1}-t^{1}|\\ \hline
 |v|^2 +O(1) + \frac{|v|^3 +O(1)}{|v_\perp^1|}& \frac{|v|^{2}+O(1)}{|\mathbf{v}_{\perp}^{1}|} + (|v|^{2}+O(1))|s^{1}-t^{1}|  & (|v|^{2}+O(1))|s^{1}-t^{1}| & 1+ |v||s^{1}-t^{1}| & (|v| +O(1))|s^{1}-t^{1}| \\
 |v|^2 +O(1) + \frac{|v|^3 +O(1)}{|v_\perp^1|}& \frac{|v|^{2}+O(1)}{|\mathbf{v}_{\perp}^{1}|} + (|v|^{2}+O(1))|s^{1}-t^{1}|  &  ( |v|^{2}+O(1))|s^{1}-t^{1}|   & 1+ |v||s^{1}-t^{1}|  &  \mathbf{Id}_{2}+ (|v| +O(1)) |s^{1}-t^{1}| \end{array}\right]}.
 \end{split}
 \end{equation}
  }

The $t^{1}$ is determined via $\mathbf{x}_{\perp}(t^{1})=0$, i.e.
\begin{equation}\label{equation_t1}
0= \mathbf{x}_{\perp}(s^{1}) - \mathbf{v}_{\perp}(s^{1}) (s^{1}-t^{1}) + \int^{s^{1}}_{t^{1}} \int^{s^{1}}_{s} F_{\perp}(\mathbf{X}(\tau), \mathbf{V}_{\mathbf{cl}}(\tau)) \mathrm{d}\tau \mathrm{d}s,
\end{equation}
where $\mathbf{X}(\tau)  = \mathbf{X}(\tau; s^{1}, \mathbf{X}(s^{1};t,x,v) , \mathbf{V}(s^{1};t,x,v) ), \mathbf{V}(\tau)   =\mathbf{V} (\tau; s^{1}, \mathbf{X}(s^{1};t,x,v) , \mathbf{V}(s^{1};t,x,v) ).$ For $\p \in \{\p_{\mathbf{x}_{\perp} (s^{1})}, \p_{\mathbf{x}_{\parallel} (s^{1})} , \p_{\mathbf{v}_{\perp} (s^{1})}, \p_{\mathbf{v}_{\parallel} (s^{1})} \}$,
\begin{equation}\label{p_xperp_vperp}\begin{split}
&\mathbf{v}_{\perp}(s^{1}) \p t^{1}
- \p t^{1} \int^{s^{1}}_{t^{1}} F_{\perp} (\mathbf{X} (\tau), \mathbf{V} (\tau)) \dd \tau+
\p \mathbf{x}_{\perp}(s^{1}) - \p\mathbf{v}_{\perp}(s^{1}) (s^{1}-t^{1})  \\
&+ \int^{s^{1}}_{t^{1}} \int^{s^{1}}_{s} 
\{\p \mathbf{X}(\tau) \cdot \nabla_{\mathbf{X}}
F_{\perp} + \p \mathbf{V}(\tau) \cdot \nabla_{\mathbf{V}}
F_{\perp}
\}
(\mathbf{X}(\tau), \mathbf{V}(\tau)) \mathrm{d}\tau \mathrm{d}s=0.\end{split}
\end{equation}
But $\mathbf{v}_{\perp}^{1}  = - \lim_{s \downarrow t^{1}} \mathbf{v}_{\perp}(s) = - \mathbf{v}_{\perp}(s^{1}) + \int^{s^{1}}_{t^{1}} F_{\perp}(\mathbf{X}(\tau), \mathbf{V}(\tau)) \mathrm{d}\tau$, we apply Lemma \ref{lemma_flow} and
 $|s^{1}-t^{1}| \lesssim_{\xi}\min\{  \frac{| \mathbf{v}_{\perp}^{1}|}{|v|^{2}} , t \}$ and (\ref{tbest}), 
 \[
 \left[\begin{array}{c}
 \frac{\partial t^{1}}{\partial \mathbf{x}_{\perp }(s^{1})}\\
 \frac{\partial t^{1}}{\partial \mathbf{x}_{\parallel }(s^{1})}\\
 \frac{\partial t^{1}}{\partial \mathbf{v}_{\perp }(s^{1})}\\
 \frac{\partial t^{1}}{\partial \mathbf{v}_{\parallel }(s^{1})}
 \end{array} \right]
 = 
  \left[\begin{array}{c}
  \frac{1}{%
 \mathbf{v}_{\perp }^{1} }\left\{
1+\int^{s^{1}}_{t^{1}}\int^{s^{1}}_{s}\frac{\partial }{\partial \mathbf{x}_{\perp
}(s^{1})}F_{\perp }( \mathbf{X}(\tau), \mathbf{V}(\tau)   )\mathrm{d}\tau \mathrm{d}s\right\}\\
\frac{1}{%
 \mathbf{v}^{1}_{\perp } }\int^{s^{1}}_{t^{1}}%
\int^{s^{1}}_{s}\frac{\partial }{\partial \mathbf{x}_{\parallel }(s^{1})}F_{\perp
}(    \mathbf{X}(\tau), \mathbf{V}(\tau)     )\mathrm{d}\tau \mathrm{d}s\\
\frac{1}{\mathbf{v}_{\perp }^{1}}%
\left\{ (t^{1}-s^{1})+\int^{s^{1}}_{t^{1}}\int^{s^{1}}_{s}\frac{\partial }{%
\partial \mathbf{v}_{\perp }(s^{1})}F_{\perp }( \mathbf{X}(\tau), \mathbf{V}(\tau)  )\mathrm{d}\tau \mathrm{d}%
s\right\} \\
\frac{1}{
 \mathbf{v}_{\perp }^{1} }\int^{s_{1}}_{t^{1}}%
\int^{s_{1}}_{s}\frac{\partial }{\partial \mathbf{v}_{\parallel }(s_{1})}F_{\perp
}( \mathbf{X}(\tau), \mathbf{V}(\tau) )\mathrm{d}\tau \mathrm{d}s
   \end{array} \right]
 \lesssim_{\xi, t}
 \left[
 \begin{array}{c}
 \frac{1}{|\mathbf{v}_{\perp}^{1}|}\\
 \frac{(|v|^{2} +O(1))|s^{1}-t^{1}|^{2}}{|\mathbf{v}_{\perp}^{1}|}\\
  \frac{|s^{1}-t^{1}|}{|\mathbf{v}_{\perp}^{1}|}\\
  \frac{(|v|+O(1))|s^{1}-t^{1}|^{2}   }{|\mathbf{v}_{\perp}^{1}|}
 \end{array}\right],
 \] 
 Taking $(\mathbf{x}(s^1),\mathbf{v}(s^1))$ derivatives of the characteristic equations
 \begin{equation}\notag
\begin{split}
\mathbf{v}_{\perp}^{1} &= - \lim_{s \downarrow t^{1}} \mathbf{v}_{\perp}(s) = - \mathbf{v}_{\perp}(s^{1}) + \int^{s^{1}}_{t^{1}} F_{\perp}(\mathbf{X}_{\mathbf{cl}}(\tau), \mathbf{V}_{\mathbf{cl}}(\tau)) \mathrm{d}\tau,\\
\mathbf{x}_{\parallel}^{1} & = \mathbf{x}_{\parallel}(s^{1})
- \int^{s^{1}}_{t^{1}} \mathbf{v}_{\parallel}(s) \dd s,\\
\mathbf{v}_{\parallel}^{1} &= \mathbf{v}_{\parallel}(s^{1}) - \int^{s^{1}}_{t^{1}} F_{\parallel}(\mathbf{X}_{\mathbf{cl}}(\tau), \mathbf{V}_{\mathbf{cl}}(\tau)) \mathrm{d}\tau.
\end{split}
\end{equation}
and using the above estimates and (\ref{p_xperp_vperp}) and Lemma \ref{lemma_flow} yields
%
%
\begin{equation*}\begin{split}
\left[\begin{array}{c}
\frac{\partial \mathbf{x}^{1}_{\parallel}}{\partial \mathbf{x}_{\perp}(s^{1})}
 \\
\frac{\partial \mathbf{x}_{\parallel}^{1}}{\partial  \mathbf{x}_{\parallel}(s^{1})} \\
\frac{\partial \mathbf{x}_{\parallel}^{1}}{\partial  \mathbf{v}_{\perp}(s^{1})} \\
\frac{\partial \mathbf{x}_{\parallel}^{1}}{\partial \mathbf{v}_{\parallel}(s^{1})}
\end{array}
\right]
 &\lesssim_{\xi,t} 
\left[\begin{array}{c} 
\frac{|v|}{|\mathbf{v}_{\perp}^{1}|} + (|v|^{2}+O(1))|s^{1}-t^{1}|^{2}  \\
\mathbf{Id}_{2,2} + (|v|+O(1))|s^{1}-t^{1}|\\
\frac{|s^{1}-t^{1}||v|}{|\mathbf{v}_{\perp}^{1}|} + |s^{1}-t^{1}|^{2} |v|\\
 |s^{1}-t^{1}|
\end{array}\right],\ 
\left[\begin{array}{c}
\frac{\partial \mathbf{v}^{1}_{\perp}}{\partial \mathbf{x}_{\perp}(s^{1})}
 \\
\frac{\partial \mathbf{v}_{\perp}^{1}}{\partial  \mathbf{x}_{\parallel}(s^{1})} \\
\frac{\partial \mathbf{v}_{\perp}^{1}}{\partial  \mathbf{v}_{\perp}(s^{1})} \\
\frac{\partial \mathbf{v}_{\perp}^{1}}{\partial \mathbf{v}_{\parallel}(s^{1})}
\end{array}
\right]  \lesssim_{\xi,t} 
\left[\begin{array}{c} 
 \frac{|v|^{2}+O(1)}{|\mathbf{v}_{\perp}^{1}|} + (|v|^{2}+O(1))|s^{1}-t^{1}|\\
 (|v|^{2}+O(1))|s^{1}-t^{1}|\\
 1+ |v||s^{1}-t^{1}|\\
  (|v|+O(1))|s^{1}-t^{1}|
\end{array}\right],
\end{split}
\end{equation*}
and
\[
\left[\begin{array}{c}
\frac{\partial \mathbf{v}^{1}_{\parallel}}{\partial \mathbf{x}_{\perp}(s^{1})}
 \\
\frac{\partial \mathbf{v}_{\parallel}^{1}}{\partial  \mathbf{x}_{\parallel}(s^{1})} \\
\frac{\partial \mathbf{v}_{\parallel}^{1}}{\partial  \mathbf{v}_{\perp}(s^{1})} \\
\frac{\partial \mathbf{v}_{\parallel}^{1}}{\partial \mathbf{v}_{\parallel}(s^{1})}
\end{array}
\right]  \lesssim_{\xi,t} 
\left[\begin{array}{c} 
 \frac{(|v|^{2}+O(1))}{|\mathbf{v}_{\perp}^{1}|} + (|v|^{2}+O(1))|s^{1}-t^{1}|\\
 ( |v|^{2}+O(1))|s^{1}-t^{1}|\\
 1+ |v||s^{1}-t^{1}|\\
 \mathbf{Id}_{2,2} +  (|v|+O(1))|s^{1}-t^{1}|
\end{array}\right].
\] 

Secondly, we claim
\begin{equation}\label{s1t}
\begin{split}
 &\frac{\partial ( \mathbf{X}_{1}(s^{1}),  \mathbf{V}_{1}(s^{1})    )} {\partial (t, x,v)}
 =\frac{\partial ( \mathbf{X}_{1}(s^{1}),  \mathbf{V}_{1}(s^{1})    )}{\partial ( X_{\mathbf{cl}}(s^{1} ), V_{\mathbf{cl}}(s^{1} ))}
\frac{\partial ( X_{\mathbf{cl}}(s^{1}), V_{\mathbf{cl}}(s^{1}))}{\partial (t,x,v)}
\\
&=
 \left[\begin{array}{c|c|ccccccccccccccc}
 & O(1) + O_{\xi}(|v||t^{1}-s^{1}|^2) &  O_{\xi} ( |t-s^{1}|)\\
|t -s^1|^2&O(1) +  O_{\xi}(|v||t^{1}-s^{1}|^2) &    O_{\xi}( |t-s^{1}|)\\
 &O(1)+  O_{\xi}(|v||t^{1}-s^{1}|^2)& O_{\xi}( |t-s^{1}|)\\ \hline
  & O_{\xi}(  |v|) &O(1) +  O_{\xi}(|v| |t-s^{1}| ) \\
|t -s^1|&O_{\xi}(  |v|) & O(1) +  O_{\xi}(|v| |t-s^{1}| )\\
 &O_{\xi}( |v|) & O(1) + O_{\xi}(|v| |t-s^{1}| )
\end{array} \right]_{6 \times 7},
\end{split}
\end{equation}
where the entries are evaluated at $(\mathbf{X}_{1}(s^{1}), \mathbf{V}_{1}(s^{1}))$. Note that $|v||t^{1}-s^{1}|\lesssim_{\xi } 1$.

From (\ref{jac_Phi})
\begin{equation}\notag
\begin{split}
& \frac{\partial ( {X}_{\mathbf{cl}}(s^{1}),  {V}_{\mathbf{cl}}(s^{1}))}{\partial (\mathbf{X} (s^{1}), \mathbf{V}(s^{1}))}
= \frac{\partial \Phi (  \mathbf{X} (s^{1}), \mathbf{V} (s)  )}{\partial ( \mathbf{X} (s^{1}), \mathbf{V} (s) )}
 : =  \left[\begin{array}{c|c} A & \mathbf{0}_{3,3} \\ \hline B & A   \end{array}\right]  + \mathbf{x}_{\perp} \left[\begin{array}{c|c}   \mathbf{0}_{3,3}  & \mathbf{0}_{3,3} \\ \hline D &  \mathbf{0}_{3,3}   \end{array}\right].
\end{split}
\end{equation}
From direct computation and (\ref{nondegenerate_eta}),
\begin{equation}\notag
\begin{split}
\text{det}(A)&=\text{det} \bigg[\begin{array}{ccc} && \\  -\mathbf{n}(\mathbf{x}_{\parallel}) & \substack{ \frac{\partial \mathbf{\eta}}{\partial \mathbf{x}_{\parallel,1}} (\mathbf{x}_{\parallel}) \\ + \mathbf{x}_{\perp} [- \frac{\partial \mathbf{n}}{\partial \mathbf{x}_{\parallel,1}} (\mathbf{x}_{\parallel}) ] } &  \substack{ \frac{\partial \mathbf{\eta}}{\partial \mathbf{x}_{\parallel,2}} (\mathbf{x}_{\parallel})  \\  + \mathbf{x}_{\perp} [ -\frac{\partial \mathbf{n}}{\partial \mathbf{x}_{\parallel,2}} (\mathbf{x}_{\parallel}) ] }\\ &&\end{array}\bigg]
=
- \mathbf{n}(\mathbf{x}_{\parallel}) \cdot \left(  \frac{\partial \mathbf{\eta}}{\partial \mathbf{x}_{\parallel,1}} (\mathbf{x}_{\parallel})\times   \frac{\partial \mathbf{\eta}}{\partial \mathbf{x}_{\parallel,2}} (\mathbf{x}_{\parallel}) \right) + O_{\xi}(|\mathbf{x}_{\perp}|) \neq 0,\\
A^{-1} &= \frac{1}{[-\mathbf{n}]\cdot (\partial_{\mathbf{x}_{\parallel,1}} \mathbf{\eta} \times \partial_{\mathbf{x}_{\parallel,2}} \mathbf{\eta}) +O(1)|\mathbf x_\perp|}
\\ & \times  \bigg[ (1-\mathbf x_\perp)^2( \partial_{\mathbf{x}_{\parallel,1}} \mathbf{\eta} \times \partial_{\mathbf{x}_{\parallel,2}} \mathbf{\eta})^{T},  (1-\mathbf x_\perp)( \partial_{\mathbf{x}_{\parallel,2}} \mathbf{\eta}  \times [-\mathbf{n}] )^{T},   (1-\mathbf x_\perp)([-\mathbf{n}]\times \partial_{\mathbf{x}_{\parallel,1}} \mathbf{\eta} )^{T}\bigg].
\end{split}
\end{equation}
From basic linear algebra
\begin{equation}\notag
\begin{split}
&\text{det}\left( \frac{\partial ( {X}_{\mathbf{cl}}(s^{1}),  {V}_{\mathbf{cl}}(s^{1}))}{\partial (\mathbf{X}_{\mathbf{cl}}(s^{1}), \mathbf{V}_{\mathbf{cl}}(s^{1}))}  \right)= \text{det} \left[\begin{array}{c|c} A  & \mathbf{0}_{3,3} \\ \hline B+ \mathbf{x}_{\perp} D & A  \end{array}   \right]
= \{\det(A )\}^{2}    = \big\{[-\mathbf{n}] \cdot (\partial_{1} \mathbf{\eta} \times \partial_{2} \mathbf{\eta}) +O(1) | \mathbf x_\perp | \big\}^{2} ,
\end{split}
\end{equation}
and $\left( \frac{\partial ( {X}_{\mathbf{cl}}(s^{1}),  {V}_{\mathbf{cl}}(s^{1}))}{\partial (\mathbf{X}_{\mathbf{cl}}(s^{1}), \mathbf{V}_{\mathbf{cl}}(s^{1}))}  \right)$ is invertible. By the basic linear algebra
\begin{equation}\label{XVchange}
 \begin{split}
 &\frac{\partial (\mathbf{X}_{\mathbf{cl}}(s^{1}), \mathbf{V}_{\mathbf{cl}}(s^{1})   )}{   \partial (  X_{\mathbf{cl}} (s^{1}), V_{\mathbf{cl}}(s^{1}) )} =
\left[ \frac{   \partial (  X_{\mathbf{cl}} (s^{1}), V_{\mathbf{cl}}(s^{1}) )}{\partial (\mathbf{X}_{\mathbf{cl}}(s^{1}), \mathbf{V}_{\mathbf{cl}}(s^{1})   )}\right]^{-1} = \left[ \begin{array}{c|c} A   & \mathbf{0}_{3,3} \\ \hline B+ \mathbf{x}_{\perp} D & A  \end{array}\right]^{-1} \\
 &= \left[\begin{array}{c|c} A ^{-1} & \mathbf{0}_{3,3} \\ \hline - A ^{-1} (B+\mathbf{x}_{\perp} D)  A ^{-1} &  A ^{-1}   \end{array}\right] = \left[\begin{array}{cc} A^{-1}(\mathbf{x}_{\parallel})  & \mathbf{0}_{3,3} \\ |v|  + O_{\xi}(\mathbf{x}_{\perp}) & A^{-1}(\mathbf{x}_{\parallel})      \end{array}\right],
\end{split}
\end{equation}
and we obtain 
 \begin{equation}\notag
 \begin{split}
 \frac{\partial (  \mathbf{X}_{\mathbf{cl}}(s^{1}) ,   \mathbf{V}_{\mathbf{cl}}(s^{1}) )}{\partial (  X_{\mathbf{cl}}(s^{1}) ,   {V}_{\mathbf{cl}}(s^{1}) )}  =  \left[\begin{array}{c|c }
  \frac{(1-\mathbf x_\perp)^2(\partial_{1} \mathbf{\eta} \times \partial_{2} \mathbf{\eta})^{T}}{ [-\mathbf{n}] \cdot (\partial_{1} \mathbf{\eta} \times \partial_{2} \mathbf{\eta})+O(1) |\mathbf x_\perp |}   &  \\
  \frac{(1-\mathbf x_\perp)(\partial_{2} \mathbf{\eta} \times [-\mathbf{n}])^{T}}{ [-\mathbf{n}] \cdot (\partial_{1} \mathbf{\eta} \times \partial_{2} \mathbf{\eta}) +O(1) |\mathbf x_\perp |}    &  \mathbf{0}_{3,3} \\
  \frac{(1-\mathbf x_\perp)( [-\mathbf{n}] \times \partial_{1} \mathbf{\eta})^{T}}{ [-\mathbf{n}] \cdot (\partial_{1} \mathbf{\eta} \times \partial_{2} \mathbf{\eta})+O(1) |\mathbf x_\perp |}   \\ \hline
  O_{\xi}(1)(  |v|) &  \frac{(1-\mathbf x_\perp)^2(\partial_{1} \mathbf{\eta} \times \partial_{2} \mathbf{\eta})^{T}}{ [-\mathbf{n}] \cdot (\partial_{1} \mathbf{\eta} \times \partial_{2} \mathbf{\eta}) +O(1) |\mathbf x_\perp |}     \\
   O_{\xi}(1)(|v|)&   \frac{(1-\mathbf x_\perp)(\partial_{2} \mathbf{\eta} \times [-\mathbf{n}])^{T}}{ [-\mathbf{n}] \cdot (\partial_{1} \mathbf{\eta} \times \partial_{2} \mathbf{\eta}) +O(1) |\mathbf x_\perp |}    \\
  O_{\xi}(1)(|v|) &  \frac{(1-\mathbf x_\perp)( [-\mathbf{n}] \times \partial_{1} \mathbf{\eta})^{T}}{ [-\mathbf{n}] \cdot (\partial_{1} \mathbf{\eta} \times \partial_{2} \mathbf{\eta}) +O(1) |\mathbf x_\perp |}     \end{array}\right].
 \end{split}
 \end{equation}

From $X_{\mathbf{cl}}(s^1;t,x,v) = x-(t-s^1)v + \int_{s^1}^t \int_s^t E(\tau) d\tau d s= x-\Delta \times  v + \int_{t -\Delta}^t \int_s^t E(\tau) d\tau ds,$ and $V_{\mathbf{cl}}(s^1;t,x,v)=v - \int_{t -\Delta}^t E(s) ds,$ we have
\[ \begin{split}
\frac{X_{\mathbf{cl} }(s^1) }{ \p t } = & - \int_{t - \Delta}^t E(s) ds + \int_{t - \Delta}^t E(t) ds + \int_{t- \Delta}^t \int_s^t \p_t E(\tau) d\tau ds
\\ = & - \int_{t - \Delta}^t \int_s^t \left( \frac{ \p E(\tau)}{\p \tau}  +  \frac{ \p E(\tau)}{\p t}  \right) d\tau ds
\\ = & - \int_{t - \Delta}^t \int_s^t  \left( \p_\tau E + \nabla E \cdot \nabla_x X  \right) d\tau ds = O(1) |t -s^1|^2
\\ \frac{V_{\mathbf{cl} }(s^1) }{ \p t } = & - E(t) + E(t -\Delta) - \int_{t-\Delta}^t \frac{ \p E(s) }{\p t } ds  
\\ = & - \int_{t-\Delta}^t \left( \frac{\p  E(s) }{ \p s } + \frac{ \p E(s) }{\p t } \right) ds
\\ = &  - \int_{t-\Delta}^t   \left( \p_s E + \nabla E \cdot \nabla_x X  \right) ds = O(1) |t -s^1|.
\end{split} \]
And using  $|\nabla_{x,v} X_{\mathbf{cl}}(s^1) | + |\nabla_{x,v} V_{\mathbf{cl}}(s^1) | \lesssim 1 $,
\begin{equation}\notag
\begin{split}
&\frac{\partial ( X_{\mathbf{cl}}(s_{1}), V_{\mathbf{cl}}(s_{1}))}{\partial (t,x,v)}
=
\left[\begin{array}{ccc} 
O(1) |t -s^1|^2 & \mathbf{Id}_{3,3} + O(1) |t -s^1|^2 & -(t-s^{1})\mathbf{Id}_{3,3} + O(1) |t -s^1|^2 \\ O(1) |t -s^1| & O(1) |t -s^1|  & \mathbf{Id}_{3,3} + O(1) |t -s^1|  \end{array}\right].
\end{split}
\end{equation} 

Finally we multiply above two matrices and use $|\mathbf{x}_{\perp}(s^{1})|\lesssim  |v||t^{1}-s^{1}|$ to conclude the second claim (\ref{s1t}).

\vspace{8pt}

\noindent \textit{Step 4. Estimate of  ${\partial (t^{\ell+1} , \mathbf{x}_{\parallel_{{\ell+1}}}^{\ell+1}, \mathbf{v}_{\perp_{{\ell+1}}}^{\ell+1}, \mathbf{v}_{\parallel_{{\ell+1}}}^{\ell+1})}/{\partial (t^{\ell}  , \mathbf{x}_{\parallel_{{\ell}}}^{\ell}, \mathbf{v}_{\perp_{{\ell}}}^{\ell}, \mathbf{v}_{\parallel_{{\ell}}}^{\ell})}$}

\vspace{4pt}

Recall $
\mathbf{r}^{\ell}$ from (\ref{r}). We show that for $0<T \ll 1$ small enough, there exists $0< \delta_1 \ll 1$, $M =M_{\xi,t}\gg 1$, such that for all $\ell \in \mathbb{N}$ and $0 \leq t^{\ell+1} \leq t^{\ell} \leq t$, if $\ell$ is \textit{Type II} or \textit{Type III},
\begin{equation}\label{l_to_l+1}
\begin{split}
J^{\ell+1}_{\ell}&:= \frac{\partial (t^{\ell+1}, \mathbf{x}_{\parallel_{\ell+1}
}^{\ell+1}, \mathbf{v}_{\perp_{\ell+1} }^{\ell+1},\mathbf{v}_{\parallel_{\ell+1} }^{\ell+1})}{\partial (t^{\ell}, \mathbf{x}_{\parallel_{\ell}
}^{\ell},\mathbf{v}_{\perp_{\ell} }^{\ell},\mathbf{v}_{\parallel_{\ell} }^{\ell})}  \\
& \leq \left[\begin{array}{c|ccc|ccc}
 1+ M |t^\ell - t^{\ell + 1 } |  & \frac{M}{|v|} \mathbf{r}^{\ell+1}&  \frac{M}{|v|} \mathbf{r}^{\ell+1}& \frac{M}{|v |^{2}} &  \frac{M}{|v|^{2}} \mathbf{r}^{\ell+1} &  \frac{M}{|v|^{2}}  \mathbf{r}^{\ell+1}\\ \hline
  M |t^\ell - t^{\ell + 1 } |   & 1 + M \mathbf{r}^{\ell+1} & M \mathbf{r}^{\ell+1} & \frac{M}{|v|}    & \frac{M}{|v|} \mathbf{r}^{\ell+1}   &  \frac{M}{|v|} \mathbf{r}^{\ell+1}  \\
   M |t^\ell - t^{\ell + 1 } |   &  M \mathbf{r}^{\ell+1} & 1 +M \mathbf{r}^{\ell+1} & \frac{M}{|v|}    & \frac{M}{|v|} \mathbf{r}^{\ell+1}   &  \frac{M}{|v|} \mathbf{r}^{\ell+1}  \\ \hline
   M |t^\ell - t^{\ell + 1 } |^2|v|  & M|v|( \mathbf{r}^{\ell+1})^{2} & M|v|( \mathbf{r}^{\ell+1})^{2} & 1+M\mathbf{r}^{\ell+1} & M (\mathbf{r}^{\ell+1})^{2} & M (\mathbf{r}^{\ell+1})^{2}\\
   M |t^\ell - t^{\ell + 1 } |   & M|v|\mathbf{r}^{\ell+1} & M|v| \mathbf{r}^{\ell+1} & M  & 1+ M\mathbf{r}^{\ell+1} & M \mathbf{r}^{\ell+1}\\
      M |t^\ell - t^{\ell + 1 } |   & M|v|\mathbf{r}^{\ell+1} & M|v| \mathbf{r}^{\ell+1} & M &  M\mathbf{r}^{\ell+1} & 1+M \mathbf{r}^{\ell+1}
 \end{array}\right]
 \\ & \leq
 \left[\begin{array}{c|ccc|ccc}
 1 +5M \mathbf r^{\ell+1} & \frac{M}{|v|} \mathbf{r}^{\ell+1}&  \frac{M}{|v|} \mathbf{r}^{\ell+1}& \frac{M}{|v |^{2}} &  \frac{M}{|v|^{2}} \mathbf{r}^{\ell+1} &  \frac{M}{|v|^{2}}  \mathbf{r}^{\ell+1}\\ \hline
 5M \mathbf r^{\ell+1} |v|   & 1 + M \mathbf{r}^{\ell+1} & M \mathbf{r}^{\ell+1} & \frac{M}{|v|}    & \frac{M}{|v|} \mathbf{r}^{\ell+1}   &  \frac{M}{|v|} \mathbf{r}^{\ell+1}  \\
  5M \mathbf r^{\ell+1}|v|   &  M \mathbf{r}^{\ell+1} & 1 +M \mathbf{r}^{\ell+1} & \frac{M}{|v|}    & \frac{M}{|v|} \mathbf{r}^{\ell+1}   &  \frac{M}{|v|} \mathbf{r}^{\ell+1}  \\ \hline
  5M (\mathbf r^{\ell+1} )^2|v|^2  & M|v|( \mathbf{r}^{\ell+1})^{2} & M|v|( \mathbf{r}^{\ell+1})^{2} & 1+M\mathbf{r}^{\ell+1} & M (\mathbf{r}^{\ell+1})^{2} & M (\mathbf{r}^{\ell+1})^{2}\\
 5M \mathbf r^{\ell+1}|v|^2   & M|v|\mathbf{r}^{\ell+1} & M|v| \mathbf{r}^{\ell+1} & M  & 1+ M\mathbf{r}^{\ell+1} & M \mathbf{r}^{\ell+1}\\
    5M \mathbf r^{\ell+1}|v|^2   & M|v|\mathbf{r}^{\ell+1} & M|v| \mathbf{r}^{\ell+1} & M &  M\mathbf{r}^{\ell+1} & 1+M \mathbf{r}^{\ell+1}
 \end{array}\right]
 \\
 &:=     \underbrace{  J(\mathbf{r}^{\ell+1})   }_{\text{  Definition of } J(\mathbf{r}^{\ell+1})}.
\end{split}
\end{equation}
And if $\ell$ is \textit{Type I}, then
\begin{equation}\label{l_to_l+1delta}
\begin{split}
J^{\ell+1}_{\ell}&:= \frac{\partial (t^{\ell+1}, \mathbf{x}_{\parallel_{\ell+1}
}^{\ell+1}, \mathbf{v}_{\perp_{\ell+1} }^{\ell+1},\mathbf{v}_{\parallel_{\ell+1} }^{\ell+1})}{\partial (t^{\ell}, \mathbf{x}_{\parallel_{\ell}
}^{\ell},\mathbf{v}_{\perp_{\ell} }^{\ell},\mathbf{v}_{\parallel_{\ell} }^{\ell})}  \\
& \leq \left[\begin{array}{c|ccc|ccc}
 1+ M |t^\ell - t^{\ell + 1 } |  & M \mathbf{v}_{\perp }^{\ell+1}&  M \mathbf{v}_{\perp }^{\ell+1}& M &  M \mathbf{v}_{\perp }^{\ell+1} & M  \mathbf{v}_{\perp }^{\ell+1}\\ \hline
  M |t^\ell - t^{\ell + 1 } |   & 1 + M \mathbf{v}_{\perp }^{\ell+1} & M \mathbf{v}_{\perp }^{\ell+1} & M    & M \mathbf{v}_{\perp }^{\ell+1}   & M \mathbf{v}_{\perp }^{\ell+1}  \\
   M |t^\ell - t^{\ell + 1 } |   &  M \mathbf{v}_{\perp }^{\ell+1} & 1 +M \mathbf{v}_{\perp }^{\ell+1} & M    & M \mathbf{v}_{\perp }^{\ell+1}   &  M \mathbf{v}_{\perp }^{\ell+1}  \\ \hline
   M |t^\ell - t^{\ell + 1 } |^2  & M( \mathbf{v}_{\perp }^{\ell+1})^{2} & M( \mathbf{v}_{\perp }^{\ell+1})^{2} & 1+M\mathbf{v}_{\perp }^{\ell+1} & M (\mathbf{v}_{\perp }^{\ell+1})^{2} & M (\mathbf{v}_{\perp }^{\ell+1})^{2}\\
   M |t^\ell - t^{\ell + 1 } |   & M\mathbf{v}_{\perp }^{\ell+1} & M \mathbf{v}_{\perp }^{\ell+1} & M  & 1+ M\mathbf{v}_{\perp }^{\ell+1} & M \mathbf{v}_{\perp }^{\ell+1}\\
      M |t^\ell - t^{\ell + 1 } |   & M\mathbf{v}_{\perp }^{\ell+1} & M \mathbf{v}_{\perp }^{\ell+1} & M &  M\mathbf{v}_{\perp }^{\ell+1} & 1+M \mathbf{v}_{\perp }^{\ell+1}
 \end{array}\right]   \\
 \\ & \leq
 \left[\begin{array}{c|ccc|ccc}
 1 +5M \mathbf{v}_{\perp }^{\ell+1}  & M\mathbf{v}_{\perp }^{\ell+1} &  M\mathbf{v}_{\perp }^{\ell+1} & M & M\mathbf{v}_{\perp }^{\ell+1}  & M \mathbf{v}_{\perp }^{\ell+1} \\ \hline
 5M \mathbf{v}_{\perp }^{\ell+1}   & 1 + M\mathbf{v}_{\perp }^{\ell+1}  & M\mathbf{v}_{\perp }^{\ell+1}  &M    & M\mathbf{v}_{\perp }^{\ell+1}    & M\mathbf{v}_{\perp }^{\ell+1}   \\
  5M \mathbf{v}_{\perp }^{\ell+1}   &  M\mathbf{v}_{\perp }^{\ell+1}  & 1 +M\mathbf{v}_{\perp }^{\ell+1}  & M    &M\mathbf{v}_{\perp }^{\ell+1}    & M\mathbf{v}_{\perp }^{\ell+1}   \\ \hline
  5M (\mathbf{v}_{\perp }^{\ell+1}  )^2  & M(\mathbf{v}_{\perp }^{\ell+1} )^{2} & M(\mathbf{v}_{\perp }^{\ell+1} )^{2} & 1+M\mathbf{v}_{\perp }^{\ell+1}  & M (\mathbf{v}_{\perp }^{\ell+1} )^{2} & M (\mathbf{v}_{\perp }^{\ell+1} )^{2}\\
 5M \mathbf{v}_{\perp }^{\ell+1}   & M \mathbf{v}_{\perp }^{\ell+1}  & M\mathbf{v}_{\perp }^{\ell+1}  & M  & 1+ M\mathbf{v}_{\perp }^{\ell+1}  & M\mathbf{v}_{\perp }^{\ell+1} \\
    5M \mathbf{v}_{\perp }^{\ell+1}    & M\mathbf{v}_{\perp }^{\ell+1}  & M\mathbf{v}_{\perp }^{\ell+1}  & M &  M\mathbf{v}_{\perp }^{\ell+1}  & 1+M\mathbf{v}_{\perp }^{\ell+1} 
 \end{array}\right]
\\
 &:=     \underbrace{  J(\mathbf{v}_{\perp }^{\ell+1})   }_{\text{  Definition of } J(\mathbf{v}_{\perp }^{\ell+1})}.
\end{split}
\end{equation}

We also denote the Jacobian matrix within a single $\mathbf{p}^{\ell}-$ spherical coordinate:
\[
\tilde{J}_{\ell}^{\ell+1} : =  \frac{\partial (t^{\ell+1} ,\mathbf{x}_{\parallel_{\ell }
}^{\ell+1}, \mathbf{v}_{\perp_{\ell } }^{\ell+1},\mathbf{v}_{\parallel_{\ell } }^{\ell+1})}{\partial (t^{\ell} ,\mathbf{x}_{\parallel_{\ell}
}^{\ell},\mathbf{v}_{\perp_{\ell} }^{\ell},\mathbf{v}_{\parallel_{\ell} }^{\ell})}.
\]

We split the proof for each \textit{Type}:

\noindent\textit{Proof of (\ref{l_to_l+1delta}) (\textit{Type I}), and (\ref{l_to_l+1}) when $\ell$ is \textit{Type II}}: Note that $\mathbf{p}^{\ell}-$spherical coordinate is well-defined of all $\tau\in [t^{\ell+1}, t^{\ell}]$ for those cases. Due to the chart changing
\begin{equation*}
 { \frac{\partial (t^{\ell+1}, \mathbf{x}_{\parallel_{\ell+1}
}^{\ell+1}, \mathbf{v}_{\perp_{\ell+1} }^{\ell+1},\mathbf{v}_{\parallel_{\ell+1} }^{\ell+1})}{\partial (t^{\ell}, \mathbf{x}_{\parallel_{\ell}
}^{\ell},\mathbf{v}_{\perp_{\ell} }^{\ell},\mathbf{v}_{\parallel_{\ell} }^{\ell})} } =
  \left[\begin{array}{c|c} 1 & \mathbf{0}_{1,5}\\  \hline \mathbf{0}_{5,1} & 
 \frac{\partial ( \mathbf{x}_{\parallel_{\ell+1}
}^{\ell+1}, \mathbf{v}_{\perp_{\ell+1} }^{\ell+1},\mathbf{v}_{\parallel_{\ell+1} }^{\ell+1})}{\partial ( \mathbf{x}_{\parallel_{\ell}
}^{\ell},\mathbf{v}_{\perp_{\ell} }^{\ell},\mathbf{v}_{\parallel_{\ell} }^{\ell})}
  \end{array} \right]
 \underbrace{  \frac{\partial (t^{\ell+1},0,\mathbf{x}_{\parallel_{\ell }
}^{\ell+1}, \mathbf{v}_{\perp_{\ell } }^{\ell+1},\mathbf{v}_{\parallel_{\ell } }^{\ell+1})}{\partial (t^{\ell},0,\mathbf{x}_{\parallel_{\ell}
}^{\ell},\mathbf{v}_{\perp_{\ell} }^{\ell},\mathbf{v}_{\parallel_{\ell} }^{\ell})}}_{=\tilde{J}_{\ell}^{\ell+1}}.
\end{equation*}
where $ \frac{\partial ( \mathbf{x}_{\parallel_{\ell+1}
}^{\ell+1}, \mathbf{v}_{\perp_{\ell+1} }^{\ell+1},\mathbf{v}_{\parallel_{\ell+1} }^{\ell+1})}{\partial ( \mathbf{x}_{\parallel_{\ell}
}^{\ell},\mathbf{v}_{\perp_{\ell} }^{\ell},\mathbf{v}_{\parallel_{\ell} }^{\ell})}$ is the $5\times 5$ right lower submatrix of (\ref{chart_changing}).

Note that $|\mathbf{p}^{\ell}-\mathbf{p}^{\ell+1}|\lesssim_{\xi}   \sqrt \delta$ from 
$(\ref{pl})$. In order to show (\ref{l_to_l+1}) and \eqref{l_to_l+1delta} it suffices to show that ${\tilde{J}_{\ell}^{\ell+1}}$ is bounded:
\begin{equation}\label{tilde_J}
\begin{split}
\tilde{J}_{\ell}^{\ell+1}  \ \leq  \  J(\mathbf{r}^{\ell+1}), \text{ if } \ell \text{ is } \textit{Type II } \text{ or } \textit{Type III},
\\ \tilde{J}_{\ell}^{\ell+1}  \ \leq  \  J(\mathbf{v}_{\perp}^{\ell+1}), \text{ if } \ell \text{ is } \textit{ Type I}.
\end{split}
\end{equation}

This is due to the following matrix multiplication
\begin{equation}\notag
\begin{split}
  &\left[\begin{array}{c|c} 1 & \mathbf{0}_{1,5}\\  \hline \mathbf{0}_{5,1} & \frac{\partial ( \mathbf{x}_{\parallel_{\ell+1}
}^{\ell+1}, \mathbf{v}_{\perp_{\ell+1} }^{\ell+1},\mathbf{v}_{\parallel_{\ell+1} }^{\ell+1})}{\partial ( \mathbf{x}_{\parallel_{\ell}
}^{\ell},\mathbf{v}_{\perp_{\ell} }^{\ell},\mathbf{v}_{\parallel_{\ell} }^{\ell})}\end{array} \right]\tilde{J}_{\ell}^{\ell+1}\\
  &\leq
 \left[
\begin{array}{c|cc|ccc}
1 &   \mathbf{0}_{1,2}  &  &&  \mathbf{0}_{1,3} & \\ \hline
  \mathbf{0}_{2,1} 
   & 1+ C\mathbf{r}^{\ell+1} &  C\mathbf{r}^{\ell+1} &  & \mathbf{0}_{3,3} & \\
&   C\mathbf{r}^{\ell+1} &  1+C\mathbf{r}^{\ell+1} &  & &   \\ \hline
&  0 & 0 & 1 & 0 & 0 \\
  \mathbf{0}_{3,1}   &C\mathbf{r}^{\ell+1} |v| & C\mathbf{r}^{\ell+1} |v|  & 0 &  1+C\mathbf{r}^{\ell+1} &  C\mathbf{r}^{\ell+1} \\
&   C\mathbf{r}^{\ell+1}|v|  &  C\mathbf{r}^{\ell+1}|v|  & 0 &  C\mathbf{r}^{\ell+1} & 1+C\mathbf{r}^{\ell+1}
\end{array}
\right]J(\mathbf{r}^{\ell+1}) \leq J( C \mathbf{r}^{\ell+1}), \text{ if } |v|> \delta,
\\ &\left[\begin{array}{c|c} 1 & \mathbf{0}_{1,5}\\  \hline \mathbf{0}_{5,1} & \frac{\partial ( \mathbf{x}_{\parallel_{\ell+1}
}^{\ell+1}, \mathbf{v}_{\perp_{\ell+1} }^{\ell+1},\mathbf{v}_{\parallel_{\ell+1} }^{\ell+1})}{\partial ( \mathbf{x}_{\parallel_{\ell}
}^{\ell},\mathbf{v}_{\perp_{\ell} }^{\ell},\mathbf{v}_{\parallel_{\ell} }^{\ell})}\end{array} \right]\tilde{J}_{\ell}^{\ell+1}\\
  &\leq
 \left[
\begin{array}{c|cc|ccc}
1 &   \mathbf{0}_{1,2}  &  &&  \mathbf{0}_{1,3} & \\ \hline
  \mathbf{0}_{2,1} 
   & 1+ C\mathbf{r}^{\ell+1} &  C\mathbf{r}^{\ell+1} &  & \mathbf{0}_{3,3} & \\
&   C\mathbf{r}^{\ell+1} &  1+C\mathbf{r}^{\ell+1} &  & &   \\ \hline
&  0 & 0 & 1 & 0 & 0 \\
  \mathbf{0}_{3,1}   &C\mathbf{r}^{\ell+1} |v| & C\mathbf{r}^{\ell+1} |v|  & 0 &  1+C\mathbf{r}^{\ell+1} &  C\mathbf{r}^{\ell+1} \\
&   C\mathbf{r}^{\ell+1}|v|  &  C\mathbf{r}^{\ell+1}|v|  & 0 &  C\mathbf{r}^{\ell+1} & 1+C\mathbf{r}^{\ell+1}
\end{array}
\right]J(\mathbf{v}_\perp^{\ell+1}) \leq J( C \mathbf{v}_\perp^{\ell+1}), \text{ if } |v|\le \delta,
\end{split}
\end{equation}
where we used (\ref{chart_changing}) with an adjusted constant $C >0$.

Now we prove the claim (\ref{tilde_J}). We fix the $\mathbf{p}^{\ell}-$spherical coordinate and drop the index $\ell$ for the chart.


If $\mathbf{v}_{\perp}^{\ell}=0$ then $t^{\ell+1}=t^{\ell}$. Otherwise if $\mathbf{v}_{\perp}^{\ell} \neq 0$ then $t^{\ell+1}$ is determined through
\begin{equation}
0=\mathbf{v}_{\perp}^{\ell}(t^{\ell+1}-t^{\ell})+\int^{t^{\ell}  }_{ t^{\ell+1}}\int^{t^{\ell}}_{s}F_{\perp }(
\mathbf{X}_{\ell}(\tau; t^{\ell}, x^{\ell},v^{\ell}), \mathbf{V}_{\ell}(\tau;t^{\ell}, x^{\ell},v^{\ell} )
)\mathrm{d}\tau
\mathrm{d}s.\label{define}
\end{equation}

We first consider the $\frac{\p}{\p t^\ell}$ derivatives. 

Using the trajectory in the standard coordinates we have
\Be
0 = \xi(x^{\ell + 1 } ) = \xi\left(x^\ell - (t^\ell - t^{\ell +1 } ) v^\ell + \int_{t^{\ell+1}}^{t^\ell} \int_s^{t^\ell } E(\tau, X(\tau ) ) d\tau ds \right).
\Ee
Taking the $\frac{\p}{\p t^\ell}$ derivative we get
\Be \begin{split}
0 &= \nabla \xi(x^{\ell+1}) \cdot  \left[ -(1 - \frac{ \p t^{\ell+1}}{\p t^\ell } ) v^\ell  -  \frac{ \p t^{\ell+1}}{\p t^\ell } \int_{t^{\ell + 1}}^{t^\ell } E(\tau, X(\tau) ) d\tau + \int_{t^{\ell + 1}}^{t^\ell } E(t^\ell, x^\ell ) ds + \int_{t^{\ell+1}}^{t^\ell} \int_s^{t^\ell } \frac{ \p E(\tau, X(\tau ) ) }{\p t^\ell } d\tau ds \right]
\\ & =  \nabla \xi(x^{\ell+1}) \cdot  \left[ -v^\ell + \frac{ \p t^{\ell+1}}{\p t^\ell } v^{\ell + 1 } +   \int_{t^{\ell+1}}^{t^\ell} E(s,X(s)) ds +  \int_{t^{\ell+1}}^{t^\ell} \left( E(t^\ell, x^\ell ) - E(s,X(s))  \right) ds + \int_{t^{\ell+1}}^{t^\ell} \int_s^{t^\ell } \frac{ \p E(\tau, X(\tau ) ) }{\p t^\ell } d\tau ds \right]
\\ & = \nabla \xi(x^{\ell+1}) \cdot \left[ - v^{\ell + 1 }+ \frac{ \p t^{\ell+1}}{\p t^\ell } v^{\ell + 1 } +  \int_{t^{\ell+1}}^{t^\ell} \int_s^{t^\ell } \left( \frac{ \p E(\tau, X(\tau ) ) }{\p \tau} + \frac{ \p E(\tau, X(\tau ) ) }{\p t^\ell }  \right) d\tau ds  \right].
\end{split} \Ee
Thus
\Be \label{ptelltell1}
\frac{ \p t^{\ell+1}}{\p t^\ell } = 1 - \frac{\nabla \xi (x^{\ell+1}) }{\nabla \xi (x^{\ell + 1} ) \cdot v^{\ell + 1 } } \cdot  \int_{t^{\ell+1}}^{t^\ell} \int_s^{t^\ell } \left( \frac{ \p E(\tau, X(\tau ) ) }{\p \tau} + \frac{ \p E(\tau, X(\tau ) ) }{\p t^\ell }  \right) d\tau ds.
\Ee
By \eqref{Xclsellstardif} we have
\Be \label{ptauptlEdiff} 
 \begin{split}
\left| \frac{ \p E(\tau, X(\tau ) ) }{\p \tau} + \frac{ \p E(\tau, X(\tau ) ) }{\p t^\ell } \right| & = | \p_s E(\tau, X(\tau)) _\infty +  \nabla_x E \cdot \left( V(\tau) - V(\tau ) + O(1)|t^\ell - t^{\ell +1 } | \right) | 
\\ & =  | \p_s E(\tau, X(\tau)) _\infty +  O(1) \nabla_x E(\tau, X(\tau)) (t^\ell - t^{\ell +1 } ) | 
\\ &\lesssim   \| \p_t E \|_{L^\infty_{t,x}} + \| \nabla_x E \|_{L^\infty_{t,x}} |t^\ell - t^{\ell +1 } |
\end{split}. \Ee
Thus from \eqref{ptelltell1}, \eqref{ptauptlEdiff}, and \eqref{tbest} we have
\Be \label{ptelltell1est}
\frac{ \p t^{\ell+1}}{\p t^\ell } = 1 - O_{\xi, E }(1) \frac{|t^\ell - t^{\ell+1} |^2}{ |v_\perp^{\ell+1} | }  = 1 - O_{\xi,E}(1)|t^\ell - t^{\ell+1} |.
\Ee
Now by directly computing $\frac{\p \mathbf{v}_\perp^{\ell+1} }{\p t^\ell}$ we would have
\Be \label{tlvl1exp}
\frac{\p \mathbf{v}_\perp^{\ell+1} }{\p t^\ell} = \frac{- F_\perp(t^{\ell+1} )}{\mathbf{v}_\perp^{\ell+1} } \int_{t^{\ell+1}}^{t^\ell} \int_s^{t^\ell } \left( \frac{ \p F_\perp(\tau) }{\p \tau} + \frac{ \p F_\perp(\tau) }{\p t^\ell }  \right) d\tau ds +  \int_{t^{\ell+1}}^{t^\ell}  \left(  \frac{ \p F_\perp(s) }{\p s} + \frac{ \p F_\perp(s) }{\p t^\ell }  \right) ds.
\Ee

Recall
\begin{equation}
\begin{split}
F_{\perp_{ }} = & F_{\perp_{}}(\mathbf{x}_{\perp_{ }}, \mathbf{x}_{\parallel_{ }},   \mathbf{v}_{\parallel_{ }})\\
= & \sum_{j,k=1}^{2} \mathbf{v}_{\parallel_{ },k} \mathbf{v}_{\parallel_{ },j} \ \partial_{j} \partial_{k} \mathbf{\eta}_{ }(\mathbf{x}_{\parallel_{ }}) \cdot \mathbf{n}_{ }(\mathbf{x}_{\parallel_{ }})  
  - \mathbf{x}_{\perp_{ }} \sum_{k=1}^{2} \mathbf{v}_{\parallel_{ },k} (\mathbf{v}_{\parallel_{ }}\cdot \nabla) \partial_{k} \mathbf{n}_{ }(\mathbf{x}_{\parallel_{ }}) \cdot \mathbf{n}_{ }(\mathbf{x}_{\parallel_{ }})
  \\ &  -E(s, - \mathbf{x}_\perp \mathbf n (\mathbf x_\parallel ) + \mathbf \eta (\mathbf x_\parallel ) ) \cdot \mathbf n (\mathbf x_\parallel ),
\end{split}
\end{equation}
So by direct computation
\Be
\dot F_\perp(\tau) := \frac{\p F_\perp(\tau) }{\p \tau } = \mathbf v_\perp \nabla_{\mathbf x_\perp} F_\perp + \mathbf v_\parallel \nabla_{\mathbf x_\parallel} F_\perp + F_\parallel \nabla_{\mathbf v_\parallel } F_\perp - \p_s E \cdot n(\mathbf x_\parallel),
\Ee
so $\| \nabla_{\mathbf x_\perp} \dot F_\perp \|_\infty + \|  \nabla_{\mathbf x_\parallel} \dot F_\perp \|_\infty \lesssim |v|^3 +1  $, and $\| \nabla_{\mathbf v_\perp} \dot F_\perp \|_\infty + \|  \nabla_{\mathbf v_\parallel} \dot F_\perp \|_\infty \lesssim |v|^2 +1  $. Thus together with \eqref{dtellall} we have
\Be \label{Fperpdottimed} \begin{split}
& \left|  \frac{d}{d\tau} \left(  \frac{ \p F_\perp(\tau) }{\p \tau} + \frac{ \p F_\perp(\tau) }{\p t^\ell } \right) \right| 
\\ = & \left|   \frac{ \p \dot F_\perp(\tau) }{\p \tau} + \frac{ \p  \dot F_\perp(\tau) }{\p t^\ell }  \right|
\\  = &  \left| \nabla_{\mathbf{x}_\perp} \dot F_\perp \cdot \left( \mathbf{v}_\perp(s) + \frac{ \p \mathbf x_\perp(s) } {\p t^\ell} \right) + \nabla_{\mathbf{x}_\parallel} \dot F_\perp \cdot  \left( \mathbf{v}_\parallel(s) +  \frac{ \p \mathbf x_\parallel(s) } {\p t^\ell}  \right)+ \nabla_{\mathbf{v}_\parallel} \dot F_\perp \cdot  \left( F_\parallel(s) + \frac{ \p \mathbf v_\parallel(s) } {\p t^\ell} \right) \right.
\\ & \left.+ \nabla_{\mathbf{v}_\perp} \dot F_\perp \cdot  \left( F_\perp(s) + \frac{ \p \mathbf v_\perp(s) } {\p t^\ell} \right) - \p_s^2 E \cdot \mathbf n(\mathbf{x}_\parallel) \right|
\\  \lesssim & (|v|^3 +1 )  \int_s^{t^\ell} \int_\tau^{t^\ell} \left(  \left|  \p_{\tau'} F_\perp (\tau') + \p_{t^\ell} F_\perp(\tau') \right| + \left| \p_{\tau'} F_\parallel (\tau') + \p_{t^\ell} F_\parallel(\tau') \right| \right)  d\tau' d\tau
 \\ & + (|v|^2+1) \int_s^{t^\ell}   \left(  \left|  \p_{\tau'} F_\perp (\tau') + \p_{t^\ell} F_\perp(\tau') \right| + \left| \p_{\tau'} F_\parallel (\tau') + \p_{t^\ell} F_\parallel(\tau') \right| \right)  d\tau' + \| \p_t^2 E \|_{L^\infty_{t,x}}
 \\  \lesssim & \| \p_t^2 E \|_{L^\infty_{t,x}} + (|v|^3+1)(t^\ell - t^{\ell+1} )^2 + (|v|^2+1)(t^\ell - t^{\ell+1} )
 \\  \lesssim &  \| \p_t^2 E \|_{L^\infty_{t,x}} + |v| + 1.
\end{split} \Ee
Combing \eqref{Ftimedest}, \eqref{tlvl1exp}, \eqref{Fperpdottimed}, and expanding $ \frac{ \p F_\perp(\tau) }{\p \tau} + \frac{ \p F_\perp(\tau) }{\p t^\ell } $ at $t^\ell$ we get
\Be \label{pvelltlpartial}
\begin{split}
\frac{\p \mathbf{v}_\perp^{\ell+1} }{\p t^\ell} 
= \left(  \frac{ \p F_\perp(t^\ell) }{\p \tau} + \frac{ \p F_\perp(t^\ell) }{\p t^\ell } \right) \left( \frac{ F_\perp(t^{\ell+1} )}{\mathbf{v}_\perp^{\ell+1} } \frac{|t^\ell - t^{\ell+1} |^2}{2}  -  |t^\ell - t^{\ell+1} | \right) + O_{\| E\|_{C^2}, \O }(1) |t^\ell - t^{\ell+1} |^2 (|v|+1).
\end{split} \Ee
Now since we have
\Be \label{fperpenest}  \begin{split}
0 = \mathbf{x}_\perp^{\ell}  &=\mathbf{x}_\perp^{\ell+1}  + \int_{t^{\ell+1}}^{t^\ell} \mathbf{v}_\perp(s) ds  
\\ &=  \int_{t^{\ell+1}}^{t^\ell}  \left( - \mathbf{v}_\perp^{\ell+1} + \int_{t^{\ell+1}}^s F_\perp (\tau ) d\tau \right) ds 
\\ &= (t^\ell - t^{\ell+1} )(- \mathbf{v}_\perp^{\ell+1} ) + \int_{t^{\ell+1}}^{t^\ell} \int_{t^{\ell+1}}^s F_\perp(\tau) d \tau d s
\\ & =(t^\ell - t^{\ell+1} )(- \mathbf{v}_\perp^{\ell+1} ) + \frac{|t^\ell - t^{\ell+1} | ^2}{2}  F_\perp(t^{\ell+1} ) +  O(1)(\| \p_t E \|_{L^\infty_{t,x}} + |v|^3 ) |t^\ell - t^{\ell+1} |^3,
\end{split} \Ee
we get the following important cancellation identity:
\Be \label{Fperpcancel}
\frac{ F_\perp(t^{\ell+1} )}{\mathbf{v}_\perp^{\ell+1} } \frac{|t^\ell - t^{\ell+1} |^2}{2}  -  |t^\ell - t^{\ell+1} | = O(1) (\| \p_t E \|_{L^\infty_{t,x}} + |v|^3 )\frac{ |t^\ell - t^{\ell+1} |^3}{\mathbf{v}_\perp^{\ell+1} }.
\Ee
By \eqref{pvelltlpartial} and \eqref{Fperpcancel} we get
\Be
| \frac{\p \mathbf{v}_\perp^{\ell+1} }{\p t^\ell} |  \lesssim \left( \| \p_t E \|_{L^\infty_{t,x}}^2 + \| \p_t^2 E \|_{L^\infty_{t,x}} +1 \right) \left( |v| |t^\ell - t^{\ell+1} |^2 +  |t^\ell - t^{\ell+1} |^2 \right).
\Ee
Next, taking $\frac{\p}{\p t^\ell }$ derivative to $\mathbf v_\parallel^{\ell +1} = \mathbf v_\parallel^\ell  -\int_{t^{\ell+1}}^{t^\ell} F_\parallel (s) ds$, and $\mathbf{x}_\parallel^{\ell+1} = \mathbf{x}_\parallel^{\ell} - (t^\ell - t^{\ell+1} ) \mathbf v_\parallel^\ell + \int_{t^{\ell+1}}^{t^\ell} \int_s^{t^\ell}  F_\parallel (\tau) d\tau ds$ we get
\[ \begin{split}
\frac{\p \mathbf v_\parallel^{\ell+1}}{\p t^\ell } &= - F_\parallel(t^\ell) + \frac{\p t^{\ell+1}}{\p t^\ell} F_\parallel (t^\ell) - \int_{t^{\ell+1}}^{t^\ell} \p_{t^\ell} F_\parallel (s) ds
\\ & = F_\parallel (t^{\ell+1}) - F_\parallel (t^\ell ) + O(1) \frac{ |t^\ell -t^{\ell+1} |^2 }{| \mathbf v_\perp^{\ell+1} | } - \int_{t^{\ell+1}}^{t^\ell} \p_{t^\ell} F_\parallel(s) ds
\\ & = O(1) \frac{ |t^\ell -t^{\ell+1} |^2 }{| \mathbf v_\perp^{\ell+1} | } - \int_{t^{\ell+1}}^{t^\ell}  \left( \p_s F_\parallel (s) + \p_{t^\ell} F_\parallel(s)   \right) ds \lesssim  |t^\ell -t^{\ell+1} |,
\end{split}\]
and
\[ \begin{split}
\frac{\p \mathbf x_\parallel^{\ell+1}}{\p t^\ell } &= -\mathbf v_\parallel^\ell + \frac{\p t^{\ell+1}}{\p t^\ell} \mathbf v_\parallel^{\ell+1} + \int_{t^{\ell+1}}^{t^\ell} F_\parallel (t^\ell) ds + \int_{t^{\ell+1}}^{t^\ell} \int_s^{t^\ell}  \p_{t^\ell} F_\parallel (\tau) d\tau ds
\\ & = \mathbf v_\parallel^{\ell+1} - \mathbf v_\parallel^\ell - O(1) \frac{ |t^\ell -t^{\ell+1} |^2 }{| \mathbf v_\perp^{\ell+1} | } \mathbf v_\parallel^{\ell+1} + \int_{t^{\ell+1}}^{t^\ell} F_\parallel (t^\ell) ds + \int_{t^{\ell+1}}^{t^\ell} \int_s^{t^\ell}  \p_{t^\ell} F_\parallel (\tau) d\tau ds
\\ & = \int_{t^{\ell+1}}^{t^\ell} \left(  F_\parallel(t^\ell) - F_\parallel(s)  \right) ds+ \int_{t^{\ell+1}}^{t^\ell} \int_s^{t^\ell}  \p_{t^\ell} F_\parallel (\tau) d\tau ds - O(1) \frac{ |t^\ell -t^{\ell+1} |^2 }{| \mathbf v_\perp^{\ell+1} | } \mathbf v_\parallel^{\ell+1}
\\ & = \int_{t^{\ell+1}}^{t^\ell} \int_s^{t^\ell} \left( \p_s F_\parallel (\tau) + \p_{t^\ell} F_\parallel(\tau)   \right) d\tau ds + O(1) |t^\ell -t^{\ell+1} | \lesssim  |t^\ell -t^{\ell+1} |.
\end{split}\]
Where we've used \eqref{Ftimedest} and \eqref{ptelltell1est}. This proves the first column of \eqref{l_to_l+1} and \eqref{l_to_l+1delta}.

Taking derivatives of (\ref{define}) as before and using $|t^{\ell}-t^{\ell+1}| \lesssim_{\xi,t} \min \{\frac{|\mathbf{v}_{\perp}^{\ell+1}|}{|v|^{2}},1\}$ and Lemma \ref{lemma_flow},
\begin{equation}\label{tx}
\left[\begin{array}{c}
\frac{\partial t^{\ell+1}}{\partial \mathbf{x}_{\parallel }^{\ell}}\\
\frac{\partial t^{\ell+1}}{\partial \mathbf{v}_{\perp }^{\ell}}\\
\frac{\partial t^{\ell+1}}{\partial \mathbf{v}_{\parallel }^{\ell}}
\end{array}
\right]
= \left[\begin{array}{c}
\frac{1}{\mathbf{v}_{\perp
}^{\ell+1}}\int_{t^{\ell}}^{t^{\ell+1}}\int_{t^{\ell}}^{s}\frac{\partial }{\partial
\mathbf{x}_{\parallel }^{\ell}}F_{\perp }(   \mathbf{X}_{\ell}(\tau),   \mathbf{V}_{\ell}(\tau))\mathrm{d}\tau \mathrm{d}s\\
\frac{1}{\mathbf{v}_{\perp }^{\ell+1}}%
\left\{ (t^{\ell+1}-t^{\ell})+\int_{t^{\ell}}^{t^{\ell+1}}\int_{t^{\ell}}^{s}\frac{\partial
}{\partial \mathbf{v}_{\perp }^{\ell}}F_{\perp }(   \mathbf{X}_{\ell}(\tau), \mathbf{V}_{\ell}(\tau) )\mathrm{d}\tau \mathrm{d}%
s\right\}\\
\frac{1}{\mathbf{v}_{\perp
}^{\ell+1}}\int^{t^{\ell}}_{t^{\ell+1}}\int^{t^{\ell}}_{s}\frac{\partial }{\partial
\mathbf{v}_{\parallel }^{\ell}}F_{\perp }(  \mathbf{X}_{\ell}(\tau),   \mathbf{V}_{\ell}(\tau))\mathrm{d}\tau \mathrm{d}s
\end{array} \right]
\lesssim_{\xi, t}
\left[\begin{array}{c}
\frac{|t^\ell - t^{\ell+1}|^2}{|\mathbf{v}_\perp^{\ell+1}|}( |v|^2 +O(1))
\\
\frac{|t^\ell - t^{\ell+1}|}{|\mathbf{v}_\perp^{\ell+1}|} +\frac{|t^\ell - t^{\ell+1}|^2}{|\mathbf{v}_\perp^{\ell+1}|}( |v| +O(1))
\\
\frac{|t^\ell - t^{\ell+1}|^2}{|\mathbf{v}_\perp^{\ell+1}|}( |v| +O(1))
\end{array} \right].
\end{equation}
Thus from \eqref{tbest} we have
\Be \begin{split}
\left[\begin{array}{c}
\frac{\partial t^{\ell+1}}{\partial \mathbf{x}_{\parallel }^{\ell}}\\
\frac{\partial t^{\ell+1}}{\partial \mathbf{v}_{\perp }^{\ell}}\\
\frac{\partial t^{\ell+1}}{\partial \mathbf{v}_{\parallel }^{\ell}}
\end{array}
\right] \lesssim
\left[\begin{array}{c}
\frac{1}{|v|} \frac{|\mathbf{v}_{\perp}^{\ell+1}|}{|v|} \\
\frac{1}{|v|^{2}} \\
 \frac{1}{|v|^{2}} \frac{|\mathbf{v}_{\perp}^{\ell+1}|}{|v|} 
\end{array} \right], \text{ for } |v| > \delta. 
\quad
\left[\begin{array}{c}
\frac{\partial t^{\ell+1}}{\partial \mathbf{x}_{\parallel }^{\ell}}\\
\frac{\partial t^{\ell+1}}{\partial \mathbf{v}_{\perp }^{\ell}}\\
\frac{\partial t^{\ell+1}}{\partial \mathbf{v}_{\parallel }^{\ell}}
\end{array}
\right] \lesssim
\left[\begin{array}{c}
|\mathbf{v}_\perp^{\ell+1}|\\
O(1)\\
|\mathbf{v}_\perp^{\ell+1}|
\end{array} \right], \text{ for } |v| \le \delta.
\end{split} \Ee
 %
Taking $(\mathbf{x}(t^\ell),\mathbf{v}(t^\ell))$ derivatives of the characteristic equations 
\[
\mathbf{x}_\parallel^{\ell+1} = \mathbf{x}_\parallel^{\ell} - \int_{t^{\ell+1}}^{t^\ell} \mathbf{v}_\parallel(s;t^\ell x^\ell, v^\ell ) ds,
\]
 by Lemma \ref{lemma_flow} and (\ref{tx}), we estimate directly
%
%
%
\begin{equation}\notag
 \left[\begin{array}{c}
 \frac{\partial \mathbf{x}_{\parallel }^{\ell+1}}{\partial \mathbf{x}_{\parallel }^{\ell}}\\
 \frac{\partial \mathbf{x}_{\parallel }^{\ell+1}}{\partial \mathbf{v}_{\perp }^{\ell}}\\
 \frac{\partial \mathbf{x}_{\parallel }^{\ell+1}}{\partial \mathbf{v}_{\parallel }^{\ell}}
\end{array}\right] 
\lesssim_{\xi, t}
 \left[\begin{array}{c}
  \mathbf{Id}_{2,2} + \frac{|t^\ell - t^{\ell+1}|^2|v|^3}{|\mathbf{v}_\perp^{\ell+1}|} + O(1)\frac{|t^\ell - t^{\ell+1}|^2|v|}{|\mathbf{v}_\perp^{\ell+1}|}\\
O(1)\frac{|t^\ell - t^{\ell+1}||v|}{|\mathbf{v}_\perp^{\ell+1}|} + |t^\ell - t^{\ell+1}|
  \\
\frac{|t^\ell - t^{\ell+1}|^2}{|\mathbf{v}_\perp^{\ell+1}|}( |v|^2 +O(1)|v|) + |t^\ell - t^{\ell+1}|
 \end{array}\right] .
 \end{equation}
 Thus from \eqref{tbest} we have
 \begin{equation}\notag
 \left[\begin{array}{c}
 \frac{\partial \mathbf{x}_{\parallel }^{\ell+1}}{\partial \mathbf{x}_{\parallel }^{\ell}}\\
 \frac{\partial \mathbf{x}_{\parallel }^{\ell+1}}{\partial \mathbf{v}_{\perp }^{\ell}}\\
 \frac{\partial \mathbf{x}_{\parallel }^{\ell+1}}{\partial \mathbf{v}_{\parallel }^{\ell}}
\end{array}\right] 
\lesssim_{\xi, t}
 \left[\begin{array}{c}
  \mathbf{Id}_{2,2} + \frac{|\mathbf{v}_{\perp}^{\ell}|}{|v|}  \\
  \frac{1}{|v|}\\
   \frac{1}{|v|} \frac{|\mathbf{v}_{\perp}^{\ell+1}|}{|v|}
 \end{array}\right] , \text{ for } |v| > \delta.
 \quad
  \left[\begin{array}{c}
 \frac{\partial \mathbf{x}_{\parallel }^{\ell+1}}{\partial \mathbf{x}_{\parallel }^{\ell}}\\
 \frac{\partial \mathbf{x}_{\parallel }^{\ell+1}}{\partial \mathbf{v}_{\perp }^{\ell}}\\
 \frac{\partial \mathbf{x}_{\parallel }^{\ell+1}}{\partial \mathbf{v}_{\parallel }^{\ell}}
\end{array}\right] 
\lesssim_{\xi, t}
 \left[\begin{array}{c}
  \mathbf{Id}_{2,2} +|\mathbf{v}_{\perp}^{\ell}|   \\
 O(1) \\
 |\mathbf{v}_{\perp}^{\ell}|
 \end{array}\right] , \text{ for } |v| \le \delta.
 \end{equation}
Also,
 \begin{equation} \notag
  \left[\begin{array}{c}
  \frac{\partial  \mathbf{v}_{\parallel }^{\ell+1}}{\partial \mathbf{x}_{\parallel }^{\ell}}\\
  \frac{\partial  \mathbf{v}_{\parallel }^{\ell+1}}{\partial  \mathbf{v}_{\perp }^{\ell}}\\
  \frac{\partial  \mathbf{v}_{\parallel }^{\ell+1}}{\partial  \mathbf{v}_{\parallel }^{\ell}}
  \end{array}\right] \lesssim_{\xi, t}
    \left[\begin{array}{c}
\frac{|t^\ell - t^{\ell+1}|^2}{|\mathbf{v}_\perp^{\ell+1}|}( |v|^2 +O(1)|v|)^2 + |t^\ell - t^{\ell+1}|(|v|^2+O(1))
   \\
     ( |v|^2+O(1))\left( \frac{|t^\ell - t^{\ell+1}|}{|\mathbf{v}_\perp^{\ell+1}|} + \frac{|t^\ell - t^{\ell+1}|^2}{|\mathbf{v}_\perp^{\ell+1}|}( |v|^2 +O(1)|v|) \right) +|t^\ell - t^{\ell+1}|(|v|+O(1))
       \\
   \mathbf{Id}_{2,2}+  ( |v|^2 +O(1)|v|)  \frac{|t^\ell - t^{\ell+1}|^2}{|\mathbf{v}_\perp^{\ell+1}|}(|v|+O(1))+|t^\ell - t^{\ell+1}|(|v|+O(1))
      \end{array}\right].
\end{equation}
 Thus from \eqref{tbest} we have
 \begin{equation} \notag
  \left[\begin{array}{c}
  \frac{\partial  \mathbf{v}_{\parallel }^{\ell+1}}{\partial \mathbf{x}_{\parallel }^{\ell}}\\
  \frac{\partial  \mathbf{v}_{\parallel }^{\ell+1}}{\partial  \mathbf{v}_{\perp }^{\ell}}\\
  \frac{\partial  \mathbf{v}_{\parallel }^{\ell+1}}{\partial  \mathbf{v}_{\parallel }^{\ell}}
  \end{array}\right] \lesssim_{\xi, t}
    \left[\begin{array}{c}
   | \mathbf{v}_{\perp}^{\ell+1}| \\
      1+\frac{| \mathbf{v}_{\perp}^{\ell+1}|}{|v^{\ell}|} \\
       \mathbf{Id}_{2,2} +  \frac{| \mathbf{v}_{\perp}^{\ell+1}|}{|v^{\ell}|}
      \end{array}\right], \text{ for } |v| > \delta.
      \quad
       \left[\begin{array}{c}
  \frac{\partial  \mathbf{v}_{\parallel }^{\ell+1}}{\partial \mathbf{x}_{\parallel }^{\ell}}\\
  \frac{\partial  \mathbf{v}_{\parallel }^{\ell+1}}{\partial  \mathbf{v}_{\perp }^{\ell}}\\
  \frac{\partial  \mathbf{v}_{\parallel }^{\ell+1}}{\partial  \mathbf{v}_{\parallel }^{\ell}}
  \end{array}\right] \lesssim_{\xi, t}
    \left[\begin{array}{c}
   | \mathbf{v}_{\perp}^{\ell+1}| \\
      1+| \mathbf{v}_{\perp}^{\ell+1}| \\
       \mathbf{Id}_{2,2} +  | \mathbf{v}_{\perp}^{\ell+1}|
      \end{array}\right], \text{ for } |v| \le \delta.
\end{equation}

Now we move to $D \mathbf{v}_{\perp}^{\ell+1}$ estimates.
Taking derivatives in (\ref{v_ell}), from the extra cancellation in
terms of order of $t^{\ell}-t^{\ell+1}$ in (\ref{Fperpcancel}), by (\ref{tx}), and plugging the expansion
\[
\frac{\partial }{\partial \mathbf{x}_{\parallel }^{\ell}}F_{\perp }( \mathbf{X}_{\ell}(\tau), \mathbf{V}_{\ell}(\tau) ) = \frac{\partial }{\partial \mathbf{x}_{\parallel }^{\ell} } F_\perp(\mathbf x^\ell , \mathbf v^\ell ) - \int_\tau^{t^\ell } \frac{d}{d\tau '} \left(  \frac{\partial }{\partial \mathbf{x}_{\parallel }^{\ell}}F_{\perp }( \mathbf{X}_{\ell}(\tau'), \mathbf{V}_{\ell}(\tau') )  \right) d\tau'
\]
into
\[
\frac{\partial \mathbf{v}_{\perp }^{\ell+1}}{\partial \mathbf{x}_{\parallel }^{\ell}} =\frac{%
-F_{\perp }(\mathbf{x}^{\ell+1}, \mathbf{v}^{\ell+1})}{\mathbf{v}_{\perp }^{\ell+1}}\int^{t^{\ell}}_{t^{\ell+1}}\int^{t^{\ell}}_{s}%
\frac{\partial }{\partial \mathbf{x}_{\parallel }^{\ell}}F_{\perp }( \mathbf{X}_{\ell}(\tau), \mathbf{V}_{\ell}(\tau) )\mathrm{d}\tau
\mathrm{d}s+\int^{t^{\ell}}_{t^{\ell+1}}\frac{\partial }{\partial \mathbf{x}_{\parallel
}^{\ell}}F_{\perp }(\mathbf{X}_{\ell}(\tau), \mathbf{V}_{\ell}(\tau) )\mathrm{d}\tau,
\]
and using the cancellation \eqref{Fperpcancel} we obtain
\begin{equation}
\begin{split}
\frac{\partial \mathbf{v}_{\perp }^{\ell+1}}{\partial \mathbf{x}_{\parallel }^{\ell}} & =\Big\{\frac{(t^{\ell}-t^{\ell+1})F_{\perp }(\mathbf{x}^{\ell+1}, \mathbf{v}^{\ell+1})}{-2\mathbf{v}_{\perp }^{\ell+1}}+1%
\Big\}(t^{\ell}-t^{\ell+1})\frac{\partial }{\partial \mathbf{x}_{\parallel }^{\ell}}F_{\perp
}(\mathbf{x}^{\ell}, \mathbf{v}^{\ell})  \\
&   \ \ \ \ \ +\frac{%
F_{\perp }(\mathbf{x}^{\ell+1}, \mathbf{v}^{\ell+1})}{\mathbf{v}_{\perp }^{\ell+1}}\int^{t^{\ell}}_{t^{\ell+1}}\int^{t^{\ell}}_{s}%
\int_\tau^{t^\ell } \frac{d}{d\tau '} \left(  \frac{\partial }{\partial \mathbf{x}_{\parallel }^{\ell}}F_{\perp }( \mathbf{X}_{\ell}(\tau'), \mathbf{V}_{\ell}(\tau') )  \right) d\tau'\mathrm{d}\tau \mathrm{d}s
\\ & \ \ \ \ \ +\int^{t^{\ell}}_{t^{\ell+1}}\int_\tau^{t^\ell } \frac{d}{d\tau '} \left(  \frac{\partial }{\partial \mathbf{x}_{\parallel }^{\ell}}F_{\perp }( \mathbf{X}_{\ell}(\tau'), \mathbf{V}_{\ell}(\tau') )  \right) d\tau'\mathrm{d}\tau
\\ & \lesssim  \Big\{   -1 + O_{\xi}(1) \frac{|t^{\ell}-t^{\ell+1}|^{2 }  (|v^\ell |^{3}+1)}{|\mathbf{v}_{\perp}^{\ell+1}|} + 1\Big\} |t^{\ell}-t^{\ell+1}|  (|v^{\ell}|^{2}+1) \\
&   \ \ \ \ \ +\frac{%
F_{\perp }(\mathbf{x}^{\ell+1}, \mathbf{v}^{\ell+1})}{\mathbf{v}_{\perp }^{\ell+1}}\int^{t^{\ell}}_{t^{\ell+1}}\int^{t^{\ell}}_{s}%
\int_\tau^{t^\ell } \frac{d}{d\tau '} \left(  \frac{\partial }{\partial \mathbf{x}_{\parallel }^{\ell}}F_{\perp }( \mathbf{X}_{\ell}(\tau'), \mathbf{V}_{\ell}(\tau') )  \right) d\tau'\mathrm{d}\tau \mathrm{d}s
\\ & \ \ \ \ \ +\int^{t^{\ell}}_{t^{\ell+1}}\int_\tau^{t^\ell } \frac{d}{d\tau '} \left(  \frac{\partial }{\partial \mathbf{x}_{\parallel }^{\ell}}F_{\perp }( \mathbf{X}_{\ell}(\tau'), \mathbf{V}_{\ell}(\tau') )  \right) d\tau'\mathrm{d}\tau.
\end{split}
\end{equation}

Now since 
\Be \begin{split}
& \frac{d}{d\tau '} \left(  \frac{\partial }{\partial \mathbf{x}_{\parallel }^{\ell}}F_{\perp }( \mathbf{X}_{\ell}(\tau'), \mathbf{V}_{\ell}(\tau') )  \right)
\\ & \lesssim |v^\ell |^3  +  |  \frac{d}{d\tau '} \frac{\partial }{\partial \mathbf{x}_{\parallel }^{\ell}}  \left(  E(\tau',  \mathbf{X}_{\ell}(\tau') ) \cdot \mathbf n ( \mathbf{X}_{\ell}(\tau')) \right) | 
\\ & \lesssim |v^\ell |^3  +  |  \frac{d}{d\tau '}   \left(  \mathbf n ( \mathbf{X}_{\ell}(\tau')) \cdot \nabla_x E(\tau',  \mathbf{X}_{\ell}(\tau') ) \cdot   \frac{\partial \mathbf{X}_{\ell}(\tau')}{\partial \mathbf{x}_{\parallel }^{\ell}} +E(\tau',  \mathbf{X}_{\ell}(\tau') ) \cdot \nabla_x\mathbf n ( \mathbf{X}_{\ell}(\tau')) \cdot  \frac{\partial \mathbf{X}_{\ell}(\tau')}{\partial \mathbf{x}_{\parallel }^{\ell}}    \right) | 
\\ & \lesssim |v^\ell |^3 + \left|  \mathbf n ( \mathbf{X}_{\ell}(\tau'))\cdot \nabla_x E(\tau',  \mathbf{X}_{\ell}(\tau') ) \cdot  \left(  \frac{d}{d\tau '} \frac{\partial \mathbf{X}_{\ell}(\tau')}{\partial \mathbf{x}_{\parallel }^{\ell}} \right)  + \left(  \frac{d}{d\tau '} \mathbf n ( \mathbf{X}_{\ell}(\tau')) \right)  \cdot \nabla_x E(\tau',  \mathbf{X}_{\ell}(\tau') ) \cdot    \frac{\partial \mathbf{X}_{\ell}(\tau')}{\partial \mathbf{x}_{\parallel }^{\ell}} \right.
\\   & \left. \quad \quad    + \mathbf n ( \mathbf{X}_{\ell}(\tau')) \cdot  \p_t \nabla_x E(\tau',  \mathbf{X}_{\ell}(\tau') ) \cdot   \frac{\partial \mathbf{X}_{\ell}(\tau')}{\partial \mathbf{x}_{\parallel }^{\ell}} + \sum_{1 \le i,j,k \le 3 }  \mathbf n^i ( \mathbf{X}_{\ell}(\tau')) \p_{x_j} \p_{x_k } E^i(\tau',\mathbf{X}_{\ell}(\tau') ) \frac{\partial \mathbf{X}^j_{\ell}(\tau')}{\partial \mathbf{x}_{\parallel }^{\ell}} \mathbf V^k_\ell(\tau' ) \right.
\\ &\left.  \quad \quad + \left( \p_t   E(\tau',  \mathbf{X}_{\ell}(\tau') ) +  \nabla_x E(\tau',  \mathbf{X}_{\ell}(\tau') ) \cdot    \mathbf V_\ell(\tau' )  \right)  \cdot \nabla_x\mathbf n ( \mathbf{X}_{\ell}(\tau')) \cdot  \frac{\partial \mathbf{X}_{\ell}(\tau')}{\partial \mathbf{x}_{\parallel }^{\ell}} \right.
\\ & \left.  \quad \quad + E(\tau',  \mathbf{X}_{\ell}(\tau') ) \cdot \left( \frac{d}{d\tau' } \nabla_x\mathbf n ( \mathbf{X}_{\ell}(\tau')) \right) \cdot  \frac{\partial \mathbf{X}_{\ell}(\tau')}{\partial \mathbf{x}_{\parallel }^{\ell}} +  E(\tau',  \mathbf{X}_{\ell}(\tau') ) \cdot \nabla_x\mathbf n ( \mathbf{X}_{\ell}(\tau')) \cdot  \left( \frac{d}{d \tau' } \frac{\partial \mathbf{X}_{\ell}(\tau')}{\partial \mathbf{x}_{\parallel }^{\ell}}  \right) \right|
\\ & \lesssim   |v^\ell |^3 + \left|  \mathbf n ( \mathbf{X}_{\ell}(\tau'))\cdot \nabla_x E(\tau',  \mathbf{X}_{\ell}(\tau') ) \cdot  \frac{\partial \mathbf{V}_{\ell}(\tau')}{\partial \mathbf{x}_{\parallel }^{\ell}}  + \left(   \nabla_x \mathbf n ( \mathbf{X}_{\ell}(\tau')) \cdot \mathbf V_\ell (\tau' ) \right)  \cdot \nabla_x E(\tau',  \mathbf{X}_{\ell}(\tau') ) \cdot    \frac{\partial \mathbf{X}_{\ell}(\tau')}{\partial \mathbf{x}_{\parallel }^{\ell}} \right.
\\   & \left. \quad \quad    + \mathbf n ( \mathbf{X}_{\ell}(\tau')) \cdot  \p_t \nabla_x E(\tau',  \mathbf{X}_{\ell}(\tau') ) \cdot   \frac{\partial \mathbf{X}_{\ell}(\tau')}{\partial \mathbf{x}_{\parallel }^{\ell}} + \sum_{1 \le i,j,k \le 3 }  \mathbf n^i ( \mathbf{X}_{\ell}(\tau')) \p_{x_j} \p_{x_k } E^i(\tau',\mathbf{X}_{\ell}(\tau') ) \frac{\partial \mathbf{X}^j_{\ell}(\tau')}{\partial \mathbf{x}_{\parallel }^{\ell}} \mathbf V^k_\ell(\tau' ) \right.
\\ & \left. \quad \quad + \left( \p_t   E(\tau',  \mathbf{X}_{\ell}(\tau') ) +  \nabla_x E(\tau',  \mathbf{X}_{\ell}(\tau') ) \cdot    \mathbf V_\ell(\tau' )  \right)  \cdot \nabla_x\mathbf n ( \mathbf{X}_{\ell}(\tau')) \cdot  \frac{\partial \mathbf{X}_{\ell}(\tau')}{\partial \mathbf{x}_{\parallel }^{\ell}} \right.
\\ & \left. \quad \quad + E(\tau',  \mathbf{X}_{\ell}(\tau') ) \cdot \left(  \nabla_x^2 \mathbf n ( \mathbf{X}_{\ell}(\tau')) \cdot \mathbf V_\ell(\tau' )  \right) \cdot  \frac{\partial \mathbf{X}_{\ell}(\tau')}{\partial \mathbf{x}_{\parallel }^{\ell}} +  E(\tau',  \mathbf{X}_{\ell}(\tau') ) \cdot \nabla_x\mathbf n ( \mathbf{X}_{\ell}(\tau')) \cdot   \frac{\partial \mathbf{V}_{\ell}(\tau')}{\partial \mathbf{x}_{\parallel }^{\ell}}  \right|
\\ & \lesssim  |v^\ell |^3  + |v^\ell|  \|\nabla_x^2 E \|_{L^{\infty}_{t,x}} + \| \p_t \nabla_x E \|_{L^{\infty}_{t,x}},
\end{split} \Ee
where we use the bounds from \eqref{Dxv_free}. We have
\Be \label{estwant}\begin{split}
\frac{\partial \mathbf{v}_{\perp }^{\ell+1}}{\partial \mathbf{x}_{\parallel }^{\ell}}  \lesssim  &   \Big(\frac{|t^{\ell}-t^{\ell+1}|(|v^{\ell}|^{2}+1)}{|\mathbf{v}_{\perp}^{\ell+1}|} \Big) \left( |t^{\ell}-t^{\ell+1}|^{2} (|v^{\ell}|^{3}+1) + |v^\ell |^3  + |v^\ell|  \|\nabla_x^2 E \|_{L^{\infty}_{t,x}} + \| \p_t \nabla_x E \|_{L^{\infty}_{t,x}} \right)
\\ \lesssim_{\xi,t} & \min\{  \frac{|\mathbf{v}_{\perp}^{\ell+1}|^{2}}{|v^{\ell}|},  |\mathbf{v}_{\perp}^{\ell+1}|^{2} \}
\end{split} \Ee
as long as  $ \|\nabla_x^2 E \|_{L^{\infty}_{t,x}} + \| \p_t \nabla_x E \|_{L^{\infty}_{t,x}} < \infty$. Similarly,
\begin{equation}\notag
\begin{split}
\frac{\partial \mathbf{v}_{\perp }^{\ell+1}}{\partial \mathbf{v}_{\perp }^{\ell}}& =-1-\frac{%
\partial t^{\ell+1}}{\partial \mathbf{v}_{\perp }^{\ell}}F_{\perp
}(\mathbf{x}^{\ell+1 }, \mathbf{v}^{\ell+1})+\int^{t^{\ell}}_{t^{\ell+1}}\frac{\partial }{\partial \mathbf{v}_{\perp }^{\ell}}%
F_{\perp }( \mathbf{X}_{\ell}(\tau), \mathbf{V}_{\ell}(\tau) )\mathrm{d}\tau \\
&= -1+\frac{F_{\perp }(\mathbf{x}^{\ell+1}, \mathbf{v}^{\ell+1})}{\mathbf{v}_{\perp }^{\ell+1}}(t^{\ell}-t^{\ell+1})-\frac{%
F_{\perp } (\mathbf{x}^{\ell+1 }, \mathbf{v}^{\ell+1})}{\mathbf{v}_{\perp }^{\ell+1}}\int_{t^{\ell}}^{t^{\ell+1}}\int_{t^{\ell}}^{s}%
\frac{\partial }{\partial \mathbf{v}_{\perp }^{\ell}}F_{\perp }(\mathbf{X}_{\ell}(\tau), \mathbf{V}_{\ell}(\tau))\mathrm{d}\tau
\mathrm{d}s \\
& \ \ \ \ \ \ \ \ +\int^{t^{\ell}}_{t^{\ell+1}}\frac{\partial }{\partial \mathbf{v}_{\perp }^{\ell}}%
F_{\perp }( \mathbf{X}_{\ell}(\tau) , \mathbf{V}_{\ell}(\tau) )\mathrm{d}\tau \\
&=- 1+2+O_{\xi }(1)\frac{|t^{\ell}-t^{\ell+1}|^{2}(|v^{\ell}|^{3}+1)}{\mathbf{v}_{\perp }^{\ell+1}} \\
& \ \ \ \ \ -
\frac{F_{\perp } (\mathbf{x}^{\ell }, \mathbf{v}^{\ell})}{\mathbf{v}_{\perp }^{\ell+1}}\frac{(t^{\ell}-t^{\ell+1})^{2}}{2}%
\Big\{ \lim_{s \uparrow t^{\ell}}  \frac{\partial }{\partial \mathbf{v}_{\perp }^{\ell}}   F_{\perp }( \mathbf{X}_{\ell}(\tau), \mathbf{V}_{\ell}(\tau)  )+O_{\xi
}(1)|t^{\ell}-t^{\ell+1}|(|v^{\ell}|^{2}+1)\Big\} \\
 & \ \ \ \ \ +(t^{\ell}-t^{\ell+1})\Big\{  \lim_{s \uparrow t^{\ell}}\frac{\partial }{\partial \mathbf{v}_{\perp }^{\ell}}F_{\perp
}( \mathbf{X}_{\ell}(\tau), \mathbf{V}_{\ell}(\tau)   )+O_{\xi }(1)|t^{\ell}-t^{\ell+1}|(|v^{\ell}|^{2}+1)\Big\} \\
&  =1 + O_{\xi}(1) \Big\{\frac{|t^{\ell}-t^{\ell+1}|^{2}(|v^{\ell}|^{3}+1)}{|\mathbf{v}_{\perp}^{\ell+1}|}+ \frac{|t^{\ell}-t^{\ell+1}|^{3}}{|\mathbf{v}_{\perp}^{\ell+1}|}(|v^{\ell}|^{3} +1) \Big| \lim_{s \uparrow t^{\ell}}\frac{\partial }{\partial \mathbf{v}_{\perp }^{\ell}}F_{\perp
}( \mathbf{X}_{\ell}(\tau), \mathbf{V}_{\ell}(\tau)   )\Big| + |t^{\ell}-t^{\ell+1}|^{2} (|v^{\ell}|^{2} +1)\Big\}\\
&\lesssim 1 + |t^{\ell}-t^{\ell+1}|^{2} ( |v^{\ell}|^{2}+1) \Big\{ 1+ \frac{|v^{\ell}| +1}{|\mathbf{v}_{\perp}^{\ell+1}|} + \frac{|t^{\ell}-t^{\ell+1}| (|v^{\ell}|^{2} +1) }{|\mathbf{v}_{\perp}^{\ell+1}|}  \Big\} \lesssim_{\xi,t}  1+ \min \{ \frac{|\mathbf{v}_{\perp}^{\ell+1}|}{|v^{\ell}|}, |\mathbf{v}_\perp^{\ell+1} | \} , 
\end{split}
\end{equation}
\begin{equation}
\begin{split}
\frac{\partial \mathbf{v}_{\perp }^{\ell+1}}{\partial \mathbf{v}_{\parallel }^{\ell}}& =\frac{%
-F_{\perp }(\mathbf{x}^{\ell+1},  v^{\ell+1})}{ \mathbf{v}_{\perp }^{\ell+1}}\int^{t^{\ell}}_{t^{\ell+1}}\int^{t^{\ell}}_{s}%
\frac{\partial }{\partial  \mathbf{v}_{\parallel }^{\ell}}F_{\perp }( \mathbf{X}_{\ell}(\tau), \mathbf{V}_{\ell}(\tau) )\mathrm{d}\tau
\mathrm{d}s-\int^{t^{\ell}}_{t^{\ell+1}}\frac{\partial }{\partial  \mathbf{v}_{\parallel
}^{\ell}}F_{\perp }(  \mathbf{X}_{\ell}(\tau), \mathbf{V}_{\ell}(\tau) )\mathrm{d}\tau \\
& =\Big\{\frac{(t^{\ell}-t^{\ell+1})F_{\perp }(\mathbf{x}^{\ell+1}, \mathbf{v}^{\ell+1})}{-2 \mathbf{v}_{\perp }^{\ell+1}}+1%
\Big\}(t^{\ell}-t^{\ell+1})\frac{\partial }{\partial  \mathbf{v}_{\parallel }^{\ell}}F_{\perp
}(\mathbf{x}^{\ell}, \mathbf{v}^{\ell})  \\
& \ \ \ \  +O_{\xi }(1) |t^{\ell}-t^{\ell+1}|^{2}( |v^{\ell}|^{2}+1)
\Big\{\frac{|F_{\perp }(\mathbf{x}^{\ell+1}, \mathbf{v}^{\ell+1})|  |t^{\ell}-t^{\ell+1}| }{| \mathbf{v}_{\perp }^{\ell+1}|}%
+1\Big\}\\
& \lesssim_{\xi} |t^{\ell}-t^{\ell+1}|^{2}(|v^{\ell}|^{2}+1) \Big\{1+ \frac{|t^{\ell}-t^{\ell+1}|(|v^{\ell}|^{2}+1)}{| \mathbf{v}_{\perp}^{\ell+1}|}\Big\} \lesssim_{ \xi,t}  \min \{ \frac{| \mathbf{v}_{\perp}^{\ell+1}|^{2}}{|v^{\ell}|^{2}}, |\mathbf v_\perp^{\ell+1} |^2 \}.
\end{split}
\end{equation}
These estimates complete the proof of the claims (\ref{l_to_l+1delta}), and of (\ref{l_to_l+1})  when $\ell$ is \textit{Type II}.

 \vspace{4pt}

\noindent\textit{Proof of (\ref{l_to_l+1}) when $\ell$ is \textit{Type III}}: 
Recall that we chose a $\mathbf{p}^{\ell}-$spherical coordinate as $\mathbf{p}^{\ell}= (z^{\ell}, w^{\ell})$ with $|z^{\ell}-x^{\ell}| \leq \sqrt{\delta}$ and any $w^{\ell} \in \mathbb{S}^{2}$ with $n(z^{\ell})\cdot w^{\ell} =0.$

%
Fix $\ell$. Let us choose fixed numbers $\Delta_{1}, \Delta_{2}>0$ such that $|v|\Delta_{1}\ll 1$ and $|v||t^{\ell+1}- (t^{\ell}-\Delta_{1}-\Delta_{2})|\ll 1$ so that
$$
s^{\ell}\equiv t^{\ell}-\Delta_{1}, \ \  s^{\ell+1}\equiv s^{\ell}-\Delta_{2} = t^{\ell} - \Delta_{1} -\Delta_{2},
$$
 satisfying $|v||t^{\ell+1}-s^{\ell+1} |= |v||t^{\ell+1}-(t^{\ell}-\Delta_{1}-\Delta_{2})|\ll 1$ and $|v||t^{\ell}-s^{\ell} | =|v||\Delta_{1}| \ll 1$ so that the spherical coordinates are well-defined for $s\in [t^{\ell+1},s ^{\ell+1}]$ and $ s\in[s ^{\ell}, t^{\ell}]$. 
 
 Notice that
 \[
 \ \ \frac{\partial s^{\ell+1}}{\partial s^{\ell}}  = \frac{\partial ( s^{\ell} -\Delta_{1})}{\partial s ^{\ell}}=1,
 \ \ \frac{\partial s ^{\ell}}{\partial t^{\ell}} = \frac{\partial (t^{\ell}-\Delta_{1})}{\partial t^{\ell}}=1.
 \]

We first follow the flow in $\mathbf{p}^{\ell}-$spherical coordinate, then change to the Euclidian coordinate to near the boundary at $s^{\ell}$, follow the flow until $s^{\ell+1}$, and then change to the chart to $\mathbf{p}^{\ell+1}-$spherical coordinate. By the chain rule,
\begin{equation*}
\begin{split}
& \frac{\partial ( t^{\ell+1},   \mathbf{x}_{\parallel_{\ell+1}}^{\ell+1}, \mathbf{v}_{\perp_{\ell+1}}^{\ell+1}, \mathbf{v}_{\parallel_{\ell+1}}^{\ell+1}  )}{\partial (t^{\ell },   \mathbf{x}_{\parallel_{\ell}}^{\ell }, \mathbf{v}_{\perp_{\ell}}^{\ell }, \mathbf{v}_{\parallel_{\ell}}^{\ell } )} \\
=&
 \frac{\partial ( t^{\ell+1},   \mathbf{x}_{\parallel_{\ell+1}}^{\ell+1}, \mathbf{v}_{\perp_{\ell+1}}^{\ell+1}, \mathbf{v}_{\parallel_{\ell+1}}^{\ell+1}  )}
 {\partial( s^{\ell+1} , \mathbf{x}_{\perp_{\ell+1}}(s ^{\ell+1}), \mathbf{x}_{\parallel_{\ell+1}}(s ^{\ell+1}), \mathbf{v}_{\perp_{\ell+1}}(s ^{\ell+1}), \mathbf{v}_{\parallel_{\ell+1}}(s ^{\ell+1}) )}
\frac{\partial (s ^{\ell+1}, \mathbf{X}_{ {\mathbf{p}^{\ell+1}}}(s ^{\ell+1}), \mathbf{V}_{{\mathbf{p}^{\ell+1}}}(s ^{\ell+1}))}
{\partial (s ^{\ell+1}, X_{\mathbf{cl}}(s ^{\ell+1}),V_{\mathbf{cl}}(s ^{\ell+1}))}
\\
 &\times\frac{\partial (s^{\ell+1},  {X}_{\mathbf{cl}}(s ^{{\ell+1}}),  {V}_{\mathbf{cl}}(s ^{{\ell+1}}))}
{ \partial (s^{{\ell }},  {X}_{\mathbf{cl}}(s  ^{{\ell }}),  {V}_{\mathbf{cl}}(s ^{{\ell }}))}
 \frac{\partial (   s ^{{\ell }}, {X}_{\mathbf{cl}}(s ^{{\ell }}), {V}_{\mathbf{cl}}(s ^{{\ell }}))}
 {\partial (   s ^{{\ell }}, \mathbf{X}_{\mathbf{p}^{\ell}}(s^{{\ell }}), \mathbf{V}_{\mathbf{p}^{\ell}}(s ^{{\ell }}))}
 \frac{\partial(s ^{{\ell }}, \mathbf{x}_{\perp_{\ell}}(s ^{{\ell }}), \mathbf{x}_{\parallel_{\ell}}(s ^{{\ell }}), \mathbf{v}_{\perp_{\ell}}(s ^{{\ell }}), \mathbf{v}_{\parallel_{\ell}}(s ^{{\ell }}) )}
{\partial (t^{\ell },  \mathbf{x}_{\parallel_{\ell}}^{\ell }, \mathbf{v}_{\perp_{\ell}}^{\ell }, \mathbf{v}_{\parallel_{\ell}}^{\ell } )}.
\end{split}
\end{equation*}

We can express that $t^{\ell+1} = t^{\ell} - t_{\mathbf{b}}(x^{\ell}, v^{\ell}) = s^{\ell+1} + \Delta_{1} + \Delta_{2}- t_{\mathbf{b}}(x^{\ell}, v^{\ell}).$ Let us regard $t^{\ell+1}$ as $t^{1}$ and $s^{\ell+1}$ as $s^{1}$ and $\Delta_{1} + \Delta_{2}$ as $\Delta$ in (\ref{Delta_step3}). Then we use (\ref{t1s1}) and (\ref{tbest}) to have
\[ \begin{split}
  \frac{\partial ( t^{\ell+1},  \mathbf{x}_{\parallel}^{\ell+1}, \mathbf{v}_{\perp}^{\ell+1}, \mathbf{v}_{\parallel}^{\ell+1}  )}{\partial( s^{\ell+1}, \mathbf{x}_{\perp}(s^{\ell+1} ), \mathbf{x}_{\parallel}(s^{\ell+1} ), \mathbf{v}_{\perp}(s^{\ell+1} ), \mathbf{v}_{\parallel}(s^{\ell+1} ) )}
& \leq   \left[\begin{array}{c|c|c}
1 +O(1)|t^\ell - t^{\ell + 1 } | &  O_{\delta,\xi}(1)\frac{1}{|v|}  & O_{\delta,\xi}(1) \frac{1}{|v|^{2}} \\ \hline
O(1)|t^\ell - t^{\ell + 1 }| & O_{\delta,\xi}(1) & O_{\delta,\xi}(1) \frac{1}{|v|} \\ \hline
O(1)|t^\ell - t^{\ell + 1 }|  & O_{\delta,\xi}(1)(|v| + \frac{1}{ |\mathbf{v}_{\perp_{\ell+1}} |}) & O_{\delta,\xi}(1)
\end{array}\right]
\\ & \leq   \left[\begin{array}{c|c|c}
1 +O(1)|t^\ell - t^{\ell + 1 } | &  O_{\delta,\xi}(1)\frac{1}{|v|}  & O_{\delta,\xi}(1) \frac{1}{|v|^{2}} \\ \hline
O(1)|t^\ell - t^{\ell + 1 }| & O_{\delta,\xi}(1) & O_{\delta,\xi}(1) \frac{1}{|v|} \\ \hline
O(1)|t^\ell - t^{\ell + 1 }|  & O_{\delta,\delta' , \xi}(1)|v|  & O_{\delta,\xi}(1)
\end{array}\right].
\end{split}
  \] 
Where we have used   
From (\ref{XVchange})
\begin{equation}\notag
\begin{split}
 \frac{\partial (s^{\ell+1}, \mathbf{X}_{\mathbf{p}^{\ell+1}}(s ^{\ell+1}) ,   \mathbf{V}_{\mathbf{p}^{\ell+1}}(s ^{\ell+1}) )}{\partial (s ^{\ell+1}, X_{\mathbf{cl}}(s ^{\ell+1}) ,   {V}_{\mathbf{cl}}(s ^{\ell+1}) )}
 \lesssim_{\xi} \left[\begin{array}{c|c|c}
1 & \mathbf{0}_{1,3}  & \mathbf{0}_{1,3}\\ \hline
\mathbf{0}_{3,1} & O_{\xi}(1) & \mathbf{0}_{3,3}\\ \hline
\mathbf{0}_{3,1} & O_{\xi}(1)|v| & O_{\xi}(1)
\end{array} \right],
\end{split}
\end{equation}
and from $s^{\ell+1} = s^{\ell} -\Delta_{2}, \ X_{\mathbf{cl}}(s^{\ell+1} ) = X_{\mathbf{cl}}(s^{\ell} ) - (s^{\ell+1} -s^{\ell}) V_{\mathbf{cl}}(s^{\ell} ), \ V_{\mathbf{cl}}(s^{\ell +1} )=V_{\mathbf{cl}}(s^{\ell }),$
\begin{equation}\notag
\begin{split}
&\frac{\partial (s^{\ell+1}, X_{\mathbf{cl}}(s^{\ell+1}), V_{\mathbf{cl}}(s^{\ell+1} ))}{\partial (s^{\ell} ,X_{\mathbf{cl}}(s^{\ell} ), V_{\mathbf{cl}}(s^{\ell} ) )}
\lesssim_{\xi}  \left[\begin{array}{ccc}
1 & \mathbf{0}_{1,3} & \mathbf{0}_{1,3} \\
\mathbf{0}_{3,1}& \mathbf{Id}_{3,3} & |s_{1}-s_{2}|\mathbf{Id}_{3,3} \\
\mathbf{0}_{3,1} & \mathbf{0}_{3,3} & \mathbf{Id}_{3,3}
\end{array}\right],
\end{split}
\end{equation}
and from (\ref{jac_Phi})
\begin{equation}\notag
\begin{split}
& \frac{\partial (s^{\ell} , X_{\mathbf{cl}}(s^{\ell} ), V_{\mathbf{cl}}(s^{\ell} )  )}{\partial (s^{\ell} , \mathbf{X}_{\mathbf{p}^{\ell}}(s^{\ell} ), \mathbf{V}_{\mathbf{p}^{\ell}}(s^{\ell} )  )}
\lesssim_{\xi} \left[ \begin{array}{ccc}
1 &   \mathbf{0}_{1,3} &   \mathbf{0}_{1,3}   \\
\mathbf{0}_{3,1}&   O_{\xi}(1)
 &   \mathbf{0}_{3,3}   \\
\mathbf{0}_{3,1}   & |v|    & O_{\xi}(1)      \end{array}\right].
\end{split}
  \end{equation}
Recalig (\ref{s1tstar}), we have
$$
\frac{\partial (s^{\ell} , \mathbf{x}_{\perp_{\ell}}(s^{\ell}),  \mathbf{x}_{\parallel_{\ell}}(s^{\ell}),   \mathbf{v}_{\perp_{\ell}}(s^{\ell}), \mathbf{v}_{\parallel_{\ell}}(s^{\ell})
    )}{\partial (t^{\ell } , \mathbf{x}_{\parallel_{\ell}}^{\ell }, \mathbf{v}_{\perp_{\ell}}^{\ell }, \mathbf{v}_{\parallel_{\ell}}^{\ell })}
    \lesssim_{\xi}
     \left[\begin{array}{c|c|c}
1   & \mathbf{0}_{1,2} & \mathbf{0}_{1,3}\\ \hline
O_{\xi}(1)|v|    & O_{\xi}(1) & O_{\xi}(1)|t^{\ell}-s_{1}| \\ \hline
O_{\xi}(1)|v|^{2}   & O_{\xi}(1)|v| & O_{\xi}(1)
\end{array}\right].$$

By direct matrix multiplication
\begin{equation}\notag
\begin{split}
 \frac{\partial ( t^{\ell+1},   \mathbf{x}_{\parallel_{{\ell+1}}}^{\ell+1}, \mathbf{v}_{\perp_{{\ell+1}}}^{\ell+1}, \mathbf{v}_{\parallel_{{\ell+1}}}^{\ell+1}  )}{\partial (t^{\ell },  \mathbf{x}_{\parallel_{{\ell }}}^{\ell }, \mathbf{v}_{\perp_{{\ell }}}^{\ell }, \mathbf{v}_{\parallel_{{\ell }}}^{\ell } )}
   \lesssim_{ t,\xi} \left[\begin{array}{c|c|c}1   & \frac{1}{|v|} & \frac{1}{|v|^{2}} \\ \hline
\mathbf{0}_{2,1}     & 1 & \frac{1}{|v|}\\ \hline
 \mathbf{0}_{3,1}   & |v| & 1
\end{array}\right].
\end{split}
\end{equation}
Note that for \textit{Type III} we have $\mathbf{r}^{\ell+1} \gtrsim \sqrt{\delta}$ so that from (\ref{l_to_l+1})
\[
J(\mathbf{r}^{\ell+1}) \gtrsim \left[\begin{array}{c|c|c}
1   & \frac{M}{|v|}\sqrt{\delta} &  \frac{M}{|v|^{2}} \min\{1, \sqrt{\delta}\} \\ \hline
\mathbf{0}_{2,1}  & M \sqrt{\delta} & \frac{M}{|v|} \min \{  1, \sqrt{\delta} \} \\ \hline
\mathbf{0}_{3,1}   & M|v| \min \{ \delta, \sqrt{\delta}\} & M \min\{\delta,\sqrt{\delta}\}
\end{array} \right] \gtrsim_{\delta,t,\xi}  \frac{\partial ( t^{\ell+1},   \mathbf{x}_{\parallel_{{\ell+1}}}^{\ell+1}, \mathbf{v}_{\perp_{{\ell+1}}}^{\ell+1}, \mathbf{v}_{\parallel_{{\ell+1}}}^{\ell+1}  )}{\partial (t^{\ell }, \mathbf{x}_{\parallel_{{\ell }}}^{\ell }, \mathbf{v}_{\perp_{{\ell }}}^{\ell }, \mathbf{v}_{\parallel_{{\ell }}}^{\ell } )}
.\]
This proves our claim (\ref{l_to_l+1}) for \textit{Type III}.

\noindent{\textit{Step 5. Eigenvalues and diagonalization of (\ref{l_to_l+1})}}

\vspace{4pt}
We consider the case when $\ell$ is \textit{Type II} or \textit{Type III}. By a basic linear algebra (row and column operations), the characteristic polynomial of (\ref{l_to_l+1}) 
 equals, with $\mathbf{r}= \mathbf{r}^{\ell+1},$
\begin{equation}\notag
\begin{split}
&\text{det}\left[\begin{array}{cccccc} 1 +5M \mathbf r^{} - \lambda 
& \frac{M}{|v|} \mathbf{r} & \frac{M}{|v|}\mathbf{r} & \frac{M}{|v|^{2}} & \frac{M}{|v|^{2}} \mathbf{r} & \frac{M}{|v|^{2}} \mathbf{r} \\
5M \mathbf r^{} |v|  
 & 1+M\mathbf{r} -\lambda  & M \mathbf{r} & \frac{M}{|v|} & \frac{M}{|v|} \mathbf{r} &  \frac{M}{|v|} \mathbf{r}\\
5M \mathbf r^{} |v|   &  M \mathbf{r}& 1+M\mathbf{r} -\lambda  & \frac{M}{|v|} & \frac{M}{|v|} \mathbf{r} &  \frac{M}{|v|} \mathbf{r}\\
5M  \mathbf r^2|v|^2  & M|v| \mathbf{r}^{2} & M |v| \mathbf{r}^{2} & 1+ M\mathbf{r} -\lambda & M \mathbf{r}^{2} & M \mathbf{r}^{2} \\
5M \mathbf r^{}|v|^2   & M|v| \mathbf{r}  & M |v| \mathbf{r}  & M   & 1+ M\mathbf{r} -\lambda  & M \mathbf{r} \\
5M \mathbf r^{}|v|^2  & M|v| \mathbf{r}  & M |v| \mathbf{r}  & M     & M \mathbf{r} & 1+ M\mathbf{r} -\lambda\\
\end{array}\right]\\
&=  (\lambda - 1)^5(  \lambda - (10M\mathbf r +1)).
\end{split}
\end{equation}
Therefore eigenvalues are
\begin{equation}\label{eigenvalue}
\begin{split}
 \lambda_{1}&= \lambda_{2} = \lambda_{3} = \lambda_{4} =  \lambda_{5} =1, \lambda_6 =  1 + 10M\mathbf r.
\end{split}
\end{equation}
Corresponding eigenvectors are
\begin{eqnarray*}
 \left(\begin{array}{ccccccc} -\frac{1}{5|v|}   \\ 1 \\ 0 \\ 0 \\ 0 \\0   \end{array}\right),
 \left(\begin{array}{ccccccc}-\frac{1}{5|v|}   \\ 0 \\ 1 \\ 0 \\ 0 \\0   \end{array}\right),
 \left(\begin{array}{ccccccc}-\frac{1}{5|v|^2\mathbf r}  \\ 0 \\ 0 \\ 1 \\ 0 \\0   \end{array}\right),
 \left(\begin{array}{ccccccc}-\frac{1}{5|v|^2} \\ 0 \\ 0 \\ 0 \\ 1 \\ 0  \end{array}\right),
 \left(\begin{array}{ccccccc}-\frac{1}{5|v|^2}   \\ 0 \\ 0 \\ 0 \\ 0 \\1   \end{array}\right),
  \left(\begin{array}{ccccccc}\frac{1}{|v|^2}   \\ \frac{1}{|v|} \\ \frac{1}{|v|} \\ \mathbf r \\ 1 \\ 1   \end{array}\right).
\end{eqnarray*}

Write $P=P(\mathbf{r}^{\ell}) $ as a block matrix of above column eigenvectors. Then
\begin{equation}  \label{diagonal_matrix}
\mathcal{P}= \left[\begin{array}{ccccccc}
  -\frac{1}{5|v|} & -\frac{1}{5|v|} & -\frac{1}{5|v|^2\mathbf r} & -\frac{1}{5|v|^2} & -\frac{1}{5|v|^2} & \frac{1}{|v|^2} \\
  1 & 0 & 0 & 0 & 0 & \frac{1}{|v|} \\
  0 &1 & 0 & 0 & 0 & \frac{1}{|v|} \\
  0 & 0 &1 & 0 & 0 &  \mathbf{r}\\
  0& 0 & 0 & 1 & 0 & 1\\
  0 & 0 & 0 & 0 & 1 & 1
 \end{array}\right], \  
\mathcal{P}^{-1}  =  \left[\begin{array}{ccccccc}
 -\frac{|v|}{2}   & \frac{9}{10} &-\frac{1}{10} & -\frac{1}{10 |v| \mathbf{r}} & -\frac{1}{10 |v|} & -\frac{1}{10 |v|} \\
   -\frac{|v|}{2} & -\frac{1}{10} & \frac{9}{10} & -\frac{1}{10 |v| \mathbf{r}} & -\frac{1}{10 |v|} &-\frac{1}{10 |v|} \\
   -\frac{|v|^2 \mathbf r}{2} & -\frac{|v|\mathbf r}{10} & -\frac{|v|\mathbf r}{10}& \frac{9}{10} & - \frac{\mathbf r}{10} & - \frac{\mathbf r}{10} \\
   -\frac{|v|^2}{2} & - \frac{|v|}{10} & - \frac{|v|}{10} & -\frac{1}{10 \mathbf{r}} & \frac{9}{10} & - \frac{1}{10} \\
   -\frac{|v|^2}{2} & - \frac{|v|}{10}  &- \frac{|v|}{10}  & -\frac{1}{10 \mathbf{r}} & - \frac{1}{10} & \frac{9}{10} \\
   \frac{|v|^2}{2} &  \frac{|v|}{10}  & \frac{|v|}{10}  & \frac{1}{10 \mathbf{r}} & \frac{1}{10} & \frac{1}{10} \\
\end{array}\right].
\end{equation}
Therefore
\begin{equation}\notag
  {J}(\mathbf{r}) = \mathcal{P}(\mathbf{r}) \Lambda(\mathbf{r}) \mathcal{P}^{-1} (\mathbf{r}) ,
\end{equation}
and 
\[
 \Lambda(\mathbf{r}):= \text{diag}\Big[   1, 1, 1, 1, 1,  1 + 10M\mathbf r \Big],
 \]
where the notation $\text{diag}[a_{1},\cdots , a_{m}]$ is a $m\times m-$matrix with $a_{ii}=a_{i}$ and $a_{ij}=0$ for all $i\neq j.$

Similarly for the case when $\ell$ is \textit{Type I}, the eigenvalues of the matrix \eqref{l_to_l+1delta} are (with $\mathbf{v}_\perp = \mathbf{v}_{\perp }^{\ell+1} $)
\begin{equation}\label{eigenvalue}
\begin{split}
 \lambda_{1}&= \lambda_{2} = \lambda_{3} = \lambda_{4} =  \lambda_{5} =1, \lambda_6 =  1 + 10M\mathbf v_\perp.
\end{split}
\end{equation}
Corresponding eigenvectors are
\begin{eqnarray*}
 \left(\begin{array}{ccccccc} -\frac{1}{5}   \\ 1 \\ 0 \\ 0 \\ 0 \\0   \end{array}\right),
 \left(\begin{array}{ccccccc}-\frac{1}{5}   \\ 0 \\ 1 \\ 0 \\ 0 \\0   \end{array}\right),
 \left(\begin{array}{ccccccc}-\frac{1}{5\mathbf v_\perp}  \\ 0 \\ 0 \\ 1 \\ 0 \\0   \end{array}\right),
 \left(\begin{array}{ccccccc}-\frac{1}{5} \\ 0 \\ 0 \\ 0 \\ 1 \\ 0  \end{array}\right),
 \left(\begin{array}{ccccccc}-\frac{1}{5}   \\ 0 \\ 0 \\ 0 \\ 0 \\1   \end{array}\right),
  \left(\begin{array}{ccccccc} 1   \\ 1 \\ 1 \\ \mathbf v_\perp \\ 1 \\ 1   \end{array}\right).
\end{eqnarray*}

Write $P=P(\mathbf v_\perp^{\ell}) $ as a block matrix of above column eigenvectors. Then
\begin{equation}  \label{diagonal_matrix}
\mathcal{P}= \left[\begin{array}{ccccccc}
  -\frac{1}{5} & -\frac{1}{5} & -\frac{1}{5\mathbf v_\perp } & -\frac{1}{5} & -\frac{1}{5} &1 \\
  1 & 0 & 0 & 0 & 0 & 1 \\
  0 &1 & 0 & 0 & 0 & 1 \\
  0 & 0 &1 & 0 & 0 &  \mathbf v_\perp\\
  0& 0 & 0 & 1 & 0 & 1\\
  0 & 0 & 0 & 0 & 1 & 1
 \end{array}\right], \  
\mathcal{P}^{-1}  =  \left[\begin{array}{ccccccc}
 -\frac{1}{2}   & \frac{9}{10} &-\frac{1}{10} & -\frac{1}{10\mathbf v_\perp} & -\frac{1}{10} & -\frac{1}{10 } \\
   -\frac{1}{2} & -\frac{1}{10} & \frac{9}{10} & -\frac{1}{10 \mathbf v_\perp} & -\frac{1}{10} &-\frac{1}{10 } \\
   -\frac{\mathbf v_\perp}{2} & -\frac{\mathbf v_\perp}{10} & -\frac{\mathbf v_\perp}{10}& \frac{9}{10} & - \frac{\mathbf v_\perp}{10} & - \frac{\mathbf v_\perp}{10} \\
   -\frac{1}{2} & - \frac{1}{10} & - \frac{1}{10} & -\frac{1}{10\mathbf v_\perp} & \frac{9}{10} & - \frac{1}{10} \\
   -\frac{1}{2} & - \frac{1}{10}  &- \frac{1}{10}  & -\frac{1}{10\mathbf v_\perp} & - \frac{1}{10} & \frac{9}{10} \\
   \frac{1}{2} &  \frac{1}{10}  & \frac{1}{10}  & \frac{1}{10 \mathbf v_\perp} & \frac{1}{10} & \frac{1}{10} \\
\end{array}\right].
\end{equation}
Therefore
\begin{equation}\notag
  {J}(\mathbf{\mathbf v_\perp}) = \mathcal{P}(\mathbf v_\perp) \Lambda(\mathbf v_\perp) \mathcal{P}^{-1} (\mathbf v_\perp) ,
\end{equation}
and 
\[
 \Lambda(\mathbf v_\perp):= \text{diag}\Big[   1, 1, 1, 1, 1,  1 + 10M\mathbf v_\perp\Big],
 \]

\vspace{8pt}

 \noindent{\textit{Step 6. The $i-$th intermediate group}}

\vspace{4pt}

If $\ell$ is \textit{Type II} or \textit{Type III}, We claim that, for $i=1,2,\cdots, [\frac{|t-s| |v|}{L_{\xi}}]$,
\begin{equation}\label{one_group}
\begin{split}
&J^{\ell_{i+1}}_{\ell_{i+1}-1} \times \cdots \times J_{\ell_{i}}^{\ell_{i}+1}\\
 =& \ {\frac{\partial (t^{\ell_{i+1}} , \mathbf{x}_{\parallel_{{\ell_{i+1}}}}^{\ell_{i+1}}, \mathbf{v}_{\perp_{{\ell_{i+1}}}}^{\ell_{i+1}}, \mathbf{v}_{\parallel_{{\ell_{i+1}}}}^{\ell_{i+1}})}{\partial (t^{\ell_{i+1}-1} , \mathbf{x}_{\parallel_{{\ell_{i+1}}-1}}^{\ell_{i+1}-1}, \mathbf{v}_{\perp_{{\ell_{i+1}}-1}}^{\ell_{i+1}-1}, \mathbf{v}_{\parallel_{{\ell_{i+1}}-1}}^{\ell_{i+1}-1})}
 \times \cdots \times
 \frac{\partial (t^{\ell_{i}+1} , \mathbf{x}_{\parallel_{{\ell_{i}+1}}}^{\ell_{i} +1}, \mathbf{v}_{\perp_{{\ell_{i}+1}}}^{\ell_{i}+1}, \mathbf{v}_{\parallel_{{\ell_{i}+1}}}^{\ell_{i}+1})}{\partial (t^{\ell_{i} } , \mathbf{x}_{\parallel_{{\ell_{i}  }}}^{\ell_{i}  }, \mathbf{v}_{\perp_{{\ell_{i}  }}}^{\ell_{i} }, \mathbf{v}_{\parallel_{{\ell_{i}  }}}^{\ell_{i} })}  }\\
 \leq & \ 
 \mathcal{P}(\mathbf{r}_{i}) (\Lambda(\mathbf{r}_{i}))^{\frac{C_{\xi}}{\mathbf{r}_{i}}}  \mathcal{P}^{-1}( \mathbf{r}_{i}).
\end{split}
\end{equation}

By the definition of the group, $L_{\xi}\leq |v||t^{\ell_{i}}-t^{\ell_{i+1}}| \leq C_{1}<+\infty$ for all $i$. By the Velocity lemma(Lemma \ref{velocitylemma}),
  \[
\frac{1}{\mathcal{C}_{1}} e^{-\frac{\mathcal{C}}{2}C_{1}} \mathbf{r}^{\ell_{i}}  \leq  \mathbf{r}^{\ell_{i+1}} \equiv \frac{|\mathbf{v}_{\perp}^{\ell_{i+1}}|}{|v^{\ell_{i+1}}|},   \mathbf{r}^{\ell_{i+1}-1} \equiv \frac{|\mathbf{v}_{\perp}^{\ell_{i+1}-1}|}{|v^{\ell_{i+1}-1}|}, \cdots ,   \mathbf{r}^{\ell_{i }+1} \equiv \frac{|\mathbf{v}_{\perp}^{\ell_{i}+1}|}{|v^{\ell_{i }+1}|},   \mathbf{r}^{\ell_{i}} \equiv \frac{|\mathbf{v}_{\perp}^{\ell_{i}}|}{|v^{\ell_i}|}
  \leq \mathcal{C}_{1} e^{\frac{\mathcal{C}}{2}C_{1}} \mathbf{r}^{\ell_{i}},
  \]
and define
\[
\mathbf{r}_{i}\equiv \mathcal{C}_{1} e^{\frac{\mathcal{C}}{2}C_{1}} \mathbf{r}^{\ell_{i}}.
\]
Then we have
\begin{equation}\label{bound_ri}
\frac{1}{(\mathcal{C}_{1})^{2}} e^{-\mathcal{C} C_{1}} \mathbf{r}_{i}  \ \leq  \ \mathbf{r}^{j} \ \leq \ \mathbf{r}_{i} \ \ \ \text{for all} \ \  \ell_{i+1}\leq j \leq \ell_{i}.
\end{equation}
From (\ref{l_to_l+1}), we have a uniform bound for all $\ell_{i+1}\leq j \leq \ell_{i}$
\[
J^{j+1}_{j}  \lesssim  {J}(\mathbf{r}_{i})= \mathcal{P}(\mathbf{r}_{i}) \Lambda(\mathbf{r}_{i}) \mathcal{P}^{-1}(\mathbf{r}_{i}).
\]
Therefore
\[
J^{\ell_{i+1}}_{\ell_{i+1}-1} \times \cdots \times J^{\ell_{i}+1}_{\ell_{i}} \leq \mathcal{P}(\mathbf{r}_{i}) [ \Lambda(\mathbf{r}_{i})]^{|\ell_{i+1}-\ell_{i}|} \mathcal{P}^{-1}(\mathbf{r}_{i}).
\]

Now we only left to prove $|\ell_{i+1}-\ell_{i}| \lesssim_{\Omega} \frac{1}{\mathbf{r}_{i}}$: For any $\ell_{i+1} \leq j \leq \ell_{i}$, we have $\xi(x^{j})=0=\xi(x^{j+1})=\xi(x^{j}-(t^{j}-t^{j+1})v^{j}).$ We expand $\xi(x^{j}-(t^{j}-t^{j+1})v^{j})$ in time to have
\begin{equation}\notag
\begin{split}
\xi(x^{j+1}) &=  \xi(x^{j}) + \int^{t^{j+1}}_{t^{j}} \frac{d}{ds} \xi(X_{\mathbf{cl}}(s)) \mathrm{d}s\\
&= \xi(x^{j}) + (v^{j}\cdot \nabla \xi(x^{j}) ) (t^{j+1}-t^{j}) + \int^{t^{j+1}}_{t^{j}} \int^{s}_{t^{j}} \frac{d^{2}}{d\tau^{2}} \xi(X_{\mathbf{cl}}(\tau))  \mathrm{d}\tau\mathrm{d}s,\\
\end{split}
\end{equation}
and  
\begin{equation}\notag
0 = (v^{j}\cdot \nabla \xi(x^{j})) (t^{j+1}-t^{j}) + \frac{(t^{j}-t^{j+1})^{2}}{2} (V_{\mathbf{cl}} (\tau_* )  \cdot \nabla^{2}\xi(X_{\mathbf{cl}}(\tau_{*})) \cdot V_{\mathbf{cl}} (\tau_* ) + E(\tau, X_{\mathbf{cl}} (\tau_* ) ) \cdot \nabla\xi(X_{\mathbf{cl}}(\tau_{*})) ),
\end{equation}
for some $ \tau_{*} \in [t^{j+1},t^{j}]$. Therefore
\begin{equation}\notag
\begin{split}
\frac{v^{j}\cdot \nabla \xi(x^{j})}{|v|}& = (t^{j}-t^{j+1})|v| \frac{V_{\mathbf{cl}} (\tau_* )  \cdot \nabla^{2}\xi(X_{\mathbf{cl}}(\tau_{*})) \cdot V_{\mathbf{cl}} (\tau_* ) + E(\tau, X_{\mathbf{cl}} (\tau_* ) ) \cdot \nabla\xi(X_{\mathbf{cl}}(\tau_{*})) }{2|v|^{2}}.
\end{split}
\end{equation}
Thus there exists $C_{2} (\delta, \xi, E)\gg 1$
\begin{equation}\label{time_angle}
\frac{|v^{j}\cdot \nabla \xi(x^{j})|}{|v|} \leq C_{2}|t^{j}-t^{j+1}||v|.
\end{equation}
Therefore we have a lower bound of $|v||t^{j}-t^{j+1}|$: $
|v||t^{j}-t^{j+1}| \geq \frac{1}{C_{2}} |\mathbf{r}^{j}| \geq  \frac{1}{(\mathcal{C}_{1})^{2}C_{2}} e^{-\mathcal{C}C_{1}} \mathbf{r}_{i},$
where we have used (\ref{bound_ri}).
Finally, using the definition of one group($1 \leq  |v||t^{\ell_{i}}-t^{\ell_{i+1}}| \leq C_{1}$), we have the following upper bound of the number of bounces in this one group($i-$th intermediate group)
\[
|\ell_{i}-\ell_{i+1}|  \leq \frac{|v||t^{\ell_{i}}-t^{\ell_{i+1}}|}{ \min_{\ell_{i} \leq j \leq \ell_{i+1}} |v||t^{j}-t^{j+1}|}  \leq \frac{C_{1}}{ \frac{1}{(\mathcal{C}_{1})^{2} C_{2}  } e^{-\mathcal{C} C_{1}} \mathbf{r}_{i} } \lesssim_{\xi} \frac{1}{\mathbf{r}_{i}},
\]
and this completes our claim (\ref{one_group}).

\vspace{1cm}
Let's consider the whole intermediate groups
\Be
J^{\ell_{*}}_{\ell_{*}-1} \times \cdots \times J^{\ell+1}_{\ell} \times J^{\ell}_{\ell-1} \times \cdots \times J_{1}^{2} \le  J(\mathbf r^{\ell_* }) \times \cdots \times J(\mathbf r^{\ell+1}) \times J(\mathbf r^{\ell} ) \times \cdots J(\mathbf r^2 ).
\Ee
We have from \eqref{diagonal_matrix} that
\[
J(\mathbf r^{\ell+1}) \times J(\mathbf r^{\ell} ) = \mathcal{P}(\mathbf{r}^{\ell+1} ) \Lambda(\mathbf{r}^{\ell+1}) \mathcal{P}^{-1}(\mathbf{r}^{\ell+1} ) \mathcal{P}(\mathbf{r}^{\ell} ) \Lambda(\mathbf{r}^{\ell}) \mathcal{P}^{-1}(\mathbf{r}^{\ell} ),
\]
and by direct computation
\Be
\begin{split}
& \mathcal{P}^{-1}(\mathbf{r}^{\ell+1} )  \mathcal{P}(\mathbf{r}^{\ell} ) 
\\  =&  \left[\begin{array}{ccccccc}
 -\frac{|v|}{2}   & \frac{9}{10} &-\frac{1}{10} & -\frac{1}{10 |v| \mathbf{r}^{\ell+1}} & -\frac{1}{10 |v|} & -\frac{1}{10 |v|} \\
   -\frac{|v|}{2} & -\frac{1}{10} & \frac{9}{10} & -\frac{1}{10 |v| \mathbf{r}^{\ell+1}} & -\frac{1}{10 |v|} &-\frac{1}{10 |v|} \\
   -\frac{|v|^2 \mathbf r^{\ell+1}}{2} & -\frac{|v|\mathbf r^{\ell+1}}{10} & -\frac{|v|\mathbf r^{\ell+1}}{10}& \frac{9}{10} & - \frac{\mathbf r^{\ell+1}}{10} & - \frac{\mathbf r^{\ell+1}}{10} \\
   -\frac{|v|^2}{2} & - \frac{|v|}{10} & - \frac{|v|}{10} & -\frac{1}{10 \mathbf{r}^{\ell+1}} & \frac{9}{10} & - \frac{1}{10} \\
   -\frac{|v|^2}{2} & - \frac{|v|}{10}  &- \frac{|v|}{10}  & -\frac{1}{10 \mathbf{r}^{\ell+1}} & - \frac{1}{10} & \frac{9}{10} \\
   \frac{|v|^2}{2} &  \frac{|v|}{10}  & \frac{|v|}{10}  & \frac{1}{10 \mathbf{r}^{\ell+1}} & \frac{1}{10} & \frac{1}{10} \\
\end{array}\right]
\left[\begin{array}{ccccccc}
  -\frac{1}{5|v|} & -\frac{1}{5|v|} & -\frac{1}{5|v|^2\mathbf r^{\ell}} & -\frac{1}{5|v|^2} & -\frac{1}{5|v|^2} & \frac{1}{|v|^2} \\
  1 & 0 & 0 & 0 & 0 & \frac{1}{|v|} \\
  0 &1 & 0 & 0 & 0 & \frac{1}{|v|} \\
  0 & 0 &1 & 0 & 0 &  \mathbf{r}^{\ell}\\
  0& 0 & 0 & 1 & 0 & 1\\
  0 & 0 & 0 & 0 & 1 & 1
 \end{array}\right]
\\ = & \left[\begin{array}{ccccccc}
  1& 0 & \frac{1}{10|v|}(\frac{1}{\mathbf r^{\ell}} - \frac{1}{\mathbf r^{\ell+1} } ) &0 & 0 &\frac{1}{10|v|}(1 - \frac{\mathbf r^\ell}{\mathbf r^{\ell+1} } ) \\
  0& 1 & \frac{1}{10|v|}(\frac{1}{\mathbf r^{\ell}} - \frac{1}{\mathbf r^{\ell+1} } ) & 0 & 0 & \frac{1}{10|v|}(1 - \frac{\mathbf r^\ell}{\mathbf r^{\ell+1} } )\\
  0 &0 & 1+\frac{1}{10}(\frac{\mathbf r^{\ell+1}}{\mathbf r^{\ell} } -1 ) & 0 & 0 & \frac{9}{10}(\mathbf r^\ell - \mathbf r^{\ell+1} ) \\
  0 & 0 &\frac{1}{10}(\frac{1}{\mathbf r^{\ell}} - \frac{1}{\mathbf r^{\ell+1} } ) & 1 & 0 & \frac{1}{10}(1 - \frac{\mathbf r^\ell}{\mathbf r^{\ell+1} } )\\
  0& 0 & \frac{1}{10}(\frac{1}{\mathbf r^{\ell}} - \frac{1}{\mathbf r^{\ell+1} } ) & 0 & 1 & \frac{1}{10}(1 - \frac{\mathbf r^\ell}{\mathbf r^{\ell+1} } )\\
  0 & 0 & -\frac{1}{10}(\frac{1}{\mathbf r^{\ell}} - \frac{1}{\mathbf r^{\ell+1} } ) & 0 &0 &  1+\frac{1}{10}(\frac{\mathbf r^{\ell}}{\mathbf r^{\ell+1} } -1 )
 \end{array}\right]  .
 \end{split}
\Ee
Since from the definition of $\mathbf v_\perp^\ell$, and \eqref{Fperpcancel} we have
\begin{equation} \begin{split}
\label{v_ell}
\mathbf{v}_{\perp}^{\ell+1} = - \lim_{s\downarrow t^{\ell+1}} \mathbf{v}_{\perp}(s) & = -\mathbf{v}_{\perp}^{\ell} + \int^{t^{\ell}}_{t^{\ell+1}}  F_{\perp}(\mathbf{X} (\tau;t,x,v),\mathbf{V} (\tau;t,x,v) ) \mathrm{d}\tau 
\\ &= -\mathbf v_\perp^\ell + (t^\ell - t^{\ell+1} ) F_\perp(t^\ell) + O(1) |t^\ell - t^{\ell+1} |^2 ( |v|^3 + 1 )
\\ & =  -\mathbf v_\perp^\ell + 2 \mathbf v_\perp^{\ell+1} + O(1) |t^\ell - t^{\ell+1} |^2 ( |v|^3 + 1 ).
\end{split} \end{equation}
This implies $\mathbf v_\perp^\ell - \mathbf v_\perp^{\ell+1}  = O(1) |t^\ell - t^{\ell+1} |^2 ( |v|^3 + 1 )$. Similarly by plugging in
\[
(t^\ell - t^{\ell+1} ) F_\perp(t^\ell) = 2 \mathbf v_\perp^\ell + O(1) |t^\ell - t^{\ell+1} |^2 ( |v|^3 + 1 ),
\]
\eqref{v_ell} becomes
\[
\mathbf{v}_{\perp}^{\ell+1} = -\mathbf v_\perp^\ell + (t^\ell - t^{\ell+1} ) F_\perp(t^\ell) + O(1) |t^\ell - t^{\ell+1} |^2 ( |v|^3 + 1 ) = \mathbf v_\perp^\ell +O(1) |t^\ell - t^{\ell+1} |^2 ( |v|^3 + 1 ).
\]
Thus $\mathbf{v}_{\perp}^{\ell+1} -  \mathbf v_\perp^\ell = O(1) |t^\ell - t^{\ell+1} |^2 ( |v|^3 + 1 )$, therefore 
\Be \label{vlplus1vl}
| \mathbf{v}_{\perp}^{\ell+1} -  \mathbf v_\perp^\ell | = O(1) |t^\ell - t^{\ell+1} |^2 ( |v|^3 + 1 ).
\Ee
From \eqref{vlplus1vl} we have
\Be
|\frac{1}{\mathbf r^\ell} - \frac{1}{\mathbf r^{\ell+1}} |  = \frac{1}{|v|}  \frac{ |\mathbf v_\perp^{\ell+1} - \mathbf v_\perp^{\ell } |}{  |\mathbf v_\perp^{\ell+1} |  |\mathbf v_\perp^{\ell}| } \lesssim \frac{ |t^\ell - t^{\ell+1} |^2 ( |v|^2 + 1 )}{  |\mathbf v_\perp^{\ell+1} |  |\mathbf v_\perp^{\ell}| } \lesssim 1,
\Ee
and
\Be
| 1 - \frac{ \mathbf r^{\ell} }{\mathbf r^{\ell +1 } } | = \frac{ |\mathbf v_\perp^{\ell+1} - \mathbf v_\perp^{\ell } | }{\mathbf v_\perp^{\ell+1}} \lesssim \frac{ |t^\ell - t^{\ell+1} |^2 ( |v|^2 + 1 )}{  |\mathbf v_\perp^{\ell+1} | } \lesssim \mathbf r^\ell.
\Ee
Thus
\[
|  \mathcal{P}^{-1}(\mathbf{r}^{\ell+1} )  \mathcal{P}(\mathbf{r}^{\ell} ) |  
\le  \left[\begin{array}{ccccccc}
  1& 0 & \frac{M}{|v|} &0 & 0 &\frac{M}{|v|} \mathbf r^\ell \\
  0& 1 & \frac{M}{|v|} & 0 & 0 & \frac{M}{|v|} \mathbf r^\ell \\
  0 &0 & 1+M \mathbf r^\ell & 0 & 0 & M (\mathbf r^\ell)^2 \\
  0 & 0 & M & 1 & 0 &  M \mathbf r^\ell\\
  0& 0 &M  & 0 & 1 & M \mathbf r^\ell \\
  0 & 0 & M & 0 &0 &  1+M \mathbf r^\ell
 \end{array}\right] := \mathcal{Q}(\mathbf r^\ell ).
\]
Now we have
\Be \label{Jprodall} \begin{split} 
& J(\mathbf r^{\ell_* })  \times \cdots \times J(\mathbf r^{\ell+1}) \times J(\mathbf r^{\ell} ) \times \cdots J(\mathbf r^2 ) 
\\ \le & \widetilde{  \mathcal P(\mathbf r^{\ell_*} ) }\Lambda (\mathbf r^{\ell_*} )  \mathcal Q ( \mathbf r^{\ell_*-1}  ) \Lambda (\mathbf r^{\ell_* -1}  )  \cdots \mathcal Q (\mathbf r^\ell ) \Lambda ( \mathbf r^\ell ) \cdots \mathcal Q(\mathbf r^2 ) \Lambda (\mathbf r^2 ) \widetilde {\mathcal P^{-1} (\mathbf r^2 )}
\\ \le & \prod_{j = 2}^{\ell_*} (1 + 10 M \mathbf r^j )  \widetilde{  \mathcal P(\mathbf r^{\ell_*} ) }  \mathcal Q ( \mathbf r^{\ell_*-1}  )  \cdots  \mathcal Q(\mathbf r^2 )  \widetilde {\mathcal P^{-1} (\mathbf r^2 )}
\\ \le & C^{C(t-s)|v| } \widetilde{  \mathcal P(\mathbf r^{\ell_*} ) }  \mathcal Q ( \mathbf r^{\ell_*-1}  )  \cdots  \mathcal Q(\mathbf r^2 )  \widetilde {\mathcal P^{-1} (\mathbf r^2 )},
\end{split} \Ee
where we have used $\Lambda (\mathbf r^j ) \le  (1+ 10 M\mathbf r^j ) \mathbf{Id}_{6,6}$, and
\[
 \prod_{j = 2}^{\ell_*} (1 + 10 M \mathbf r^j ) \le  \prod_{i=1}^{ [ \frac{\tilde{t}|v|}{L_{\xi}} ]} \prod_{j = \ell_{i-1} }^{\ell_{i}} (1 + 10 M \mathbf r^j ) \lesssim  \prod_{i=1}^{ [ \frac{\tilde{t}|v|}{L_{\xi}} ]} (1 +10M \mathbf r_i)^{\frac{C_\xi}{\mathbf r_i } } \lesssim C^{C(t-s) |v| }.
\]
Next we estimate $ \mathcal Q ( \mathbf r^{\ell_*-1}  )  \cdots  \mathcal Q(\mathbf r^2 ) $. First by diagonalization we have
\Be \begin{split}
& \mathcal Q(\mathbf r)  = \mathcal R(\mathbf r) \mathcal B(\mathbf r) \mathcal R^{-1}(\mathbf r)
\\ = &    \left[\begin{array}{ccccccc}
  1& 0 & 0 &0 & 0 & \frac{1}{|v|} \\
  0& 1 & 0 & 0 & 0 & \frac{1}{|v|} \\
  0 &0 & 1  & 0 & -\mathbf r & \mathbf r \\
  0 & 0 & 0 & 1 & 0 &  1 \\
  0& 0 &0  & 0 & 0 & 1 \\
  0 & 0 & 0 & 0 & 1 &  1
 \end{array}\right]  
 \left[\begin{array}{ccccccc}
  1& 0 & 0 &0 & 0 & 0 \\
  0& 1 & 0 & 0 & 0 &0 \\
  0 &0 & 1  & 0 & 0 & 0\\
  0 & 0 & 0 & 1 & 0 &  0 \\
  0& 0 &0  & 0 & 1 & 0 \\
  0 & 0 & 0 & 0 & 0 &  1+2M\mathbf r
 \end{array}\right]
 \left[\begin{array}{ccccccc}
  1& 0 & -\frac{1}{2|v| \mathbf r} &0 & 0 &-\frac{1}{2 |v|} \\
  0& 1 & -\frac{1}{2|v| \mathbf r} & 0 & 0 & -\frac{1}{2 |v|} \\
  0 &0 & -\frac{1}{2 \mathbf r} & 0 & 0 & -\frac{1}{2 } \\
  0 & 0 & -\frac{1}{2 \mathbf r} & 1 & 0 &  -\frac{1}{2 }\\
  0& 0 &-\frac{1}{2 \mathbf r}  & 0 & 1 & \frac{1}{2 } \\
  0 & 0 & \frac{1}{2 \mathbf r} & 0 &0 &  \frac{1}{2 }
 \end{array}\right].
 \end{split}
\Ee
Thus
\[
 \prod_{j = 2}^{\ell_*}\mathcal Q ( \mathbf r^j ) \le  \prod_{i=1}^{ [ \frac{\tilde{t}|v|}{L_{\xi}} ]} \prod_{j = \ell_{i-1} }^{\ell_{i}} \mathcal Q ( \mathbf r^j ) \lesssim \prod_{i=1}^{ [ \frac{\tilde{t}|v|}{L_{\xi}} ]} \left[ \mathcal Q ( \mathbf r_i ) \right]^{\ell_i - \ell_{i-1} }  \le  \prod_{i=1}^{ [ \frac{\tilde{t}|v|}{L_{\xi}} ]} \mathcal R(\mathbf r_i )  [\mathcal B (\mathbf r_i ) ]^{\ell_i - \ell_{i-1} }  \mathcal R^{-1}(\mathbf r_i ),
 \]
note that  for some $C \gg 1$
\Be \label{Best}
[\mathcal B (\mathbf r_i ) ]^{\ell_i - \ell_{i-1} }  \lesssim [\mathcal B (\mathbf r_i ) ]^{\frac{C_\xi}{\mathbf r_i } }  \lesssim \text{diag}\Big[   1, 1, 1, 1, 1,  C \Big].
\Ee

Next we have again by explicit computation and using $|\frac{\mathbf r_i }{\mathbf r_{i+1} }  | \lesssim C_\xi$
\Be \begin{split}
 \mathcal R^{-1}(\mathbf r_{i+1}  ) \mathcal R (\mathbf r_i ) = & \left[\begin{array}{ccccccc}
  1& 0 & 0 &0 & \frac{1}{2|v|}( \frac{\mathbf r_i}{\mathbf r_{i+1} } - 1 )&  - \frac{1}{2|v|}( \frac{\mathbf r_i}{\mathbf r_{i+1} } - 1 )\\
  0& 1 & 0 & 0 & \frac{1}{2|v|}( \frac{\mathbf r_i}{\mathbf r_{i+1} } - 1 ) & - \frac{1}{2|v|}( \frac{\mathbf r_i}{\mathbf r_{i+1} } - 1 ) \\
  0 &0 & 1  & 0 & \frac{1}{2}( \frac{\mathbf r_i}{\mathbf r_{i+1} } - 1 ) & - \frac{1}{2}( \frac{\mathbf r_i}{\mathbf r_{i+1} } - 1 )  \\
  0 & 0 & 0 & 1 & \frac{1}{2}( \frac{\mathbf r_i}{\mathbf r_{i+1} } - 1 ) &  - \frac{1}{2}( \frac{\mathbf r_i}{\mathbf r_{i+1} } - 1 )  \\
  0& 0 &0  & 0 & \frac{1}{2}( \frac{\mathbf r_i}{\mathbf r_{i+1} } + 1 ) & - \frac{1}{2}( \frac{\mathbf r_i}{\mathbf r_{i+1} } - 1 )  \\
  0 & 0 & 0 & 0 & - \frac{1}{2}( \frac{\mathbf r_i}{\mathbf r_{i+1} } - 1 ) &   \frac{1}{2}( \frac{\mathbf r_i}{\mathbf r_{i+1} } + 1 )
 \end{array}\right] 
 \\ \lesssim & \left[\begin{array}{ccccccc}
  1& 0 & 0 &0 & \frac{C_\xi}{|v|} & \frac{C_\xi}{|v|} \\
  0& 1 & 0 & 0 &\frac{C_\xi}{|v|}  &\frac{C_\xi}{|v|} \\
  0 &0 & 1  & 0 &C_\xi & C_\xi  \\
  0 & 0 & 0 & 1 &C_\xi &  C_\xi  \\
  0& 0 &0  & 0 & C_\xi & C_\xi  \\
  0 & 0 & 0 & 0 & C_\xi &  C_\xi
 \end{array}\right]  := \mathcal S.
 \end{split}
\Ee
Again we diagonalize $\mathcal S$ as
\Be \begin{split}
\mathcal S  = & \mathcal F \mathcal A \mathcal F^{-1}
\\ := &  \left[\begin{array}{ccccccc}
  0& 1 & 0 &0 &0& \frac{2 C_\xi}{|v| (2 C_\xi -1 )} \\
  0& 0 & 1 & 0 &0  &\frac{2 C_\xi}{|v| (2 C_\xi -1 )} \\
  0 &0 & 0  & 1 & 0  & \frac{2 C_\xi}{ 2 C_\xi -1 }  \\
  0 & 0 & 0 & 0 & 1&  \frac{2 C_\xi}{2 C_\xi -1 } \\
  -1& 0 &0  & 0 & 0 & 1 \\
  1 & 0 & 0 & 0 & 0 &  1
 \end{array}\right] 
  \left[\begin{array}{ccccccc}
  0& 0 & 0 &0 & 0 & 0 \\
  0& 1 & 0 & 0 & 0 &0 \\
  0 &0 & 1  & 0 & 0 & 0\\
  0 & 0 & 0 & 1 & 0 &  0 \\
  0& 0 &0  & 0 & 1 & 0 \\
  0 & 0 & 0 & 0 & 0 &  2C_\xi
 \end{array}\right]
  \left[\begin{array}{ccccccc}
  0& 0 & 0 &0 & -\frac{1}{2} & \frac{1}{2} \\
  1& 0 & 0 & 0 & -\frac{ C_\xi}{|v| (2 C_\xi -1 )} & -\frac{ C_\xi}{|v| (2 C_\xi -1 )}  \\
  0 &1 & 0  & 0 &  -\frac{ C_\xi}{|v| (2 C_\xi -1 )}  &  -\frac{ C_\xi}{|v| (2 C_\xi -1 )}  \\
  0 & 0 & 1 & 0 &  -\frac{ C_\xi}{(2 C_\xi -1 )}  &  -\frac{ C_\xi}{(2 C_\xi -1 )} \\
  0& 0 &0  & 1 & -\frac{ C_\xi}{(2 C_\xi -1 )}  & -\frac{ C_\xi}{(2 C_\xi -1 )}  \\
  0 & 0 & 0 & 0 & \frac{1}{2} &  \frac{1}{2}
 \end{array}\right],
\end{split} \Ee
and directly
\Be \label{Sdiag}
 \begin{split}
\mathcal S^{ [ \frac{\tilde{t}|v|}{L_{\xi}} ]}  = &  \mathcal F \mathcal A^{ [ \frac{\tilde{t}|v|}{L_{\xi}} ]} \mathcal F^{-1}
\\ = & \mathcal F \text{diag}\Big[   0, 1, 1, 1, 1,   (2 C_\xi)^{ [ \frac{\tilde{t}|v|}{L_{\xi}} ]} \Big] \mathcal F^{-1}
\\ = & \left[\begin{array}{ccccccc}
  1& 0 & 0 &0 & \frac{1}{|v|} \frac{C_\xi}{2 C_\xi - 1 }((2C_\xi )^{ [ \frac{\tilde{t}|v|}{L_{\xi}} ]} - 1) & \frac{1}{|v|} \frac{C_\xi}{2 C_\xi - 1 }((2C_\xi )^{ [ \frac{\tilde{t}|v|}{L_{\xi}} ]} - 1) \\
  0& 1 & 0 & 0 &\frac{1}{|v|} \frac{C_\xi}{2 C_\xi - 1 }((2C_\xi )^{ [ \frac{\tilde{t}|v|}{L_{\xi}} ]} - 1)  &\frac{1}{|v|} \frac{C_\xi}{2 C_\xi - 1 }((2C_\xi )^{ [ \frac{\tilde{t}|v|}{L_{\xi}} ]} - 1) \\
  0 &0 & 1  & 0 & \frac{C_\xi}{2 C_\xi - 1 }((2C_\xi )^{ [ \frac{\tilde{t}|v|}{L_{\xi}} ]} - 1) &\frac{C_\xi}{2 C_\xi - 1 }((2C_\xi )^{ [ \frac{\tilde{t}|v|}{L_{\xi}} ]} - 1)  \\
  0 & 0 & 0 & 1 & \frac{C_\xi}{2 C_\xi - 1 }((2C_\xi )^{ [ \frac{\tilde{t}|v|}{L_{\xi}} ]} - 1) &   \frac{C_\xi}{2 C_\xi - 1 }((2C_\xi )^{ [ \frac{\tilde{t}|v|}{L_{\xi}} ]} - 1)  \\
  0& 0 &0  & 0 &  \frac{(2C_\xi )^{ [ \frac{\tilde{t}|v|}{L_{\xi}} ]} }{2 }& \frac{(2C_\xi )^{ [ \frac{\tilde{t}|v|}{L_{\xi}} ]} }{2 }  \\
  0 & 0 & 0 & 0 & \frac{(2C_\xi )^{ [ \frac{\tilde{t}|v|}{L_{\xi}} ]} }{2 } & \frac{(2C_\xi )^{ [ \frac{\tilde{t}|v|}{L_{\xi}} ]} }{2 }
 \end{array}\right]
\\  \le &    \left[\begin{array}{ccccccc}
  1& 0 & 0 &0 & \frac{1}{|v|} C^{[ \frac{\tilde{t}|v|}{L_{\xi}} ]} & \frac{1}{|v|} C^{[ \frac{\tilde{t}|v|}{L_{\xi}} ]} \\
  0& 1 & 0 & 0 & \frac{1}{|v|} C^{[ \frac{\tilde{t}|v|}{L_{\xi}} ]} & \frac{1}{|v|} C^{[ \frac{\tilde{t}|v|}{L_{\xi}} ]} \\
  0 &0 & 1  & 0 & C^{[ \frac{\tilde{t}|v|}{L_{\xi}} ]} &C^{[ \frac{\tilde{t}|v|}{L_{\xi}} ]}  \\
  0 & 0 & 0 & 1 &C^{[ \frac{\tilde{t}|v|}{L_{\xi}} ]}&  C^{[ \frac{\tilde{t}|v|}{L_{\xi}} ]} \\
  0& 0 &0  & 0 & C^{[ \frac{\tilde{t}|v|}{L_{\xi}} ]}&C^{[ \frac{\tilde{t}|v|}{L_{\xi}} ]} \\
  0 & 0 & 0 & 0 & C^{[ \frac{\tilde{t}|v|}{L_{\xi}} ]} & C^{[ \frac{\tilde{t}|v|}{L_{\xi}} ]}
 \end{array}\right] := \mathcal  D.
\end{split} \Ee
Therefore from \eqref{Best} and \eqref{Sdiag} we have for some $C_1 \gg 1$,
\Be
 \prod_{j = 2}^{\ell_*}\mathcal Q ( \mathbf r^j ) \le  C_1^{[ \frac{\tilde{t}|v|}{L_{\xi}} ]} \widetilde{\mathcal R( \mathbf r_{[ \frac{\tilde{t}|v|}{L_{\xi}} ]} )} \mathcal F \mathcal A^{ [ \frac{\tilde{t}|v|}{L_{\xi}} ]} \mathcal F^{-1} \widetilde{ \mathcal  R^{-1} (\mathbf r_1)} \le C_1^{[ \frac{\tilde{t}|v|}{L_{\xi}} ]} \widetilde{\mathcal R( \mathbf r_{[ \frac{\tilde{t}|v|}{L_{\xi}} ]} )} \mathcal D \widetilde{ \mathcal  R^{-1} (\mathbf r_1)}
\Ee

Finally using $\mathbf r_1 \sim \mathbf r^2 \sim \mathbf r^1$, and $\mathbf r^{\ell_*} \sim \mathbf r_{[ \frac{\tilde{t}|v|}{L_{\xi}} ]}$. Putting everything together we have from \eqref{Jprodall}, for $C_2 \gg 1 $,
\Be \begin{split}
& J(\mathbf r^{\ell_* })  \times \cdots \times J(\mathbf r^{\ell+1}) \times J(\mathbf r^{\ell} ) \times \cdots J(\mathbf r^2 ) 
\\ \le &C^{C(t-s)|v| } \widetilde{  \mathcal P(\mathbf r^{\ell_*} ) }  \prod_{j = 2}^{\ell_*}\mathcal Q ( \mathbf r^j ) \widetilde {\mathcal P^{-1} (\mathbf r^2 )}
\\ \le &  C_2^{C_2 (t-s)|v| }  \widetilde{  \mathcal P(\mathbf r^{\ell_*} ) }  \widetilde{\mathcal R( \mathbf r_{[ \frac{\tilde{t}|v|}{L_{\xi}} ]} )} \mathcal D \widetilde{ \mathcal  R^{-1} (\mathbf r_1)} \widetilde {\mathcal P^{-1} (\mathbf r^2 )}
\\ \lesssim & C_2^{C_2 (t-s)|v| }  \widetilde{  \mathcal P(\mathbf r^{\ell_*} ) }  \widetilde{\mathcal R( \mathbf r^{\ell_*}  )} \mathcal D \widetilde{ \mathcal  R^{-1} (\mathbf r^1)} \widetilde {\mathcal P^{-1} (\mathbf r^1 )}
\\ = & C_2^{C_2 (t-s)|v| } \times 
\\ &   \left[\begin{array}{cccccc}
\frac{1}{5|v|} &  \frac{1}{5|v|} &  \frac{1}{5|v|^2} &  \frac{1}{5|v|^2} &  \frac{6}{5|v|^2} &  \frac{2}{|v|^2}
\\ 1 & 0 & 0 & 0 & \frac{1}{|v|} & \frac{2}{|v|} 
\\ 0 & 1 & 0 & 0 &  \frac{1}{|v|} & \frac{2}{|v|}
\\ 0 & 0 & 0 & 0 & \frac{2}{\mathbf r^{\ell_*} } & \frac{2}{\mathbf r^{\ell_* } }
\\ 0 & 0 & 1 & 0 &1 &2
\\ 0 & 0 & 0 & 1 & 1 & 2
  \end{array} \right]   \left[\begin{array}{ccccccc}
  1& 0 & 0 &0 & \frac{1}{|v|} C^{[ \frac{\tilde{t}|v|}{L_{\xi}} ]} & \frac{1}{|v|} C^{[ \frac{\tilde{t}|v|}{L_{\xi}} ]} \\
  0& 1 & 0 & 0 & \frac{1}{|v|} C^{[ \frac{\tilde{t}|v|}{L_{\xi}} ]} & \frac{1}{|v|} C^{[ \frac{\tilde{t}|v|}{L_{\xi}} ]} \\
  0 &0 & 1  & 0 & C^{[ \frac{\tilde{t}|v|}{L_{\xi}} ]} &C^{[ \frac{\tilde{t}|v|}{L_{\xi}} ]}  \\
  0 & 0 & 0 & 1 &C^{[ \frac{\tilde{t}|v|}{L_{\xi}} ]}&  C^{[ \frac{\tilde{t}|v|}{L_{\xi}} ]} \\
  0& 0 &0  & 0 & C^{[ \frac{\tilde{t}|v|}{L_{\xi}} ]}&C^{[ \frac{\tilde{t}|v|}{L_{\xi}} ]} \\
  0 & 0 & 0 & 0 & C^{[ \frac{\tilde{t}|v|}{L_{\xi}} ]} & C^{[ \frac{\tilde{t}|v|}{L_{\xi}} ]}
 \end{array}\right] 
\left[  \begin{array}{cccccc}
|v| & 1 & \frac{1}{5} & \frac{3}{5 \mathbf r^1|v| } & \frac{1}{5|v|} & \frac{1}{5|v|}
\\ |v| & \frac{1}{5} & 1 & \frac{3}{5 \mathbf r^1|v| } & \frac{1}{5|v|} & \frac{1}{5|v|}
\\ |v|^2 & \frac{|v|}{5} & \frac{|v|}{5} & \frac{3}{5 \mathbf r^1 } & 1 & \frac{1}{5} 
\\ |v|^2 & \frac{|v|}{5} & \frac{|v|}{5} & \frac{3}{5 \mathbf r^1 } & \frac{1}{5} & 1
\\ \frac{|v|^2}{2} & \frac{|v|}{10} & \frac{|v|}{10} & \frac{1}{2 \mathbf r^1 } & \frac{1}{10} & \frac{1}{10}
\\ \frac{|v|^2}{2} & \frac{|v|}{10} & \frac{|v|}{10} & \frac{1}{2 \mathbf r^1 } & \frac{1}{10} & \frac{1}{10}
\end{array} \right]
\\ =  &C_2^{C_2 (t-s)|v| }   \left[\begin{array}{ccccccc}
  4 C^{[ \frac{\tilde{t}|v|}{L_{\xi}} ]} + \frac{4}{5}& \frac{20  C^{[ \frac{\tilde{t}|v|}{L_{\xi}} ]} +8 }{25|v|}  &  \frac{20  C^{[ \frac{\tilde{t}|v|}{L_{\xi}} ]} +8 }{25|v|}  &\frac{4(25 C^{[ \frac{\tilde{t}|v|}{L_{\xi}} ]} +3)}{25 \mathbf r^1 |v|^2} & \frac{4(5 C^{[ \frac{\tilde{t}|v|}{L_{\xi}} ]} +2)}{25 |v|^2}  & \frac{4(5 C^{[ \frac{\tilde{t}|v|}{L_{\xi}} ]} +2)}{25 |v|^2}  \\
  |v| (4  C^{[ \frac{\tilde{t}|v|}{L_{\xi}} ]} + 1)& \frac{4 C^{[ \frac{\tilde{t}|v|}{L_{\xi}} ]}}{5} +1 & \frac{4 C^{[ \frac{\tilde{t}|v|}{L_{\xi}} ]}}{5} +\frac{1}{5} & \frac{20 C^{[ \frac{\tilde{t}|v|}{L_{\xi}} ]} +3}{5\mathbf r^1 |v|} &\frac{4 C^{[ \frac{\tilde{t}|v|}{L_{\xi}} ]} +1}{5|v|} & \frac{4 C^{[ \frac{\tilde{t}|v|}{L_{\xi}} ]} +1}{5|v|} \\
   |v| (4  C^{[ \frac{\tilde{t}|v|}{L_{\xi}} ]} + 1) &\frac{4 C^{[ \frac{\tilde{t}|v|}{L_{\xi}} ]}}{5} +\frac{1}{5} & \frac{4 C^{[ \frac{\tilde{t}|v|}{L_{\xi}} ]}}{5} +1  & \frac{20 C^{[ \frac{\tilde{t}|v|}{L_{\xi}} ]} +3}{5\mathbf r^1 |v|} & \frac{4 C^{[ \frac{\tilde{t}|v|}{L_{\xi}} ]} +1}{5|v|}  & \frac{4 C^{[ \frac{\tilde{t}|v|}{L_{\xi}} ]} +1}{5|v|}  \\
  4 C^{[ \frac{\tilde{t}|v|}{L_{\xi}} ]} \mathbf r^{\ell_*} |v|^2 & \frac{4}{5} C^{[ \frac{\tilde{t}|v|}{L_{\xi}} ]} \mathbf r^{\ell_*} |v| &  \frac{4}{5} C^{[ \frac{\tilde{t}|v|}{L_{\xi}} ]} \mathbf r^{\ell_*} |v| & \frac{4 C^{[ \frac{\tilde{t}|v|}{L_{\xi}} ]} \mathbf r^{\ell_*} }{\mathbf r^1 } & \frac{4}{5}  C^{[ \frac{\tilde{t}|v|}{L_{\xi}} ]} \mathbf r^{\ell_* } &    \frac{4}{5}  C^{[ \frac{\tilde{t}|v|}{L_{\xi}} ]} \mathbf r^{\ell_* }\\
   |v|^2 (4  C^{[ \frac{\tilde{t}|v|}{L_{\xi}} ]} + 1)& \frac{|v|}{5} ({4 C^{[ \frac{\tilde{t}|v|}{L_{\xi}} ]}+1} )& \frac{|v|}{5} ({4 C^{[ \frac{\tilde{t}|v|}{L_{\xi}} ]}+1} )  & \frac{20 C^{[ \frac{\tilde{t}|v|}{L_{\xi}} ]}+3}{5 \mathbf r^1} & \frac{4 C^{[ \frac{\tilde{t}|v|}{L_{\xi}} ]}}{5} +1 & \frac{4 C^{[ \frac{\tilde{t}|v|}{L_{\xi}} ]}}{5} +\frac{1}{5} \\
   |v|^2 (4  C^{[ \frac{\tilde{t}|v|}{L_{\xi}} ]} + 1) &  \frac{|v|}{5} ({4 C^{[ \frac{\tilde{t}|v|}{L_{\xi}} ]}+1} ) &  \frac{|v|}{5} ({4 C^{[ \frac{\tilde{t}|v|}{L_{\xi}} ]}+1} )& \frac{20 C^{[ \frac{\tilde{t}|v|}{L_{\xi}} ]}+3}{5 \mathbf r^1} &\frac{4 C^{[ \frac{\tilde{t}|v|}{L_{\xi}} ]}}{5} +\frac{1}{5}  & \frac{4 C^{[ \frac{\tilde{t}|v|}{L_{\xi}} ]}}{5} +1
 \end{array}\right]
  \\
 & \lesssim  C^{C|t-s||v|}
 \left[\begin{array}{c|c|c|c}
O_\xi(1)   & \frac{1}{|v|} & \frac{1}{|v| |\mathbf{v}_{\perp}^{1}|} & \frac{1}{|v|^{2}}  \\ \hline
|v| &  O_{\xi}(1) & \frac{1}{|\mathbf{v}_{\perp}^{1}|} & \frac{1}{|v|} \\ \hline
 |\mathbf v_\perp^1||v|  & |\mathbf{v}_{\perp}^{1}| & O_{\xi}(1) & \frac{|\mathbf{v}_{\perp}^{1}|}{|v|}\\ \hline
|v|^2 & |v| & \frac{|v|}{|\mathbf{v}_{\perp}^{1}|} & O_{\xi}(1)
 \end{array} \right]_{6\times6}\label{whole_inter},
\end{split} \Ee
where we have used (\ref{time_angle}) and the Velocity lemma (Lemma \ref{velocitylemma}) and \eqref{tbest} and 
\[
\mathbf{r}_{i} = \mathcal{C}_{1} e^{\frac{\mathcal{C}}{2}C_{1}} \mathbf{r}^{i} \lesssim e^{C|t-s||v|} \frac{|\mathbf{v}_{\perp}^{1}|}{|v|}, \ \ \text{and} \ \frac{\mathbf{r}_{[ \frac{|t-s||v| }{L_{\xi}}]}}{\mathbf{r}_{1}}  = \frac{\mathbf{r}^{[  \frac{|t-s||v|}{L_{\xi}}]}}{\mathbf{r}^{1}}
= \frac{\Big|\mathbf{v}_{\perp}^{[ \frac{   |t-s||v| }{L_{\xi}}  ]}\Big|}{|\mathbf{v}_{\perp}^{1}|} \leq
\mathcal{C}_{1} e^{\frac{\mathcal{C}}{2} |v||t-s|}.
\]

The case for $\ell$ is \textit{Type I} is easier, we first claim
\begin{equation}\label{one_group2}
\begin{split}
&J^{\ell_{*}}_{\ell_{*}-1} \times \cdots \times J_{1}^{2}\\
 =& \ {\frac{\partial (t^{\ell_{*}} , \mathbf{x}_{\parallel_{{\ell_{*}}}}^{\ell_{*}}, \mathbf{v}_{\perp_{{\ell_{*}}}}^{\ell_{*}}, \mathbf{v}_{\parallel_{{\ell_{*}}}}^{\ell_{*}})}{\partial (t^{\ell_{*}-1} , \mathbf{x}_{\parallel_{{\ell_{*}}-1}}^{\ell_{*}-1}, \mathbf{v}_{\perp_{{\ell_{*}}-1}}^{\ell_{*}-1}, \mathbf{v}_{\parallel_{{\ell_{*}}-1}}^{\ell_{*}-1})}
 \times \cdots \times
 \frac{\partial (t^2 , \mathbf{x}_{\parallel_{2}}^{2}, \mathbf{v}_{\perp_{2}}^2, \mathbf{v}_{\parallel_{2}}^2)}{\partial (t^{1 } , \mathbf{x}_{\parallel_{{1  }}}^{1  }, \mathbf{v}_{\perp_{{1  }}}^{1 }, \mathbf{v}_{\parallel_{{1  }}}^{1 })}  }\\
 \leq & \ 
 \mathcal{P}(\mathbf{v}_{\perp}^1) (\Lambda(\mathbf{v}_{\perp}^1))^{\frac{C_{\xi}}{\mathbf{v}_{\perp}^1}}  \mathcal{P}^{-1}( \mathbf{v}_{\perp}^1).
\end{split}
\end{equation}
From the same arguments between \eqref{one_group} and \eqref{bound_ri}, we have
\begin{equation}\label{bound_ri2}
\frac{1}{(\mathcal{C}_{1})^{2}} e^{-\mathcal{C} C_{1}} \mathbf{v}_{\perp}^1  \ \leq  \ \mathbf{v}_{\perp}^{j} \ \leq \ \mathbf{v}_{\perp}^1 \ \ \ \text{for all} \ \  1 \leq j \leq \ell_{*}.
\end{equation}
Therefore
\[
J^{\ell_{*}}_{\ell_{*}-1} \times \cdots \times J^{2}_{1} \leq  \mathcal{P}(\mathbf{v}_{\perp}^1) (\Lambda(\mathbf{v}_{\perp}^1))^{|\ell_* |}  \mathcal{P}^{-1}( \mathbf{v}_{\perp}^1)
\]
Now we only left to prove $|\ell_*| \lesssim_{\Omega} \frac{1}{\mathbf{v}_{\perp}^1}$: For any $1 \leq j \leq \ell_{*}$, we have $\xi(x^{j})=0=\xi(x^{j+1})=\xi(x^{j}-(t^{j}-t^{j+1})v^{j}).$ We expand $\xi(x^{j}-(t^{j}-t^{j+1})v^{j})$ in time to have
\begin{equation}\notag
\begin{split}
\xi(x^{j+1}) &=  \xi(x^{j}) + \int^{t^{j+1}}_{t^{j}} \frac{d}{ds} \xi(X_{\mathbf{cl}}(s)) \mathrm{d}s\\
&= \xi(x^{j}) + (v^{j}\cdot \nabla \xi(x^{j}) ) (t^{j+1}-t^{j}) + \int^{t^{j+1}}_{t^{j}} \int^{s}_{t^{j}} \frac{d^{2}}{d\tau^{2}} \xi(X_{\mathbf{cl}}(\tau))  \mathrm{d}\tau\mathrm{d}s,\\
\end{split}
\end{equation}
and  
\begin{equation}\notag
0 = (v^{j}\cdot \nabla \xi(x^{j})) (t^{j+1}-t^{j}) + \frac{(t^{j}-t^{j+1})^{2}}{2} (V_{\mathbf{cl}} (\tau_* )  \cdot \nabla^{2}\xi(X_{\mathbf{cl}}(\tau_{*})) \cdot V_{\mathbf{cl}} (\tau_* ) + E(\tau, X_{\mathbf{cl}} (\tau_* ) ) \cdot \nabla\xi(X_{\mathbf{cl}}(\tau_{*})) ),
\end{equation}
for some $ \tau_{*} \in [t^{j+1},t^{j}]$. Therefore there exists $C_{2} (\delta, \xi, E)\gg 1$ such that
\begin{equation}\notag
\begin{split}
|t^{j} - t^{j+1} | = \frac{2 v^{j}\cdot \nabla \xi(x^{j})}{V_{\mathbf{cl}} (\tau_* )  \cdot \nabla^{2}\xi(X_{\mathbf{cl}}(\tau_{*})) \cdot V_{\mathbf{cl}} (\tau_* ) + E(\tau, X_{\mathbf{cl}} (\tau_* ) ) \cdot \nabla\xi(X_{\mathbf{cl}}(\tau_{*}))} \ge \frac{1}{C_{2}} \mathbf v_\perp^j \gtrsim \mathbf v_\perp^1
.
\end{split}
\end{equation}
Thus
\[
|\ell_{*}|  \leq \frac{T}{ \min_{ j }|t^{j}-t^{j+1}|}  \lesssim \frac{T}{\mathbf v_\perp^1},
\]
and this completes our claim (\ref{one_group2}).

Then directly from \eqref{one_group2} we have for some $C \gg 1$,
\Be \begin{split}
&J^{\ell_{*}}_{\ell_{*}-1} \times \cdots \times J_{1}^{2}
\\ \le  & \widetilde{ \mathcal{P}(\mathbf{v}_{\perp}^1) } (\Lambda(\mathbf{v}_{\perp}^1))^{\frac{C_{\xi}}{\mathbf{v}_{\perp}^1}}  \widetilde{ \mathcal{P}^{-1}( \mathbf{v}_{\perp}^1) }
\\ \le & \widetilde{ \mathcal{P}(\mathbf{v}_{\perp}^1) } ((1+M \mathbf v_\perp^1 ) ^{\frac{C_\xi}{\mathbf v_\perp^1 } }  \mathbf{Id}_{6,6} )  \widetilde{ \mathcal{P}^{-1}( \mathbf{v}_{\perp}^1) }
\\ \le & C  \widetilde{ \mathcal{P}(\mathbf{v}_{\perp}^1) }  \widetilde{ \mathcal{P}^{-1}( \mathbf{v}_{\perp}^1) }
\\  \le &C  \left[\begin{array}{c|cc|c|cc}
1 &\frac{9}{25} & \frac{9}{25} &  \frac{9}{25 | \mathbf v_\perp^1 |} & \frac{9}{25} & \frac{9}{25} \\ \hline
 1 &  1  & \frac{1}{5}  & \frac{1}{5 |\mathbf v_\perp^1 | } & \frac{1}{5} & \frac{1}{5}  \\
1 &\frac{1}{5}  & 1  & \frac{1}{5 |\mathbf v_\perp^1 | } & \frac{1}{5} & \frac{1}{5} \\ \hline
| \mathbf v_\perp^1 |& \frac{ | \mathbf v_\perp^1 |}{5} & \frac{ | \mathbf v_\perp^1 |}{5}  & 1   & \frac{| \mathbf v_\perp^1 | }{5} & \frac{| \mathbf v_\perp^1 | }{5} \\ \hline
1 & \frac{1}{5} & \frac{1}{5} &  \frac{1}{5 |\mathbf v_\perp^1 | }  & 1 & \frac{1}{5}
\\ 1 & \frac{1}{5} &\frac{1}{5} & \frac{1}{5 |\mathbf v_\perp^1 | }  & \frac{1}{5} & 1
 \end{array}
 \right].  
\end{split}  \label{wholeitvsmall}
\Ee
 
 \vspace{8pt}

\noindent\textit{Step 8. Intermediate summary for the matrix method and the final estimate for \textit{Type III}}

\vspace{4pt}

Recall from (\ref{chain}) and (\ref{s1tstar}), (\ref{whole_inter}), (\ref{t1s1}),
\begin{equation}\notag
\begin{split}
&\frac{\partial (s^{\ell_{*}},\mathbf{X}_{ \ell_{*}}(s^{\ell_{*}}),\mathbf{V}_{  \ell_{*}}(s^{\ell_{*}}))}{\partial (s^{1},\mathbf{X}_{1}(s^{1}),\mathbf{V}_{1}(s^{1}) )}\equiv
\frac{\partial (s^{\ell_{*}}, \mathbf{x}_{\perp_{\ell_{*}}}(s^{\ell_{*}}),\mathbf{x}_{\parallel_{\ell_{*}}}(s^{\ell_{*}}), \mathbf{v}_{\perp_{\ell_{*}}}(s^{\ell_{*}}), \mathbf{v}_{\parallel_{\ell_{*}}}(s^{\ell_{*}})   )}{\partial ( s^{1}, \mathbf{x}_{\perp_{1}}(s^{1}), \mathbf{x}_{\parallel_{1}}(s^{1}), \mathbf{v}_{\perp_{1}}(s^{1}), \mathbf{v}_{\parallel_{1}}(s^{1})   )}
\\
 = & 
\ \frac{ \partial (s^{\ell_{*}}, \mathbf{x}_{\perp_{\ell_{*}}}(s^{\ell_{*}}),\mathbf{x}_{\parallel_{\ell_{*}}}(s^{\ell_{*}}), \mathbf{v}_{\perp_{\ell_{*}}}(s^{\ell_{*}}), \mathbf{v}_{\parallel_{\ell_{*}}}(s^{\ell_{*}})   )}{\partial (t^{\ell_{*}} , \mathbf{x}_{\parallel_{\ell_{*}}}^{\ell_{*}}, \mathbf{v}_{\perp_{\ell_{*}}}^{\ell_{*}}, \mathbf{v}_{\parallel_{\ell_{*}}}^{\ell_{*}})}    \\
&  \times\prod_{i=1}^{ [\frac{|t-s||v|}{L_{\xi}}]}  \frac{\partial (t^{\ell_{i+1}} , \mathbf{x}_{\parallel_{ \ell_{i+1}}}^{\ell_{i+1}}, \mathbf{v}_{\perp_{ \ell_{i+1}}}^{\ell_{i+1}}, \mathbf{v}_{\parallel_{ \ell_{i+1}}}^{\ell_{i+1}})}
{\partial (t^{\ell_{i+1}-1} , \mathbf{x}_{\parallel_{{\ell_{i+1}-1}}}^{\ell_{i+1}-1}, \mathbf{v}_{\perp_{{\ell_{i+1}-1}}}^{\ell_{i+1}-1}, \mathbf{v}_{\parallel_{{\ell_{i+1}-1}}}^{\ell_{i+1}-1})}
 \times \cdots \times
 \frac{\partial (t^{\ell_{i}+1} , \mathbf{x}_{\parallel_{\ell_{i} +1}}^{\ell_{i} +1}, \mathbf{v}_{\perp_{\ell_{i} +1}}^{\ell_{i}+1}, \mathbf{v}_{\parallel_{\ell_{i} +1}}^{\ell_{i}+1})}{\partial (t^{\ell_{i} } , \mathbf{x}_{\parallel_{\ell_{i} }}^{\ell_{i}  }, \mathbf{v}_{\perp_{\ell_{i} }}^{\ell_{i} }, \mathbf{v}_{\parallel_{\ell_{i} }}^{\ell_{i} })}   \\
& \times    \frac{\partial (t^{1} , \mathbf{x}_{\parallel_{1}}^{1}, \mathbf{v}_{\perp_{1}}^{1}, \mathbf{v}_{\parallel_{1}}^{1})}{\partial (s^{1}, \mathbf{x}_{\perp_{1}}(s^{1}), \mathbf{x}_{\parallel_{1}}(s^{1}), \mathbf{v}_{\perp_{1}}(s^{1}), \mathbf{v}_{\parallel_{1}}(s^{1}) )}
 \\
 \leq&  \ (\ref{s1tstar}) \times (\ref{whole_inter}) \times (\ref{t1s1}) .
 \end{split}
 \end{equation}
 Then directly since $|v| > \delta$, we bound it by
 \begin{equation}\label{inter_1}
 \begin{split}
 \leq & \ (\ref{s1tstar}) \times C^{C|t-s||v|}\\
 &\times { \left[\begin{array}{c|cc|cc}
\frac{|v|^2}{|\mathbf v_\perp^1 |^2} & \frac{1}{|\mathbf{v}_{\perp}^{1}|} + \frac{|v|}{|\mathbf{v}_{\perp}^{1}|^{2}} + |t^{1}-s^{1}| & \frac{1}{|v|} & \frac{1}{|v||\mathbf{v}_{\perp}^{1}|}   + |s^{1}-t^{1}|^{2} & \frac{1}{|v|^{2}} + \frac{|s^{1}-t^{1}|}{|v|} \\ \hline
  \frac{|v|^3 }{|\mathbf v_\perp^1 | } + \frac{|v|^2}{|\mathbf v_\perp^1 |^2} &  \frac{|v|^{2}}{|\mathbf{v}_{\perp}^{1}|^{2}} + \frac{|v|}{|\mathbf{v}_{\perp}^{1}|} + |v||s^{1}-t^{1}| & O_\xi(1)  & \frac{1}{|\mathbf{v}_{\perp}^{1}|}   + |s^{1}-t^{1}| & \frac{1}{|v|}  
  \\ \hline
 \frac{|v|^3 }{|\mathbf v_\perp^1 | } &\frac{|v|^{2}}{|\mathbf{v}_{\perp}^{1}|} + |v| &|\mathbf v_\perp^1 | & O_{\xi}(1)  & \frac{|\mathbf{v}_{\perp}^{1}|}{|v|}\\
 \frac{|v|^4}{|\mathbf v_\perp^1 |^2} & \frac{|v|^{3}}{|\mathbf{v}_{\perp}^{1}|^{2}} + \frac{|v|^{2}}{|\mathbf{v}_{\perp}^{1}|} + |v|^{2} |s^1-t^{1}| & |v|  & \frac{|v|}{|\mathbf{v}_{\perp}^{1}|} + |v||s^1-t^{1}| & O_{\xi}(1)
 \end{array}
 \right] },
\end{split}
\end{equation}
 where we have used the Velocity lemma (Lemma \ref{velocitylemma}) and (\ref{time_angle}), \eqref{tbest}, and
 \[
 |v||t^{1}-s^{1}| \leq  \lesssim_{\Omega} \min \{\frac{|\mathbf{v}_{\perp}^{1}|}{|v|}, (t-s)|v|  \}
 \lesssim_{\Omega} C^{C|t-s||v|} \min \{\frac{|\mathbf{v}_{\perp}^{1}|}{|v|}, 1 \}.
 \]
 Again we use the Velocity lemma (Lemma \ref{velocitylemma}), \eqref{tbest}, and
\begin{eqnarray*}%
 |v||t^{\ell_{*}}-s^{\ell_{*}} | \leq \min\{ |v||t^{\ell_{*}} - t^{\ell_{*}+1}| ,| t-s| |v|\}\lesssim_{\Omega}  \min \{\frac{|\mathbf{v}_{\perp}^{\ell_{*}}|}{|v|}, |t-s||v|\}
 \lesssim_{\Omega} C^{C|t-s||v|}  \min \{\frac{|\mathbf{v}_{\perp}^{1}|}{|v|}, 1\}
 ,\\
\end{eqnarray*}
and
$|\mathbf{v}_{\perp}(s^{\ell_{*}})|   \lesssim_{\Omega} C^{C|v|(t-s)} |\mathbf{v}_{\perp}^{1}|$ to have, from (\ref{inter_1})
\begin{equation}\label{middle}
\begin{split}
\frac{\partial (s^{\ell_{*}},\mathbf{X}_{ \ell_{*}}(s^{\ell_{*}}),\mathbf{V}_{  \ell_{*}}(s^{\ell_{*}}))}{\partial (s^{1},\mathbf{X}_{1}(s^{1}),\mathbf{V}_{1}(s^{1}) )}
\lesssim C^{C|t-s||v|}  \left[\begin{array}{c|cc|cc}
 0 &  0 &  \mathbf{0}_{1,2} &  0 & \mathbf{0}_{1,2}  \\ \hline
 \frac{|v|^2}{| \mathbf v_\perp^1 |} &     \frac{|v|}{|\mathbf{v}_{\perp}^{1}|}  & \frac{|\mathbf v_\perp^1|}{|v|} & \frac{1}{|v|}& \frac{1}{|v|} \\
 \frac{|v|^3}{| \mathbf v_\perp^1 |^2} &     \frac{|v|^{2}}{|\mathbf{v}_{\perp}^{1}|^{2}} 
 & 1 &  \frac{1}{|\mathbf{v}_{\perp}^{1}|} &   \frac{1}{|v|}\\ \hline
&&&   \\
 \frac{|v|^4}{| \mathbf v_\perp^1 |^2}&     \frac{|v|^{3}}{|\mathbf{v}_{\perp}^{1}|^{2}} & |v| & 
   \frac{|v|}{|\mathbf{v}_{\perp}^{1}|} & O_{\xi}(1) \\
&&&
   \end{array}\right]_{7\times 7}.
\end{split}
\end{equation}

We consider the following case:
\begin{equation}\label{typeII}
\text{There exists } \ell \in [\ell_{*}(s;t,x,v), 0]  \text{ such that } \mathbf{r}^{\ell} \geq \sqrt{\delta}.
\end{equation}
Therefore $\ell$ is \textit{Type III} in (\ref{type_r}). Equivalently $\tau \in [t^{\ell+1}, t^{\ell}]$ for some $\ell_{*}\leq \ell \leq 0$ and $|\xi(X_{\mathbf{cl}}(\tau;t,x,v))|\geq C\delta$. By the Velocity lemma (Lemma \ref{velocitylemma}), for all $1 \leq i \leq  \ell_{*}(s;t,x,v) ,$
\[
|\mathbf{r}^{i}| \ =  \ \frac{|\mathbf{v}_{\perp}^{i}|}{|v|} \ \gtrsim_{\xi} \ e^{-C_{\xi}|v||t^{i}-t^{\ell}|} |\mathbf{r}^{\ell}| \ \gtrsim_{\xi} \ e^{-C_{\xi}|v|(t-s)} \sqrt{\delta}.
\]
Especially, for all $1 \leq i \leq  \ell_{*}(s;t,x,v) ,$
\[
|\mathbf{r}^{1}| \gtrsim_{\xi} e^{-C_{\xi}|v|(t-s)} \sqrt{\delta}, \ \ \ \frac{1}{|\mathbf{r}^{i}|} = \frac{|v|}{|\mathbf{v}_{\perp}^{i}|} \lesssim_{\xi} \frac{e^{C_{\xi}|v|(t-s)}}{\sqrt{\delta}}.
\]
Note that $\ell_{*}(s;t,x,v) \lesssim \max_{i}\frac{|v||t-s|}{ \mathbf{r}^{i}} \lesssim_{\delta} C^{C|v||t-s|}.$

Therefore in the case of (\ref{typeII}), from (\ref{middle}),
\begin{equation}\notag
 \begin{split}
  \frac{\partial (s^{\ell_{*}}, \mathbf{X}_{\ell_{*}} (s^{\ell_{*}}), \mathbf{V}_{\ell_{*} } (s^{\ell_{*}})   )}{\partial (s^{1}, \mathbf{X}_{1} (s^{1}), \mathbf{V}_{1} (s^{1})   )} 
 &\lesssim C^{C(t-s)|v|} \left[\begin{array}{c|cc|ccc}
 0 & 0 & \mathbf{0}_{1,2} & 0 & \mathbf{0}_{1,2} &\\ \hline
 |v| &  \frac{1}{\sqrt{\delta}}&     \frac{1}{\sqrt{\delta}}& \frac{1}{|v|}& \frac{1}{|v|}&\\
 |v| &  \frac{1}{ {\delta}}    &  \frac{1}{ {\delta}}    &  \frac{1}{|v|} \frac{1}{\sqrt{\delta}}&   \frac{1}{|v|}\\ \hline
&&&&& \\
|v|^{2}& |v|  \frac{1}{ {\delta}}   &     |v|     \frac{1}{ {\delta}}   & \frac{1}{\sqrt{\delta}} & 1 \\
&&&&&
   \end{array}\right]\\
   & \lesssim_{\delta} C^{C|v|(t-s)} \left[\begin{array}{c|c|c} 0 & \mathbf{0}_{1,3} & \mathbf{0}_{1,3} \\ \hline |v| & 1 & \frac{1}{|v|} \\ \hline |v|^{2} & |v|  & 1 \end{array}\right].
 \end{split}
 \end{equation}
From (\ref{ss1}) and (\ref{s1t}) we conclude
\begin{equation}\label{final_Dxv_nongrazing}
\begin{split}
 &\frac{\partial ( X_{\mathbf{cl}}(s;t,x,v),V_{\mathbf{cl}}(s;t,x,v))}{\partial (t,x,v)} \\
 &  \lesssim_{\delta,\xi} \ C^{C|v|(t-s)}  \ 
 \frac{\partial ( X_{\mathbf{cl}}(s), V_{\mathbf{cl}}(s))}{\partial (s^{\ell_{*}}, \mathbf{X}_{\ell_{*} }(s^{\ell_{*}}), \mathbf{V}_{ \ell_{*}}(s^{\ell_{*}}))}
\left[\begin{array}{c|c|c} 0 & \mathbf{0}_{1,3} & \mathbf{0}_{1,3} \\ \hline |v| & 1 & \frac{1}{|v|} \\ \hline |v|^{2} & |v|  & 1 \end{array}\right]  \frac{   {\partial (s^{1}, \mathbf{x}_{\perp_{1}}(s^{1}), \mathbf{x}_{\parallel_{1}}(s^{1}), \mathbf{v}_{\perp_{1}}(s^{1}), \mathbf{v}_{\parallel_{1}}(s^{1}) )}       }{\partial (t,x,v)}\\
&  \lesssim_{\delta,\xi}\ C^{C|v|(t-s)} 
\left[\begin{array}{ccc}
 |v| & 1 & |s^{\ell_{*}}-s| \\  1 & |v| & 1 \end{array}\right]
 \left[\begin{array}{ccc} 0 & \mathbf{0}_{1,3} & \mathbf{0}_{1,3} \\   |v| & 1 & \frac{1}{|v|} \\  |v|^{2} & |v|  & 1 \end{array}\right]
 \left[\begin{array}{c|c|c} 1 & \mathbf{0}_{1,3} & \mathbf{0}_{1,3} \\ |t -s^1|^2 & 1 & |t-s^{1}| \\ |t -s^1| & |v| &  1  \end{array}\right]\\
 & \lesssim_{\delta,\xi} \ C^{C|v|(t-s)}  \left[\begin{array}{ccc} 
 |v| +1 & 1 & \frac{1}{|v|} \\ |v|^{2} + 1& |v| & 1 \end{array} \right]_{6 \times 7}.
\end{split}
\end{equation}

Now for $|v| < \delta$, we have
\Be \label{middle2} \begin{split}
& \frac{\partial (s^{\ell_{*}},\mathbf{X}_{ \ell_{*}}(s^{\ell_{*}}),\mathbf{V}_{  \ell_{*}}(s^{\ell_{*}}))}{\partial (s^{1},\mathbf{X}_{1}(s^{1}),\mathbf{V}_{1}(s^{1}) )}
\\ \le  & \ (\ref{s1tstar}) \times (\ref{wholeitvsmall}) \times (\ref{t1s1})
 \\ \lesssim & \left[ \begin{array}{c|c|c|c}
 0 & \mathbf 0_{1,3} & 0 & \mathbf 0_{1,2} 
 \\ \hline |\mathbf v_\perp^1 | & |v|^2 |\mathbf v_\perp^1 | & | \mathbf v_\perp^1 | & |v| |\mathbf v_\perp^1 |^2
 \\  |v| & 1 & |v|  | \mathbf v_\perp^1 |^2 &  | \mathbf v_\perp^1 |
 \\ \hline 1 &  | \mathbf v_\perp^1 | & 1 & |v|  | \mathbf v_\perp^1 |
 \\  1 &  | \mathbf v_\perp^1 | & |v| | \mathbf v_\perp^1 | & 1
 \end{array} \right]_{7 \times 6}
 \times
 \left[ \begin{array}{c|c|c|c}
 1 & 1 & \frac{1}{ | \mathbf v_\perp^1 |} & 1 
 \\ \hline 1 & 1 & \frac{1}{ | \mathbf v_\perp^1 |} & 1 
 \\ \hline  | \mathbf v_\perp^1 | &  | \mathbf v_\perp^1 |& 1 &  | \mathbf v_\perp^1 |
 \\ \hline 1 & 1 & \frac{1}{ | \mathbf v_\perp^1 |} & 1
 \end{array} \right]_{6 \times 6}
 \times
 \left[ \begin{array}{c|cc|cc}
 \frac{|v|}{ | \mathbf v_\perp^1 |} & \frac{1}{ | \mathbf v_\perp^1 |} &  | \mathbf v_\perp^1 | & 1 &  | \mathbf v_\perp^1 | 
 \\ \hline \frac{1}{ | \mathbf v_\perp^1 |} & \frac{|v|}{ | \mathbf v_\perp^1 |} & 1 & |v| &  | \mathbf v_\perp^1 |
 \\ \hline \frac{1}{ | \mathbf v_\perp^1 |} & \frac{1}{ | \mathbf v_\perp^1 |} & | \mathbf v_\perp^1 | & 1 &  | \mathbf v_\perp^1 |
 \\  \frac{1}{ | \mathbf v_\perp^1 |} & \frac{1}{ | \mathbf v_\perp^1 |} &  | \mathbf v_\perp^1 | & 1 & 1
 \end{array} \right]_{6 \times 7}
 \\  \lesssim & \left[ \begin{array}{c|c|c|c}
 0 & \mathbf 0_{1,3} & 0 & \mathbf 0_{1,2} 
 \\ \hline |\mathbf v_\perp^1 | & |v|^2 |\mathbf v_\perp^1 | & | \mathbf v_\perp^1 | & |v| |\mathbf v_\perp^1 |^2
 \\  |v| & 1 & |v|  | \mathbf v_\perp^1 |^2 &  | \mathbf v_\perp^1 |
 \\ \hline 1 &  | \mathbf v_\perp^1 | & 1 & |v|  | \mathbf v_\perp^1 |
 \\  1 &  | \mathbf v_\perp^1 | & |v| | \mathbf v_\perp^1 | & 1
 \end{array} \right]_{7 \times 6}
 \times
 \left[ \begin{array}{c|cc|cc}
 \frac{1}{|\mathbf v_\perp^1 |^2 } & \frac{1}{|\mathbf v_\perp^1 |^2} & 1 & \frac{1}{|\mathbf v_\perp^1 | } & 1
 \\ \hline \frac{1}{|\mathbf v_\perp^1 |^2 } & \frac{1}{|\mathbf v_\perp^1 |^2} & 1 & \frac{1}{|\mathbf v_\perp^1 | } & 1
 \\ \hline \frac{1}{|\mathbf v_\perp^1 |} & \frac{1}{|\mathbf v_\perp^1 |} &|\mathbf v_\perp^1 | &1 & 
|\mathbf v_\perp^1|
\\  \frac{1}{|\mathbf v_\perp^1 |^2 } & \frac{1}{|\mathbf v_\perp^1 |^2} & 1 & \frac{1}{|\mathbf v_\perp^1 | } & 1
 \end{array} \right]_{6 \times 7} 
\\  \lesssim &
\left[ \begin{array}{c|c|c|c|c}
0 & 0 & \mathbf 0_{1,2} & 0 & \mathbf 0_{1,2}
\\ \hline  \frac{1}{|\mathbf v_\perp^1 | } & \frac{1}{|\mathbf v_\perp^1 |} &  |\mathbf v_\perp^1 |  &1 & |\mathbf v_\perp^1 | 
\\   \frac{|v|}{|\mathbf v_\perp^1 |^2 } & \frac{1}{|\mathbf v_\perp^1 |^2} & 1 & \frac{1}{|\mathbf v_\perp^1 | } & 1
\\ \hline  \frac{1}{|\mathbf v_\perp^1 |^2 } & \frac{1}{|\mathbf v_\perp^1 |^2} & 1 &  \frac{1}{|\mathbf v_\perp^1 | } & 1
\\   \frac{1}{|\mathbf v_\perp^1 |^2 } & \frac{1}{|\mathbf v_\perp^1 |^2} & 1 & \frac{1}{|\mathbf v_\perp^1 | } & 1
\end{array} \right]_{7\times 7}
\end{split} 
\Ee

Now let's address the derivatives $\p_x t^\ell$, and $\p_v t^\ell$ for any $1 \le \ell \le \ell^*$, as we will need it later. For $|v| > \delta$, we compute $[\text{the first row of } (\ref{whole_inter}) \times (\ref{t1s1}) ] \cdot  \frac{   {\partial (s^{1}, \mathbf{x}_{\perp}(s^{1}), \mathbf{x}_{\parallel}(s^{1}), \mathbf{v}_{\perp}(s^{1}), \mathbf{v}_{\parallel}(s^{1}) )}       }{\partial (t,x,v)}$ and use \eqref{tbest} to get
\Be \label{ptell}
 \begin{split}
\left[ \begin{array}{c}
\p_x t^\ell
\\ \p_v t^\ell
\end{array} \right]
\lesssim
\left[ \begin{array}{ccc}
\frac{|v|^2}{| \mathbf v_\perp^1| } & \frac{|v|}{| \mathbf v_\perp^1|^2 } & \frac{1}{ |v|| \mathbf v_\perp^1| }  
\end{array}   \right]
\left[\begin{array}{cc} \mathbf{0}_{1,3} & \mathbf{0}_{1,3} \\  1 & |t-s^{1}| \\  |v| &  1  \end{array}\right]
\lesssim \left[ \begin{array}{c}
\frac{|v|}{ | \mathbf v_\perp^1 |^2} \\  \frac{1}{ |v|| \mathbf v_\perp^1| }
\end{array} \right].
\end{split} 
\Ee
And similarly, for $|v| \le \delta$, we compute $[\text{the first row of } (\ref{wholeitvsmall}) \times (\ref{t1s1}) ] \cdot  \frac{   {\partial (s^{1}, \mathbf{x}_{\perp}(s^{1}), \mathbf{x}_{\parallel}(s^{1}), \mathbf{v}_{\perp}(s^{1}), \mathbf{v}_{\parallel}(s^{1}) )}       }{\partial (t,x,v)}$ and use \eqref{tbest} to get
\Be \label{ptell2}
 \begin{split}
\left[ \begin{array}{c}
\p_x t^\ell
\\ \p_v t^\ell
\end{array} \right]
\lesssim
\left[ \begin{array}{ccc}
\frac{1}{| \mathbf v_\perp^1| } & \frac{1}{| \mathbf v_\perp^1|^2 } & \frac{1}{ | \mathbf v_\perp^1| }  
\end{array}   \right]
\left[\begin{array}{cc} \mathbf{0}_{1,3} & \mathbf{0}_{1,3} \\  1 & |t-s^{1}| \\  |v| &  1  \end{array}\right]
\lesssim \left[ \begin{array}{c}
\frac{1}{ | \mathbf v_\perp^1 |^2} \\  \frac{1}{| \mathbf v_\perp^1| }
\end{array} \right].
\end{split} 
\Ee

We remark $\p \mathbf{x}_{\perp_{\ell_{*}}}$ and $\p \mathbf{v}_{\perp_{\ell_{*}}}$ have desired bounds but $\p \mathbf{x}_{\parallel_{\ell_{*}}}$ and $\p \mathbf{v}_{\parallel_{\ell_{*}}}$ still have undesired bounds in (\ref{middle}), \eqref{middle2}. 

We only need to consider the remaining cases, i.e. $\ell$ is \textit{Type I} or \textit{Type II}.
Note that in either case the moving frame ($\mathbf{p}^{\ell}-$spherical coordinate) is well-defined for all $\tau \in [s,t]$. In next two step we use the ODE method to refine the submatrix  of (\ref{middle}) and \eqref{middle2}: 
\begin{equation}\notag
\begin{split}
 \frac{\partial(\mathbf{x}_{\parallel_{\ell_{*}}}(s^{\ell_{*}}),   \mathbf{v}_{\parallel_{\ell_{*}}}(s^{\ell_{*}})   )}{\partial (\mathbf{x}_{\perp_{1}}(s^{1}), \mathbf{x}_{\parallel_{1}}(s^{1}), \mathbf{v}_{\perp_{1}}(s^{1}), \mathbf{v}_{\parallel_{1}}(s^{1})   )} 
 =
\left[\begin{array}{cccccccc}
 \frac{\partial\mathbf{x}_{\parallel_{\ell_{*}} }(s^{\ell_{*}})}{\partial \mathbf{x}_{\perp_{1}}(s^{1})  } & 
  \frac{\partial\mathbf{x}_{\parallel_{\ell_{*}} }(s^{\ell_{*}})}{\partial \mathbf{x}_{\parallel_{1} }(s^{1})  } & 
    \frac{\partial\mathbf{x}_{\parallel_{\ell_{*}} }(s^{\ell_{*}})}{\partial \mathbf{v}_{\perp_{1}}(s^{1})  } &  
    \frac{\partial\mathbf{x}_{\parallel_{\ell_{*}} }(s^{\ell_{*}})}{\partial \mathbf{v}_{\parallel_{1} }(s^{1})  } \\ 
     \frac{\partial\mathbf{v}_{\parallel_{\ell_{*}} }(s^{\ell_{*}})}{\partial \mathbf{x}_{\perp_{1}}(s^{1})  } &  
    \frac{\partial\mathbf{v}_{\parallel_{\ell_{*}} }(s^{\ell_{*}})}{\partial \mathbf{x}_{\parallel_{1} }(s^{1})  } &
    \frac{\partial\mathbf{v}_{\parallel_{\ell_{*}} }(s^{\ell_{*}})}{\partial \mathbf{v}_{\perp_{1}}(s^{1})  } &  
    \frac{\partial\mathbf{v}_{\parallel_{\ell_{*}} }(s^{\ell_{*}})}{\partial \mathbf{v}_{\parallel_{1} }(s^{1})  }  
\end{array}
\right]_{4\times 6}
.
\end{split}
\end{equation}

\vspace{8pt}

\noindent{\textit{Step 9. ODE method within the time scale $|t-s||v|\simeq L_{\xi}$}}

\vspace{4pt}
 
Recall the end points (time) of intermediate groups from (\ref{group}):  
\begin{equation}\notag
\begin{split}
s < \underbrace{ t^{\ell_{*}} < t^{\ell_{[ \frac{|t-s||v|}{L_{\xi}}]} +1 } }_{{[ \frac{|t-s||v|}{L_{\xi}}]  +1 }}  <  \underbrace{t^{\ell_{[ \frac{|t-s||v|}{L_{\xi}}]   }} < t^{\ell_{[ \frac{|t-s||v|}{L_{\xi}}]-1}+1}   }_{{[ \frac{|t-s||v|}{L_{\xi}}]   }}
<  \cdots <  \underbrace{t^{\ell_{i }} < t^{\ell_{i-1}+1  }}_{i }  < \cdots
<  \underbrace{t^{\ell_{2}}<t^{\ell_{1} +1} }_{2}< \underbrace{t^{\ell_{1}}< t^{1} }_{1}< t,
\end{split}
\end{equation}
where the underbraced numbering indicates the index of the intermediate group. We further choose points independently on $(t,x,v)$ for all $i=1,2, \cdots, [\frac{|t-s||v|}{L_{\xi}}]:$ 
\begin{eqnarray*} 
& t^{\ell_{1} + 1} < s^{2} < t^{\ell_{1}},&
\\
& t^{\ell_{2}+1} < s^{3}< t^{\ell_{2}},&\\
& \vdots&\\
&t^{\ell_{i}+1}<s^{i+1}  <\underbrace{ t^{\ell_{i }}< \cdots \cdots< t^{\ell_{i-1}+1} }_{i -\text{intermediate group}}< s^{i }<   t^{\ell_{i-1} }, & \\ 
&\vdots &\\ 
&t^{\ell_{[ \frac{|t-s||v|}{L_{\xi}}] }+1} <  s^{\ell_{[ \frac{|t-s||v|}{L_{\xi}}] }+1}  < t^{\ell_{[ \frac{|t-s||v|}{L_{\xi}}] } }
. & \end{eqnarray*} 

%
%

We claim the following estimate at $s^{i+1 }$ via $s^{i}$. Within the $i-$th intermediate group, we fix $\mathbf{p}^{\ell_{i}}-$spherical coordinate in \textit{Step 9}. The goal is to estimate derivatives with
respect to initial $(\mathbf{x}_{1},\mathbf{v}_{1})$ at $s^{i+1}$ in terms of $s^{i}$. This
is a different from previous steps. 
\begin{equation}\label{ODE_onegroup}
\begin{split}
&\left[\begin{array}{cc} |
  \frac{\partial \mathbf{x}_{\parallel_{\ell_{i}}} (s^{i+1})}{\partial {\mathbf{x}_{\perp_{1}}}(s^{1})  }|  &  |  \frac{\partial \mathbf{x}_{\parallel_{\ell_{i}}} (s^{i+1})}{\partial {\mathbf{x}_{\parallel_{1}}} (s^{1})  }|  \\
| \frac{\partial   \mathbf{v}_{\parallel _{\ell_{i}}}   (s^{i+1})}{  \partial {\mathbf{x}_{\perp _{1}}}(s^{1}) } |
& | \frac{\partial   \mathbf{v}_{\parallel _{\ell_{i}}}   ( s^{i+1})}{  \partial {\mathbf{x}_{\parallel _{1}}}(s^{1}) } |
\end{array} \right] \\
&\lesssim_{\delta,\xi}
\left[\begin{array}{cc} 1 & \frac{1}{|v|} \\ |v| & 1   \end{array} \right]
\left[\begin{array}{cc} |   \frac{\partial \mathbf{x}_{\parallel_{\ell_{i}}} ( s^{i })}{\partial {\mathbf{x}_{\perp_{1}}} (s^{1}) }|
& |   \frac{\partial \mathbf{x}_{\parallel_{\ell_{i}}}  ( s^{i })}{\partial {\mathbf{x}_{\parallel_{1}}} (s^{1})  }|
   \\
| \frac{\partial   \mathbf{v}_{\parallel _{\ell_{i}}}    ( s^{i })}{  \partial {\mathbf{x}_{\perp _{1}}} (s^{1})} |
&
| \frac{\partial   \mathbf{v}_{\parallel _{\ell_{i}}}    ( s^{i })}{  \partial {\mathbf{x}_{\parallel _{1}}} (s^{1})} |
 \end{array} \right]
+ e^{C|v||t-s^{i}|}
\left[\begin{array}{cc} 1 & \frac{1}{|v|} \\ |v| & 1   \end{array} \right]
\left[\begin{array}{cc}  0 & 0\\ |v| \Big(1+ \frac{|v|}{|\mathbf{v}_{\perp }^{1}|} \Big) &   |v| \Big(1+ \frac{|v|}{|\mathbf{v}_{\perp }^{1}|} \Big) \end{array} \right]  ,\\
&\left[\begin{array}{cc} |
  \frac{\partial \mathbf{x}_{\parallel_{\ell_{i}}}  (s^{i+1})}{\partial {\mathbf{v}_{\perp_{1}}}(s^{1})  }|  &  |  \frac{\partial \mathbf{x}_{\parallel_{\ell_{i}}}(s^{i+1})}{\partial {\mathbf{v}_{\parallel_{1}}}  (s^{1})}|  \\
| \frac{\partial   \mathbf{v}_{\parallel _{\ell_{i}}}   (s^{i+1})}{  \partial {\mathbf{v}_{\perp _{1}}} (s^{1})} |
& | \frac{\partial   \mathbf{v}_{\parallel _{\ell_{i}}}   (s^{i+1})}{  \partial {\mathbf{v}_{\parallel _{1}}} (s^{1})} |
\end{array} \right] \\
&\lesssim_{\delta,\xi}
\left[\begin{array}{cc} 1 & \frac{1}{|v|} \\ |v| & 1   \end{array} \right]
\left[\begin{array}{cc} |   \frac{\partial \mathbf{x}_{\parallel_{\ell_{i}}} ( s^{i})}{\partial {\mathbf{v}_{\perp_{1}}} (s^{1}) }|
& |   \frac{\partial \mathbf{x}_{\parallel_{\ell_{i}}}  ( s^{i})}{\partial {\mathbf{v}_{\parallel_{1}}}   (s^{1})}|
   \\
| \frac{\partial   \mathbf{v}_{\parallel _{\ell_{i}}}    ( s^{i})}{  \partial {\mathbf{v}_{\perp _{1}}}  (s^{1})} |
&
| \frac{\partial   \mathbf{v}_{\parallel _{\ell_{i}}}    ( s^{i})}{  \partial {\mathbf{v}_{\parallel _{1}}}  (s^{1})} |
 \end{array} \right]
+ e^{C|v||t-s^{i}|}
\left[\begin{array}{cc} 1 & \frac{1}{|v|} \\ |v| & 1   \end{array} \right]
\left[\begin{array}{cc}   0 &  0
\\ 1 & 1 \end{array} \right] .
\end{split}
\end{equation}
For the sake of simplicity we drop the index $\ell_{i}$.

Denote, from (\ref{F||}),
\begin{equation}\label{F||_decompose}
\begin{split}
F_{\parallel}(\mathbf{x}_{\perp}, \mathbf{x}_{\parallel}, \mathbf{v}_{\perp}, \mathbf{v}_{\parallel}) := D(\mathbf{x}_{\perp}, \mathbf{x}_{\parallel} ,\mathbf{v}_{\parallel})+H(\mathbf{x}_{\perp}, \mathbf{x}_{\parallel}, \mathbf{v}_{\parallel})\mathbf{v}_{\perp} ,
\end{split}
\end{equation}
where $D$ is a $\mathbf{r}^{3}$-vector-valued function and $H$ is a $3\times3$ matrix-valued function:
\begin{equation}\notag
\begin{split}
 D(\mathbf{x}_{\perp}, \mathbf{x}_{\parallel}, \mathbf{v}_{\parallel}) 
 =&\sum_{i} G_{ij} (\mathbf{x}_{\perp}, \mathbf{x}_{\parallel})\frac{(-1)^{i+1}}{-\mathbf{n}(\mathbf{x}_{\parallel}) \cdot (\partial_{1} \mathbf{\eta}(\mathbf{x}_{\parallel}) \times \partial_{2} \mathbf{\eta}(\mathbf{x}_{\parallel}))}\\
& \ \ \ \times  \Big\{  \mathbf{v}_{\parallel} \cdot  \nabla^{2}\mathbf{ \eta}(\mathbf{x}_{\parallel}) \cdot \mathbf{v}_{\parallel} - \mathbf{x}_{\perp} \mathbf{v}_{\parallel} \cdot \nabla^{2} \mathbf{n}(\mathbf{x}_{\parallel}) \cdot \mathbf{v}_{\parallel} - E(s, - \mathbf{x}_\perp \mathbf n (\mathbf x_\parallel ) + \mathbf \eta (\mathbf x_\parallel ) ) 
\Big\} \cdot (-\mathbf{n} (\mathbf{x}_{\parallel}) \times \partial_{i+1} \mathbf{\eta}(\mathbf{x}_{\parallel})),
\end{split}
\end{equation}
and 
\begin{equation}\notag
\begin{split}
 H(\mathbf{x}_{\perp}, \mathbf{x}_{\parallel}, \mathbf{v}_{\parallel}) 
 = \sum_{i} G_{ij} (\mathbf{x}_{\perp}, \mathbf{x}_{\parallel})\frac{(-1)^{i+1}}{-\mathbf{n}(\mathbf{x}_{\parallel}) \cdot (\partial_{1} \mathbf{\eta}(\mathbf{x}_{\parallel}) \times \partial_{2} \mathbf{\eta}(\mathbf{x}_{\parallel}))}   2  \mathbf{v}_{\parallel}\cdot \nabla \mathbf{n}(\mathbf{x}_{\parallel}) \cdot (-\mathbf{n} (\mathbf{x}_{\parallel}) \times \partial_{i+1} \mathbf{\eta}(\mathbf{x}_{\parallel})).
\end{split}
\end{equation}
Note that $H$ is linear in $\mathbf{v}_{\parallel}$. Here $G_{ij}(\cdot,\cdot)$ is a smooth bounded function defined in (\ref{G}) and we used the notational convention $i\equiv i \ \mathrm{mod } \ 2$.

From Lemma \ref{chart_lemma} we take the time integration of (\ref{ODE_ell}) along the characteristics to have
\begin{equation}\notag
\begin{split}
\mathbf{x}_{\parallel}( s^{i+1}) & = \mathbf{x}_{\parallel}(s^{i}) - \int^{s^{i}}_{s^{i+1}} \mathbf{v}_{\parallel}(\tau ) \mathrm{d}\tau ,\\
\mathbf{v}_{\parallel}(s^{i+1}) & =  \mathbf{v}_{\parallel}(s^{i}) - \int^{s^{i}}_{s^{i+1}} \big\{  H(\mathbf{x}_{\perp}(\tau )  , \mathbf{x}_{\parallel}(\tau ), \mathbf{v}_{\parallel}(\tau ))\mathbf{v}_{\perp}(\tau ) +  D(\mathbf{x}_{\perp}(\tau ), \mathbf{x}_{\parallel}(\tau ) ,\mathbf{v}_{\parallel}(\tau ))\big\} \ \mathrm{d}\tau .
\end{split}
\end{equation}
Note that $\mathbf{v}_{\perp}(\tau )$ is not continuous with respect to the time $\tau$. Using (\ref{ODE_ell}) we rewrite this time integration as
\begin{equation}\notag
\begin{split}
\int^{s^{i}}_{s^{i+1}}  H(\mathbf{x}_{\perp}(\tau ), \mathbf{x}_{\parallel}(\tau ), \mathbf{v}_{\parallel}(\tau ))   \mathbf{v}_{\perp}(\tau ) \mathrm{d}\tau 
 =  
 \int^{s^{i}}_{t^{\ell_{i-1}+1}  } + \sum_{\ell= \ell_{i}-1}^{\ell_{i-1}+1} \int^{t^{\ell}}_{t^{\ell+1}} + \int^{t^{\ell_{i}  }}_{ s^{i+1}} ,\\
\end{split}
\end{equation}
then we use $\mathbf{v}_{\perp}(\tau ) = \dot{\mathbf{x}}_{\perp}(\tau )$ and the integration by parts to have
\begin{equation}\notag
\begin{split}
 &  \int_{t^{\ell_{i-1} +1} }^{s^{i}}   H(\mathbf{x}_{\perp}(\tau ), \mathbf{x}_{\parallel}(\tau ), \mathbf{v}_{\parallel}(\tau ) ) \dot{\mathbf{x}}_{\perp}(\tau ) \mathrm{d}\tau 
  +  \sum_{\ell=\ell_{i}-1}^{\ell_{i-1}+1} \int^{t^{\ell}}_{t^{\ell+1}}  H(\mathbf{x}_{\perp}(\tau ), \mathbf{x}_{\parallel}(\tau ),\mathbf{v}_{\parallel}(\tau ) )\dot{\mathbf{x}}_{\perp}(\tau ) \mathrm{d}\tau  \\
 &+ \int^{t^{\ell_{i}   }}_{s^{i+1}}  H(\mathbf{x}_{\perp}(\tau ), \mathbf{x}_{\parallel}(\tau ), \mathbf{v}_{\parallel}(\tau )) \dot{\mathbf{x}}_{\perp}(\tau ) \mathrm{d}\tau \\
 =& \  H(s^{i})    {\mathbf{x}}_{\perp}(s^{i}) -H( t^{\ell_{i-1}+1} )  \underbrace{\mathbf{x}_{\perp}(  t^{\ell_{i-1} +1})}_{=0} - \int_{t^{\ell_{i-1}+1}}^{s^{i}}
\big[{\mathbf{v}}_{\perp}(\tau ),  {\mathbf{v}}_{\parallel}(\tau ), {F}_{\parallel}(\tau )  \big]\cdot \nabla H(\tau )
   {\mathbf{x}}_{\perp}(\tau ) \mathrm{d}\tau \\
 & + \sum_{\ell=\ell_{i}-1}^{\ell_{i-1}+1 } \Big\{
 H( t^{\ell} )  \underbrace{\mathbf{x}_{\perp}(t^{\ell})}_{=0} -  H(t^{\ell+1})   \underbrace{{\mathbf{x}}_{\perp}(t^{\ell+1})}_{=0}  
  - \int_{t^{\ell+1}}^{t^{\ell}}
\big[{\mathbf{v}}_{\perp}(\tau ),  {\mathbf{v}}_{\parallel}(\tau ), {F}_{\parallel}(\tau )  \big]  \cdot \nabla H(\tau )
   {\mathbf{x}}_{\perp}(\tau ) \mathrm{d}\tau 
 \Big\}\\
 & +
 H( t^{\ell_{i} } )  \underbrace{\mathbf{x}_{\perp}( t^{\ell_{i}  }) }_{=0}- H(s^{i+1})  { {\mathbf{x}}_{\perp}(s^{i+1}) }
    - \int^{ t^{\ell_{i} } }_{s^{i+1}}
\big[{\mathbf{v}}_{\perp}(\tau ),  {\mathbf{v}}_{\parallel}(\tau ), {F}_{\parallel}(\tau )  \big]   \cdot \nabla H(\tau )
  {\mathbf{x}}_{\perp}(\tau ) \mathrm{d}\tau \\
 =& \  H( \mathbf{x}_{\perp}, \mathbf{x}_{\parallel}, \mathbf{v}_{\parallel} ) (s^{i}) \mathbf{x}_{\perp}(s^{i})  -  H(s^{i+1}) {\mathbf{x}}_{\perp}(s^{i+1})
 - \int_{s^{i}}^{s^{i+1}}   \big[{\mathbf{v}}_{\perp}(\tau ),  {\mathbf{v}}_{\parallel}(\tau ), {F}_{\parallel}(\tau )  \big]    \cdot \nabla H(\tau )  {\mathbf{x}}_{\perp}(\tau ) \mathrm{d}\tau ,
\end{split}
\end{equation}
where we have used the fact $X_{\mathbf{cl}}(t^{\ell}) \in\partial\Omega$(therefore $\mathbf{x}_{\perp}(t^{\ell})=0$) and the notation $H(\tau ) = H(\mathbf{x}_{\perp}(\tau ), \mathbf{x}_{\parallel}(\tau ), \mathbf{v}_{\parallel}(\tau )), \ D(\tau ) = D(\mathbf{x}_{\perp}(\tau ), \mathbf{x}_{\parallel}(\tau ),\mathbf{v}_{\parallel}(\tau )), \ F_{\parallel}(\tau  ) = F_{\parallel}( \mathbf{x}_{\perp}(\tau ), \mathbf{x}_{\parallel}(\tau ), \mathbf{v}_{\perp}(\tau ), \mathbf{v}_{\parallel}(\tau ) ).$

Overall we have
\begin{equation}\label{xv_mildform}
\begin{split}
\mathbf{x}_{\parallel}(s^{i+1}) & = \mathbf{x}_{\parallel}(s^{i}) - \int_{s^{i+1}}^{s^{i}} \mathbf{v}_{\parallel}(\tau ) \mathrm{d}\tau ,\\
\mathbf{v}_{\parallel}(s^{i+1}) & =  \mathbf{v}_{\parallel}( s^{i }) - H( s^{i }) \mathbf{x}_{\perp} ( s^{i }) +
 H ( s^{i+1})  \mathbf{x}_{\perp}( s^{i+1}) \\
 & \ \  + \int^{ s^{i }}_{ s^{i+1}}     \big[{\mathbf{v}}_{\perp}(\tau ),  {\mathbf{v}}_{\parallel}(\tau ), {F}_{\parallel}(\tau )  \big]    \cdot \nabla H(\tau )  {\mathbf{x}}_{\perp}(\tau ) \mathrm{d}\tau   - \int^{ s^{i }}_{ s^{i+1}} D(  \tau )  \mathrm{d}\tau .
\end{split}
\end{equation}
Denote $$\partial = [\partial_{\mathbf{x}_{\perp}(s^{1})}, \partial_{\mathbf{x}_{\parallel}(s^{1})}, \partial_{\mathbf{v}_{\perp}(s^{1})}, \partial_{\mathbf{v}_{\parallel} (s^{1})}]= [\frac{\partial}{\partial \mathbf{x}_{\perp} (s^{1})}, \frac{\partial}{\partial \mathbf{x}_{\parallel}  ( s^{1})}, \frac{\partial}{\partial \mathbf{v}_{\perp}  ( s^{1})}, \frac{\partial}{\partial \mathbf{v}_{\parallel}  ( s^{1})}].$$ 
 We claim that, in a sense of distribution on $(s^{1}, \mathbf{x}_{\perp}( s^{1}), \mathbf{x}_{\parallel}(  s^{1}), \mathbf{v}_{\perp}(  s^{1}), \mathbf{v}_{\parallel}(  s^{1})) \in [0,\infty) \times (0,C_{\xi}) \times  (0,2\pi] \times (\delta, \pi-\delta) \times \mathbb{R}\times \mathbb{R}^{2}$,
 \begin{equation}\label{piece_d}
 \begin{split}
& \big[\partial \mathbf{x}_{\perp}( s^{i+1};s^{1},\mathbf{x}(s^{1}),\mathbf{v}(s^{1})), 
\partial \mathbf{x}_{\parallel} ( s^{i+1};s^{1},\mathbf{x}(s^{1}),\mathbf{v}(s^{1})),  \  \partial \mathbf{v}_{\parallel} ( s^{i+1};s^{1},\mathbf{x}(s^{1}),\mathbf{v}(s^{1}))\big]\\
 &= \sum_{\ell  } \mathbf{1}_{[t^{\ell+1},t^{\ell})} (s^{i+1})[\partial \mathbf{x}_{\perp} , \partial \mathbf{x}_{\parallel} ,     \partial \mathbf{v}_{\parallel}  \big],\\
& \partial\big[ \mathbf{v}_{\perp} ( s^{i+1};s^{1},\mathbf{x}(s^{1}),\mathbf{v}(s^{1}))\mathbf{x}_{\perp} ( s^{i+1};s^{1},\mathbf{x}(s^{1}),\mathbf{v}(s^{1}))\big] = \sum_{\ell  } \mathbf{1}_{[t^{\ell+1},t^{\ell})}(s^{i+1}) \big\{   \partial \mathbf{v}_{\perp}  \mathbf{x}_{\perp} + \mathbf{v}_{\perp}   \partial \mathbf{x}_{\perp} \big\},
 \end{split}
 \end{equation}
i.e. the distributional derivatives of $[ \mathbf{x}_{\perp}, \mathbf{x}_{\parallel} , \mathbf{v}_{\parallel}]$ and $\mathbf{v}_{\perp} \mathbf{x}_{\perp}$ equal the piecewise derivatives. 

\textit{Proof of }(\ref{piece_d}). Let $\phi(\tau^{\prime}, \mathbf{x}_{\perp}, \mathbf{x}_{\parallel}, \mathbf{v}_{\perp}, \mathbf{v}_{\parallel}) \in C^{\infty}_{c} ( [0,\infty) \times   (0,C_{\xi})\times \mathbb{S}^{2}\times\mathbb{R}\times\mathbb{R}^{2})$. Therefore $\phi\equiv 0$ when $\mathbf{x}_{\perp} < \delta$, $|v| > \frac{1}{\delta}$. For $\mathbf{x}_{\perp } \geq \delta$ we use the proof of Lemma \ref{chart_lemma}: For $x= \mathbf{\eta}(\mathbf{x}_{\parallel}) + \mathbf{x}_{\perp} [-\mathbf{n}(\mathbf{x}_{\parallel})],$
\[
|\mathbf{x}_{\perp}|  \ \lesssim_{\xi} \ \xi(x) = \xi(  \mathbf{\eta}(\mathbf{x}_{\parallel}) + \mathbf{x}_{\perp}[- \mathbf{n}(\mathbf{x}_{\parallel})] ) \ \lesssim_{\xi} \ |\mathbf{x}_{\perp}|,
\]
and therefore $\xi(x)\gtrsim_{\xi}\delta$ and $\alpha(t,x,v) \gtrsim_{\xi, E} \sqrt{ |\xi (x) |} \gtrsim_{\xi ,E }  \sqrt \delta.$
By the Velocity lemma, for $(x,v)\in \text{supp}(\phi)$
\[
\alpha(x^{\ell},v^{\ell}) \gtrsim_{\xi} e^{-C(|v|+1)|t^{1}-t^{\ell}|} \alpha(t,x,v) \gtrsim_{\xi} e^{-\frac{C}{\delta}(t-s)} \sqrt \delta \gtrsim_{\xi, E, |t-s|,\delta, \phi}1>0,
\]
where we used the fact that $\phi$ vanishes away from a compact subset $\text{supp}(\phi)$. Therefore $t^{\ell}(t,x,v)= t^{\ell}(t, \mathbf{x}_{\perp}, \mathbf{x}_{\parallel}, \mathbf{v}_{\perp}, \mathbf{v}_{\parallel})$ is $C^1$ with respect to $\mathbf{x}_{\perp}, \mathbf{x}_{\parallel}, \mathbf{v}_{\perp}, \mathbf{v}_{\parallel}$ locally on $\text{supp}(\phi)$ and therefore $\mathcal{M}=\{(\tau^{\prime}, \mathbf{x}, \mathbf{v}) \in  \text{supp}(\phi): \tau^{\prime} = t^{\ell}(t, \mathbf{x}, \mathbf{v})  \}$ is a $C^1$ manifold.

It suffices to consider the case $|\tau^{\prime}- t^{\ell}(t,x,v)| \ll1$. Denote $\partial_{\mathbf{e}}\in \{\partial_{\mathbf{x_{\perp}}}, \partial_{\mathbf{x}_{\parallel,1}}, \partial_{\mathbf{x}_{\parallel,2}}, \partial_{\mathbf{v_{\perp}}}, \partial_{\mathbf{v}_{\parallel,1}}, \partial_{\mathbf{v}_{\parallel,2}}\}$ and $n_{\mathcal{M}}= e_1$ to have
\begin{equation}\notag
\begin{split}
&\int_{\{(\tau^{\prime},\mathbf{x}, \mathbf{v}) \in \text{supp}(\phi)\}}[\partial_{\mathbf{e}} \mathbf{x}_{\perp}(\tau^{\prime};t,\mathbf{x}, \mathbf{v}), \partial_{\mathbf{e}} \mathbf{x}_{\parallel}(\tau^{\prime};t,\mathbf{x}, \mathbf{v}), \partial_{\mathbf{e}}\mathbf{v}_{\parallel}(\tau^{\prime};t,\mathbf{x}, \mathbf{v}) ]
\phi(\tau^{\prime},\mathbf{x}, \mathbf{v}) \mathrm{d} \mathbf{x} \mathrm{d}\mathbf{v} \mathrm{d}\tau^{\prime}\\
=& \int_{\tau^{\prime} < t^{\ell}} + \int_{\tau^{\prime} \geq t^{\ell}}\\
=& \int_\mathcal{M} \Big(\lim_{\tau^{\prime} \uparrow t^{\ell} } [\mathbf{x}_{\perp}(\tau^{\prime}), \mathbf{x}_{\parallel}(\tau^{\prime}), \mathbf{v}_{\parallel}(\tau^{\prime})]  - \lim_{\tau^{\prime} \downarrow t^{\ell}} [\mathbf{x}_{\perp}(\tau^{\prime}), \mathbf{x}_{\parallel}(\tau^{\prime}), \mathbf{v}_{\parallel} (\tau^{\prime}) ] \Big) \phi(\tau^{\prime}, \mathbf{x}, \mathbf{v})\{ \mathbf{e}\cdot n_{\mathcal{M}}\}  \mathrm{d}\mathbf{x} \mathrm{d}\mathbf{v}\\
&- \int_{\{\tau^{\prime} \neq t^{\ell}(t,\mathbf{x}, \mathbf{v})\}} [\mathbf{x}_{\perp}(\tau^{\prime}), \mathbf{x}_{\parallel}(\tau^{\prime}), \mathbf{v}_{\parallel}(\tau^{\prime})] \partial_{\mathbf{e}} \phi(\tau^{\prime}, \mathbf{x}, \mathbf{v}) \mathrm{d}\tau^{\prime} \mathrm{d}\mathbf{v} \mathrm{d}\mathbf{x} \\
=& - \int_{\{\tau^{\prime} \neq t^{\ell}(t,\mathbf{x}, \mathbf{v})\}} [\mathbf{x}_{\perp}(\tau^{\prime}), \mathbf{x}_{\parallel}(\tau^{\prime}), \mathbf{v}_{\parallel}(\tau^{\prime})] \partial_{\mathbf{e}} \phi(\tau^{\prime}, \mathbf{x}, \mathbf{v}) \mathrm{d}\tau^{\prime} \mathrm{d}\mathbf{v} \mathrm{d}\mathbf{x},
\end{split}
\end{equation}
where we used the continuity of $[ \mathbf{x}_{\perp}(\tau^{\prime};t,\mathbf{x}, \mathbf{v}),  \mathbf{x}_{\parallel}(\tau^{\prime};t,\mathbf{x}, \mathbf{v}),  \mathbf{v}_{\parallel}(\tau^{\prime};t,\mathbf{x}, \mathbf{v}) ]$ in terms of $\tau^{\prime}$ near $t^{\ell}(t,\mathbf{x}, \mathbf{v})$.

Note that $\mathbf{v}_{\perp}(\tau^{\prime};t, \mathbf{x}, \mathbf{v})$ is discontinuous around $|\tau^{\prime}- t^{\ell}| \ll1. (\lim_{\tau^{\prime} \downarrow t^{\ell}} \mathbf{v}_{\perp}(\tau^{\prime}) = - \lim_{\tau^{\prime} \uparrow t^{\ell}} \mathbf{v}_{\perp}(\tau^{\prime}))$ However with crucial $\mathbf{x}_{\perp}(\tau^{\prime})-$multiplication we have $\mathbf{x}_{\perp}(t^{\ell}) \mathbf{v}_{\perp}(t^{\ell})=0$ and therefore
\begin{equation}\notag
\begin{split}
&\int_{\{(\tau^{\prime},\mathbf{x}, \mathbf{v}) \in \text{supp}(\phi)\}} \partial_{\mathbf{e}} [ \mathbf{x}_{\perp}(\tau^{\prime};t,\mathbf{x},\mathbf{v}) \mathbf{v}_{\perp}(\tau^{\prime};t,\mathbf{x},\mathbf{v}) ]
\phi(\tau^{\prime},\mathbf{x}, \mathbf{v}) \mathrm{d} \mathbf{x} \mathrm{d}\mathbf{v} \mathrm{d}\tau^{\prime}\\
=& \int_{\tau^{\prime} < t^{\ell}} + \int_{\tau^{\prime} \geq t^{\ell}}\\
=& \int_\mathcal{M} \Big(\lim_{\tau^{\prime} \uparrow t^{\ell} } [\mathbf{x}_{\perp}(\tau^{\prime})  \mathbf{v}_{\perp}(\tau^{\prime})]  - \lim_{\tau^{\prime} \downarrow t^{\ell}} [\mathbf{x}_{\perp}(\tau^{\prime}) \mathbf{v}_{\perp} (\tau^{\prime}) ] \Big) \phi(\tau^{\prime}, \mathbf{x}, \mathbf{v})\{ \mathbf{e}\cdot n_{\mathcal{M}}\}  \mathrm{d}\mathbf{x} \mathrm{d}\mathbf{v}\\
&- \int_{\{\tau^{\prime} \neq t^{\ell}(t,\mathbf{x}, \mathbf{v})\}} [\mathbf{x}_{\perp}(\tau^{\prime})  \mathbf{v}_{\perp}(\tau^{\prime})] \partial_{\mathbf{e}} \phi(\tau^{\prime}, \mathbf{x}, \mathbf{v}) \mathrm{d}\tau^{\prime} \mathrm{d}\mathbf{v} \mathrm{d}\mathbf{x} \\
=& - \int_{\{\tau^{\prime} \neq t^{\ell}(t,\mathbf{x}, \mathbf{v})\}} [\mathbf{x}_{\perp}(\tau^{\prime};t,\mathbf{x},\mathbf{v})  \mathbf{v}_{\perp}(\tau^{\prime};t,\mathbf{x},\mathbf{v})] \partial_{\mathbf{e}} \phi(\tau^{\prime}, \mathbf{x}, \mathbf{v}) \mathrm{d}\tau^{\prime} \mathrm{d}\mathbf{v} \mathrm{d}\mathbf{x}.
\end{split}
\end{equation}
This completes the proof of (\ref{piece_d}).

Since $\mathbf{v}_{\perp}$
always is multiplied with $\mathbf{x}_{\perp }$ in (\ref{xv_mildform}), we may apply (\ref{piece_d}) and take
derivative inside each $\int_{s^{i+1}}^{s^{i}}$ of (\ref{xv_mildform}), separating the
main terms with $\partial_{\mathbf{e}}\mathbf{x}_{||}$ and $\partial_{\mathbf{e}}\mathbf{v}_{||},$ and treating the rest (underbraced terms) as forcing terms to obtain, for $\partial_{\mathbf{e}}\in \{\partial_{\mathbf{x_{\perp}}}, \partial_{\mathbf{x}_{\parallel,1}}, \partial_{\mathbf{x}_{\parallel,2}}, \partial_{\mathbf{v_{\perp}}}, \partial_{\mathbf{v}_{\parallel,1}}, \partial_{\mathbf{v}_{\parallel,2}}\}$,
%
\begin{equation}\label{Dode}
\begin{split}
&\partial_{\mathbf{e}}\mathbf{x}_{\parallel}(s^{i+1}) = \partial_{\mathbf{e}} \mathbf{x}_{\parallel}(s^{i}) - \int^{s^{i}}_{s^{i+1}} \partial_{\mathbf{e}} \mathbf{v}_{\parallel}(\tau ) \mathrm{d}\tau ,\\
&\partial \mathbf{v}_{\parallel}(s^{i+1}) = \partial_{\mathbf{e}} H(s^{i+1}) \mathbf{x}_{\perp}(s^{i+1}) + H(s^{i+1}) \underbrace{ \partial_{\mathbf{e}}\mathbf{x}_{\perp}(s^{i+1}) }+ \partial_{\mathbf{e}}\mathbf{v}_{\parallel}(s^{i }) - \partial_{\mathbf{e}}[H(\mathbf{x}_{\perp}, \mathbf{x}_{\parallel}, \mathbf{v}_{\parallel}) \mathbf{x}_{\perp}](s^{i+1})\\
& \ \ + \int^{s^{i}}_{s^{i+1}} \underbrace{  \partial_{\mathbf{e}}\mathbf{v}_{\perp}(\tau) }\partial_{\mathbf{x}_{\perp}}H(\tau)  \mathbf{x}_{\perp}(\tau)+ \partial_{\mathbf{e}}\mathbf{v}_{\parallel}(\tau)\cdot \nabla_{\mathbf{x}_{\parallel}}H(\tau)   \mathbf{x}_{\perp}(\tau) \mathrm{d}\tau  \\
& \ \ + \int^{ s^{i}}_{s^{i+1}}
\Big\{  \Big[  \underbrace{\partial_{\mathbf{e}}\mathbf{x}_{\perp}(\tau ) }\partial_{\mathbf{x}_{\perp}}H(\tau) + \partial_{\mathbf{e}}\mathbf{x}_{\parallel}(\tau ) \cdot \nabla_{\mathbf{x}_{\parallel}}H(\tau) + \partial_{\mathbf{e}}\mathbf{v}_{\parallel}(\tau) \cdot \nabla_{\mathbf{v}_{\parallel}} H(\tau) \Big]\mathbf{v}_{\perp}(\tau) \\
& \ \ \ \ \  + H(\tau)\underbrace{ \partial_{\mathbf{e}}\mathbf{v}_{\perp}(\tau) } +\underbrace{ \partial_{\mathbf{e}}\mathbf{x}_{\perp}(\tau)} \partial_{\mathbf{x}_{\perp}} D(\tau) + \partial_{\mathbf{e}}\mathbf{x}_{\parallel}(\tau) \cdot \nabla_{\mathbf{x}_{\parallel}} D(\tau)  + \partial_{\mathbf{e}}\mathbf{v}_{\parallel}(\tau) \nabla_{\mathbf{v}_{\parallel}} D(\tau)    \Big\}
\cdot \nabla_{\mathbf{v}_{\parallel}} H(\tau) \mathbf{x}_{\perp}(\tau) \mathrm{d}\tau  \\
& \ \ + \int^{ s^{i}}_{s^{i+1}}
\Big\{  \mathbf{v}_{\perp}(\tau) [  \underbrace{\partial_{\mathbf{e}}\mathbf{x}_{\perp}(\tau)}, \partial_{\mathbf{e}}\mathbf{x}_{\parallel}(\tau), \partial_{\mathbf{e}}\mathbf{v}_{\parallel}(\tau)] \cdot \nabla \partial_{\mathbf{x}_{\perp}} H(\tau)
+ \mathbf{v}_{\parallel}(\tau) \cdot  [ \underbrace{\partial_{\mathbf{e}}\mathbf{x}_{\perp}(\tau)}, \partial_{\mathbf{e}}\mathbf{x}_{\parallel}(\tau), \partial_{\mathbf{e}}\mathbf{v}_{\parallel}(\tau)] \cdot \nabla \nabla_{\mathbf{x}_{\parallel}} H(\tau) \\
& \ \ \ \ \ \ \ \ \ \ \ \
+ F_{\parallel}(\tau) \cdot  [ \underbrace{\partial_{\mathbf{e}}\mathbf{x}_{\perp}(\tau)}, \partial_{\mathbf{e}}\mathbf{x}_{\parallel}(\tau), \partial_{\mathbf{e}}\mathbf{v}_{\parallel}(\tau)] \cdot \nabla \nabla_{\mathbf{v}_{\parallel}} H(\tau)
\Big\}\mathbf{x}_{\perp}(\tau)  \mathrm{d}\tau  \\
& \ \ + \int^{ s^{i}}_{s^{i+1}} \big\{
\mathbf{v}_{\perp}(\tau) \partial_{\mathbf{x}_{\perp}} H(\tau) + \mathbf{v}_{\parallel}(\tau) \cdot \nabla_{\mathbf{x}_{\parallel}} H(\tau) + F_{\parallel}(\tau) \cdot \nabla_{\mathbf{v}_{\parallel}} H(\tau)
\big\} \underbrace{ \partial_{\mathbf{e}}\mathbf{x}_{\perp}(\tau)} \mathrm{d}\tau\\
& \ \ - \int^{ s^{i}}_{s^{i+1}}  \big[ \underbrace{ \partial_{\mathbf{e}}\mathbf{x}_{\perp}(\tau)}, \partial_{\mathbf{e}}\mathbf{x}_{\parallel}(\tau), \partial_{\mathbf{e}}\mathbf{v}_{\parallel}(\tau) \big] \cdot \nabla D(\tau) \mathrm{d}\tau.
\end{split}
\end{equation}
Now we use (\ref{middle}) to control the underbraced term of (\ref{Dode}). Notice that we cannot directly use (\ref{middle}) since now we fix the chart for whole $i-$th intermediate group but the estimate (\ref{middle}) is for the moving frame. (For clarity, we write the index for the chart for this part.) Note the time of bounces within the $i-$th intermediate group ($|t^{\ell_{i-1}}-t^{\ell_{i}}||v| \simeq L_{\xi}$) are
\[
t^{\ell_{i}+1} < s^{i+1} < t^{\ell_{i}} <  t^{\ell_{i}-1} < \cdots\cdots <t^{\ell_{i-1}+2}  < t^{\ell_{i-1}+1} < s^{i} < t^{\ell_{i-1}}.
\]

Now we apply (\ref{chart_changing}) and (\ref{middle}) to bound, for $\tau \in (s^{i+1},s^{i})$ and $\ell \in \{ \ell_{i}, \ell_{i}-1, \cdots, \ell_{i-1}+2, \ell_{i-1} +1 , \ell_{i-1}\}$
\begin{equation}\label{ell_1}
\begin{split}
&\frac{\partial (  \mathbf{x}_{\perp_{\ell}}(\tau ), \mathbf{x}_{\parallel_{\ell}}( \tau), \mathbf{v}_{\perp_{\ell}} ( \tau ), \mathbf{v}_{\parallel_{\ell}} ( \tau )   )}{\partial    (    \mathbf{x}_{\perp_{1}}(s^{1}),   \mathbf{x}_{\parallel_{1}}(s^{1})  , \mathbf{v}_{\perp_{1}}(s^{1}),   \mathbf{v}_{\parallel_{1}}(s^{1})   )}\\
=&\frac{\partial (  \mathbf{x}_{\perp_{\ell}}(\tau), \mathbf{x}_{\parallel_{\ell}}( \tau ), \mathbf{v}_{\perp_{\ell}} ( \tau ), \mathbf{v}_{\parallel_{\ell}} ( \tau )   )}{\partial (  \mathbf{x}_{\perp_{\ell_{i}}}(\tau ), \mathbf{x}_{\parallel_{\ell_{i}}}( \tau ), \mathbf{v}_{\perp_{\ell_{i}}} ( \tau ), \mathbf{v}_{\parallel_{\ell_{i}}} ( \tau )   )}
\frac{  \partial (   \mathbf{x}_{\perp_{\ell_{i}}}(\tau ), \mathbf{x}_{\parallel_{\ell_{i}}}( \tau ), \mathbf{v}_{\perp_{\ell_{i}}} ( \tau ), \mathbf{v}_{\parallel_{\ell_{i}}} ( \tau )   )  }{\partial    (    \mathbf{x}_{\perp_{1}}(s^{1}),   \mathbf{x}_{\parallel_{1}}(s^{1})  , \mathbf{v}_{\perp_{1}}(s^{1}),   \mathbf{v}_{\parallel_{1}}(s^{1})   )}\\
\lesssim&  \tiny{ e^{C|t-s||v|} \left\{ \mathbf{Id}_{6,6} + O_{\xi}(|\mathbf{p}^{\ell} - \mathbf{p}^{\ell_{i}}|)
\left[\begin{array}{ccc|ccc} 
0 & 0 & 0 & & & \\
0 & 1 & 1 & & \mathbf{0}_{3,3} & \\
0 & 1 & 1 &  & & \\ \hline
0 & 0 & 0 & 0 & 0 & 0 \\
0 & |v| & |v| & 0 & 1 & 1 \\
0 & |v| & |v| & 0 & 1 & 1
\end{array}
\right]
 \right\}  
  \left[\begin{array}{cccccc}
 \frac{|v| +1}{|\mathbf{v}_{\perp}^{1}|}  &  \frac{|v|+1}{|\mathbf{v}_{\perp}^{1}|}  &  \frac{|v|+1}{|\mathbf{v}_{\perp}^{1}|} & \min \{ \frac{1}{|v|},1 \} & \min \{ \frac{1}{|v|},1 \}  &\min \{ \frac{1}{|v|},1 \} \\
  \frac{|v|^{2}+1}{|\mathbf{v}_{\perp}^{1}|^{2}} &  \frac{|v|^{2}+1}{|\mathbf{v}_{\perp}^{1}|^{2}} &  \frac{|v|^{2}+1}{|\mathbf{v}_{\perp}^{1}|^{2}} & 
  \frac{1}{|\mathbf{v}_{\perp}^{1}|} &\min \{ \frac{1}{|v|},1 \} &\min \{ \frac{1}{|v|},1 \} \\
   \frac{|v|^{2}+1}{|\mathbf{v}_{\perp}^{1}|^{2}} &  \frac{|v|^{2}+1}{|\mathbf{v}_{\perp}^{1}|^{2}} &  \frac{|v|^{2}+1}{|\mathbf{v}_{\perp}^{1}|^{2}} & 
    \frac{1}{|\mathbf{v}_{\perp}^{1}|} &\min \{ \frac{1}{|v|},1 \} &\min \{ \frac{1}{|v|},1 \} \\
    \frac{|v|^{3}+1}{|\mathbf{v}_{\perp}^{1}|^{2}} &  \frac{|v|^{3}+1}{|\mathbf{v}_{\perp}^{1}|^{2}} &  \frac{|v|^{3}+1}{|\mathbf{v}_{\perp}^{1}|^{2}}
    & \frac{|v|+1}{|\mathbf{v}_{\perp}^{1}|} & 1 & 1\\
     \frac{|v|^{3}+1}{|\mathbf{v}_{\perp}^{1}|^{2}}&  \frac{|v|^{3}+1}{|\mathbf{v}_{\perp}^{1}|^{2}} &  \frac{|v|^{3}+1}{|\mathbf{v}_{\perp}^{1}|^{2}}
   & \frac{|v|+1}{|\mathbf{v}_{\perp}^{1}|} & 1 & 1\\    
      \frac{|v|^{3}+1}{|\mathbf{v}_{\perp}^{1}|^{2}}&  \frac{|v|^{3}+1}{|\mathbf{v}_{\perp}^{1}|^{2}} & \frac{|v|^{3}+1}{|\mathbf{v}_{\perp}^{1}|^{2}}
        & \frac{|v|+1}{|\mathbf{v}_{\perp}^{1}|} & 1 & 1\\
\end{array}\right] }  \\
\lesssim &e^{C|t-s||v|}  
  \left[\begin{array}{cccccc}
 \frac{|v| +1}{|\mathbf{v}_{\perp}^{1}|}  &  \frac{|v|+1}{|\mathbf{v}_{\perp}^{1}|}  &  \frac{|v|+1}{|\mathbf{v}_{\perp}^{1}|} & \min \{ \frac{1}{|v|},1 \} & \min \{ \frac{1}{|v|},1 \}  &\min \{ \frac{1}{|v|},1 \} \\
  \frac{|v|^{2}+1}{|\mathbf{v}_{\perp}^{1}|^{2}} &  \frac{|v|^{2}+1}{|\mathbf{v}_{\perp}^{1}|^{2}} &  \frac{|v|^{2}+1}{|\mathbf{v}_{\perp}^{1}|^{2}} & 
  \frac{1}{|\mathbf{v}_{\perp}^{1}|} &\min \{ \frac{1}{|v|},1 \} &\min \{ \frac{1}{|v|},1 \} \\
   \frac{|v|^{2}+1}{|\mathbf{v}_{\perp}^{1}|^{2}} &  \frac{|v|^{2}+1}{|\mathbf{v}_{\perp}^{1}|^{2}} &  \frac{|v|^{2}+1}{|\mathbf{v}_{\perp}^{1}|^{2}} & 
    \frac{1}{|\mathbf{v}_{\perp}^{1}|} &\min \{ \frac{1}{|v|},1 \} &\min \{ \frac{1}{|v|},1 \} \\
    \frac{|v|^{3}+1}{|\mathbf{v}_{\perp}^{1}|^{2}} &  \frac{|v|^{3}+1}{|\mathbf{v}_{\perp}^{1}|^{2}} &  \frac{|v|^{3}+1}{|\mathbf{v}_{\perp}^{1}|^{2}}
    & \frac{|v|+1}{|\mathbf{v}_{\perp}^{1}|} & 1 & 1\\
     \frac{|v|^{3}+1}{|\mathbf{v}_{\perp}^{1}|^{2}}&  \frac{|v|^{3}+1}{|\mathbf{v}_{\perp}^{1}|^{2}} &  \frac{|v|^{3}+1}{|\mathbf{v}_{\perp}^{1}|^{2}}
   & \frac{|v|+1}{|\mathbf{v}_{\perp}^{1}|} & 1 & 1\\    
      \frac{|v|^{3}+1}{|\mathbf{v}_{\perp}^{1}|^{2}}&  \frac{|v|^{3}+1}{|\mathbf{v}_{\perp}^{1}|^{2}} & \frac{|v|^{3}+1}{|\mathbf{v}_{\perp}^{1}|^{2}}
        & \frac{|v|+1}{|\mathbf{v}_{\perp}^{1}|} & 1 & 1\\
\end{array}\right] ,
\end{split}
\end{equation}
where we have used $|\mathbf{p}^{\ell} - \mathbf{p}^{\ell_{i}}| \lesssim 1.$


We plug in (\ref{Dode}) with (\ref{ell_1}) respectively with 
\[ 
| \partial_{\mathbf{x}_{\perp_1}}\mathbf{x}_{\perp}(\tau) | \lesssim \frac{|v|+1}{ |\mathbf v_\perp^1 | }, \, | \partial_{\mathbf{x}_{\perp_1}}\mathbf{v}_{\perp}(\tau) | \lesssim \frac{|v|^3+1}{ |\mathbf v_\perp^1 |^2}, \, | \partial_{\mathbf{v}_{\perp_1}}\mathbf{x}_{\perp}(\tau) | \lesssim \min\{ \frac{1}{|v|}, 1 \}, \, | \partial_{\mathbf{v}_{\perp_1}}\mathbf{v}_{\perp}(\tau) | \lesssim 1,
\]
and
\[ |  \nabla_{\mathbf{v}_{\parallel}} H(\tau) | \lesssim 1, \, |  \nabla_{\mathbf{x}_{\parallel}, \mathbf{x}_\perp} H(\tau) | \lesssim |v|+1, \, |  \nabla_{\mathbf{v}_{\parallel}} D(\tau) | \lesssim |v|+1, \, |  \nabla_{\mathbf{x}_{\parallel}, \mathbf{x}_\perp} D(\tau) | \lesssim |v|^2+1,
 \]
and use the fact that $|s^i -s^{i+1}| \lesssim \frac{1}{|v|+1}$ by the way we define $s^i$. Collecting terms with tedious but straightforward bounds, we summarize the results
as: for $s \in [s^{i+1}, s^{i}]$
\begin{equation}\label{ODE_group}
\begin{split}
\left[
\begin{array}{c}
|\frac{\partial  \mathbf{x}_{\parallel}(s )}{ \partial {\mathbf{x}_{\perp}} }|\\
 |  \frac{ \partial   \mathbf{v}_{\parallel}(s )}{  \partial {\mathbf{x}_{\perp}}} | 
\end{array}
\right]
&\lesssim_{\xi}
\left[
\begin{array}{c}
|\frac{\partial  \mathbf{x}_{\parallel}(s^{i})}{ \partial {\mathbf{x}_{\perp}} }| \\
 |  \frac{ \partial   \mathbf{v}_{\parallel}(s^{i})}{  \partial {\mathbf{x}_{\perp}}} |
 + |v| |\frac{\partial  \mathbf{x}_{\parallel}(s^{i})}{ \partial {\mathbf{x}_{\perp}} }|
\end{array}
\right]
+ 
\left[\begin{array}{c}
\int^{s^{i}}_{s } | \frac{ \partial \mathbf{v}_{\parallel} }{\partial{\mathbf{x}_{\perp}}}  | \\
 \int^{s^{i}}_{s }  ( |v|^{2}+1) |   \frac{ \partial \mathbf{x}_{\parallel} }{   \partial {\mathbf{x}_{\perp}}}   |+ ( |v|+1)
  | \frac{ \partial  \mathbf{v}_{\parallel} }{  \partial {\mathbf{x}_{\perp}}}|  
\end{array}
\right]
+ \left[\begin{array}{c}
0\\
  e^{C|v||t-s |}   \frac{|v|^{2}+1}{|\mathbf{v}_{\perp}^{1}|}  
\end{array}
\right],
\\
\left[
\begin{array}{c}
|\frac{\partial  \mathbf{x}_{\parallel}(s )}{ \partial {\mathbf{v}_{\perp}} }|\\
 |  \frac{ \partial   \mathbf{v}_{\parallel}(s )}{  \partial {\mathbf{v}_{\perp}}} | 
\end{array}
\right]
&\lesssim_{\xi}
\left[
\begin{array}{c}
|\frac{\partial  \mathbf{x}_{\parallel}(s^{i})}{ \partial {\mathbf{v}_{\perp}} }| \\
 |  \frac{ \partial   \mathbf{v}_{\parallel}(s^{i})}{  \partial {\mathbf{v}_{\perp}}} |
 + |v| |\frac{\partial  \mathbf{x}_{\parallel}(s^{i})}{ \partial {\mathbf{v}_{\perp}} }|
\end{array}
\right]
+ 
\left[\begin{array}{c}
\int^{s^{i}}_{s } | \frac{ \partial \mathbf{v}_{\parallel} }{\partial{\mathbf{v}_{\perp}}}  | \\
 \int^{s^{i}}_{s }   (|v|^{2} +1) |   \frac{ \partial \mathbf{x}_{\parallel} }{   \partial {\mathbf{v}_{\perp}}}   |+  (|v|+1)
  | \frac{ \partial  \mathbf{v}_{\parallel} }{  \partial {\mathbf{v}_{\perp}}}|  
\end{array}
\right]
+ \left[\begin{array}{c}
0\\
  e^{C|v||t-s |} 
\end{array}
\right].
\end{split}
\end{equation}
From \eqref{ODE_group} we have
\Be \label{beforegronwall}
\langle v \rangle |\frac{\partial  \mathbf{x}_{\parallel}(s )}{ \partial {\mathbf{x}_{\perp}} }| + | \frac{ \partial \mathbf v_\parallel (s) }{\partial \mathbf x_\perp} (s) | 
\lesssim   |  \frac{ \partial   \mathbf{v}_{\parallel}(s^{i})}{  \partial {\mathbf{x}_{\perp}}} |
 +\langle v \rangle |\frac{\partial  \mathbf{x}_{\parallel}(s^{i})}{ \partial {\mathbf{x}_{\perp}} }| + e^{C|v||t-s |}   \frac{|v|^{2}+1}{|\mathbf{v}_{\perp}^{1}|} +  \int^{s^{i}}_{s }  \langle v \rangle \left(\langle v \rangle|   \frac{ \partial \mathbf{x}_{\parallel} }{   \partial {\mathbf{x}_{\perp}}}   |+ 
  | \frac{ \partial  \mathbf{v}_{\parallel} }{  \partial {\mathbf{x}_{\perp}}}|   \right),
\Ee
from Gronwall inequality we get
\Be \label{gronwallpxparall} \begin{split}
\langle v \rangle |\frac{\partial  \mathbf{x}_{\parallel}(s )}{ \partial {\mathbf{x}_{\perp}} }| + | \frac{ \partial \mathbf v_\parallel (s) }{\partial \mathbf x_\perp} (s) | 
& \le C'_\xi e^{\int_s^{s^i} \langle v \rangle  d\tau}   \left(  |  \frac{ \partial   \mathbf{v}_{\parallel}(s^{i})}{  \partial {\mathbf{x}_{\perp}}} | + \langle v \rangle |\frac{\partial  \mathbf{x}_{\parallel}(s^{i})}{ \partial {\mathbf{x}_{\perp}} }| + e^{C|v||t-s |}  \frac{|v|^{2}+1}{|\mathbf{v}_{\perp}^{1}|}  \right)
\\ & \le  C(\xi)   \left(  |  \frac{ \partial   \mathbf{v}_{\parallel}(s^{i})}{  \partial {\mathbf{x}_{\perp}}} | + \langle v \rangle |\frac{\partial  \mathbf{x}_{\parallel}(s^{i})}{ \partial {\mathbf{x}_{\perp}} }| + e^{C|v||t-s |}  \frac{|v|^{2}+1}{|\mathbf{v}_{\perp}^{1}|}  \right)
\end{split}
\Ee 
Iterating \eqref{gronwallpxparall} we get
\Be \label{vperpparafinal} \begin{split}
\langle v \rangle |\frac{\partial  \mathbf{x}_{\parallel}(s )}{ \partial {\mathbf{x}_{\perp}} }| + | \frac{ \partial \mathbf v_\parallel (s) }{\partial \mathbf x_\perp} (s) | 
\le & C^2 \left( |  \frac{ \partial   \mathbf{v}_{\parallel}(s^{i-1})}{  \partial {\mathbf{x}_{\perp}}} | + \langle v \rangle |\frac{\partial  \mathbf{x}_{\parallel}(s^{i-1})}{ \partial {\mathbf{x}_{\perp}} }|  \right)  + (C^2 +C) e^{C|v||t-s |}  \frac{|v|^{2}+1}{|\mathbf{v}_{\perp}^{1}|} 
\\   \vdots &
\\ \le & C^{[\frac{|t-s||v|}{L_{\xi}}]}\left( |  \frac{ \partial   \mathbf{v}_{\parallel}(s^{1})}{  \partial {\mathbf{x}_{\perp}}} | + \langle v \rangle |\frac{\partial  \mathbf{x}_{\parallel}(s^{1})}{ \partial {\mathbf{x}_{\perp}} }|  \right) + (C^{[\frac{|t-s||v|}{L_{\xi}}]} + \cdots + C)e^{C|v||t-s |}  \frac{|v|^{2}+1}{|\mathbf{v}_{\perp}^{1}|} 
\\ \le & C^{[\frac{|t-s||v|}{L_{\xi}}]}\left( |  \frac{ \partial   \mathbf{v}_{\parallel}(s^{1})}{  \partial {\mathbf{x}_{\perp}}} | + \langle v \rangle |\frac{\partial  \mathbf{x}_{\parallel}(s^{1})}{ \partial {\mathbf{x}_{\perp}} }|  \right)  + C^{2[\frac{|t-s||v|}{L_{\xi}}] } e^{C|v||t-s |}  \frac{|v|^{2}+1}{|\mathbf{v}_{\perp}^{1}|} 
\\ \le & C^{C|t-s||v| } \left( |  \frac{ \partial   \mathbf{v}_{\parallel}(s^{1})}{  \partial {\mathbf{x}_{\perp}}} | + \langle v \rangle |\frac{\partial  \mathbf{x}_{\parallel}(s^{1})}{ \partial {\mathbf{x}_{\perp}} }| + \frac{|v|^{2}+1}{|\mathbf{v}_{\perp}^{1}|} \right)
\\ \le & C^{C |t-s| |v| }\frac{\langle v \rangle^{2}}{|\mathbf{v}_{\perp}^{1}|}.
\end{split} \Ee
And by the same argument as \eqref{beforegronwall} - \eqref{vperpparafinal}, we get from \eqref{ODE_group} that
\Be \label{vperpparafinal2}
\langle v \rangle |\frac{\partial  \mathbf{x}_{\parallel}(s )}{ \partial {\mathbf{v}_{\perp}} }| + | \frac{ \partial \mathbf v_\parallel (s) }{\partial \mathbf v_\perp} (s) | \le  C^{C |t-s| |v| }.
\Ee
Therefore, from \eqref{vperpparafinal} and \eqref{vperpparafinal2} we get
\Be \label{xparallderfinal}
\left[
\begin{array}{c}
|\frac{\partial  \mathbf{x}_{\parallel}(s )}{ \partial {\mathbf{x}_{\perp_1}} }|\\
 |  \frac{ \partial   \mathbf{x}_{\parallel}(s )}{  \partial {\mathbf{v}_{\perp_1}}} | 
\end{array}
\right]
\lesssim C^{C|t-s||v|}
\left[ \begin{array}{c}
\frac{\langle v \rangle}{|\mathbf v_\perp^1|} \\
\frac{1}{\langle v \rangle}
\end{array}
\right]
\Ee

With the estimate \eqref{xparallderfinal}, we refine (\ref{middle}) and \eqref{middle2} to give a final estimate for the case that some $\ell$ is \textit{Type I} or \textit{Type II}
:
  \begin{equation}\label{middle_refined}
 \begin{split}
 & \frac{\partial (s^{\ell_{*}}, \mathbf{x}_{\perp}(s^{\ell_{*}}), \mathbf{x}_{\parallel}(s^{\ell_{*}}), \mathbf{v}_{\perp}(s^{\ell_{*}}), \mathbf{v}_{\parallel}(s^{\ell_{*}})  )}{\partial (s^{1}, \mathbf{x}_{\perp}(s^{1}), \mathbf{x}_{\parallel}(s^{1}), \mathbf{v}_{\perp}(s^{1}), \mathbf{v}_{\parallel}(s^{1}) )}\\
 &
 \lesssim C^{C|v|(t-s)} \left[\begin{array}{c|cc|ccc}
 0 & 0 & \mathbf{0}_{1,2} & 0 & \mathbf{0}_{1,2} &\\ \hline
 \frac{|v|^2+1}{ |\mathbf{v}_{\perp}^{1}| }&   \frac{|v|+1}{|\mathbf{v}_{\perp}^{1}|} & \min\{ | \mathbf v_\perp^1|, \frac{ | \mathbf v_\perp^1|}{\langle v \rangle} \}  & \frac{1}{\langle v \rangle}& \frac{1}{\langle v \rangle}&\\
\frac{ |v|^3 +|v|}{ |\mathbf{v}_{\perp}^{1}|^2 } &    \frac{|v|+1}{|\mathbf{v}_{\perp}^{1}|}   &  1  &\frac{1}{\langle v \rangle}& \frac{1}{\langle v \rangle} \\ \hline
\frac{ |v|^4 +1}{ |\mathbf{v}_{\perp}^{1}|^2 } & \frac{|v|^{3}+1}{|\mathbf{v}_{\perp}^{1}|^{2}}   &   |v|+1  &  \frac{|v|}{|\mathbf{v}_{\perp}^{1}|} & O_{\xi}(1) \\
\frac{ |v|^4 +1}{ |\mathbf{v}_{\perp}^{1}|^2 }&   \frac{|v|^{2}+1}{|\mathbf{v}_{\perp}^{1}|}&  |v|+1 & O_{\xi}(1)&O_{\xi}(1) &
   \end{array}\right],
 \end{split}
 \end{equation}
and from (\ref{ss1}) and (\ref{s1t})
\begin{equation}\label{final_Dxv_grazing}
\begin{split}
 &\frac{\partial ( X_{\mathbf{cl}}(s;t,x,v),V_{\mathbf{cl}}(s;t,x,v))}{\partial (t,x,v)} \\
 &  \lesssim   C^{C|v|(t-s)}  \ \frac{\partial ( X_{\mathbf{cl}}(s), V_{\mathbf{cl}}(s))}{\partial (s^{\ell_{*}}, \mathbf{X}_{\mathbf{cl}}(s^{\ell_{*}}), \mathbf{V}_{\mathbf{cl}}(s^{\ell_{*}}))}
\left[\begin{array}{c|c|c} 
0 & \mathbf{0}_{1,3} & \mathbf{0}_{1,3} \\ \hline
 \frac{ |v|^3 +|v|}{ |\mathbf{v}_{\perp}^{1}|^2 } &   \frac{|v|+1}{|\mathbf{v}_{\perp}^{1}|} & \frac{1}{\langle v \rangle} \\ \hline
\frac{ |v|^4 +1}{ |\mathbf{v}_{\perp}^{1}|^2 } &   \frac{|v|^{3}+1}{|\mathbf{v}_{\perp}^{1}|^{2}} &  \frac{|v|+1}{|\mathbf{v}_{\perp}^{1}|} \end{array}\right]  \frac{   {\partial (s^{1}, \mathbf{x}_{\perp}(s^{1}), \mathbf{x}_{\parallel}(s^{1}), \mathbf{v}_{\perp}(s^{1}), \mathbf{v}_{\parallel}(s^{1}) )}       }{\partial (t,x,v)}\\
&  \lesssim C^{C|v|(t-s)} \left[\begin{array}{ccc} 
 |v| + |s^{\ell_{*}}-s| & 1 & |s^{\ell_{*}}-s| \\ 1 & |v| & 1 \end{array}\right]
\left[\begin{array}{c|c|c} 
0 & \mathbf{0}_{1,3} & \mathbf{0}_{1,3} \\ \hline
 \frac{ |v|^3 +|v|}{ |\mathbf{v}_{\perp}^{1}|^2 } &   \frac{|v|+1}{|\mathbf{v}_{\perp}^{1}|} & \frac{1}{\langle v \rangle} \\ \hline
\frac{ |v|^4 +1}{ |\mathbf{v}_{\perp}^{1}|^2 } &   \frac{|v|^{3}+1}{|\mathbf{v}_{\perp}^{1}|^{2}} &  \frac{|v|+1}{|\mathbf{v}_{\perp}^{1}|} \end{array}\right] 
 \left[\begin{array}{ccc} 1 & \mathbf{0}_{1,3} & \mathbf{0}_{1,3} \\  |t-s^{1}|^2 & 1 & |t-s^{1}| \\  |t-s^{1}| & |v| &  1  \end{array}\right]\\
 & \lesssim  C^{C|v|(t-s)}
  \left[\begin{array}{c|c|c} 
 \frac{ |v|^3 +|v|}{ |\mathbf{v}_{\perp}^{1}|^2 } &   \frac{|v|+1}{|\mathbf{v}_{\perp}^{1}|} & \frac{1}{\langle v \rangle} \\ \hline
  \frac{ |v|^4 +1}{ |\mathbf{v}_{\perp}^{1}|^2 } &  \frac{|v|^{3}+1}{|\mathbf{v}_{\perp}^{1}|^{2}} &   \frac{|v|+1}{|\mathbf{v}_{\perp}^{1}|}\end{array} \right].
\end{split}
\end{equation}
Finally from (\ref{final_Dxv_nongrazing}) and (\ref{final_Dxv_grazing}) we conclude, for all $\tau \in [s,t]$
\begin{equation}\notag
\begin{split}
\frac{\partial(  X_{\mathbf{cl}}(s;t,x,v), V_{\mathbf{cl}}(s;t,x,v))}{\partial (t,x,v)}
\leq
Ce^{C|v|(t-s)}    \left[\begin{array}{c|c|c}
 \frac{ |v|^3 +|v|}{ |\mathbf{v}_{\perp}^{1}|^2 } &   \frac{|v|}{|\mathbf{v}_{\perp}^{1}|} & \frac{1}{\langle v \rangle} \\ \hline
  \frac{ |v|^4 +1}{ |\mathbf{v}_{\perp}^{1}|^2 }  &   \frac{|v|^{3}}{|\mathbf{v}_{\perp}^{1}|^{2}} &   \frac{|v|}{|\mathbf{v}_{\perp}^{1}|}\end{array} \right]_{6\times 7}.
\end{split}
\end{equation}
From the Velocity lemma (Lemma \ref{velocitylemma}),
\begin{equation}\notag
\begin{split}
|\mathbf{v}_{\perp}^{1}| &= |v^{1}\cdot [-n(x^{1})] |= |V_{\mathbf{cl}}(t^{1};t,x,v) \cdot n(X_{\mathbf{cl}}(t^{1};t,x,v) ) |\\
&= \sqrt{\alpha(X_{\mathbf{cl}}(t^{1} ),V_{\mathbf{cl}}(t^{1} ))} \geq e^{\mathcal{C}|v| |t-t^{1}|}\alpha(t,x,v) \gtrsim \alpha(t,x,v),
\end{split}
\end{equation}
and this completes the proof.

\end{proof}

\section{Weighted $C^1$ estimate}
In this section, we put together all the results we get in previous sections and prove our main theorem.
\begin{proof}[\textbf{Proof of Theorem \ref{C1Main}}]
We use the approximation sequence (\ref{VPBsq1}) with (\ref{seqbd}). Due to \eqref{uniformlinfinitybddnoselfgeneratedpotential} we have 
\[
\sup_{m} \sup_{0 \leq  t \leq T}||  e^{\theta|v|^{2}}f^{m}(t) ||_{\infty}\lesssim_{\xi, T}  P(   || e^{\theta^{\prime}|v|^{2}}  f_{0}||_{\infty}).
\]

Now we claim that the distributional derivatives coincide with the piecewise derivatives. This is due to Proposition \ref{inflowexistence1} with an invariant property of $\Gamma(f,f)= \Gamma_{\mathrm{gain}}(f,f) - \nu(\sqrt{\mu}f)f:$ \textit{Assume $f^{m}(v)=f^{m-1}(\mathcal{O}v)$ holds for some orthonormal matrix. Then }
\begin{equation}\label{invariant}
\Gamma(f^{m}, f^{m})(v) = \Gamma(f^{m-1}, f^{m-1})(\mathcal{O}v).
\end{equation}
Denote 
\Be \label{nuell}
\nu^{m - \ell}(s): = \nu^{m - \ell}(s,X_{\mathbf{cl}}(s), V_{\mathbf{cl}}(s)) : = \nu(\sqrt \mu f^{m - \ell } )(s,X_{\mathbf{cl}}(s), V_{\mathbf{cl}}(s)) - \frac{V_{\mathbf{cl}}(s))}{2} \cdot E(s,X_{\mathbf{cl}}(s), V_{\mathbf{cl}}(s)).
\Ee
Using (\ref{invariant}), we apply Proposition 1 to have 
\begin{equation}\notag
\begin{split}
&f^{m}(t,x,v) \\
&= e^{- \int_{0}^{t} \sum_{\ell=0}^{\ell_{*} (0)} \mathbf{1}_{[t^{\ell+1} , t^{\ell} ) } (s)\nu^{m - \ell}(s) \mathrm{d}s  } f_{0}(X_{\mathbf{cl}}(0), V_{\mathbf{cl}}(0))\\
& \ \ +  \int_{0}^{t}\sum_{\ell=0 }^{\ell_{*}(0)}  \mathbf{1}_{[t^{\ell+1},t^{\ell})}(s) e^{-\int^{t}_{s}
\sum_{j=0}^{\ell_{*}(s)} \mathbf{1}_{[{t^{j+1}},{t^{j}})}(\tau)\nu^{m -j}(\tau )%
\mathrm{d}\tau }     \Gamma_{\mathrm{gain}} (f^{m-\ell}, f^{m-\ell}) (s,X_{\mathbf{cl}}(s),V_{\mathbf{cl}}(s)) \mathrm{d}%
s.
\end{split}
\end{equation}

 Now we consider the spatial and velocity derivatives. In the sense of distributions, we have for $\partial_{\mathbf{e}}\in  \{\nabla_{x}, \nabla_{v}\}$  
\begin{equation}\label{deriv_spec}
\begin{split}
  \partial_{\mathbf{e}} f^{m}(t,x,v) = \text{I} _{\mathbf{e}}+ \text{II}_{\mathbf{e}} + \text{III}_{\mathbf{e}}. \end{split}
\end{equation}
Here
\[
\text{I}_{\mathbf{e}} \ = \ e^{- \int^{t}_{0}   \sum_{\ell=0}^{\ell_{*}(0)} \mathbf{1}_{[{t^{\ell+1}} ,{t^{\ell}})}(s) \nu^{m - \ell}(s) \mathrm{d}s} \
 \partial_{\mathbf{e}}[ X_{\mathbf{cl}}(0),  V_{\mathbf{cl}}(0) ] \cdot \nabla_{x,v}f_{0}(X_{\mathbf{cl}}(0),V_{\mathbf{cl}}(0)),
\]
and
   \begin{equation}\notag
   \begin{split}
\text{II}_{\mathbf{e}}&= \int_{0}^{t}\sum_{\ell=0 }^{\ell_{*}(0)}  \mathbf{1}_{[t^{\ell+1},t^{\ell})}(s) e^{-\int^{t}_{s}
\sum_{j=0}^{\ell_{*}(s)} \mathbf{1}_{[{t^{j+1}},{t^{j}})}(\tau)\nu^{m -j}(\tau )%
\mathrm{d}\tau }     \partial_{\mathbf{e}} \big[    \Gamma_{\mathrm{gain}} (f^{m-\ell}, f^{m-\ell}) (s,X_{\mathbf{cl}}(s),V_{\mathbf{cl}}(s))\big] \mathrm{d}%
s\\
& -\int_{0}^{t}\sum_{\ell =0}^{\ell_{*}(0)}  \mathbf{1}_{[t^{\ell+1},t^{\ell})}(s) e^{-\int^{t}_{s}
\sum_{j} \mathbf{1}_{[{t^{j+1}},{t^{j}})}(\tau)\nu^{m -j}(\tau )%
\mathrm{d}\tau }  \int^{t}_{s}
\sum_{j=0}^{\ell_{*}(s)} \mathbf{1}_{[{t^{j+1}},{t^{j}})}(\tau) \partial_{\mathbf{e}} [\nu^{m -j}(\tau, X_{\mathbf{cl}}(\tau), V_{\mathbf{cl}}(\tau) )]%
\mathrm{d}\tau   \\  
& \ \ \ \ \  \times  \Gamma_{\text{gain}}(f^{m-\ell}, f^{m-\ell}) (s,X_{\mathbf{cl}}(s),V_{\mathbf{cl}}(s)) \mathrm{d}%
s  \\
&   - e^{- \int^{t}_{0}   \sum_{\ell=0 }^{\ell_{*}(0)} \mathbf{1}_{[{t^{\ell+1}} ,{t^{\ell}})}(s) \nu^{m - \ell}(s) \mathrm{d}s} \
   f_{0}(X_{\mathbf{cl}}(0),V_{\mathbf{cl}}(0)){  \int^{t}_{0}   \sum_{\ell=0 }^{\ell_{*}(0)} \mathbf{1}_{[{t^{\ell+1}} ,{t^{\ell}})}(s)  \partial_{\mathbf{e}} \big[  \nu^{m -\ell}(s, X_{\mathbf{cl}}(s), V_{\mathbf{cl}}(s))  \big] \mathrm{d}s},
\end{split}
\end{equation}
and
\begin{equation}\notag
\begin{split}
\mathbf{III}_{\mathbf{e}}=& \sum_{\ell=0}^{\ell_{*}(0)} \big[ -\partial_{\mathbf{e}} t^{\ell} 
\lim_{s \uparrow t^{\ell} } \nu^{m - \ell}(s) +  \partial_{\mathbf{e}} t^{\ell+1} \lim_{s\downarrow t^{\ell+1}} \nu^{m - \ell}(s)  \big]\\
& \ \ \ \ \ \  \times e^{-\int_{0}^{t}  \sum_{\ell=0}^{\ell_{*} (0)} \mathbf{1}_{[t^{\ell+1}, t^{\ell} )}(s) \nu^{m - \ell}(s)   }  \\
+& \sum_{\ell=0}^{\ell_{*}(0)}  \Big[\lim_{s\uparrow t^{\ell}} e^{- \int^{t}_{s} \sum_{j} \mathbf{1}_{[t^{j+1}, t^{j} ) } (\tau) \nu^{m -j}(\tau) \mathrm{d}\tau} \Gamma_{\mathrm{gain}}(f^{m-\ell}, f^{m-\ell}) (s,X_{\mathbf{cl}}(s), V_{\mathbf{cl}}(s))\\
& \ \ \ \ \ \ \ \ - \lim_{s\downarrow t^{\ell+1}} e^{- \int^{t}_{s} \sum_{j} \mathbf{1}_{[t^{j+1}, t^{j} ) } (\tau) \nu^{m -j}(\tau) \mathrm{d}\tau} \Gamma_{\mathrm{gain}}(f^{m-\ell}, f^{m-\ell}) (s,X_{\mathbf{cl}}(s), V_{\mathbf{cl}}(s))\\
+& \int_{0}^{t} \sum_{\ell} \mathbf{1}_{[t^{\ell+1}, t^{\ell} )}(s) \sum_{j=0}^{\ell_{*}(s)}\big[ -\lim_{\tau \downarrow t^{j}} \nu^{m -j}(\tau, X_{\mathbf{cl}}(\tau), V_{\mathbf{cl}}(\tau))  + \lim_{\tau \uparrow t^{j+1}} \nu^{m -j}(\tau, X_{\mathbf{cl}}(\tau), V_{\mathbf{cl}}(\tau))\big] \\
&  \ \ \ \ \  \ \ \ \times e^{-\int^{t}_{s} \sum_{j=0}^{\ell_{*} (s)}  \mathbf{1}_{[t^{j+1} , t^{j} ) }(\tau) \nu^{m -j} (\tau) \mathrm{d}\tau }\Gamma_{\mathrm{gain}}(f^{m-\ell} , f^{m-\ell})(s, X_{\mathbf{cl}}(s), V_{\mathbf{cl}}(s)).
\end{split}
\end{equation}
For $\mathbf{III}_{\mathbf{e}}$ we rearrange the summation and use (\ref{v_perp}), \eqref{nuell} and apply (\ref{invariant}) to get 
\begin{equation} \label{IIIe}
\begin{split}
\mathbf{III}_{\mathbf{e}} = & \sum_{\ell=0}^{\ell_{*}(0)} 
\Big[  - \nu^{m - \ell}(t^{\ell}, x^{\ell}, v^{\ell} ) + \nu^{m - \ell+1}( t^{\ell}, x^{\ell}, R_{x^{\ell}}v^{\ell})  \Big] \partial_{\mathbf{e}} t^{\ell}e^{-\int_{0}^{t}  \sum_{\ell=0}^{\ell_{*} (0)} \mathbf{1}_{[t^{\ell+1}, t^{\ell} )}(s) \nu^{m - \ell}(s)   } \\
+ &\sum_{\ell=0}^{\ell_{*}(0)} e^{- \int^{t}_{t^{\ell}} \sum_{j} \mathbf{1}_{[t^{j+1},t^{j}) }(\tau) \nu(\sqrt{\mu} f^{m-j}) (\tau) \mathrm{d}\tau  } 
\Big[ \Gamma_{\mathrm{gain}}( f^{m-\ell}, f^{m-\ell} )(t^{\ell}, x^{\ell}, v^{\ell}) -\Gamma_{\mathrm{gain}}( f^{m-\ell+1}, f^{m-\ell+1} )(t^{\ell}, x^{\ell}, R_{x^{\ell}}v^{\ell}) \Big]\\
+& \int_{0}^{t} \sum_{\ell} \mathbf{1}_{[t^{\ell+1}, t^{\ell} )}(s) ^{-\int^{t}_{s} \sum_{j=0}^{\ell_{*} (s)}  \mathbf{1}_{[t^{j+1} , t^{j} ) }(\tau) \nu^{m -j} (\tau) \mathrm{d}\tau }\Gamma_{\mathrm{gain}}(f^{m-\ell} , f^{m-\ell})(s, X_{\mathbf{cl}}(s), V_{\mathbf{cl}}(s))\\
&\times\sum_{\ell=0}^{\ell_{*}(s)} 
\Big[  - \nu^{m - \ell}(t^{\ell}, x^{\ell}, v^{\ell} ) + \nu^{m - \ell+1}( t^{\ell}, x^{\ell}, R_{x^{\ell}}v^{\ell})  \Big]\\
= & \sum_{\ell=0}^{\ell_{*}(0)} \left[ \frac{ R_{x^{\ell}}v^{\ell} - v^\ell  }{2}  \cdot E(t^\ell, x^\ell )\right]\partial_{\mathbf{e}} t^{\ell}e^{-\int_{0}^{t}  \sum_{\ell=0}^{\ell_{*} (0)} \mathbf{1}_{[t^{\ell+1}, t^{\ell} )}(s) \nu^{m - \ell}(s)   }
\\ +& \int_{0}^{t} \sum_{\ell} \mathbf{1}_{[t^{\ell+1}, t^{\ell} )}(s) ^{-\int^{t}_{s} \sum_{j=0}^{\ell_{*} (s)}  \mathbf{1}_{[t^{j+1} , t^{j} ) }(\tau) \nu^{m -j} (\tau) \mathrm{d}\tau }\Gamma_{\mathrm{gain}}(f^{m-\ell} , f^{m-\ell})(s, X_{\mathbf{cl}}(s), V_{\mathbf{cl}}(s))\\
&\times\sum_{\ell=0}^{\ell_{*}(s)} 
\Big[  \frac{ R_{x^{\ell}}v^{\ell} - v^\ell  }{2}  \cdot E(t^\ell, x^\ell )  \Big]
.
\end{split}
\end{equation}

\noindent\textit{Proof of (\ref{invariant}).} The proof is due to the change of variables 
\[
\tilde{u} = \mathcal{O} u, \ \ \tilde{\omega} = \mathcal{O} \omega, \ \ \ 
\mathrm{d}\tilde{u} = \mathrm{d}u , \ \ \mathrm{d}\tilde{\omega} = \mathrm{d}\omega.
\]
Note
\begin{equation}\notag
\begin{split}
&\Gamma(f^{m}, f^{m})(v)\\
=&\int_{\mathbb{R}^{3}} \int_{\mathbb{S}^{2}} |v-u|^{\kappa}q_{0}(\frac{v-u}{|v-u|}\cdot \omega) \sqrt{\mu(u)} \Big\{ f^{m}(u-[(u-v)\cdot \omega]\omega) f^{m}(v+ [(u-v)\cdot \omega]\omega) -f^{m} (u)f^{m} (v) \Big\}
\mathrm{d}\omega\mathrm{d}u\\
=&\int_{\mathbb{R}^{3}} \int_{\mathbb{S}^{2}}
 |\mathcal{O}v-\mathcal{O}u|^{\kappa}q_{0}(\frac{\mathcal{O}v-\mathcal{O}u}{|\mathcal{O}v-\mathcal{O}u|}\cdot \mathcal{O}\omega) \sqrt{\mu(\mathcal{O}u)} \\
&  \ \ \ \times \Big\{ f^{m-1}
(\mathcal{O} u -[( \mathcal{O}u- \mathcal{O}v)\cdot \mathcal{O} \omega] \mathcal{O}{\omega}) f^{m-1}(\mathcal{O}v+ [(\mathcal{O}u-\mathcal{O}v)\cdot \mathcal{O}\omega]\mathcal{O}\omega) -f^{m-1} ( \mathcal{O}u)f^{m-1} (\mathcal{O}v) \Big\}
\mathrm{d}\omega\mathrm{d}u\\
=& \int_{\mathbb{R}^{3}} \int_{\mathbb{S}^{2}} | \mathcal{O}v- \tilde{u}|^{\kappa}q_{0}(\frac{ \mathcal{O}v-  \tilde{u}}{|\mathcal{O}v-  \tilde{u}|}\cdot  \tilde{\omega}) \sqrt{\mu( \tilde{u})}\\
& \ \ \ \times \Big\{
f^{m-1} (  \tilde{u} - [ (   \tilde{u} - \mathcal{O}  {v}  ) \cdot  \tilde{\omega}  ]  \tilde{\omega} )
f^{m-1} ( \mathcal{O}v + [(\tilde{u} - \mathcal{O}v)]\cdot \tilde{\omega}  ) \tilde{\omega}
-f^{m-1} (\tilde{u}) f^{m-1}(\mathcal{O}v)
\Big\} \mathrm{d}\tilde{\omega} \mathrm{d}\tilde{u}\\
= & \ \Gamma(f^{m-1}, f^{m-1})(  \mathcal{O} v ).
\end{split}
\end{equation}

This proves (\ref{invariant}). Especially we can apply (\ref{invariant}) for the specular reflection BC (\ref{specularBC}) with $\mathcal{O}v= R_{x}v$ 

Using Lemma \ref{lemma_operator} and \eqref{uniformlinfinitybddnoselfgeneratedpotential}, we obtain for $\partial_{\mathbf{e}}\in  \{\nabla_{x}, \nabla_{v}\}$
\begin{equation}\notag
\begin{split}
\text{II}_{\mathbf{e}}& \lesssim \   P(|| e^{\theta|v|^{2}} f_{0} ||_{\infty})\int_{0}^{t}\sum_{\ell=0 }^{\ell_{*}(0)}  \mathbf{1}_{[t^{\ell+1},t^{\ell})}(s)    |\partial_{\mathbf{e}} X_{\mathbf{cl}}(s)| 
\int_{\mathbb{R}^{3}}  \frac{e^{-C_{\theta}  |V_{\mathbf{cl}}(s)-u|^{2}}}{|V_{\mathbf{cl}}(s)-u|^{2-\kappa}} | \nabla_{x} f^{m-\ell} (s,X_{\mathbf{cl}}(s),u  )| \mathrm{d}u \mathrm{d}s
\\
& \ \ + P(|| e^{\theta|v|^{2}} f_{0} ||_{\infty})\int_{0}^{t}\sum_{\ell=0 }^{\ell_{*}(0)}  \mathbf{1}_{[t^{\ell+1},t^{\ell})}(s)    |\partial_{\mathbf{e}} V_{\mathbf{cl}}(s)| 
\int_{\mathbb{R}^{3}}  \frac{e^{-C_{\theta}  |V_{\mathbf{cl}}(s)-u|^{2}}}{|V_{\mathbf{cl}}(s)-u|^{2-\kappa}} | \nabla_{v} f^{m-\ell} (s,X_{\mathbf{cl}}(s),u  )| \mathrm{d}u \mathrm{d}s
\\
& \ \ + tP(|| e^{\theta|v|^{2}} f_{0} ||_{\infty})  \langle v\rangle^{\kappa} e^{-\theta |v|^{2}} \left(\|  E \|_{L^\infty_{t,x}} + \| \nabla_x E \|_{L^\infty_{t,x}} \right) \left(  \sup_{0 \leq s\leq t} |\partial_{\mathbf{e}}V(s;t,x,v)| + \sup_{0 \leq s\leq t} |\partial_{\mathbf{e}}X(s;t,x,v)| \right).
\end{split}
\end{equation}

We shall estimate the following:
\[
e^{-\varpi \langle v\rangle t} \frac{ [\alpha(t,x,v)]^{\beta} }{\langle v\rangle^{b+1}} |\partial_{x} f(t,x,v)|, \ \ \ 
e^{-\varpi \langle v\rangle t} \frac{ [\alpha(t,x,v)]^{\beta-1} }{\langle v\rangle^{b-1}} |\partial_{v} f(t,x,v)|.
\]
 
 From (\ref{lemma_Dxv}), the Velocity lemma (Lemma \ref{velocitylemma}), Lemma \ref{localexslemma}, and $F^{m}\geq 0$ for all $m,$ with $\varpi \gg 1$
\begin{equation}\notag
\begin{split}
&e^{-\varpi \langle v\rangle t}  \frac{1}{\langle v\rangle^{b+1}}
 [\alpha(t,x,v)]^{\beta} \ \text{I}_{\mathbf{x}}\\
& \lesssim_{\xi,t}   e^{-\varpi \langle v \rangle t } \frac{1}{\langle v\rangle^{b+1 }}
 [\alpha(X_{\mathbf{cl}}(0),V_{\mathbf{cl}}(0)) ]^{\beta}e^{ 2\mathcal{C}|v|t} \\
 &  \ \ \ \times \Big\{   \frac{\langle v \rangle}{{\alpha(t,x,v)}} |\partial_{x} f_{0}(X_{\mathbf{cl}}(0), V_{\mathbf{cl}}(0))| +  \frac{\langle v\rangle^{3}}{\alpha^2(t,x,v)}  |\partial_{v} f_{0}(X_{\mathbf{cl}}(0), V_{\mathbf{cl}}(0))|  \Big\} \\
&\lesssim_{\xi,t}   
\big|\big| \  \frac{\langle v\rangle }{\langle v\rangle^{b+1}}
 {\alpha}^{\beta-1} \partial_{x} f_{0} \  \big|\big|_{\infty} + \big|\big| \   \frac{   \langle v\rangle^{3}}{\langle v\rangle^{b+1}}
 \alpha^{\beta-2} \partial_{v} f_{0}  \ \big|\big|_{\infty}\\
& \lesssim_{\xi,t} 
 \big|\big| \  \frac{ {\alpha}^{\beta-1}   }{\langle v\rangle^{b }}
\partial_{x} f_{0} \  \big|\big|_{\infty} + \big|\big| \   \frac{     \alpha^{\beta-2} }{\langle v\rangle^{b-2 }}
 \partial_{v} f_{0}  \ \big|\big|_{\infty}
 ,
\end{split}
\end{equation}
and
\begin{equation}\notag
\begin{split}
& e^{-\varpi \langle v\rangle t}  \frac{1}{\langle v\rangle^{b-1}}
[\alpha(t,x,v)]^{\beta-1} \ \text{I}_{\mathbf{v}}\\
& \lesssim_{\xi,t}  \ e^{-\varpi \langle v \rangle t } \frac{ 1}{\langle v\rangle^{b-1}}
 [\alpha(X_{\mathbf{cl}}(0),V_{\mathbf{cl}}(0))]^{\beta -1 } e^{2 \mathcal{C}|v|t}  \\
 &  \ \ \ \times  \Big\{ \frac{1}{\langle v \rangle} |\partial_{x} f_{0}(X_{\mathbf{cl}}(0), V_{\mathbf{cl}}(0))| +   \frac{\langle v\rangle }{{\alpha(t,x,v)}}  |\partial_{v} f_{0}(X_{\mathbf{cl}}(0), V_{\mathbf{cl}}(0))|  \Big\} \\
 & \lesssim_{\xi,t}\
\big|\big| \    \frac{\alpha^{\beta-1 } }{\langle v\rangle^{b}}
 \partial_{x} f_{0} \  \big|\big|_{\infty} + \big|\big| \  \frac{ 1}{\langle v\rangle^{b-2}}
{\alpha}^{ \beta-2}  \partial_{v} f_{0}  \ \big|\big|_{\infty},
\end{split}
\end{equation}
where we have used $\alpha(t,x,v) \lesssim_{\xi} |v|^{2}$ and the choice of $\varpi \gg 1.$

From Lemma \ref{localexslemma} and Lemma \ref{lemma_operator},  
\begin{equation}\notag
\begin{split}
 \text{II}_{\mathbf{e}} & \lesssim_{t}  P(|| e^{\theta|v|^{2}} f_{0} ||_{\infty} ) \int_{0}^{t} \mathrm{d}s \sum_{\ell=0}^{\ell_{*}(0)} \mathbf{1}_{[t^{\ell+1}, t^{\ell})}(s)    \int_{\mathbb{R}^{3}} \mathrm{d}u 
 \frac{e^{-C_{\theta} |u-V_{\mathbf{cl}}(s)|^{2} }}{|u-V_{\mathbf{cl}}(s)|^{2-\kappa}}
   \\
 &  \ \   \times
  \Big\{   |\partial_{\mathbf{e}} X_{\mathbf{cl}}(s)| |\partial_{x} f^{m-j}(s, X_{\mathbf{cl}}(s), u)| 
  + |\partial_{\mathbf{e}} V_{\mathbf{cl}}(s) |   \big(  1+  |\partial_{v} f^{m-j} (s, X_{\mathbf{cl}}(s), u)| \big)
  \Big\} .
\end{split}
\end{equation}
Now we use (\ref{lemma_Dxv}) to have 
\begin{equation}\notag
\begin{split}
& e^{-\varpi \langle v \rangle t}   \frac{ [\alpha(t,x,v)]^{\beta}}{\langle v\rangle^{b+1 }}
 \ \text{II}_{\mathbf{x}}  \lesssim_{t,\xi}   
P(||  e^{\theta|v|^{2}} f_{0} ||_{\infty} )  \\
& \ \ \ \times 
  \bigg\{ 
  \int_{0}^{t}  \int_{\mathbb{R}^{3}}  
  \frac{  e^{-C_{\theta}|V_{\mathbf{cl}}(s) -u|^{2}}}{ |u-V_{\mathbf{cl}}(s)|^{2-\kappa}}
 e^{-\varpi \langle v\rangle t }e^{\varpi \langle u\rangle s} e^{C|v||t-s|} \frac{|v| [\alpha(t,x,v)]^{\beta- \frac{1}{2}}}{ [\alpha(s,X_{\mathbf{cl}}(s),u)]^{\beta}} \frac{\langle u\rangle^{b+1} }{ \langle v\rangle^{b+1}}  \mathrm{d}u\mathrm{d}s
 \\
& \ \ \ \ \ \ \ \ \ \ \ \ \ \ \ \times  \sup_{m} \sup_{0 \leq s\leq t} \big|\big| e^{-\varpi \langle u \rangle s} \frac{[\alpha(s,X_{\mathbf{cl}} (s), u  )]^\beta}{ \langle u \rangle^{b+1}} \partial_{x} f^{m-j}(s,X_{\mathbf{cl}}(s),u)  \big|\big|_{\infty}  \\ 
& \ \ \ \ \ \ + \int_{0}^{t} \int_{\mathbb{R}^{3}}
  \frac{  e^{-C_{\theta}|V_{\mathbf{cl}}(s) -u|^{2}}}{ |u-V_{\mathbf{cl}}(s)|^{2-\kappa}} e^{- \varpi \langle v\rangle t} e^{\varpi \langle u\rangle s} e^{C|v||t-s|} 
\frac{ \langle u\rangle^{b}}{ \langle v\rangle^{b }}  \frac{|v|^{2} [\alpha(t,x,v)]^{\beta-1}}{ |u|[\alpha(s,X_{\mathbf{cl}}(s),u)]^{\beta- \frac{1}{2}}}\\
& \ \ \ \ \ \ \ \ \ \ \ \ \ \ \ \times  \sup_{m} \sup_{0 \leq s\leq t} \big|\big| e^{-\varpi \langle u \rangle s} \frac{|u|[\alpha(s,X_{\mathbf{cl}} (s), u  )]^{\beta-\frac{1}{2}}}{ \langle u \rangle^{b}} \partial_{v} f^{m-j}(s,X_{\mathbf{cl}}(s),u)  \big|\big|_{\infty}\bigg\}
.
\end{split}
\end{equation}
We first claim that 
\begin{equation}\label{exponent}
e^{-\varpi \langle v \rangle t}e^{\varpi \langle u \rangle s} e^{C|v|(t-s)} e^{-C^{\prime}|v-u|^{2} }\lesssim  e^{-\frac{\varpi  \langle v\rangle}{2} (t-s)}e^{C^{\prime\prime}  (s+s^{2})}   e^{- {C}^{\prime\prime}  |v-u|^{2}}.
\end{equation}
Using $\langle u \rangle   \leq  1+ |u| \leq 1+ |v|  + |u-v|  \leq 1+ \langle v\rangle  + |v-u|,$ we bound the first three exponents as
\begin{equation}\notag
\begin{split}
  -(\varpi-C) \langle v \rangle (t-s) - \varpi (\langle v\rangle - \langle u \rangle )s \leq -(\varpi-C)\langle v\rangle (t-s)+   \varpi |v-u|s + \varpi s.
\end{split}
\end{equation} 
Then we use a complete square trick, for $0 < \sigma \ll 1$
\begin{equation}\notag
\begin{split}
\varpi |v-u|s  
  = \frac{\sigma \varpi^{2}}{2} |v-u|^{2} + \frac{s^{2}}{2\sigma} - \frac{1}{2\sigma} \big[s- \sigma \varpi |v-u| \big]^{2} \leq \frac{\sigma \varpi^{2}}{2} |v-u|^{2} + \frac{s^{2}}{2\sigma} ,
\end{split}
\end{equation}
to bound the whole exponents of (\ref{exponent}) by
\begin{equation}\notag
\begin{split}
&-(\varpi -C) \langle v \rangle (t-s) + \varpi |v-u|s  -C^{\prime}|v-u|^{2} + \varpi s \\ 
&\leq -(\varpi-C) \langle v \rangle (t-s) -(C - \frac{\sigma \varpi^{2}}{2}) |v-u|^{2} + \frac{s^{2}}{2\sigma} +\varpi s  \\
& \leq - (\varpi-C)\langle  v\rangle (t-s) - C_{\sigma, \varpi}  |v-u|^{2} + C_{\sigma,\varpi}^{\prime} \big\{ s^{2} +s\big\}.
\end{split}
\end{equation}
Hence we prove the claim (\ref{exponent}) for $\varpi \gg 1.$

Now we use (\ref{exponent}) to bound 
\begin{equation}\label{AB}
\begin{split}
&e^{-\varpi \langle v \rangle t}  \frac{1}{\langle v\rangle^{ b+1}}
[\alpha(t,x,v)]^{\beta} \ \text{II}_{\mathbf{x}}\\
& \lesssim_{t,\xi} P( ||   e^{\theta|v|^{2}} f_{0} ||_{\infty} ) \times\\
& \ \times \bigg\{  \underbrace{  \int_{0}^{t} \int_{ \mathbb{R}^{3}}
 e^{-  \frac{\varpi \langle v \rangle}{2} (t-s) }  
\frac{ e^{-C_{\theta}^{\prime}|V_{\mathbf{cl}}(s)-u|^{2}} }{|V_{\mathbf{cl}}(s)-u|^{2-\kappa}}
 \frac{\langle u\rangle^{b+1}}{\langle v\rangle^{b+1}} \frac{ \langle v\rangle   [\alpha(t,x,v)]^{\beta-1}}{ [\alpha(s,X_{\mathbf{cl}}(s),u)]^{\beta}}
 \mathrm{d}u \mathrm{d}s }_{ \mathbf{(A)} }  \ 
\sup_{m}
 \sup_{0\leq s \leq t}  \big|\big| e^{-\varpi \langle v \rangle s}    \frac{\alpha^{\beta}}{\langle v\rangle^{b+1}}
 \partial_{x} f^{m}(s) \big|\big|_{\infty}\\
& \ \  + 
  \underbrace{  \int_{0}^{t} \int_{ \mathbb{R}^{3}}
 e^{-  \frac{\varpi \langle v \rangle}{2} (t-s) }   \frac{ e^{-C_{\theta}^{\prime}|V_{\mathbf{cl}}(s)-u|^{2}} }{|V_{\mathbf{cl}}(s)-u|^{2-\kappa}}\frac{\langle u\rangle^{b-1 }}{\langle v\rangle^{b+1 }} \frac{ \langle v \rangle^{3}  [\alpha(t,x,v)]^{\beta-2}}{[\alpha(s,X_{\mathbf{cl}}(s),u)]^{\beta-1}}
 \mathrm{d}u \mathrm{d}s }_{ \mathbf{(B)} }    \ 
\sup_{m}
 \sup_{0\leq s \leq t} \big|\big| e^{-\varpi \langle v \rangle s}    \frac{\alpha^{\beta-1}}{\langle v\rangle^{b -1 }}
 \partial_{v} f^{m}(s) \big|\big|_{\infty} \bigg\}
 .\end{split}
\end{equation}
For $\mathbf{(A)}$ we use (\ref{specular_nonlocal}) with $Z=\langle v\rangle\big[ \alpha(t,x,v) \big]^{\beta-1}$ and $l= \frac{\varpi}{2}$ and $r=b+1$. For $\mathbf{(B)}$ we use (\ref{specular_nonlocal}) with $\beta \mapsto\beta-1$ and $Z=\langle v\rangle\big[ \alpha(t,x,v) \big]^{\beta- {2}}$ and $l= \frac{\varpi}{2}$ and $r=b-1$. Then 
\[
\mathbf{(A)} , \  \mathbf{(B)} \   \ll \ 1.
\] 

Similarly, but with a different weight $e^{-\varpi \langle v\rangle t}  \frac{1}{\langle v\rangle^{b-1}}
 [\alpha(t,x,v)]^{\beta-1}$, we use (\ref{lemma_Dxv}) to have
\begin{equation}\notag
\begin{split}
 &e^{-\varpi \langle v \rangle t}   \frac{1}{\langle v\rangle^{b-1}}
[\alpha(t,x,v)]^{\beta-1} \ \text{II}_{\mathbf{v}} \\
&\lesssim_{t,\xi}   
P(|| e^{\theta|v|^{2}} f_{0} ||_{\infty} )  \times\\
& \ \  \times 
  \bigg\{ 
  \int_{0}^{t}  \int_{\mathbb{R}^{3}} 
  \frac{e^{-C|V_{\mathbf{cl}}(s) -u|^{2}} }{ |u-V_{\mathbf{cl}}(s)|^{2-\kappa}}e^{-\varpi \langle v\rangle t }e^{\varpi \langle u\rangle s} e^{C|v||t-s|} \frac{ [\alpha(t,x,v)]^{\beta-1} }{   [\alpha(s,X_{\mathbf{cl}}(s),u)]^{\beta}} \frac{\langle u\rangle^{b+1} }{ \langle v\rangle^{b}}  \mathrm{d}u\mathrm{d}s
 \\
& \ \ \ \ \ \ \ \ \ \ \ \ \ \ \ \times  \sup_{m} \sup_{0 \leq s\leq t} \big|\big| e^{-\varpi \langle u \rangle s} \frac{[\alpha(s,X_{\mathbf{cl}} (s), u  )]^\beta}{ \langle u \rangle^{b+1}} \partial_{x} f^{m }(s,X_{\mathbf{cl}}(s),u)  \big|\big|_{\infty}  \\
& \ \ \ \ \ \ 
+ \int_{0}^{t} \int_{\mathbb{R}^{3}}
 \frac{e^{-C|V_{\mathbf{cl}} (s)-u|^{2}} }{|u-V_{\mathbf{cl}}(s)|^{2-\kappa} }  e^{- \varpi \langle v\rangle t} e^{\varpi \langle u\rangle s} e^{C|v||t-s|} 
\frac{ \langle u\rangle^{b-1}}{ \langle v\rangle^{b-1 }}  \frac{\langle v \rangle [\alpha(t,x,v)]^{\beta-2}}{ [\alpha(s,X_{\mathbf{cl}}(s),u)]^{\beta- 1}}\\
& \ \ \ \ \ \ \ \ \ \ \ \ \ \ \ \times  \sup_{m} \sup_{0 \leq s\leq t} \big|\big| e^{-\varpi \langle u \rangle s} \frac{[\alpha(s,X_{\mathbf{cl}} (s), u  )]^{\beta-1}}{ \langle u \rangle^{b-1}} \partial_{v} f^{m }(s,X_{\mathbf{cl}}(s),u)  \big|\big|_{\infty}\bigg\}
.
\end{split}
\end{equation}
Again we use (\ref{exponent}) and (\ref{specular_nonlocal}) exactly as (\ref{AB}). Therefore for $0<\delta = \delta( || e^{\theta|v|^{2}} f_{0}||_{\infty}  ) \ll 1$
\begin{equation*} 
\begin{split}
&e^{-\varpi \langle v\rangle t} \frac{1}{\langle v\rangle^{b+1}} [\alpha(t,x,v)]^{\beta} \text{II}_{\mathbf{x}}
+e^{-\varpi \langle v\rangle t} \frac{|v|}{ \langle v\rangle^{b} } [\alpha(t,x,v)]^{\beta-1} \text{II}_{\mathbf{v}}\\
\lesssim & \ \delta \ \big\{ \sup_{m} \sup_{0 \leq s\leq t} \big|\big| e^{-\varpi \langle v \rangle s} \frac{ \alpha  ^\beta}{ \langle v \rangle^{b+1}} \partial_{x} f^{m }(s )  \big|\big|_{\infty}+   \sup_{m} \sup_{0 \leq s\leq t} \big|\big| e^{-\varpi \langle v \rangle s} \frac{ \alpha ^{\beta-1}}{ \langle v \rangle^{b-1}} \partial_{v} f^{m }(s)  \big|\big|_{\infty}\big\}.
\end{split}
\end{equation*}

Finally using $ \frac{ R_{x^{\ell}}v^{\ell} - v^\ell  }{2} =  \mathbf v_\perp^\ell $, the bound on $\p_{\mathbf e } t^\ell$ in \eqref{ptell} and \eqref{ptell2}, from (\ref{lemma_Dxv}), the Velocity lemma (Lemma \ref{velocitylemma})
\begin{equation}\notag
\begin{split}
&e^{-\varpi \langle v\rangle t}  \frac{1}{\langle v\rangle^{b+1}}
 [\alpha(t,x,v)]^{\beta} \ \text{III}_{\mathbf{x}} 
\\ & \lesssim e^{-\varpi \langle v\rangle t}  \frac{1}{\langle v\rangle^{b+1}}   [\alpha(t,x,v)]^{\beta} \| E \|_{L^\infty_{t,x}} e^{C\langle v \rangle t } \alpha(t,x,v) \ell_*(0) \sup_{0 \le \ell \le \ell_*(0) } | \p_x t^\ell  | + tP(|| e^{\theta|v|^{2}} f_{0} ||_{\infty})  \| E \|_{L^\infty_{t,x}} 
\\ & \lesssim \| E \|_{L^\infty_{t,x}} \alpha(t,x.v)^{\beta - 2 } +  tP(|| e^{\theta|v|^{2}} f_{0} ||_{\infty})  \| E \|_{L^\infty_{t,x}},
\end{split}
\end{equation}
for $\varpi \gg 1 $. And similarly
\begin{equation}\notag
\begin{split}
&e^{-\varpi \langle v\rangle t}  \frac{1}{\langle v\rangle^{b-1}}
 [\alpha(t,x,v)]^{\beta-1} \ \text{III}_{\mathbf{v}} 
\\ & \lesssim e^{-\varpi \langle v\rangle t}  \frac{1}{\langle v\rangle^{b-1}}   [\alpha(t,x,v)]^{\beta-1} \| E \|_{L^\infty_{t,x}} e^{C\langle v \rangle t } \alpha(t,x,v) \ell_*(0) \sup_{0 \le \ell \le \ell_*(0) } | \p_v t^\ell  | + tP(|| e^{\theta|v|^{2}} f_{0} ||_{\infty})  \| E \|_{L^\infty_{t,x}} 
\\ & \lesssim \| E \|_{L^\infty_{t,x}} \alpha(t,x.v)^{\beta - 2 } +  tP(|| e^{\theta|v|^{2}} f_{0} ||_{\infty})  \| E \|_{L^\infty_{t,x}}.
\end{split}
\end{equation}

Collecting all the terms, for $2< \beta < 3$ and $b > 1$ with $\varpi \gg1$ and $0 < \delta \ll1 $, we get 
\begin{equation}\notag
\begin{split}
&\sup_{m} \sup_{0 \leq s\leq t}||e^{-\varpi \langle v\rangle s}   \frac{\alpha ^{\beta} }{\langle v\rangle^{b+1 }}
\partial_{x}f^{m}(s )||_{\infty}
+ \sup_{m}  \sup_{0 \leq s\leq t}||e^{-\varpi \langle v\rangle s}  \frac{ \alpha ^{\beta-1} }{\langle v\rangle^{b-1}}
\partial_{v}f^{m}(s )||_{\infty}
\\
 & \lesssim 
 ||   \frac{ \alpha^{\beta-1}}{\langle v\rangle^{b}}  \partial_{x}f_{0} ||_{\infty}
   + ||  \frac{ \alpha^{\beta-2}}{\langle v\rangle^{b-2 }}\partial_{v }f_{0 }||_{\infty} 
     +  {P}(||   e^{\theta |v|^{2}} f_{0} ||_{\infty}).
\end{split}
\end{equation} 
 
We remark that this sequence $f^{m}$ is Cauchy in $L^{\infty}([0,T]\times \bar{\Omega}\times\mathbb{R}^{3})$ for $0< T\ll 1$. Therefore the limit function $f$ is a solution of the Boltzmann equation satisfying the specular reflection BC. On the other hand, due to the weak lower semi-continuity of $L^{p}$, $p>1$, we pass a limit $\partial f^{m} \rightharpoonup \partial f$ weakly in the weighted $L^{\infty}-$norm.

Now we consider the continuity of $e^{-\varpi \langle v\rangle t} \langle v\rangle^{-b-1}\alpha^{\beta} \partial_{x} f$ and $e^{-\varpi \langle v\rangle t}\langle v\rangle^{-b+1}\alpha^{\beta-1} \partial_{v}f$. Remark that  $e^{-\varpi \langle v\rangle t} \langle v\rangle^{-b-1}\alpha^{\beta} \partial_{x} f^m$ and $e^{-\varpi \langle v\rangle t}\langle v\rangle^{-b+1}\alpha^{\beta-1} \partial_{v}f^m$ satisfy all the conditions of Proposition \ref{inflowexistence1}. Therefore we conclude 
$$e^{-\varpi \langle v\rangle t} \langle v\rangle^{-b-1}\alpha^{\beta} \partial_{x} f^{m} \in C^{0}([0,T ] \times \bar{\Omega} \times\mathbb{R}^{3}), \ e^{-\varpi \langle v\rangle t}  \langle v\rangle^{-b+1}\alpha^{\beta-1} \partial_{v}f^{m}\in C^{0}([0,T ] \times \bar{\Omega} \times\mathbb{R}^{3}).$$
Now we follow $W^{1,\infty}$ estimate proof for $e^{-\varpi \langle v\rangle t} \langle v\rangle^{-b-1}\alpha^{\beta} [\partial_{x} f^{m+1}- \partial_{x}f^{m}]$ and $e^{-\varpi \langle v\rangle t} \langle v\rangle^{-b+1} \alpha^{\beta-1} [\partial_{v}f^{m+1}-\partial f^{m}]$ to show that $e^{-\varpi \langle v\rangle t} \langle v\rangle^{-b-1}\alpha^{\beta} \partial_{x} f^{m}$ and $e^{-\varpi \langle v\rangle t}\langle v\rangle^{-b+1}\alpha^{\beta-1} \partial_{v}f^{m}$ are Cauchy in $L^{\infty}$. Then we pass a limit $e^{-\varpi \langle v\rangle t} \langle v\rangle^{-b-1}\alpha^{\beta} \partial_{x} f^{m}\rightarrow e^{-\varpi \langle v\rangle t} \langle v\rangle^{-b-1}\alpha^{\beta} \partial_{x} f$ and $e^{-\varpi \langle v\rangle t}\langle v\rangle^{-b+1} \alpha^{\beta-1} \partial_{v}f^{m}\rightarrow e^{-\varpi \langle v\rangle t}\langle v\rangle^{-b+1} \alpha^{\beta-1} \partial_{v}f$ strongly in $L^{\infty}$ so that 
$e^{-\varpi \langle v\rangle t} \langle v\rangle^{-b-1}\alpha^{\beta} \partial_{x} f  \in C^{0}([0,T^{*}] \times \bar{\Omega} \times\mathbb{R}^{3})$ and $e^{-\varpi \langle v\rangle t} \langle v\rangle^{-b+1} \alpha^{\beta-1} \partial_{v}f \in C^{0}([0,T^* ] \times \bar{\Omega} \times\mathbb{R}^{3}).$\end{proof}

\textbf{Acknowledgements.} This research is partly support in part by National Science Foundation under award NSF DMS-1501031, DMS-1900923.

\end{document}